\documentclass[11pt,a4paper]{article}
\usepackage{algorithm}
\usepackage{algpseudocode}
\usepackage[utf8]{inputenc}
\usepackage[T1]{fontenc}
\usepackage[english]{babel}
\usepackage{xcolor}
\usepackage{graphicx}
\usepackage{subcaption}
\usepackage{hyperref}
\usepackage{amsmath, amstext, amsopn, amssymb, amsthm}
\usepackage{thmtools}
\usepackage{mathtools}
\usepackage{siunitx}
\usepackage{etoolbox}
\usepackage{bm}
\usepackage{booktabs}
\usepackage{makebox}
\usepackage{stmaryrd}
\usepackage{setspace}
\usepackage[shortlabels]{enumitem}
\setlist[enumerate,1]{label= (\alph*)}
\usepackage{bbm}
\usepackage[authoryear]{natbib}

\graphicspath{{Figures/}}

\setcitestyle{semicolon}

\setcitestyle{aysep={}}

\setcitestyle{yysep={,}}

\addto\extrasenglish{

}
\hypersetup{
  colorlinks   = true,
  urlcolor     = blue,
  linkcolor    = blue,
  citecolor    = blue
}

\DeclareMathOperator*{\argmin}{arg\,min}

\DeclareMathOperator{\sgn}{sgn}

\DeclarePairedDelimiterX{\inner}[2]{\langle}{\rangle}{{#1},{#2}}

\newcommand{\gammaK}[1]{\Gamma_{#1}^{\mathbb{K}}}
\newcommand{\gammaG}[1]{\Gamma_{#1}^{\mathbb{G}}}

\newcommand{\de}{\mathrm{d}}
\newcommand{\diff}{\,\mathrm{d}}
\newcommand{\invmap}{\Phi_{\mathrm{inv}}}

\newcommand{\R}{\mathbb{R}}
\newcommand{\N}{\mathbb{N}}

\newcommand{\Uninv}{\mathbf{U}_n^{-1}}
\newcommand{\Un}{\mathbf{U}_n}
\newcommand{\U}{\mathbf{U}}
\newcommand{\Uinv}{\mathbf{U}^{-1}}

\newcommand{\abs}[1]{\left|#1\right|}
\newcommand{\norm}[1]{\left\lVert#1\right\rVert}

\newcommand{\iid}{\overset{\mathrm{i.i.d.}}{\sim}}

\newcommand{\eqD}{\stackrel{D}{=}}
\newcommand{\convAS}{\stackrel{a.s.}{\longrightarrow}}
\newcommand{\outerconvAS}{\stackrel{\mathrm{as}^\star}{\longrightarrow}}
\newcommand{\condconvAS}{\underset{M}{\overset{\mathrm{as}^\star}{\rightsquigarrow}}}
\newcommand{\condconvP}{\underset{M}{\overset{\mathbb{P}^\star}{\rightsquigarrow}}}
\newcommand{\convP}{\stackrel{\mathbb{P}}{\longrightarrow}}
\newcommand{\outerconvP}{\stackrel{\mathbb{P}^\star}{\longrightarrow}}

\DeclareMathOperator{\Var}{Var}
\DeclareMathOperator{\diam}{diam}
\DeclareMathOperator{\Cov}{Cov}

\DeclareMathAlphabet{\mymathbb}{U}{BOONDOX-ds}{m}{n}

\theoremstyle{plain}
\newtheorem{sectioncount}{xxxxxxx}[section]
\newtheorem{theorem}[sectioncount]{Theorem}
\newtheorem{proposition}[sectioncount]{Proposition}
\newtheorem{lemma}[sectioncount]{Lemma}

\theoremstyle{definition}
\newtheorem{example}[sectioncount]{Example}
\newtheorem{condition}[sectioncount]{Condition}

\newtheorem{remark}[sectioncount]{Remark}
\newtheorem{setting}[sectioncount]{Setting}



\newcommand{\footremember}[2]{
	\footnote{#2}
	\newcounter{#1}
	\setcounter{#1}{\value{footnote}}
}
\newcommand{\footrecall}[1]{
	\footnotemark[\value{#1}]
}

\newcommand{\blind}{1}

\begin{document}

\def\spacingset#1{\renewcommand{\baselinestretch}%
{#1}\small\normalsize} \spacingset{1}

\if1\blind
{
  \title{\bf Gromov-Wasserstein Barycenter Surrogates: \\ Statistical Methodology, Distributional Limits and Applications\thanks{
The authors gratefully acknowledge support of the Deutsche Forschungsgemeinschaft (DFG, German Research Foundation) FOR 5381 ``Mathematical Statistics in the Information Age''.}}
\author{\begin{tabular}{ c c c }
	Florian Steinkamp\hspace{-0.3em}\footremember{ims}{Institute for Mathematical Stochastics, University of G{\"o}ttingen, Goldschmidtstra{\ss}e 7, 37077 G{\"o}ttingen, Germany} & Luis Rodriguez \hspace{-0.3em}\footrecall{ims} &
    Jan Victor Otte \hspace{-0.1em}\footrecall{ims}
  \end{tabular} \\[1em]
  \begin{tabular}{c}
    Axel Munk\hspace{-0.3em}\footrecall{ims}\hspace{-0.3em}\footremember{mbexc}{Cluster of Excellence ``Multiscale Bioimaging: from Molecular Machines to Networks of Excitable Cells'' (MBExC), University of G{\"o}ttingen, Robert-Koch-Stra{\ss}e 40, 37075 G{\"o}ttingen, Germany}
\end{tabular}}
\maketitle
} \fi

\if0\blind
{
  \bigskip
  \bigskip
  \bigskip
  \begin{center}
    {\LARGE\bf Gromov-Wasserstein Barycenter Surrogates: \\ Statistical Methodology, Distributional Limits and Applications}
  \end{center}
  \medskip
} \fi

\bigskip\vspace{-0.6cm}
\begin{abstract}
We introduce statistical theory for the matching of finitely many objects, represented as metric measure spaces (mm-spaces). The approach is based on the second lower bound (SLB) of the Gromov-Wasserstein distance and thus is able to identify deviations in the distributions of the (pairwise) distances within each mm-space. We introduce a surrogate of the SLB barycenter which can be easily computed and expressed explicitly in terms of the distance distributions of each object. When comparing $m$ mm-spaces for $n$ randomly drawn samples in each space, the resulting statistic then can be calculated efficiently in $O(m \cdot n^2 \log(n))$ basic operations. We derive the asymptotic distribution and finite-sample bounds of the proposed test statistic, which serves as a basis for a variety of tools for statistical inference, specifically an asymptotic test for pose-invariant object discrimination and a classification method (based on the SLB barycenter) with controlled error rates. These methods are investigated in simulations and applied to the structural comparison of protein domains.
\end{abstract}

\noindent%
{\it Keywords:} Distance-to-distance data, nonparametric testing, U-process, Fr\'echet mean, metric data analysis.
\vfill

\newpage

\section{Introduction}\label{sec:intro}
As capabilities for acquiring and storing data with intrinsic geometric structure have increased massively over the years, finding meaningful ways of describing and comparing such data is an ever so important task in many different fields. An example of this is the protein database (PDB) \citep{bermanprotein}, which has seen drastic growth over recent years, starting from just seven registered structures in 1970 \citep{BURLEY2021100559} and counting a new record number of over $250,\!000$ structures in 2026. In databases such as SCOPe \citep{chandonia2022scope} or CATH \citep{ORENGO19971093}, these proteins are decomposed into smaller subparts, so-called protein domains, which are the building blocks of proteins \citep{schulz2013principles}. These are then, among other factors, classified based on functional, evolutionary and structural similarity. Since the geometric structure of proteins is closely linked to both their functionality and evolutionary origin \citep{kolodny2005comprehensive}, a principled comparison of these structures is essential for understanding previously uncharacterized protein domains. However, a key challenge arises from the fact that protein domains have an intrinsic variation and do not possess a canonical orientation, as illustrated in \autoref{fig: two protein domains same protein}. Therefore, a robust and statistically sound method is required that allows these objects to be identified up to their orientation, meaning rotations, translations or reflections. 
\begin{figure}[b]
\begin{minipage}[t]{0.32\textwidth}
    \vspace{0pt}
    \textbf{A}\par
    \includegraphics[width=\linewidth]{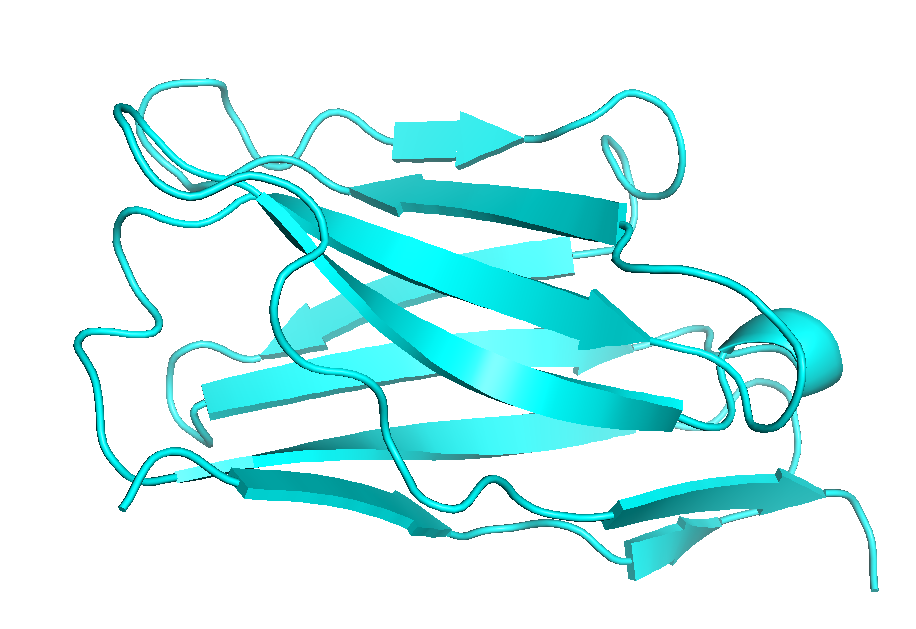}
\end{minipage}
\hfill
\begin{minipage}[t]{0.25\textwidth}
    \vspace{0pt}
    \textbf{B}\par
    \includegraphics[width=\linewidth]{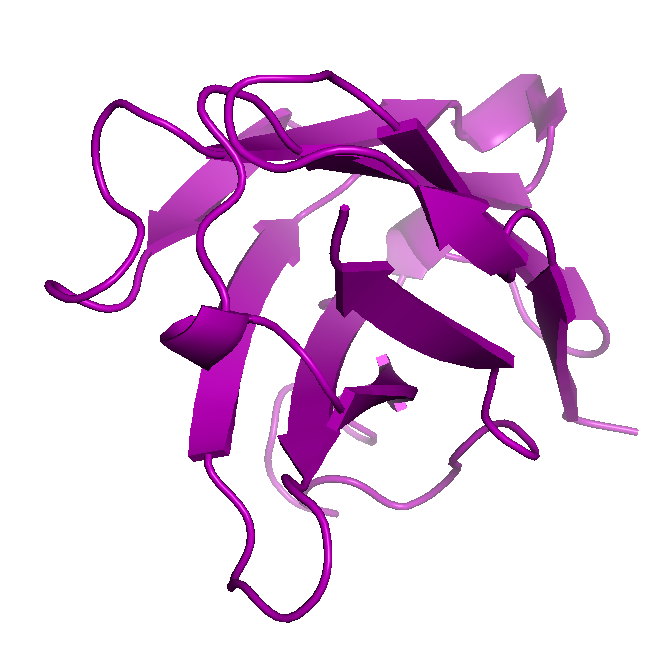}
\end{minipage}
\hfill
\begin{minipage}[t]{0.32\textwidth}
    \vspace{0pt}
    \textbf{C}\par
    \includegraphics[width=\linewidth]{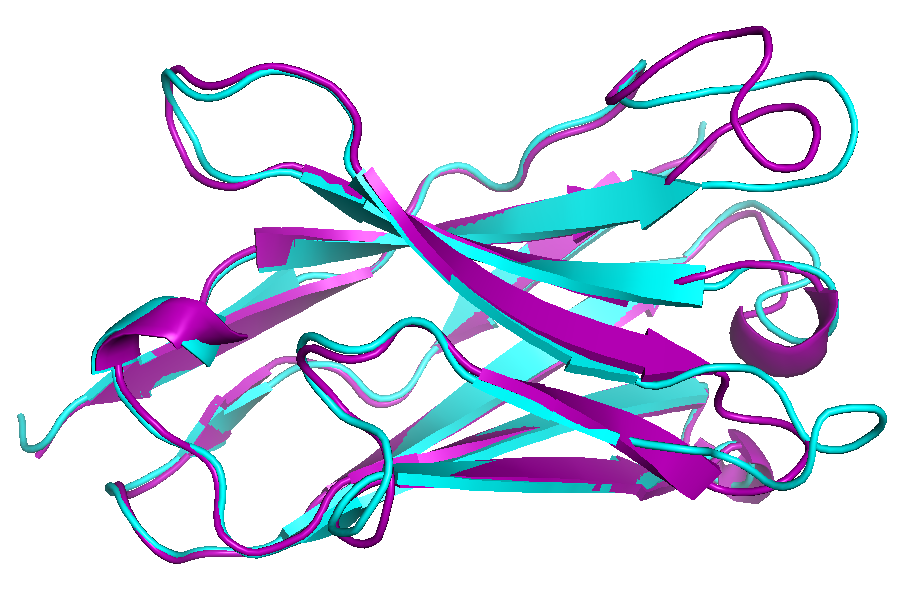}
\end{minipage}
\caption{\textbf{A and B:} Cartoon representations of protein domains d1nfdf1 (\textbf{A}) and d1xhgb1 (\textbf{B}) from SCOPe database, created with PyMOL \citep{pymol}. \textbf{C:} Alignment of d1nfdf1 and d1xhgb1 using PyMOL.}\label{fig: two protein domains same protein}
\end{figure}
\\
Besides protein matching, the (pose-invariant) comparison of objects based on structural similarity is relevant to many fields. Examples include computational anatomy where the goal is to compare different anatomical objects to each other based on their shape or structural features \citep{glaunes2004landmark}, and more generally a variety of tasks in computer vision, to name a few.
\subsection{How to Model the Data}
We follow the approach of \citet{memoli2007use,memoli2011gromov} and model such data as \textit{metric measure spaces} (mm-space), i.e.\ a triple $(\mathcal{X},d_\mathcal{X},\mu)$ where $(\mathcal{X},d_\mathcal{X})$ is a compact metric space and $\mu$ a probability measure on $\mathcal{X}$. It has been shown that this provides a highly flexible framework to capture the intrinsic geometry of the data well, recent applications range from graph matching and node embedding \citep{xu2019scalable} over network analysis \citep{chowdhury2019gromov} to robotics \citep{lyu2026last}. In order to compare such mm-spaces, we say that $(\mathcal{X},d_\mathcal{X}, \mu)$ and $(\mathcal{Y},d_\mathcal{Y}, \nu)$ are \textit{isomorphic} if there exists an isometry $\phi:\mathcal{X} \to \mathcal{Y}$ such that $\nu = \phi_\# \mu$, where $\phi_\#\mu$ denotes push-forward of $\mu$ under $\phi$. This is denoted by $(\mathcal{X},d_\mathcal{X}, \mu) \cong (\mathcal{Y},d_\mathcal{Y}, \nu)$, or for the ease of notation, $\mathcal{X} \cong \mathcal{Y}$ or $\mu \cong \nu$ depending on the context. Notably, if both mm-spaces can be embedded in the Euclidean space, they are isomorphic if and only if they can be transformed onto each other by a composition of translation, reflection and rotation, while also preserving the total mass. As in many applications the goal is to identify objects represented as Euclidean vectors precisely up to those rigid transformations, the above notion of similarity between mm-spaces is particularly attractive. A tool for identifying isomorphic relationships between metric measure spaces is the \textit{Gromov-Wasserstein (GW) distance}, which was introduced by \citet{memoli2007use, memoli2011gromov} and is based on earlier work of \citet{Gromov1999}, see also \citet{sturm2006geometry} for a related concept. It is defined for $p \geq 1$ as
\begin{equation}
    \mathcal{GW}_p^p(\mathcal{X}, \mathcal{Y}) = \inf_{\pi \in \Pi(\mu, \nu)} \int_{\mathcal{X} \times \mathcal{Y}} \int_{\mathcal{X} \times \mathcal{Y}} |d_\mathcal{X}(x,x^\prime) - d_\mathcal{Y}(y, y^\prime)|^p \diff \pi(x,y) d\pi(x^\prime, y^\prime),
    \label{eq: GW-distance}
\end{equation}
where $\Pi(\mu,\nu)$ denotes the set of couplings between the metric measure spaces, i.e.\ probability measures $\pi$ on $\mathcal{X} \times \mathcal{Y}$ for which $\pi(A \times \mathcal{Y}) = \mu(A)$ and $\pi(\mathcal{X} \times B) = \nu(B)$ for every measurable $A \subset \mathcal{X}$ and $B \subset \mathcal{Y}$. Importantly, it has been shown that the GW distance defines a metric on the space of isomorphism classes of $\mathcal{G}_w$, the collection of mm-spaces \citep[Theorem 5.1]{memoli2011gromov}. Hence, it holds that $\mathcal{GW}_p(\mu , \nu) = 0$ if and only if $\mu \cong \nu$. Despite its conceptual appeal, the downside of the GW distance for practical applications lies in the difficulty of its computation. Already in the case that $\mathcal{X}$ and $\mathcal{Y}$ consist of finitely many points, its computation boils down to solving a highly non-convex quadratic optimization problem, which is NP-hard, in general \citep{memoli2011gromov,memoli2023ultrametric,kravtsova2024np}. Thus, its usage in practical situations is severely hindered. While there are approaches to numerical approximations of the GW distance, see e.g.\ \citet{scetbon2022linear, ryner2023globally, rioux2024entropic,doi:10.1137/25M1777116}, no convergence guarantees to the global optimum are known, in general. To circumvent this issue, one may try to approximate the GW distance using lower bounds, which keep much of the discriminatory power of the original distance while allowing for easier computation. The lower bound considered in this article is the so called \textit{Second Lower Bound} (SLB), introduced by \citet{memoli2011gromov}, and defined for $p \geq 1$ as a relaxation of the GW distance via
\begin{equation}
    \mathcal{SLB}_p^p(\mathcal{X}, \mathcal{Y}) = \inf_{\pi \in \Pi(\mu \otimes\mu, \nu \otimes \nu)} \int_{\mathcal{X}^2\times \mathcal{Y}^2} |d_\mathcal{X}(x,x^\prime)- d_\mathcal{Y}(y, y^\prime)|^p \diff \pi(x,x^\prime,y,y^\prime) .
    \label{eq: SLB-distance}
\end{equation}
Note that in contrast to~\eqref{eq: GW-distance}, the couplings are given by those with independent marginals $\mu \otimes \mu$ and $\nu \otimes \nu$, hence ignoring the absolute local information of points in $\mathcal{X}$ and $\mathcal{Y}$. In particular, one has \citep[Proposition 6.2]{memoli2011gromov}
\begin{equation}
    \mathcal{SLB}_p(\mathcal{X}, \mathcal{Y}) \leq \mathcal{GW}_p(\mathcal{X}, \mathcal{Y}), \qquad p \geq 1.
    \label{eq: SLB bounds GW}
\end{equation}
In fact, rather than finding the optimal matching between all points in the mm-spaces,~\eqref{eq: SLB-distance} compares the \textit{distance distributions} of the mm-spaces, $U(t) = \mathbb{P}(d_\mathcal{X}(X,X^\prime) \leq t)$ and $V(t) = \mathbb{P}(d_\mathcal{Y}(Y, Y^\prime) \leq t)$. More precisely, denoting $U^{-1}$ and $V^{-1}$ the corresponding quantile functions, it holds that \citep[Theorem 24]{chowdhury2019gromov}
\begin{equation*}
    \mathcal{SLB}_p^p(\mathcal{X}, \mathcal{Y}) = \int_0^1 \left| U^{-1}(t) - V^{-1}(t) \right|^p \diff t , \qquad p \geq 1.
\end{equation*}
The above equation also shows that the SLB can be interpreted as the $p$-th Wasserstein distance between the distance distributions of the mm-spaces, further implying that $\mathcal{SLB}_p(\mathcal{X}, \mathcal{Y}) = 0$ is equivalent to $U=V$. While the SLB does not keep track of the individual location of points, and hence may lose some discriminatory power in comparison to the GW distance, our experiments demonstrate that the SLB still distinguishes well between non-isomorphic mm-spaces in the proposed scenarios. This is in line with practical experience that distance distribution or distance matrices themselves have been successfully used for comparing objects based on their geometry, see for example \citet{campbell2011performance}  or \citet{holm2020dali}. While the discriminatory power of distance matrices and of the SLB is well known, the investigation of its statistical properties when estimated from data is still at its infancy.
\subsection{The Proposed Approach}\label{subsection: the proposed approach}
Within this article, we aim to extend the work of \citet{weitkamp2024distribution} from the two-sample comparison of mm-spaces to a finite number of mm-spaces $(\mathcal{X}_i,\mu_i,d_i)_{1 \leq i \leq m}$. In particular, we will introduce a SLB based \emph{barycenter} of these spaces and based on this provide a test for the hypothesis that all these spaces carry the same distance distribution,
 \begin{equation} \label{eq: nullhypothesis}
 H_0: \mathcal{SLB}_p(\mathcal{X}_i, \mathcal{X}_j) = 0, \, 1\leq i < j \leq m.
 \end{equation}
Apart from testing, the SLB based barycenter can be easily displayed graphically as it represents the collection of mm-spaces via a single distribution, rendering it particularly useful for classification tasks, see \autoref{theorem: barycenter classifier} ahead. To formalize, denote by $\mu$ a coupling of $\mu_1, \ldots , \mu_m$, i.e.\ a joint distribution $\mu$ on $\bigotimes_{i=1}^m (\mathcal{X}_i,d_i)$ with marginals $\mu_1, \ldots, \mu_m$. Furthermore, assume we observe an i.i.d.\ sample $X_1, \ldots, X_n$ from this coupling, where $X_i = (X_{1,i}, \ldots, X_{m,i})$ for each $i = 1, \ldots,n$. We denote the cdf of the distribution of distance measure $\mu^{U_i}$ of the $i$-th metric measure space by $U_i(t) = \mathbb{P}(d_i(X_{i,1},X_{i,2}) \leq t)$ and its empirical counterpart
\begin{equation*}
    U_{i,n}(t) = \frac{2}{n(n-1)} \sum_{1 \leq k < l \leq n} \mathbbm{1} \! \left(d_i(X_{i,k}, X_{i,l}) \leq t \right), \qquad i=1, \ldots, m.
\end{equation*}
Moreover, write $U_i^{-1}$ and $U_{i,n}^{-1}$ for their respective generalized inverses (quantile functions) and denote $\mathbf{U}^{-1} = (U_1^{-1}, \ldots , U_m^{-1})$, and similarly $\Uninv$, the vector of the (empirical) quantile functions. \\
We introduce now the Wasserstein barycenter of the distance distributions. Denoting the set of all quantile functions of probability measures on the real line by $\mathcal{Q}$, let 
\begin{equation} \label{eq: Wasserstein SLB barycenter}
    T^p \coloneqq T^p(\mathcal{X}_1, \ldots, \mathcal{X}_m) =\inf_{V^{-1} \in \mathcal{Q}} \frac{1}{m} \sum_{i=1}^m \int_0^1 \left| U_i^{-1}(t) - V^{-1}(t) \right|^p \diff t \qquad p \geq 1.
\end{equation}
As this is a Wasserstein barycenter of probability measures on the real line, an explicit quantile function solving the minimization is readily derived. Let for some $p > 1$ the pointwise barycenter $\Lambda_p:\mathbb{R}^m \to \mathbb{R}$ be defined via $\Lambda_p(x) = \argmin_{y \in \mathbb{R}} \sum_{i=1}^m \left| x_i - y\right|^p$, and for $p=1$ specify a particular minimizer $\Lambda_1(x) = 1/2(x_{(\lfloor (m+1)/2 \rfloor)} + x_{(\lceil (m+1)/2 \rceil)})$, where $x_{(i)}$ is the $i$-th order statistic of $x$. Of special interest are the cases $p=2$ and $p=1$, as then $\Lambda_p$ maps a vector $x \in \mathbb{R}^m$ to its mean or median, respectively. Further, $\Lambda_p(\Uinv(t)) \coloneqq \Lambda_p(U_1^{-1}(t), \ldots , U_m^{-1}(t))$ preserves the monotonicity and left-continuity of the input functions and as such is again a quantile function. Consequently, it holds that 
\begin{equation*}
    T^p  = \frac{1}{m}\sum_{i=1}^m \int_0^{1}\left| U_i^{-1}(t) - \Lambda_p(\Uinv(t)) \right|^p \diff t, \qquad p \geq 1.
\end{equation*}
We will from now on call $\Lambda_p\left(\Uinv \right)$, or more precisely the distribution induced by this quantile function, the \textit{SLB barycenter}. Note that $\Lambda_p$ is invariant under permutations of the components of the vector $\Uinv$. Further, it is completely contained in the band
\begin{equation*}
    \Lambda_p\left(\Uinv(t) \right) \in \left[ \min_{i=1,\ldots, m} U_{i}^{-1}(t), \max_{i=1,\ldots, m} U_{i}^{-1}(t) \right], \qquad t \in (0,1),
\end{equation*}
and assigns to each $t\in (0,1)$ the pointwise barycenter of $U_1^{-1}(t), \ldots, U_m^{-1}(t) \in \mathbb{R}$. \newline
It is easy to verify that $T^p = 0$ if and only if $H_0$ is satisfied in~\eqref{eq: nullhypothesis}, making it a suitable candidate to detect deviations from $H_0$ in the distance distributions, i.e.\ \begin{equation} \label{eq: alternative definition}
H_1 : \exists i \neq j \text{ s.t. }\mathcal{SLB}_p(\mathcal{X}_i, \mathcal{X}_j) >0.
\end{equation}
We will show that (\autoref{prop: SLB bounds GW}) $T^p$ is a lower bound to the GW barycenter evaluation
\begin{equation}
    T_{GW}^p \coloneqq\inf_{\mathcal{X} \in \mathcal{G}_w} \frac{1}{m} \sum_{i=1}^m \mathcal{GW}_p^p(\mathcal{X}_i, \mathcal{X}),
\end{equation}
generalizing~\eqref{eq: SLB bounds GW} to the case of barycenters. As mentioned before, the computation of the GW distance is generally NP-hard, which immediately transfers to the quantity $T_{GW}^p$. Hence, the SLB barycenter $T^p$ can be viewed as a computationally feasible relaxation of the GW barycenter $T_{GW}^p$ in the same way that the SLB~\eqref{eq: SLB-distance} relaxes the GW distance~\eqref{eq: GW-distance}. \newline
\noindent
Denote the sample counterpart of $T^p$ by
\begin{equation*}
    T_n^p  = \frac{1}{m}\sum_{i=1}^m \int_0^{1}\left| U_{i,n}^{-1}(t) - \Lambda_p(\Uninv(t)) \right|^p \diff t.
\end{equation*}
Writing $\lbrace d_{(j)}^{\mathcal{X}_i} \rbrace_{1 \leq j \leq n(n-1)/2}$ for the order statistics of $\lbrace d_i(X_{i,k}, X_{i,l}) \rbrace_{1 \leq l < k \leq n }$, it is thus possible to rewrite
\begin{equation} \label{eq: elementary operations for calculation}
    T_n^p =  \frac{2}{mn(n-1)}\sum_{i=1}^m \sum_{k=1}^{n(n-1)/2} \left| d_{(k)}^{\mathcal{X}_i} - \Lambda_p(d_{(k)}^{\mathcal{X}_1},\ldots , d_{(k)}^{\mathcal{X}_m}) \right|^p,
\end{equation}
which implies it can be calculated in $O(m \cdot n^2 \log(n))$ elementary operations, assuming that calculating each distance takes only one elementary operation and calculating each barycenter takes $O(m)$. The $\log(n)$ factor stems from the sorting of the distance distribution sample required for calculating its order statistic.
\begin{remark}\label{rem: comments SLB approach}
We close this section by commenting on alternative ways of testing for $H_0$ and the advantages of taking the SLB barycenter approach.
\begin{enumerate}
\item{(Exact barycenter).} 
     Note that $\Lambda_p(\Uinv)$ may not always induce an mm-space, in the sense that there may not be an mm-space $(\mathcal{Y},d_\mathcal{Y}, \nu) \in \mathcal{G}_w$ such that $V^{-1} = \Lambda_p(\Uinv)$ is the quantile function of its distance distribution. However, $T^p>0$ holds under every alternative $H_1$ in~\eqref{eq: alternative definition}, which renders it useful for testing purposes. Furthermore, the simple representation of the barycenter quantile function $\Lambda_p(\Uinv)$ allows for a fruitful mathematical treatment. In contrast, deriving the precise SLB barycenter itself requires solving an optimization problem over the highly non-convex set
     \begin{equation*}
        \Tilde{\mathcal{Q}} = \left\{ F^{-1} \in \mathcal{Q}: \exists (\mathcal{Y},d_\mathcal{Y}, \nu) \in \mathcal{G}_w \text{ s.t.\ }F(t) = \mathbb{P}\left( d_\mathcal{Y}(Y, Y^\prime) \leq t \right) \text{ with } Y,Y^\prime \iid \nu \right\},
    \end{equation*}
    i.e.\ the set of quantile functions belonging to the distance distribution of some mm-space. Since $\Tilde{\mathcal{Q}} \subset \mathcal{Q}$, $T^p$ is a convex relaxation of the exact SLB barycenter evaluation.
    \item{(Pairwise testing).} One could also design a test statistic for $H_0$ using the pairwise SLB's between the metric measure spaces, i.e.
    \begin{equation*}
        T_{PW,n}^p \coloneqq \frac{2n^{p/2}}{m(m-1)} \sum_{1 \leq i < j \leq m} \int_{0}^1 \abs{U_{i,n}^{-1}(t) - U_{j,n}^{-1}(t)}^p \diff t.
    \end{equation*}
    However, a direct calculation of the pairwise statistic $T_{PW,n}^p$ takes $O(m^2 \cdot n^2 \log(n))$ elementary calculation operations, which is one order (in $m$) of magnitude larger than computing $T_n^p$. This can become critical when comparing a large number of mm-spaces. Further, the barycenter approach bears the additional advantage that it summarizes an ensemble of mm-spaces via a single representative quantile function.
    \item{(Bayesian viewpoint).} Wasserstein barycenters recently have also gained interest from a Bayesian point of view and our SLB barycenter may serve as a proxy for an extension to mm-spaces. Given a prior $\Pi \in \mathcal{P}_p(\mathcal{P}_p(\mathbb{R}))$ and a sampling model $p(D \! \mid \! \pi)$, the Bayes estimator $\hat{\pi}$ arises as the Wasserstein barycenter of the posterior distribution $\Pi(\de  \pi \! \mid \! D)$ under the loss function $L(\pi_1, \pi_2) = \mathcal{W}_p^p(\pi_1, \pi_2)$ (the $p$-Wasserstein distance, see \citet{villani2021topics}), i.e.\ $
        \hat{\pi} = \argmin_{\pi \in \mathcal{P}_p(\mathbb{R})} \int_{\mathcal{P}_p(\mathbb{R})} L(\pi, \pi^\prime) \Pi(\diff \pi^\prime \mid D)$. We mention e.g.\ \citet{srivastava2018scalable} for its use in approximate Bayesian computation or \citet{backhoff2022Bayesian} for predictive estimation. In our context, the SLB barycenter is thus the Bayes estimator under Wasserstein loss if the posterior law is given by 
        \begin{equation*}
            \Pi(\de \pi \! \mid \!  D) = \frac{1}{m} \sum_{i=1}^m \delta_{\mu_{i,n}^U}(\de \pi),
        \end{equation*}
        where $\mu_{i,n}^U$ is the probability measure corresponding to $U_{i,n}$.
\end{enumerate}
\end{remark}
\subsection{Main Results: Limit Distributions}
Our main theoretical contribution is the limit law of the (possibly trimmed) SLB barycenter statistic
\begin{equation} \label{eq: untrimmed test statistic}
    T_{\beta,n}^p = \frac{1}{m} \sum_{i=1}^m \int_\beta^{1-\beta} \left| U_{i,n}^{-1}(t) - \Lambda_p(\Uninv(t)) \right|^p \diff t, \qquad \beta \in [0,1/2),
\end{equation}
the population counterpart of which will be denoted by $T_\beta^p$. The trimming of $\beta \geq 0$ can be beneficial in several ways: It can robustify the presented methodology, which is reflected in simpler conditions under which the asymptotic results will be derived (see \autoref{theorem: Convergence under the Null} and \autoref{theorem: Convergence under the alt}). Further, it allows to highlight certain regions of the distance-to-distance distributions, which are particularly informative for a given task. We will derive the asymptotic distribution of $T_{\beta,n}^p$ for $\beta \geq 0$ both in the case that the distance distributions are equal and not. Based on these results, we define a statistical test for $H_0$. We furthermore provide an explicit finite sample risk bound including a bound for the bias of $T_{\beta,n}^p$, which is then applied in order to bound the finite sample error rate of the SLB barycenter based classifier in \autoref{theorem: barycenter classifier}. \\
Many of the theoretical results of this article hinge on the joint weak convergence of the multivariate quantile process $\mathbb{U}_n^{-1} \coloneqq (\mathbb{U}_{1,n}^{-1}, \ldots, \mathbb{U}_{m,n}^{-1})$, where $\mathbb{U}_{i,n}^{-1} (t) \coloneqq \sqrt{n}(U_{i,n}^{-1}(t) - U_i^{-1}(t))$, as well as the related barycenter process $\lambda_{p,n} \coloneqq \sqrt{n}(\Lambda_p(\Uninv) - \Lambda_p(\Uinv))$, which may be of interest by its own. For $\beta > 0$, we will view each marginal of $\mathbb{U}_n^{-1}$ as an element of the space of bounded functions $\ell^\infty([\beta,1-\beta])$ under mild conditions. In order to derive the asymptotic distribution of the process, the delta method combined with the quantile inversion $\Phi_{\text{inv}}: F \mapsto F^{-1}$, plays a pivotal role. Considering $\Phi_{\text{inv}}$ as a map from the space of trimmed distribution functions (i.e.\ c\`{a}dl\`{a}g) onto $\ell^\infty([\beta,1-\beta])$, the following condition is imposed in order to ensure Hadamard-differentiability of the functional at $U_i$.
\begin{condition}\label{cond 1.2}
    For $\beta \in (0,1/2)$ and $p \geq 1$, assume that for each $i\in \lbrace 1,\ldots, m \rbrace$ the distribution function $U_i$ is continuously differentiable on an interval $[C_{i,1},C_{i,2}] = [U_i^{-1}(\beta)- \varepsilon,U_i^{-1}(1- \beta)+ \varepsilon]$ for some $\varepsilon > 0$ with strictly positive derivative $u_i$.
\end{condition}
For $\beta = 0$, special care needs to be taken as it might happen that $\mathbb{U}_{i,n}^{-1}$ grows in an uncontrollable way at the boundaries of the interval $(0,1)$. While \citet[Lemma 3.10.24]{wellner2013weak} also provides conditions under which $\invmap$ is Hadamard differentiable as a function mapping to the space of bounded functions, it turns out that integrability of the process is sufficient to derive its limiting distribution. Hence, only the following, milder conditions are required.
\begin{condition}\label{cond 1.3}
    Assume that for each $i=1, \ldots,m$, the distribution function $U_i$ is continuously differentiable on its support $[0, D_i]$, such that the derivative satisfies $u_i(t) > 0$ for every $t \in (0,D_i)$. Further, assume that there is some $\theta \geq 1$ for which exist constants $-1/\theta - 1/2 <\gamma_{i,1}, \gamma_{i,2}< \infty$ and $c_i$ such that $(U_i^{-1})^\prime(t) \leq c_i t^{\gamma_{i,1}}(1-t)^{\gamma_{i,2}}$ for $t \in (0,1)$.
\end{condition}
These conditions resemble the ones used by \citet{weitkamp2024distribution} and the examples of mm-spaces given there immediately transfer to our setting.
\paragraph{Limit Distribution under $H_0$}
In the following, we denote by $\rightsquigarrow$ weak convergence in the sense of Hoffmann-J\o rgensen \citep[chap.~1.3]{wellner2013weak}. Under $H_0:T^p = 0$ (or more specifically, $H_0^{(\beta)} : T_\beta^p = 0$) and \autoref{cond 1.2} for some $\beta > 0$ and $p \geq 1$, we prove that (\autoref{theorem: Convergence under the Null})
\begin{equation}\label{eq: limit distribution under the Null}
    n^{p/2} T_{\beta,n}^p \rightsquigarrow  \frac{1}{m} \sum_{i=1}^m \int_\beta^{1-\beta} \left|\mathbb{G}_i(t) - \Lambda_p(\mathbb{G}(t)) \right|^p \diff t,
\end{equation}
where $\mathbb{G} = (\mathbb{G}_1, \ldots, \mathbb{G}_m)$ is a certain m-dimensional centered Gaussian process. If additionally, \autoref{cond 1.3} is satisfied with $\theta = p$, the same distributional limit as in equation~\eqref{eq: limit distribution under the Null} holds with $\beta = 0$, as well. We further accompany the distributional limit~\eqref{eq: limit distribution under the Null} with a bootstrap scheme based on a bootstrap statistic $T_{\beta,n}^{p , \ast}$ and prove its consistency in \autoref{theorem: bootstrap consistency}. The limit law~\eqref{eq: limit distribution under the Null} as well as its bootstrap counterpart are further investigated in extensive simulation studies (\autoref{subsec:sim:NH} and \autoref{subsec:sim:bootstrap}), while the corresponding statistical test is evaluated on synthetic data in \autoref{subsec:sim:test}.
\paragraph{Limit Distribution under the Alternative $H_1$}
We again start by considering $\beta > 0$. Then, for $p \geq 2$, under the assumption that $T_\beta^p >0$ and \autoref{cond 1.2}, we show that (\autoref{theorem: Convergence under the alt}) the statistic $T_{\beta,n}^p$, upon a proper centering and rescaling, is asymptotically normal with mean zero and variance $\sigma_\beta^2$, which is defined in terms of the underlying distance distributions. We also provide a distributional result for $p = 1$, in which case the limiting distribution is not necessarily Gaussian. In the untrimmed case $\beta = 0$, we further show that for $p = 1$ and $p \geq 2$, the same limit laws hold under \autoref{cond 1.3} with $\theta = 1$. These results are supported by simulations, which are provided in \autoref{subsec:sim:alternative}. Finally, in \autoref{theorem: local alternatives} we provide the asymptotic distribution of the centered test statistic under local alternatives.
\begin{figure}[b!]
    \centering
    \includegraphics[width=1\linewidth]{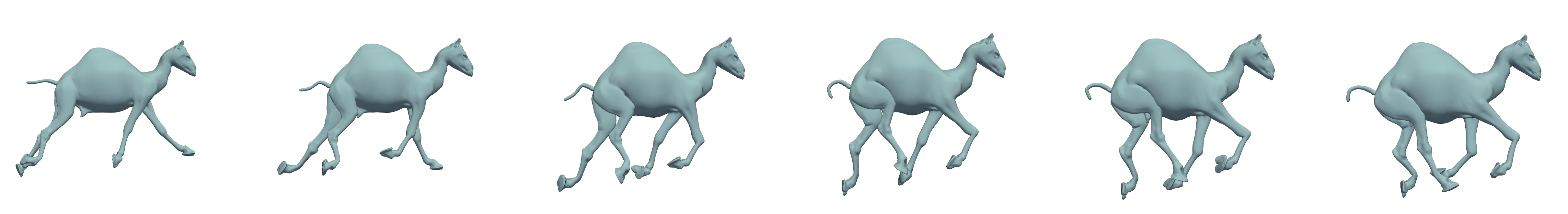}
    \includegraphics[width=0.6\linewidth]{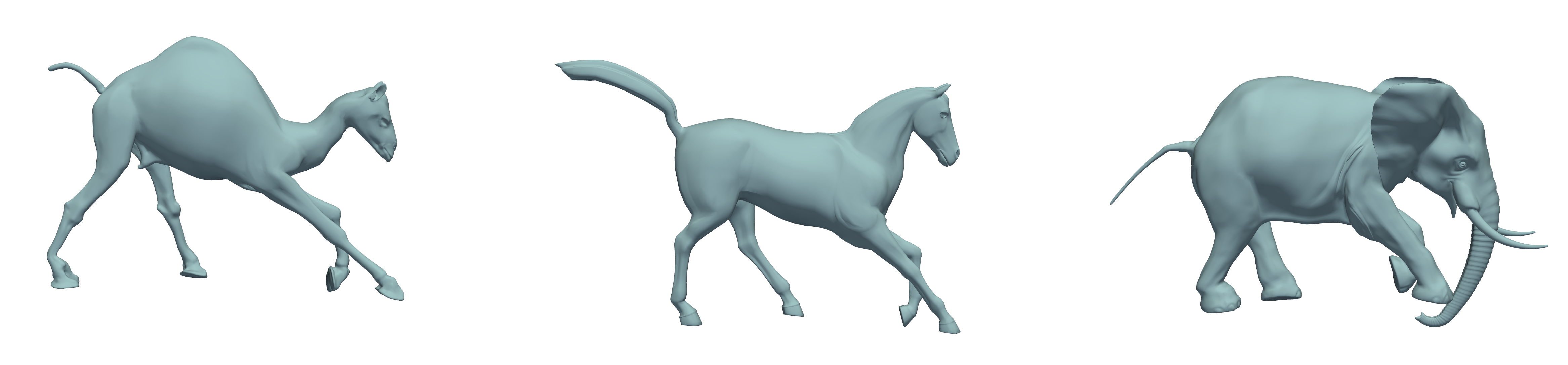}
    \caption{\textbf{First row:} Example meshes from \textit{camel-gallop} class. \textbf{Second row:} Examples for camel, horse and elephant objects.}\label{fig: Meshes pictures}
\end{figure}
\subsection{Applications}\label{subsec:applications}
We claim that distance distributions and SLB barycenters can be a powerful tool for the classification of objects. To this end, we illustrate the SLB barycenters by means of the triangulated objects database \citep{sumner2004deformation}, which contains meshes corresponding to a variety of different animals, see also \autoref{fig: Meshes pictures}. Upon sampling points uniformly on the surfaces of meshes contained in the classes \emph{horse-gallop}, \emph{elephant-gallop} and \emph{camel-gallop}, the resulting distance distributions reflect the prior conception of similarity well. As in each of the classes, animals are displayed in different poses and thus can be seen as metric preserving perturbations of the same object, the distance distributions within these classes should be rather similar, see part A of \autoref{fig: visualization of distance distributions meshes and proteins}. When looking at the resulting distance distributions of each class, the distinct structure of the classes is clearly reflected in the shape of the corresponding distance distributions, which lies at the core of the SLB barycenter classifier method. The performance of the classifier on this dataset is documented in \autoref{subsec:sim:classifier}. 
\paragraph{Classifications of Protein Domains}
\begin{figure}[t!]
    \centering
    \includegraphics[width=1\linewidth]{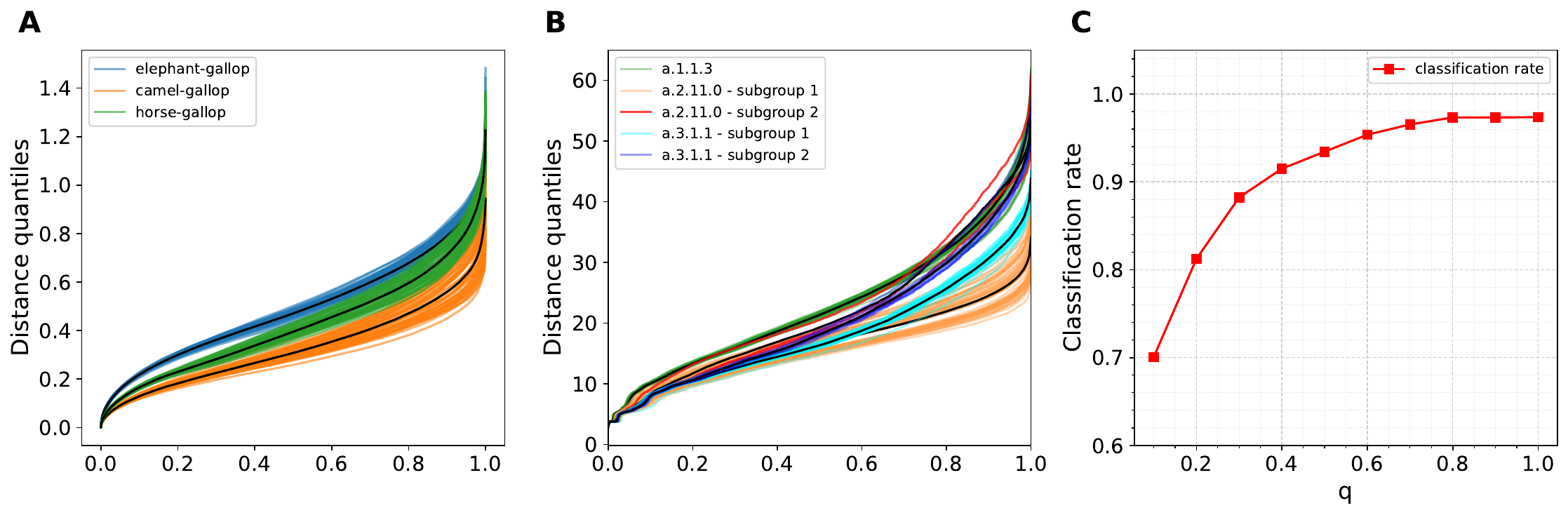}
    \caption{\textbf{A:} Quantile functions induced by the distances of $250$ uniformly sampled points on the surface of a mesh from the triangulated mesh database \citep{sumner2004deformation}. The color of the quantile function displays the class. The SLB barycenter of each group is plotted in black. \textbf{B:} Distance distributions of protein domains from three different SCOPe families. Distance distributions from the groups a.3.1.1 and a.2.11.0 were subclustered via Wasserstein $k$-means into $2$ clusters each. The barycenters of each (sub)-group are displayed in black. \textbf{C:} Rate of correct classification of SLB barycenter classifier for protein domains from families a.2.11.0, a.3.1.1 and a.1.1.3 when subsampling a proportion $q$ of the C-alpha atoms of each domain.}\label{fig: visualization of distance distributions meshes and proteins}
\end{figure}
In the following, we present the results for the classification of structural protein domain data. To this end, in \autoref{theorem: barycenter classifier} we provide a finite sample bound for the classification error of the SLB barycenter classifier introduced in \autoref{subsec: barycenter classifier}. The data stems from the SCOPe protein database \citep{fox2014scope}, where protein domains are classified into classes, folds, superfamilies and families. Families usually consist of closely related protein domains on an evolutionary and structural level. In superfamilies, families that are believed to share a common evolutionary ancestor and similar functionality are assembled. Superfamilies which contain protein domains sharing some structural features but lack evidence of evolutionary connection are then grouped into a fold. On the class level, the folds are classified based on similarity of secondary structure content and organization of the contained protein domains \citep{chandonia2022scope}. For the classification, we consider protein domains contained in the SCOPe families a.2.11.0, a.3.1.1 and a.1.1.3. As encoded in their SCOPe IDs, they are families consisting of protein domains that are made up primarily of alpha helices. However, they are not contained in the same fold, meaning that there is little to no evolutionary or functional connection between them. The protein domain data is provided in terms of the 3D coordinates of atoms belonging to the protein backbone, of which we only take into account the locations of the C-alpha atoms. The backbone of a protein consists of a repeating chain of atoms that links amino acids via peptide bonds. It is well known that the locations of the C-alpha atoms of a protein determine the structure of its backbone reasonably well \citep{holm1993protein}, which is why in many applications they are used to model the respective proteins \citep{mayr2007comparative}. The metric used to calculate the distances between the respective C-alpha atoms is the Euclidean distance. We then try to reproduce the SCOPe classification of the protein domains by performing a repeated leave-one-out classification.
In order to robustify the method, in a first step the distance distributions of each protein family may be divided into subclusters by applying Wasserstein $k$-means clustering, using Lloyd's algorithm \citep{lloyd1982least} with Wasserstein barycenters as centroids and the Wasserstein distance as metric between the distributions (see \citet{jaffe2025asymptotic} for statistical guarantees of the procedure). This way, we account for differences in the within-family variation of the distance distributions, such that mm-spaces belonging to more diverse groups can be identified more reliably, see \autoref{subsec:sim:classifier} for an example on synthetic data. Note that the preclustering only serves as a preprocessing step on the known, already labeled data, and as such errors in the procedure do not necessarily induce a further statistical error on the classifier. For our experiments, we chose cluster amounts $(2,2,1)$ for the families a.2.11.0, a.3.1.1 and a.1.1.3 upon a visual inspection of the distance distributions of the groups. The resulting subclusters and their respective barycenters are visualized in part B of \autoref{fig: visualization of distance distributions meshes and proteins}. Then, the left-out protein domain is classified to the family to one of the barycenters of which its distance distribution has the lowest Wasserstein distance to. In order to validate the method and its finite sample classification error rate stated in \autoref{theorem: barycenter classifier}, we carry out the classification based on a proportion of $q$ randomly chosen subsamples of the C-alpha atoms of each of the protein domains, for different values of $q \in (0,1]$. Part C of \autoref{fig: visualization of distance distributions meshes and proteins} shows that the rate of correct classification increases as $q \nearrow 1$ in accordance with \autoref{theorem: barycenter classifier}. For $q = 1$, i.e.\ when taking into account all C-alpha atoms of each protein domain, the classification rate goes up to over $0.96$, showing that the SLB barycenter method reflects the SCOPe classification of the protein domains well. \autoref{fig:confusion-matrices} shows confusion matrices of the empirical rates at which a protein domain from family $x$ is classified to family $y$. Further, we want to point out that the results of the classification differ only slightly if the procedure is carried out with similar precluster configurations (see \autoref{table: classification proteins other amts}), meaning that it is robust with respect to such small modelling changes.
\begin{figure}[t]
\centering
\includegraphics[width=\linewidth]{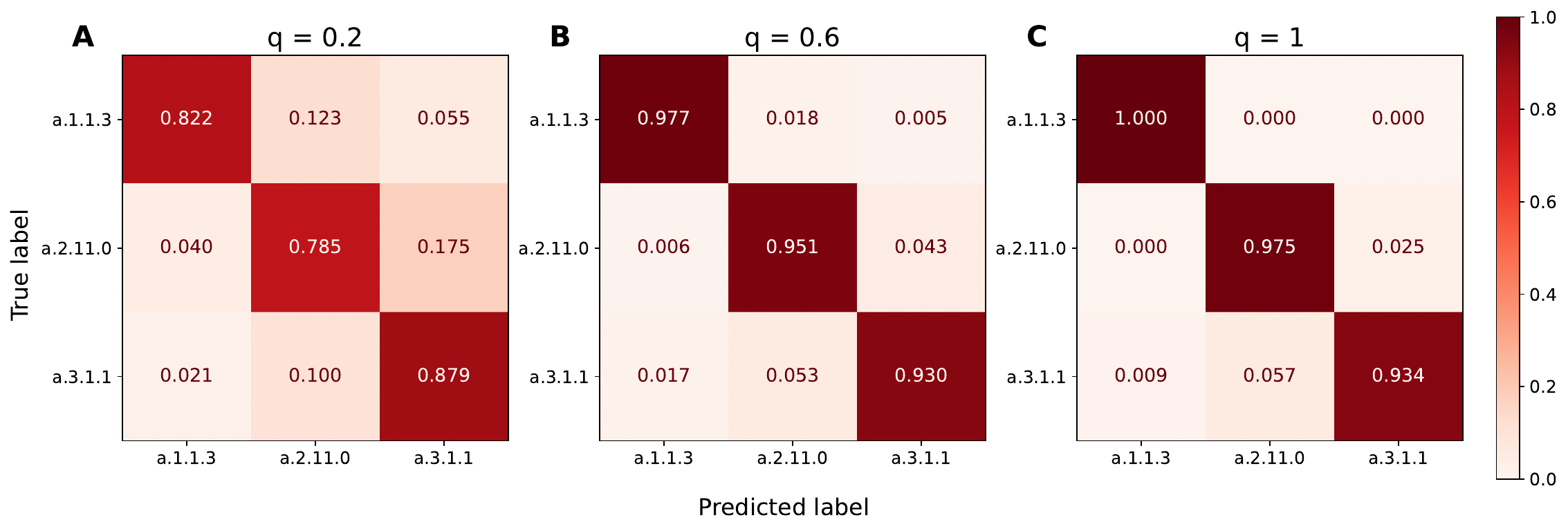}
\caption{Confusion matrices displaying how often a protein domain from \emph{true label} is classified to \emph{predicted label}, for different subsample proportions $q = 0.2,0.6,1$.}\label{fig:confusion-matrices}
\end{figure}
\begin{table}[b]
\centering
\begin{tabular}{cc@{\qquad}cc}
\toprule
Preclusters & Classification rate &
Preclusters & Classification rate \\
\midrule
$(1,1,1)$ & $0.926$ & $(2,3,1)$ & $0.967$ \\
$(1,2,1)$ & $0.964$ & $(3,1,1)$ & $0.949$ \\
$(2,1,1)$ & $0.94$ & $(3,2,1)$ & $0.987$ \\
$(2,2,1)$ & $0.969$     & $(3,3,1)$ & $0.977$ \\
\bottomrule
\end{tabular}
\caption{Rate of correct classifications of SLB barycenter classifier for protein domains from families a.2.11.0, a.3.1.1 and a.1.1.3 over $1,\!000$ repetitions, in each of which one domain from each family is left out and classified, taking into account every C-alpha atom of each domain (i.e.\ $q=1$). \emph{Preclusters} is the number of clusters used in the Wasserstein $k$-means preclustering procedure. The results are rounded to the third digit.}\label{table: classification proteins other amts}
\end{table}
\paragraph{Testing Structural Similarity of Protein Domains} We use the statistical test for $H_0$ from \autoref{theorem: bootstrap consistency} to assess the structural similarity of protein domains. To this end, we first apply the test to Immunoglobulin heavy chain variable domains, which are protein domains from the same SCOPe family, that share additional structural features. In accordance with prior biological knowledge, the test does not reject the null hypothesis of homology between the domains. In a second step, we compare protein domains from a representative subset of SCOPe family b.1.1.3. As such, the domains to be compared are still structurally related, albeit not as closely as in the first test. In this case, the null hypothesis is rejected reliably, which shows the sensitivity of the method even against small deviations from the null hypothesis. Further details are presented in \autoref{sec:application}.
\subsection{Related Work}
Our work extends \citet{weitkamp2024distribution} from the comparison of two mm-spaces to SLB based barycenters of an ensemble of spaces. In addition, we derive the limit distribution under local alternatives and a novel finite sample risk bound, which serves as the basis for the SLB classifier. Moreover, we introduce the possibility of drawing dependent and coupled samples from the metric measure spaces. We finally stress that our novel technique of proof of \autoref{theorem: Convergence under the alt} relies on a careful analysis of the SLB barycenter functional along with a sandwich inequality for the test statistic. In particular, following this strategy allows to generalize \citet[Theorem 2.7]{weitkamp2024distribution} from $p=2$ to general $p \geq 1$. \vspace{0.05cm} \\ 
As mentioned before, the SLB can be viewed as the \textit{Wasserstein distance} between the distance distributions of metric measure spaces. Statistical inference for the Wasserstein distance between one-dimensional cdfs has seen much attention over recent years, with distributional limits and convergence rates being derived by, among others, \citet{munk1998nonparametric, del1999central} or \citet{bobkov2019one}, see also the reviews by \citet{panaretos2019statistical} and \citet{del2025distributional}. Our work is inspired by \citet{del2019central}, who provide a central limit theorem for Wasserstein barycenters of one-dimensional distributions when $p=2$, which they use in order to assess the goodness-of-fit of a deformation model. One of the main differences between these works and the distributional limits in this article is that the empirical distance distributions $U_{i,n}$ are not based on i.i.d. observations anymore, but instead carry an intrinsic dependency induced by the distance-to-distance structure. Hence, it is not possible to directly apply classical empirical process theory \citep{wellner2013weak}. However, $\sqrt{n}(U_{i,n}-U_i)$ may be viewed as a $U$-process. The asymptotic behavior and bootstrap of these types of processes has been intensively studied by Arcones and Gin\'e, see e.g.\ \citet{arcones1994u}. Extending this theory, we derive convergence of these processes in the space of bounded functions $\ell^\infty([C_{i,1},C_{i,2}])$ for properly chosen constants $C_{i,1},C_{i,2}$ under mild assumptions. \vspace{0.05cm} \\ 
The study of optimal transport based barycenters for ground spaces beyond the real line has received considerable attention over the last years. In the special case $d=1$, as considered here, the explicit representation of the barycenter as $\Lambda_p(U_1^{-1}, \ldots, U_m^{-1})$ goes back to \citet{agueh2011barycenters}. One dimensional Wasserstein barycenters were further exploited by \citet{10.1214/15-AOS1387} in the study of the separation of amplitude and phase variation of point processes.
  \vspace{0.05cm} \\ 
Lastly, we want to comment on related works using GW based methods for the analysis of protein data. A statistical, pairwise comparison of proteins can be found in \citet{weitkamp2024distribution}. Note that, in contrast to this work, the examples provided there focus on the comparison of complete proteins, which are composed of multiple domains. Here, we focus on the comparison and classification of the protein domains themselves, which is of particular relevance, as they are the building blocks of proteins and as such determine their function \citep{schulz2013principles}. \citet{gellert2019substrate} applied the \emph{first} lower bound of the GW distance, which is in general less informative than our SLB \citep[sec.~6.1]{memoli2011gromov} in the context of clustering thioredoxins and glutaredoxins, with the goal of finding a functional classification of these proteins. Instead of using the locations of the C-alpha atoms of the proteins, they compared the 3-dimensional isosurfaces of the positive and negative potential of each protein. Moreover, \citet{riahi2025joint} define a generalization of the GW distance, which they applied in order to match complete proteins to their domains. However, they rely on the approximation of an entropically regularized version of their proposed objective, and their results do not include any statistical guarantees.
\subsection{Organization and Software}
In \autoref{section: theory: limit distributions}, we present the theoretical results involving the SLB barycenter statistic, including finite sample risk bounds and distributional limits. From these results we derive a variety of statistical tools, some of which are presented in \autoref{sec: statistical methodology}. In \autoref{sec: Simulations}, the distributional convergence results and statistical methods are investigated in Monte Carlo studies using synthetically generated data. Further details on the application of the SLB barycenter test to structural protein data (recall \autoref{subsec:applications}) are presented in \autoref{sec:application}. Proofs of this paper are rather technical and thus postponed to the appendix, which begins with the derivation of the joint limit law of the $U$-quantile processes $\mathbb{U}_{i,n}^{-1}, \, i=1, \ldots ,m$ and the barycenter process $\lambda_{p,n}$. In \autoref{appendix: proof of main results}, the main results of the paper are proven. \autoref{appendix: bootstrap} contains the derivation of the consistency of the proposed bootstrap procedure. Then, some supplementary simulations to \autoref{sec: Simulations} are, for the sake of readability, presented in \autoref{appendix: simulations}. A Python package, in which the SLB barycenter test (\autoref{theorem: bootstrap consistency}) and the classifier (\autoref{theorem: barycenter classifier}) are implemented is made available at \url{https://github.com/FlorianSteinkamp/SLBBarycenter}.

\section{Theory: Main Results}\label{section: theory: limit distributions}
In the following, we relate the SLB barycenter to the GW barycenter, provide finite sample bounds for the (trimmed) SLB barycenter statistic~\eqref{eq: untrimmed test statistic} and derive limit laws both under the null hypothesis and under (local) alternatives. We begin with the following simple observation.
\begin{proposition}\label{prop: SLB bounds GW}
    In the setting of \autoref{subsection: the proposed approach}, define the GW barycenter evaluation
    \begin{equation*}
    T_{GW}^p \coloneqq\inf_{\mathcal{X} \in \mathcal{G}_w} \frac{1}{m} \sum_{i=1}^m \mathcal{GW}_p^p(\mathcal{X}_i, \mathcal{X}), \qquad p \geq 1.
\end{equation*}
    For $T_\beta^p$, the population counterpart of~\eqref{eq: untrimmed test statistic}, it holds that $T_\beta^p \leq T_{GW}^p$, for any $\beta \in [0,1/2)$.
\end{proposition}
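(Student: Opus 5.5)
The argument chains three elementary inequalities. The idea is to restrict the infimum defining $T^p$ to quantile functions coming from mm-spaces, then use $\mathcal{SLB}_p \leq \mathcal{GW}_p$ from~\eqref{eq: SLB bounds GW}.

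\textbf{Step 1 (trimming only decreases the value).} The integrand in the definition of $T^p_\beta$ is nonnegative and $[\beta,1-\beta]\subseteq(0,1)$, so $T_\beta^p \leq T^p = T_0^p$ for every $\beta\in[0,1/2)$. It therefore suffices to prove $T^p \leq T_{GW}^p$.

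\textbf{Step 2 (restricting the infimum to mm-spaces).} Fix any $\mathcal{X}=(\mathcal{X},d_\mathcal{X},\nu)\in\mathcal{G}_w$. Let $V$ be its distance distribution, $V(t)=\mathbb{P}(d_\mathcal{X}(Y,Y')\leq t)$ with $Y,Y'\iid\nu$, and let $V^{-1}$ be its quantile function. Then $V^{-1}\in\tilde{\mathcal{Q}}\subset\mathcal{Q}$. By the definition~\eqref{eq: Wasserstein SLB barycenter} of $T^p$ as an infimum over $\mathcal{Q}$,
\begin{equation*}
T^p \leq \frac1m\sum_{i=1}^m\int_0^1\left|U_i^{-1}(t)-V^{-1}(t)\right|^p\diff t = \frac1m\sum_{i=1}^m \mathcal{SLB}_p^p(\mathcal{X}_i,\mathcal{X}).
\end{equation*}
The equality is the quantile representation of the SLB from \citet[Theorem 24]{chowdhury2019gromov}, recalled in the introduction.

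\textbf{Step 3 (passing from SLB to GW).} Apply $\mathcal{SLB}_p(\mathcal{X}_i,\mathcal{X})\leq\mathcal{GW}_p(\mathcal{X}_i,\mathcal{X})$ termwise; this is \citet[Proposition 6.2]{memoli2011gromov}, i.e.~\eqref{eq: SLB bounds GW}. Since $x\mapsto x^p$ is monotone on $[0,\infty)$, this gives $T^p\leq \frac1m\sum_i\mathcal{GW}_p^p(\mathcal{X}_i,\mathcal{X})$. The left-hand side does not depend on $\mathcal{X}$, so taking the infimum over $\mathcal{X}\in\mathcal{G}_w$ yields $T^p\leq T^p_{GW}$. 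Combining with Step 1 proves the proposition.

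\textbf{Main obstacle.} There is no substantial obstacle. The only point needing care is that the quantile representation of the SLB applies to every compact mm-space $\mathcal{X}$, so that its distance quantile function is a legitimate competitor in $\mathcal{Q}$. This holds because $d_\mathcal{X}$ is bounded on a compact space, hence $V$ is the cdf of a compactly supported probability measure on $\R$. Also, the minimizer $\Lambda_p(\Uinv)$ plays no role here, since only the infimum characterization of $T^p$ is used.
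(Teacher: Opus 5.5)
Your proof is correct and uses essentially the same chain as the paper's: $\mathcal{SLB}_p\le\mathcal{GW}_p$, the quantile representation of the SLB, and the inclusion $\tilde{\mathcal{Q}}\subset\mathcal{Q}$, i.e.\ distance quantile functions of mm-spaces are admissible competitors in the barycenter problem. Your Step 1 — trimming a nonnegative integrand only lowers it, so $T^p_\beta\le T^p$ — makes explicit a step the paper's proof leaves implicit, since that proof stops at $T^p$.
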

For a proof, see \autoref{proof:prop:barycenter-bound}. The above proposition implies that $T_\beta^p >0$ guarantees $T_{GW}^p>0$, which is a desirable property in the context of testing whether the underlying mm-spaces are isomorphic using the SLB barycenter.
\subsection{Finite Sample Bounds}
In this section, we study the finite sample properties of the SLB barycenter statistic.
\begin{theorem}\label{theorem: finite sample bound the alternative}
    Recall the setting of \autoref{subsection: the proposed approach} and the trimmed SLB barycenter $T_{\beta,n}^p$ given in~\eqref{eq: untrimmed test statistic}. Define $D = \max_{i=1, \ldots, m} \diam(\mathcal{X}_i)$. Then, for any $p,q \geq 1$ and $\beta \in [0,1/2)$
    \begin{equation*}
        \mathbb{E} \left[\abs{T_{\beta,n}^p -T_\beta^p}^q \right] \leq 2^{qp - 3q/2 + 1} \left(p(m+1) \right)^q D^{pq} \Gamma(1 + q/2) \lfloor n/2 \rfloor ^{-q/2},
    \end{equation*}
    for $\Gamma$ the gamma function and $\lfloor x \rfloor$ the largest integer less than or equal to $x \in \mathbb{R}$.
\end{theorem}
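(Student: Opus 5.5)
The plan is to reduce the deviation $\abs{T_{\beta,n}^p - T_\beta^p}$ to a sum of one-dimensional Wasserstein-type distances $\int_0^1 \abs{U_{i,n}^{-1}(t)-U_i^{-1}(t)}\diff t$. Each of these equals $\int_0^D \abs{U_{i,n}(s)-U_i(s)}\diff s$, and we control it through a concentration inequality for $U$-statistics (Hoeffding's decomposition into averages of $\lfloor n/2\rfloor$ independent blocks). The constants in the statement, namely $\Gamma(1+q/2)$ and $\lfloor n/2\rfloor^{-q/2}$, are exactly what one gets from integrating a sub-Gaussian tail $2\exp(-2\lfloor n/2\rfloor x^2/D^2)$. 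This suggests the route: a deterministic Lipschitz bound, then Hoeffding for $U$-statistics, then a moment computation from the tail.

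\textbf{Step 1 (deterministic Lipschitz bound).} All quantile functions take values in $[0,D]$. For $a,b\in[0,D]$ we have $\abs{\,\abs{a}^p-\abs{b}^p}\le pD^{p-1}\abs{a-b}$. Hence
\begin{equation*}
\abs{T_{\beta,n}^p-T_\beta^p}\le \frac{pD^{p-1}}{m}\sum_{i=1}^m\int_\beta^{1-\beta}\Big(\abs{U_{i,n}^{-1}-U_i^{-1}}+\abs{\Lambda_p(\Uninv)-\Lambda_p(\Uinv)}\Big)\diff t .
\end{equation*}
Next we need $\abs{\Lambda_p(x)-\Lambda_p(y)}\le \sum_j\abs{x_j-y_j}$, or better $\le\max_j\abs{x_j-y_j}$. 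For $p>1$ this follows from monotonicity and translation equivariance: $\Lambda_p(x)\le\Lambda_p(y+\|x-y\|_\infty\mathbf 1)=\Lambda_p(y)+\|x-y\|_\infty$. The same argument works for the median-type $\Lambda_1$. Using $\max_j\le\sum_j$ gives a total bound of
\begin{equation*}
\frac{pD^{p-1}(m+1)}{m}\sum_{i=1}^m W_i, \qquad W_i=\int_0^1\abs{U_{i,n}^{-1}-U_i^{-1}}\diff t=\int_0^D\abs{U_{i,n}-U_i}\diff s .
\end{equation*}
This is where the factor $p(m+1)$ comes from.

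\textbf{Step 2 (concentration).} Bound $\sup_s\abs{U_{i,n}(s)-U_i(s)}$, or directly the integral. Write $U_{i,n}(s)$ as an average over permutations of $\lfloor n/2\rfloor^{-1}\sum_k \mathbbm 1(d_i(X_{i,\sigma(2k-1)},X_{i,\sigma(2k)})\le s)$, which is Hoeffding's trick. By Jensen/convexity, the map $F\mapsto\int_0^D\abs{F-U_i}$ evaluated at the average is bounded by the average of the same functional over block empirical cdfs, each built from $\lfloor n/2\rfloor$ i.i.d.\ distances. For such an i.i.d.\ empirical cdf, $\int_0^D\abs{F_k-U_i}$ is a function of $\lfloor n/2\rfloor$ independent variables with bounded differences $D/\lfloor n/2\rfloor$. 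McDiarmid, or more cleanly a DKW-type sub-Gaussian bound for $D\cdot\sup\abs{F_k-U_i}$, then gives a tail $\mathbb P(\cdot>x)\le 2\exp(-2\lfloor n/2\rfloor x^2/D^2)$.

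\textbf{Step 3 (moments).} Take $q$-th moments. Use the convexity of $x\mapsto x^q$ to move the average over $i$ and over permutations outside; the factor $1/m$ cancels the sum over $i$. Then apply $\mathbb E Y^q=\int_0^\infty q x^{q-1}\mathbb P(Y>x)\diff x\le 2\Gamma(1+q/2)(D^2/(2\lfloor n/2\rfloor))^{q/2}$. Collecting powers of $2$ and $D$ yields $2^{qp-3q/2+1}(p(m+1))^qD^{pq}\Gamma(1+q/2)\lfloor n/2\rfloor^{-q/2}$.

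\textbf{Main obstacle.} The $2^{qp}$ factor suggests the authors use a cruder Lipschitz constant, such as $p(2D)^{p-1}$ or diameter $2D$ in places. I would adjust Step 1 to match rather than aim for sharpness. The delicate part is Step 2: justifying that the sub-Gaussian bound survives the $U$-statistic dependence via the permutation/blocking representation, and that Jensen can be applied to the nonlinear functional (here a convex one, so it is fine). If the block functional gives a constant mismatch, I would instead use McDiarmid directly on the $U$-statistic functional, where changing one $X_k$ alters $U_{i,n}$ by at most $2/n$, with the centering handled by a bias bound.
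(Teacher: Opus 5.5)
Your proof is correct, but its concentration step takes a different route from the paper's. Step 1 matches the paper's reduction to $\frac{m+1}{m}\sum_i W_i$, except that the paper uses the cruder constant $p(2D)^{p-1}$. Your $pD^{p-1}$ is legitimate, because both $U_{i,n}^{-1}(t)-\Lambda_p(\mathbf{U}_n^{-1}(t))$ and $U_i^{-1}(t)-\Lambda_p(\mathbf{U}^{-1}(t))$ lie in $[-D,D]$. It gives a bound smaller by the factor $2^{q(p-1)}$, which of course implies the stated one, so there is nothing to ``adjust''. Your Step 3 claim should accordingly read $2^{1-q/2}$ rather than $2^{qp-3q/2+1}$.

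For the moments of $W_i$, the paper never needs control that is uniform in $s$. It applies Jensen in the $s$-variable, $W_i^q\le D^{q-1}\int_0^D\abs{U_{i,n}(s)-U_i(s)}^q\diff s$, then Tonelli and Hoeffding's inequality for the bounded $U$-statistic $U_{i,n}(s)$ at each fixed $s$. Integrating over $[0,D]$ yields exactly your bound $\mathbb{E}W_i^q\le 2^{1-q/2}\Gamma(1+q/2)D^q\lfloor n/2\rfloor^{-q/2}$.

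You instead open up Hoeffding's permutation representation and dominate $W_i$ by $D$ times the Kolmogorov--Smirnov statistic of an i.i.d.\ empirical cdf built from $\lfloor n/2\rfloor$ distances. This works, but only with Massart's sharp constant $2$ in the DKW inequality. At $p=1$ the stated constant leaves no slack, so a generic DKW constant would not suffice. Your McDiarmid fallback only controls $Y-\mathbb{E}Y$ and would not reproduce the constant either, so commit to the Massart--DKW route.

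What your route buys is a genuine sub-Gaussian tail bound for $W_i$, indeed for $\sup_s\abs{U_{i,n}(s)-U_i(s)}$, and hence a high-probability deviation bound for $\abs{T_{\beta,n}^p-T_\beta^p}$ rather than just moment bounds. What the paper's route buys is that it needs only the elementary pointwise Hoeffding inequality.
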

When $q = 1$, the above theorem characterizes the finite sample mean absolute error of $T_{\beta,n}^p$, which then decreases at the rate $O(n^{-1/2})$. Together with the case $q=2$, this yields the rate $O(n^{-1})$ for the MSE. The proof of \autoref{theorem: finite sample bound the alternative} is given in \autoref{proof:thm:finitesample-bias-alt}. If the null hypothesis~\eqref{eq: nullhypothesis} is satisfied, the bound of \autoref{theorem: finite sample bound the alternative} can be further improved under minimal additional assumptions: In \citet{bobkov2019one}, it is shown that the finite sample bias of the Wasserstein distance can be bounded in terms of 
\begin{equation*}
    J_p(\mu) \coloneqq \int_{-\infty}^\infty \frac{\left[F(x)(1-F(x)) \right]^{p/2}}{f(x)^{p-1}} \diff x, \qquad p \geq 1,
\end{equation*}
for absolutely continuous $\mu \in \mathcal{P}(\mathbb{R})$ with cdf $F$ and positive density $f$.
\begin{theorem}\label{theorem: sample bias of DoD}
    Assume the setting of \autoref{theorem: finite sample bound the alternative} and let $p,q \geq 1$. If additionally, $H_0: \mathcal{SLB}_p(\mathcal{X}_i, \mathcal{X}_j) = 0$ for $1 \leq i < j \leq m$ is satisfied and $J_{pq}(\mu^{U_1})< \infty$, then it holds that
    \begin{equation*}
        \mathbb{E}\left[\left(T_{\beta, n}^p \right)^q \right] \leq  \left( \frac{10 pq}{\sqrt{n+2}} \right)^{pq} J_{pq}(\mu^{U_1}).
    \end{equation*}
\end{theorem}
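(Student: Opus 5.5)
Under $H_0$ all $U_i$ coincide with $U_1 =: U$ (since $\mathcal{SLB}_p=0$ iff the distance distributions agree), so $T_\beta^p = 0$ and we only need to bound $T_{\beta,n}^p$ itself. The plan is to compare each empirical quantile function with the common population quantile $U^{-1}$ rather than with the empirical barycenter. Since $\Lambda_p(x)$ minimizes $y\mapsto \sum_i |x_i-y|^p$ (for $p=1$ the chosen median is still a minimizer), for each $t$
\begin{equation*}
\frac1m\sum_{i=1}^m \left|U_{i,n}^{-1}(t)-\Lambda_p(\Uninv(t))\right|^p \le \frac1m\sum_{i=1}^m \left|U_{i,n}^{-1}(t)-U^{-1}(t)\right|^p .
\end{equation*}
Integrating over $[\beta,1-\beta]\subset(0,1)$ gives $T_{\beta,n}^p \le \frac1m\sum_i \mathcal{W}_p^p(\mu_{i,n}^U,\mu^{U})$, where $\mu_{i,n}^U$ is the empirical distance measure. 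Raising to the power $q$ and applying Jensen (convexity of $x\mapsto x^q$) to the average over $i$, then $\mathcal{W}_p^{pq}\le \mathcal{W}_{pq}^{pq}$ (monotonicity of $L^r$ norms on $(0,1)$), we obtain
\begin{equation*}
\mathbb{E}\left[(T_{\beta,n}^p)^q\right] \le \frac1m\sum_{i=1}^m \mathbb{E}\left[\mathcal{W}_{pq}^{pq}(\mu_{i,n}^U,\mu^U)\right].
\end{equation*}
This removes the coupling between spaces entirely, and each term is a one-space quantity with the same law $\mu^U=\mu^{U_1}$.

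It therefore remains to show, for a single space, $\mathbb{E}[\mathcal{W}_r^r(\mu_n^U,\mu^U)] \le (10r/\sqrt{n+2})^r J_r(\mu^U)$ with $r=pq$. Following Bobkov--Ledoux, $\mathcal{W}_r^r=\int_0^1|U_n^{-1}-U^{-1}|^r$ is controlled by $\int |U_n(x)-U(x)|^r/u(x)^{r-1}\,dx$ up to constants, or one bounds the quantile differences directly. The key moment input is then $\mathbb{E}|U_n(x)-U(x)|^r \lesssim (C r)^{r}[U(x)(1-U(x))/ (n+2)]^{r/2}$ for each fixed $x$. Here $U_n(x)$ is a U-statistic of degree two with the bounded kernel $\mathbbm{1}(d(X_k,X_l)\le x)$, with mean $U(x)$ and variance-type parameter at most $U(x)(1-U(x))$.

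The main obstacle is this pointwise moment bound. The i.i.d.\ binomial argument of Bobkov--Ledoux does not apply directly, because the $n(n-1)/2$ distances are dependent. I would use Hoeffding's representation of a U-statistic as an average over permutations of means of $\lfloor n/2\rfloor$ i.i.d.\ block terms $\mathbbm{1}(d(X_{\sigma(2j-1)},X_{\sigma(2j)})\le x)$. By convexity, $\mathbb{E}|U_n(x)-U(x)|^r$ is then bounded by the $r$-th central moment of a binomial mean with $\lfloor n/2\rfloor$ trials and success probability $U(x)$. A Bernstein/Rosenthal-type bound for that moment gives the factor $[U(1-U)]^{r/2}(\lfloor n/2\rfloor)^{-r/2}$, with the constant tracked so that it fits within $(10r)^r$ and $(n+2)^{-r/2}$; the halving of $n$ must be absorbed into the numerical constant.

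The lower-order term $1/n^{r-1}$ in Rosenthal's inequality is handled via $U(1-U)\ge$ its power where relevant, exactly as in Bobkov--Ledoux's Theorem 5.3. Combining the pointwise bound with the density-weighted integral then yields the claimed bound.
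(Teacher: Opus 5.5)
\textbf{Where the proposal breaks.} Your reduction is correct. The minimizing property of $\Lambda_p$ and Jensen give $\mathbb{E}[(T_{\beta,n}^p)^q]\le\frac1m\sum_i\mathbb{E}[\mathcal{W}_r^r(\mu_{i,n}^U,\mu^{U_1})]$ with $r=pq$. The paper instead compares with the quantile function $V_n^{-1}$ of an independent sample $Y_1,\dots,Y_n\sim\mu_1$. It then imports the two-sample bound $\mathbb{E}\int_0^1|U_{i,n}^{-1}-V_n^{-1}|^{pq}\le(10pq/\sqrt{n+2})^{pq}J_{pq}(\mu^{U_1})$ from \citet{weitkamp2024distribution}.

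The gap is in your plan for the one-sample bound. The pointwise estimate $\mathbb{E}|U_n(x)-U(x)|^r\lesssim(Cr)^r[U(x)(1-U(x))/(n+2)]^{r/2}$ is false for $r>2$. Since $U_n(x)\in\binom{n}{2}^{-1}\mathbb{Z}$ and $\mathbb{P}(U_n(x)>0)\ge U(x)$, you get $\mathbb{E}|U_n(x)-U(x)|^r\ge 2^{-r}\binom{n}{2}^{-r}U(x)$ once $U(x)\le\frac12\binom{n}{2}^{-1}$. This is of larger order than $U(x)^{r/2}$ as $U(x)\to0$.

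Your repair of the lower-order Rosenthal term runs the wrong way. Absorbing $U(1-U)/k^{r-1}$ into $[U(1-U)/k]^{r/2}$ requires $kU(1-U)\gtrsim1$, which fails exactly in the tails. The inequality $U(1-U)\ge[U(1-U)]^{r/2}$ is the opposite of what you need.

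This is not a matter of constants. Take the uniform cube $[0,1]^3$ with $d_\infty$, where $U(x)=(2x-x^2)^3$, and $r=4$. Then $J_4(\mu^U)<\infty$, but $U u^{-3}\asymp x^{-3}$ near $0$, so $\int\mathbb{E}|U_n-U|^4u^{-3}\diff x=\infty$ for every $n$. Any argument that goes through a comparison $\mathcal{W}_r^r\lesssim\int|U_n-U|^ru^{1-r}$ plus pointwise moments is therefore vacuous there. Bobkov--Ledoux do not obtain $J_r$ this way. They use the order-statistics/Beta representation of the empirical quantile function, which is exactly what the dependent distances lack. So ``exactly as in Bobkov--Ledoux'' does not apply.

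\textbf{How to close it.} Apply Hoeffding's representation to the empirical measure itself, not to $U_n(x)$ pointwise. With $k=\lfloor n/2\rfloor$, write $\mu_n^U=\frac1{n!}\sum_\sigma\nu_\sigma$, where $\nu_\sigma=\frac1k\sum_{j=1}^k\delta_{d(X_{\sigma(2j-1)},X_{\sigma(2j)})}$ is the empirical measure of $k$ i.i.d.\ draws from $\mu^U$. The map $\nu\mapsto\mathcal{W}_r^r(\nu,\mu^U)$ is convex, since you can mix the optimal couplings. Hence $\mathbb{E}\mathcal{W}_r^r(\mu_n^U,\mu^U)\le\mathbb{E}\mathcal{W}_r^r(\nu_{\mathrm{id}},\mu^U)\le(5r/\sqrt{k+2})^rJ_r(\mu^U)$, using the i.i.d.\ bound of \citet{bobkov2019one} as a black box. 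Since $k+2\ge(n+2)/2$, this becomes $(5\sqrt2\,r/\sqrt{n+2})^rJ_r(\mu^U)\le(10r/\sqrt{n+2})^rJ_r(\mu^U)$.

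With this substitution your one-sample route closes without pointwise moments or constant tracking. It also handles the dependence issue directly, where the paper defers it to the cited two-sample result.
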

For a proof, consult \autoref{proof:thm:finite-sample-bias}.
\subsection{Limit Distribution under the Null}
Next, we turn our attention to the study of the asymptotic distributional behavior of the SLB barycenter statistic. We first provide a limit theorem under the null hypothesis $H_0$.
\begin{theorem}\label{theorem: Convergence under the Null}
    Assume the setting of \autoref{theorem: finite sample bound the alternative}. Further, let $p \geq 1$ and assume that the null hypothesis $H_0$ (see~\eqref{eq: nullhypothesis}) is satisfied.
    \begin{enumerate}
        \item\label{item: first part of theorem under Null}Let $\beta > 0$. Under \autoref{cond 1.2} , it holds that 
        \begin{equation} \label{eq: limiting distribution under the Null}
            n^{p/2}T_{\beta,n}^p \rightsquigarrow  \frac{1}{m} \sum_{i=1}^m \int_\beta^{1-\beta} \left| \mathbb{G}_i(t) - \Lambda_p(\mathbb{G}(t)) \right|^p \diff t,
        \end{equation}
        where $\mathbb{G} = (\mathbb{G}_1, \ldots, \mathbb{G}_m)$ is an $m$-dimensional centered Gaussian process. For $1 \leq i,j \leq m$ and $t,s \in (\beta,1-\beta)$, its covariance is given by
        \begin{equation}\label{eq:Covariance Operator G}
            \Cov\left(\mathbb{G}_i(t), \mathbb{G}_j(s)\right) = \gammaG{i,j}(t,s) \coloneqq \frac{\gammaK{i,j}\left(U_i^{-1}(t), U_j^{-1}(s) \right)}{u_i\Bigl(U_i^{-1}(t)\Bigr)u_j\left(U_j^{-1}(s)\right)},
        \end{equation}
        where for $x, y \in \mathbb{R}$, we define
        \begin{eqnarray*}
            \gammaK{i,j}(x,y) \coloneqq 4 \int \left[ \int \mathbbm{1} \!\left(d_i(a,a^\prime) \leq x \right) \diff \mu_i(a)\int \mathbbm{1} \!\left(d_j(b,b^\prime) \leq y \right) \diff \mu_j(b) \right] \diff \mu_{i,j}(a^\prime,b^\prime) \\
            - 4\int \int \mathbbm{1} \!\left(d_i(a,a^\prime) \leq x \right) \diff \mu_i(a)\diff \mu_i(a^\prime) \int\int \mathbbm{1} \!\left(d_j(b,b^\prime) \leq y \right) \diff \mu_j(b)\diff \mu_j(b^\prime).
        \end{eqnarray*}
        Here, $\mu_{i,j}$ denotes the coupling between $\mu_i$ and $\mu_j$ induced by $\mu$ via
        \begin{equation*}
            \mu_{i,j}(A_i \times A_j) = (\pi_{i,j})_\# \mu(A_i \times A_j), \qquad A_i \in \mathcal{B}(\mathcal{X}_i), A_j \in \mathcal{B}(\mathcal{X}_j),
        \end{equation*}
        where $\pi_{i,j}(x) = (x_i,x_j)$ for $x \in \mathcal{X}_1 \times \ldots \times \mathcal{X}_m$ is the $(i,j)$-th coordinate projection.
        \item If furthermore both parts of \autoref{cond 1.3} are fulfilled with $\theta = p$, then~\eqref{eq: limiting distribution under the Null} also holds for $\beta = 0$.
    \end{enumerate}
\end{theorem}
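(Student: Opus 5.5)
Under $H_0$ all $U_i$ coincide, so $U_i^{-1}=U_1^{-1}$ for every $i$. Because $\Lambda_p$ is translation equivariant ($\Lambda_p(x+c\mathbf 1)=\Lambda_p(x)+c$) and positively homogeneous ($\Lambda_p(ax)=a\Lambda_p(x)$ for $a>0$), for each $t$ we have
\begin{equation*}
\sqrt{n}\bigl(U_{i,n}^{-1}(t)-\Lambda_p(\Uninv(t))\bigr)=\mathbb{U}_{i,n}^{-1}(t)-\Lambda_p\bigl(\mathbb{U}_n^{-1}(t)\bigr).
\end{equation*}
Hence $n^{p/2}T_{\beta,n}^p=\Psi(\mathbb{U}_n^{-1})$, where $\Psi(h)=\frac1m\sum_i\int_\beta^{1-\beta}|h_i(t)-\Lambda_p(h(t))|^p\,\de t$. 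The plan is therefore (i) establish joint weak convergence of the $U$-quantile process $\mathbb{U}_n^{-1}\rightsquigarrow\mathbb{G}$, and (ii) apply the continuous mapping theorem to $\Psi$.

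For (i) with $\beta>0$, I start from the multivariate $U$-process $\sqrt n(U_{i,n}-U_i)_{i\le m}$ indexed by $t\in[C_{i,1},C_{i,2}]$. The Hoeffding decomposition gives the linear part $\frac{2}{\sqrt n}\sum_k\bigl(\int\mathbbm 1(d_i(a,X_{i,k})\le t)\,\de\mu_i(a)-U_i(t)\bigr)$ and a degenerate remainder. By Arcones--Giné, the remainder is $o_P(1)$ uniformly over the VC class of indicators of distance balls. The linear parts are i.i.d. sums over the coupled vectors $X_k$, so a multivariate Donsker theorem gives joint convergence to a Gaussian process with covariance $\gammaK{i,j}$; the cross terms come through $\mu_{i,j}$. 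Under \autoref{cond 1.2}, the inversion map $\invmap$ is Hadamard differentiable at $U_i$ tangentially to continuous functions, with derivative $h\mapsto -h(U_i^{-1})/u_i(U_i^{-1})$. Applying the functional delta method componentwise (the product map is Hadamard differentiable when each coordinate is) yields $\mathbb{U}_n^{-1}\rightsquigarrow\mathbb{G}$ in $(\ell^\infty[\beta,1-\beta])^m$ with covariance \eqref{eq:Covariance Operator G}. The sign is irrelevant for the covariance.

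For (ii), note that $\Lambda_p:\R^m\to\R$ is continuous, in fact Lipschitz with constant $1$ in sup norm by monotonicity and translation equivariance. So $h\mapsto|h_i-\Lambda_p(h)|^p$ is continuous from $(\ell^\infty)^m$ to $\ell^\infty$ on bounded sets, and integration over $[\beta,1-\beta]$ is continuous. The continuous mapping theorem then gives \eqref{eq: limiting distribution under the Null}, which proves part (a).

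For part (b) with $\beta=0$ there is no sup-norm convergence, and this is the main obstacle. I would instead work in $L^p(0,1)^m$. The bound $|h_i-\Lambda_p(h)|\le 2\max_j|h_j|$ makes $\Psi$ continuous on $L^p(0,1)^m$, so it suffices to show $\mathbb{U}_n^{-1}\rightsquigarrow\mathbb{G}$ in $L^p$. I would combine the convergence on $[\beta,1-\beta]$ from (a) with a uniform tail control
\begin{equation*}
\lim_{\beta\to0}\limsup_n\mathbb P\Bigl(\int_{(0,\beta)\cup(1-\beta,1)}|\mathbb{U}_{i,n}^{-1}|^p\,\de t>\varepsilon\Bigr)=0,
\end{equation*}
together with $\int_0^1|\mathbb{G}_i|^p<\infty$ a.s.; then a standard approximation argument (Billingsley Theorem 3.2) finishes the proof. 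For the tail control I would follow \citet{weitkamp2024distribution}: use $\mathbb E|\mathbb{U}_{i,n}^{-1}(t)|^p$ bounds obtained from the representation $U_{i,n}^{-1}(t)-U_i^{-1}(t)=\int (U_i^{-1})'$ over the random interval between $t$ and $U_i(U_{i,n}^{-1}(t))$. Combining the growth bound $(U_i^{-1})'\le c_it^{\gamma_{i,1}}(1-t)^{\gamma_{i,2}}$ with moment and exponential bounds for $U$-statistics (Hoeffding's inequality on $\lfloor n/2\rfloor$ blocks), one gets a bound of the form $\int_0^\beta t^{p(\gamma+1/2)}\,\de t\to0$. This integral is finite precisely because $\gamma>-1/p-1/2$, which matches $\theta=p$. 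The delicate part is handling $t$ close to $0$ at scale $1/n$, where the empirical quantile equals the minimum distance. There I would split $(0,\beta)$ into $(0,1/n)$ and $(1/n,\beta)$ and bound the first piece using the boundedness of the diameter together with the integrability of $t^{p\gamma}$ near zero.
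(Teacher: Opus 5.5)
Your part (a) is the paper's argument. It uses joint convergence of the $U$-processes via the Hoeffding projection and Arcones--Gin\'e, Hadamard differentiability of the inversion map with the delta method, and then the continuous mapping theorem after rewriting $n^{p/2}T_{\beta,n}^p$ using translation equivariance and positive homogeneity of $\Lambda_p$.

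For part (b) you take a genuinely different route.

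\textbf{The paper's route.} The paper never proves convergence of $\mathbb{U}_n^{-1}$ in $L^p((0,1))$ for $p>1$. It treats $\beta\mapsto n^{p/2}T_{\beta,n}^p$ as a random element of $\mathcal{C}([0,1/2])$ and proves tightness there. This combines the moment bound at $\beta=0$ from \autoref{theorem: sample bias of DoD} with a modulus-of-continuity estimate. It then identifies the value at $\beta=0$ through Prohorov, a Skorohod representation, monotonicity in $\beta$ and Fatou. All boundary control comes from the pointwise minimality of $\Lambda_p$. If $V_n^{-1}$ is the empirical distance quantile function of an independent sample from $\mu_1$, then $\sum_i|U_{i,n}^{-1}-\Lambda_p(\Uninv)|^p\le\sum_i|U_{i,n}^{-1}-V_n^{-1}|^p$. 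Under $H_0$ the tails are therefore dominated by those of $m$ two-sample statistics, for which \citet{weitkamp2024distribution} already supply the estimates. So the paper uses $H_0$ essentially but borrows the hard analysis.

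\textbf{Your route.} You prove $\mathbb{U}_n^{-1}\rightsquigarrow\mathbb{G}$ in $L^p((0,1))^m$ via Billingsley's Theorem 3.2, then apply one continuous-mapping step. Continuity on $L^p$ follows from the $1$-Lipschitz property of $\Lambda_p$ you noted, not from the growth bound alone. This gives a stronger, process-level statement that does not depend on $H_0$. It extends the $L^1$ statement in \autoref{lemma: joint convergence inverted u process}, and it immediately gives $\beta=0$ versions of other $L^p$-continuous functionals, such as the Tukey-type statistic in \eqref{eq: tukey limit}. The price is that you must prove the one-sample tail bound yourself.

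That bound is attainable along your lines. Write $W_n^{-1}(t)\coloneqq U_i(U_{i,n}^{-1}(t))$ for the empirical quantile of the marginally uniform variables $U_i(d_i(X_{i,k},X_{i,l}))$. Two steps need adjusting.

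First, the sub-Gaussian bound $2\exp(-2x^2\lfloor n/2\rfloor)$ used in the proof of \autoref{theorem: finite sample bound the alternative} only gives $n^{p/2}\mathbb{E}|W_n^{-1}(t)-t|^p=O(1)$. That leads to $\int_0^\beta t^{p\gamma}\,\de t$, which diverges for $\gamma\in(-1/p-1/2,-1/p]$. To gain the factor $t^{p/2}$ you need a variance-sensitive, Bernstein-type inequality for the $U$-statistic, with kernel variance $s(1-s)$. Hoeffding's block representation also delivers this.

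Second, on $(0,1/n)$ and wherever $W_n^{-1}(t)$ falls far below $t$, integrability of $t^{p\gamma}$ is not implied by $\gamma>-1/p-1/2$. For $p\ge2$ you have $\gamma>-1$, so use $U_i^{-1}(t)\le Ct^{\gamma+1}$. This makes these contributions of order $n^{-p(\gamma+1/2)-1}\to0$, exactly under $\theta=p$. For $p<2$ the crude bound $D^p n^{p/2-1}\to0$ suffices.
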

The proof is given in \autoref{proof:thm:convergence-null}.
\begin{remark}[Different sample sizes]\label{remark: extensions Theorem under null} So far, we have only considered the situation that the samples $X_1, \ldots, X_n$ are drawn iid from a coupling $\mu$ of the mm-spaces. In particular, this means that the number of samples from each space is equal. However, it is easily possible to extend this to a different sample size framework. For simplicity, assume that for each $i = 1, \ldots, m$, we observe independent (also between the mm-spaces) samples $X_{i,1}, \ldots, X_{i,n_i} \sim \mu_i$ (the extension to dependent samples is straightforward). Define the normalization factor $\rho_n \coloneqq \prod_{i=1}^m n_i / \sum_{i=1}^m \prod_{j \neq i} n_j$ and assume that the sample sizes $n_i$ are chosen such that as $n_1, \ldots, n_m \to \infty$ also $\rho_n/n_i \to \lambda_i \in [0,1]$ for each $i =1, \ldots,m$. In the following we also write $T_{\beta,n}^p$ for the empirical counterpart of $T_\beta^p$ in this different sample size setup. Then a slight adjustment of the proof of \autoref{theorem: Convergence under the Null} leads to 
        \begin{equation}
            \rho_n^{p/2} T_{\beta,n}^p\rightsquigarrow \frac{1}{m} \sum_{i=1}^m \int_\beta^{1-\beta} \left| \sqrt{\lambda_i}\mathbb{G}_i(t) - \Lambda_p\left( \sqrt{\lambda_1}\mathbb{G}_1(t), \ldots , \sqrt{\lambda_m} \mathbb{G}_m(t) \right) \right|^p \diff t
        \end{equation}
        under equivalent conditions as in \autoref{theorem: Convergence under the Null} for $\beta >0$ and $\beta = 0$, respectively.
\end{remark}
\subsection{Limit Distribution under the Alternative}\label{subsec:alternative-limit}
In the following, we provide asymptotic results for the centered statistic $\sqrt{n}\left( T_{\beta,n}^p - T_\beta^p \right)$. To this end, for $p \geq 2$, we show the Fr\'echet differentiability of the barycenter $\Lambda_p$. For $p=1$ however, this is not valid anymore and we follow a combinatorial argument. The definition of the map $\psi_x^1$, which characterizes the underlying limiting process of $\lambda_{1,n}$, is notationally involved and thus stated at the beginning of \autoref{section: JWC of DoD process}.
\begin{theorem}\label{theorem: Convergence under the alt}
    Assume the setting of \autoref{theorem: finite sample bound the alternative}. Denote by $\Delta_i = U_i^{-1} - \Lambda_p\left(\Uinv \right)$.
    \begin{enumerate}
        \item For $\beta > 0 $, $p \geq  2$ and under \autoref{cond 1.2}, with $\mathbb{G}$ as in \autoref{theorem: Convergence under the Null}, it holds that
        \begin{equation} \label{eq: first limit thm alternative}
        \sqrt{n}\left( T_{\beta,n}^p - T_\beta^p \right) \rightsquigarrow\frac{1}{m} \sum_{i=1}^m  \int_{\beta}^{1-\beta}   p  |\Delta_i(t)|^{p-1} \sgn(\Delta_i(t)) \mathbb{G}_i(t)\diff t.
    \end{equation}
    For $p = 1$, we obtain
    \begin{equation} \label{eq: second limit thm alternative}
    \begin{split}
    \sqrt{n}\left(T_{\beta,n}^1-T_\beta^1\right)
    \rightsquigarrow {}
    \frac{1}{m}\sum_{i=1}^m
    \int_\beta^{1-\beta} &
    \left[\mathbb{G}_i(t)-\psi_{\Uinv}^1(\mathbb{G})(t)\right]
    \sgn(\Delta_i(t))
    \mathbbm{1} \!\left(\abs{\Delta_i(t)}>0\right)
    \\
    &
    +\abs{\mathbb{G}_i(t)-\psi_{\Uinv}^1(\mathbb{G})(t)}
    \mathbbm{1} \!\left(\Delta_i(t)=0\right)\diff t .
    \end{split}
    \end{equation}
        \item If \autoref{cond 1.3} holds with $\theta = 1$, then~\eqref{eq: second limit thm alternative} and~\eqref{eq: first limit thm alternative} hold for $\beta = 0$.
        \end{enumerate}
\end{theorem}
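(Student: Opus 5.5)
The plan is to derive both limit laws from the joint weak convergence of the multivariate $U$-quantile process $\mathbb{U}_n^{-1}\rightsquigarrow\mathbb{G}$ in $\ell^\infty([\beta,1-\beta])^m$, which holds under \autoref{cond 1.2}. This joint convergence comes from the $U$-process convergence of $\sqrt{n}(U_{i,n}-U_i)$, using Hoeffding projection with the coupling $\mu_{i,j}$ giving the covariance $\gammaK{i,j}$, combined with the Hadamard differentiability of $\invmap$. It is the same ingredient used for \autoref{theorem: Convergence under the Null}. The statistic is $T_{\beta,n}^p=\frac1m\sum_i\int_\beta^{1-\beta}|U_{i,n}^{-1}-\Lambda_p(\Uninv)|^p\,dt$, so the task is to linearize the map
\[
\Phi:(F_1^{-1},\dots,F_m^{-1})\mapsto \frac1m\sum_i\int_\beta^{1-\beta} h\bigl(F_i^{-1}(t)-\Lambda_p(F^{-1}(t))\bigr)\,dt,\qquad h(x)=|x|^p,
\]
at $\Uinv$ in the sup-norm, and then apply the functional delta method.

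\textbf{Case $p\ge2$.} First I show that $\Lambda_p:\mathbb{R}^m\to\mathbb{R}$ is $C^1$. It is characterized by $\sum_i|x_i-y|^{p-1}\sgn(x_i-y)=0$. For $p\ge2$ the map $y\mapsto\sum_i|x_i-y|^{p-1}\sgn(x_i-y)$ is $C^1$, with derivative $-(p-1)\sum_i|x_i-y|^{p-2}$. This derivative is nonzero unless all $x_i$ coincide.

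\begin{itemize}
\item Away from the diagonal, the implicit function theorem gives a gradient $\nabla\Lambda_p$ with nonnegative weights summing to one.
\item On the diagonal, $\Lambda_p$ is $1$-Lipschitz and translation-equivariant, so the perturbation is controlled directly.
\end{itemize}

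Hence $\lambda_{p,n}\rightsquigarrow\lambda_p:=\langle\nabla\Lambda_p(\Uinv),\mathbb{G}\rangle$ pointwise, with the diagonal points handled through the directional derivative.

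The chain rule for $h$ gives
\[
\sqrt n\,(T_{\beta,n}^p-T_\beta^p)\approx\frac1m\sum_i\int p|\Delta_i|^{p-1}\sgn(\Delta_i)\bigl(\mathbb{G}_i-\lambda_p\bigr)\,dt .
\]
The crucial observation is that the $\lambda_p$ terms cancel. By the first-order optimality of $\Lambda_p$, $\sum_i|\Delta_i(t)|^{p-1}\sgn(\Delta_i(t))=0$ for every $t$. This is an envelope-theorem effect and yields \eqref{eq: first limit thm alternative}. Because of this cancellation, I can avoid differentiating $\Lambda_p$ precisely and instead use a sandwich argument. For the upper bound, $T_{\beta,n}^p\le\frac1m\sum_i\int|U_{i,n}^{-1}-\Lambda_p(\Uinv)|^p$, since $\Lambda_p(\Uninv)$ minimizes pointwise. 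For the lower bound, $T_\beta^p\le\frac1m\sum_i\int|U_i^{-1}-\Lambda_p(\Uninv)|^p$. Each side is then linearized with only the differentiable $h$ and uniform consistency $\|\Lambda_p(\Uninv)-\Lambda_p(\Uinv)\|_\infty\to0$. The two bounds have the same limit, because the cross term $\int|\Delta_i|^{p-1}\sgn(\Delta_i)\cdot(\Lambda_p(\Uninv)-\Lambda_p(\Uinv))$ summed over $i$ vanishes. Its remainder is $o_P(n^{-1/2})$ because $h'$ is continuous and $\sqrt n\|\Lambda_p(\Uninv)-\Lambda_p(\Uinv)\|_\infty=O_P(1)$, which follows from the Lipschitz property together with tightness of $\mathbb{U}_n^{-1}$.

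\textbf{Case $p=1$.} Here $h=|\cdot|$ is not differentiable at $0$ and the median is not smooth, so the sandwich argument fails. I instead use the combinatorial description of $\lambda_{1,n}\rightsquigarrow\psi^1_{\Uinv}(\mathbb{G})$ from \autoref{section: JWC of DoD process}. Pointwise in $t$, the median of $U_{i,n}^{-1}(t)$ is determined by which indices tie with the population median; the perturbation picks the median of the perturbations within that tie set, or the mean of the two middle ones. The map $\psi^1$ is only Hadamard directionally differentiable, so I apply the extended delta method to $(x,y)\mapsto\sum_i|x_i-y|$. Its directional derivative in direction $g$ is $\mathrm{sgn}(\Delta_i)(g_i-\psi(g))$ where $\Delta_i\neq0$, and $|g_i-\psi(g)|$ where $\Delta_i=0$. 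Integrating over $t$ with dominated convergence gives \eqref{eq: second limit thm alternative}. The measurability of the tie sets in $t$ needs checking, and it follows from the continuity of $U_i^{-1}$.

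\textbf{Case $\beta=0$.} The processes no longer converge in $\ell^\infty$, so I would use the integral version as in the null theorem. Under \autoref{cond 1.3} with $\theta=1$, the bound $(U_i^{-1})'\le c\,t^{\gamma_1}(1-t)^{\gamma_2}$ with $\gamma>-3/2$ makes $\int_0^1|\mathbb{U}_{i,n}^{-1}|\,dt$ uniformly tight and $L^1$-convergent. The plan is to prove the result on $[\delta,1-\delta]$ and show that the boundary contributions $\sqrt n\int_0^\delta|\cdot|$ vanish uniformly as $\delta\to0$. This uses $|\Delta_i|^{p-1}\le D^{p-1}$ and the Lipschitz bound on $\Lambda_p$.

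I expect this tail control, namely $\limsup_n\mathbb{E}^\ast\sqrt n\int_0^\delta|U_{i,n}^{-1}-U_i^{-1}|\,dt\to0$ for the $U$-quantile process, to be the main obstacle. I would try to reduce it via $\int_0^\delta|U_{i,n}^{-1}-U_i^{-1}|=\int|U_{i,n}-U_i|$ over the corresponding $x$-range. I would then bound $\mathbb{E}|U_{i,n}(x)-U_i(x)|\lesssim\sqrt{U_i(x)(1-U_i(x))/n}$ using the Hoeffding variance bound for $U$-statistics, and integrate against the growth condition.
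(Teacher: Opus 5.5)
Your proposal is correct, but for $p\ge 2$ it takes a genuinely different and shorter route than the paper.

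\textbf{The paper's route for $p\ge 2$.} The paper first proves the joint limit $(\mathbb{U}_n^{-1},\lambda_{p,n})\rightsquigarrow(\mathbb{G},\psi^p_{\Uinv}(\mathbb{G}))$ in $\ell^\infty\times L^1$. This requires differentiating $\Lambda_p$ off the diagonal via the implicit function theorem, which is where $p\ge 2$ enters. It then splits $(\beta,1-\beta)$ into the sets where $\abs{\Delta_i}>\delta$, $\Delta_i=0$ and $0<\abs{\Delta_i}\le\delta$, and controls the last set by a pointwise sandwich. It lets $\delta\searrow 0$ via Portmanteau and dominated convergence, and only at the very end removes the $\psi^p$-term through $\sum_i\abs{\Delta_i}^{p-1}\sgn(\Delta_i)=0$.

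\textbf{Your route for $p\ge 2$.} You use that first-order condition from the outset. Since $\sqrt n\norm{\Lambda_p(\Uninv)-\Lambda_p(\Uinv)}_\infty=O_P(1)$ by the $1$-Lipschitz property alone, and $x\mapsto\abs{x}^p$ has a uniformly continuous derivative on compacts, the barycenter fluctuation drops out of the linear term. It only enters a remainder of order $o_P(n^{-1/2})$, so neither $\lambda_{p,n}$ nor $\psi^p$ is needed.
\begin{itemize}
\item Your preliminary discussion of $\nabla\Lambda_p$ can be deleted. It is also inaccurate: $\Lambda_p$ is not $C^1$ at the diagonal for $p>2$.
\item Your one-sided bounds and your direct expansion are each sufficient on their own. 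In the sandwich, both bounds already linearize to $\frac1m\sum_i\int p\abs{\Delta_i}^{p-1}\sgn(\Delta_i)\sqrt n(U_{i,n}^{-1}-U_i^{-1})$ with no cross-term cancellation needed.
\end{itemize}

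\textbf{What each approach buys.} Your route uses only $\abs{\cdot}^p\in C^1$ and the first-order condition. It therefore gives \eqref{eq: first limit thm alternative} for every $p>1$, including $1<p<2$, which the theorem leaves out. The paper's route additionally yields the weak limit of the barycenter process itself.

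\textbf{The case $p=1$.} Your directional delta method is the paper's $\delta$-sandwich in abstract form. Dominated convergence inside the directional derivative of $(f,g)\mapsto\frac1m\sum_i\int\abs{f_i-g}$ replaces the $\delta\searrow0$ step. You could even avoid citing the joint convergence of $\lambda_{1,n}$. The map $f\mapsto\Lambda_1(f)$ is itself Hadamard directionally differentiable from $\ell^\infty([\beta,1-\beta])^m$ into $L^1([\beta,1-\beta])$ with derivative $\psi^1_{\Uinv}$. This follows from pointwise piecewise linearity, the $1$-Lipschitz bound and dominated convergence, and then the chain rule applies.

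\textbf{The case $\beta=0$.} The paper reruns the argument in $L^1((0,1))$, whereas you approximate by trimmed statistics; this works.
\begin{itemize}
\item The tail estimate you call the main obstacle is immediate from $\mathbb{U}_n^{-1}\rightsquigarrow\mathbb{G}$ in $L^1((0,1))^m$, which you already invoke. Apply Portmanteau to the closed set $\lbrace f:\int_0^\delta\abs{f}\ge\varepsilon\rbrace$.
\item Your Hoeffding-variance route also works and needs only compact support. You must first confine the random $x$-range using $\norm{U_{i,n}-U_i}_\infty\to0$.
\item You should check explicitly that the trimmed limits converge to the untrimmed one as $\delta\searrow0$. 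This uses $\int_0^1\abs{\mathbb{G}_i}<\infty$ almost surely.
\end{itemize}
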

\autoref{theorem: Convergence under the alt} is proven in \autoref{proof:thm:convergence-alt}.
\begin{remark}[Normality]\label{cor: variance of limiting distr}
    For $p \geq2$, the limiting distribution in \autoref{theorem: Convergence under the alt} is a linear transformation of $\mathbb{G}$, and as such it is normally distributed with mean zero. Define for each $i=1, \ldots, m$ the functions $q_i:(\beta,1-\beta) \to \mathbb{R}, \, t \mapsto p \abs{\Delta_i(t)}^{p-1} \sgn(\Delta_i(t))$. Then the variance of the limiting distribution is given by
    \begin{equation*}
        \sigma_{\beta,p}^2 =\frac{1}{m^2} \sum_{i,j=1}^m  \int_{\beta}^{1-\beta} \int_\beta^{1-\beta} q_i(s)q_j(t) \gammaG{i,j}(s,t) \diff s \diff t.
    \end{equation*}
    where $\gammaG{i,j}$ is the covariance operator of $\mathbb{G}$ defined in \autoref{theorem: Convergence under the Null}.
\end{remark}
\begin{remark} Below we state extensions and implications of \autoref{theorem: Convergence under the alt}.  
    \begin{enumerate}
        \item{(Degenerate limit).} For $p \geq 2$, the limiting distribution provided in \autoref{theorem: Convergence under the alt} is degenerate when $\Delta_i(t) = 0$ (Lebesgue) almost everywhere on $[\beta,1-\beta]$ for every $i =1, \ldots,m$. This corresponds precisely to the case that $T_\beta^p = 0$. When $p=1$, due to the fact that the convergence rates from \autoref{theorem: Convergence under the Null} and \autoref{theorem: Convergence under the alt} are equal in this case, the limit distribution derived in \autoref{theorem: Convergence under the alt} agrees with the limit distribution derived in \autoref{theorem: Convergence under the Null} when $H_0^{(\beta)}:T_\beta^p = 0$ is satisfied.
        \item{(Testing significant differences).} Similarly to the methods used by \citet{munk1998nonparametric}, it is possible to use the result from \autoref{theorem: Convergence under the alt} in order to test for significant differences $H^\prime_0: T_\beta^p \leq \delta$ as well as for hypotheses of the form $H_0^{\prime \prime}: T_\beta^p > \delta$, for some arbitrary $\delta > 0$. To this end, note that the plug-in estimator $\widehat{\sigma}_n$ for the standard deviation of the limit distribution from \autoref{theorem: Convergence under the alt} given in \autoref{cor: variance of limiting distr} is consistent, which by Slutzky's Theorem implies that $\sqrt{n}(T_{\beta,n}^p - T_\beta^p)/\widehat{\sigma}_n \rightsquigarrow \mathcal{N}(0,1)$. Thus, we may reject $H_0^\prime$ if $\sqrt{n}(T_{\beta,n}^p - \delta)/\widehat{\sigma}_n > z_{1-\alpha}$, where $z_{1-\alpha}$ is the $(1\!-\!\alpha)$-quantile of the standard normal distribution. Testing for $H_0^\prime$ instead of $H_0$ has the potential upside that in practical applications, it may sometimes be of advantage to not only identify isomorphic relationships but also mm-spaces that are close to being isomorphic. For example, similar proteins exhibit similar functionalities. Thus, something is to be gained by finding similar groups of proteins up to threshold $\delta$ instead of only considering isomorphic ones. 
        \item{(Different sample sizes).} Note that \autoref{theorem: Convergence under the alt} may be generalized to the case of different sample sizes analogously as in \autoref{remark: extensions Theorem under null}.
        \end{enumerate}
\end{remark}
\subsection{Local Power Analysis}
Previously, we have stated the behavior of the SLB barycenter statistic both under the null hypothesis and the alternative. In the following, we provide further results for distributional convergence under so-called \textit{local alternatives}. To this end, we slightly adjust the framework from \autoref{subsection: the proposed approach}. 
\begin{setting}\label{setting: local alternatives}
For $i = 1, \ldots ,m$, consider compact metric spaces $(\mathcal{X}_i,d_i)$ and probability measures $\mu_i$ such that $\mathrm{supp}(\mu_i) \subseteq \mathcal{X}_i$ and such that their support is compact. Furthermore, for $i = 1, \ldots,m$, we let $(\nu_{i,n})_{n \in \mathbb{N}}$ be a sequence of probability measures that are compactly supported on a subset of $\mathcal{X}_i$ and let $\rho_i$ be a $\sigma$-finite measure dominating $\mu_i + \nu_{i,n}$ for every $n \in \mathbb{N}$. For each $i = 1, \ldots,m$ and $n \in \N$ let $X_{i,1}^n, \ldots, X_{i,n}^n \iid \nu_{i,n}$ and assume, for simplicity, that there are no cross-dependencies between samples from $\nu_{i,n}$ and $\nu_{j,n}$ for each $j \neq i$. The corresponding empirical distance distributions are denoted
\begin{equation*}
    V_{i,n}(t) = \frac{2}{n(n-1)} \sum_{1 \leq k < l \leq n} \mathbbm{1} \!\left( d_i(X_{i,k}^n, X_{i,l}^n) \leq t \right).
\end{equation*}
The corresponding quantile function will be denoted $V_{i,n}^{-1}$, and for each $i=1, \ldots, m$, let 
\begin{equation} \label{eq: RN derivatives}
g_{i} = \frac{\de \mu_i}{\de \rho_i}, \qquad g_{i,n} = \frac{\de \nu_{i,n}}{\de \rho_i}    
\end{equation} 
the Radon-Nikodym derivatives with respect to the dominating measure $\rho_i$.
\end{setting} 
The question we are concerned with in this section then can be posed as follows: If the distance distributions $\mu^{U_i}$ induced by $\mu_i$, $1 \leq i \leq m$, satisfy the null hypothesis $H_0$ in~\eqref{eq: nullhypothesis}, and the sequences of measure $\nu_{i,n}$ come \emph{asymptotically closer} to $\mu_i$ for each $i = 1, \ldots, m$, then how does the limit distribution of 
\begin{equation*}
    n^{p/2}\widetilde{T}_{\beta,n}^p \coloneqq \frac{n^{p/2}}{m} \sum_{i=1}^m \int_\beta^{1-\beta} \abs{V_{i,n}^{-1}(t) - \Lambda_p \left(V_{1,n}^{-1}(t), \ldots, V_{m,n}^{-1}(t) \right)}^p \diff t
\end{equation*}
behave. 
\begin{theorem}\label{theorem: local alternatives}
    Assume \autoref{setting: local alternatives} and let $\beta >0$. Moreover, let $\mu^{U_1} = \ldots = \mu^{U_m}$ and assume that their corresponding distribution functions $U_1, \ldots, U_m$ satisfy \autoref{cond 1.2}. If for each $i = 1, \ldots,m$, there is some measurable $h_i: \mathcal{X}_i \to \mathbb{R}$ such that
    \begin{equation} \label{eq: hellinger derivative}
        \int_{\mathcal{X}_i} \left[ \sqrt{n} \left( \sqrt{g_{i,n}(x)} - \sqrt{g_{i}(x)} \right) - \frac{1}{2}h_i(x) \sqrt{g_{i}(x)} \right]^2 \diff \rho_i(x) \overset{n \to \infty}{\longrightarrow} 0,
    \end{equation}
    where $g_{i}$ and $g_{i,n}$ are defined in~\eqref{eq: RN derivatives}, then it holds that
    \begin{equation*}
        n^{p/2}\widetilde{T}_{\beta,n}^p \rightsquigarrow \frac{1}{m} \sum_{i=1}^m \int_\beta^{1-\beta} \abs{\mathbb{G}_i(t) + 2 D_i(t) - \Lambda_p\left( \mathbb{G}_1(t) + 2 D_1(t), \ldots, \mathbb{G}_m(t) + 2 D_m(t) \right)}^p \diff t.
    \end{equation*}
    Here, $\mathbb{G}$ is the centered $m$-dimensional Gaussian process from \autoref{theorem: Convergence under the Null} and $D_i$ is a drift term given by $D_i(t) = - \widetilde{D}_i\left( U_i^{-1}(t)\right)/u_i \! \left(U_i^{-1}(t) \right)$, with 
    \begin{equation*}
        \widetilde{D}_i(t) = \int_{\mathcal{X}_i} \int_{\mathcal{X}_i} \mathbbm{1} \!\left(d_i(x,y) \leq t\right)h_i(y) \diff \mu_i(x) \diff \mu_i(y).
    \end{equation*}
\end{theorem}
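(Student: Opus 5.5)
We will use Le Cam's third lemma together with the functional delta method, reducing everything to the joint weak convergence of the $U$-quantile processes under the null, which the paper establishes in the appendix (the joint limit law of $\mathbb{U}_{i,n}^{-1}$ used for \autoref{theorem: Convergence under the Null}).

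\textbf{Step 1: contiguity.} Condition~\eqref{eq: hellinger derivative} says that each $\nu_{i,n}$ is Hellinger differentiable (differentiable in quadratic mean) along the direction $h_i$ at $\mu_i$. Since it forces $\int h_i\, g_i \diff\rho_i = 0$ and $h_i \in L^2(\mu_i)$, the standard LAN expansion (e.g.\ van der Vaart, Lemma 25.14) gives, for the product of all $m$ independent samples,
\begin{equation*}
\log \frac{\de \bigotimes_{i} \nu_{i,n}^{\otimes n}}{\de \bigotimes_i \mu_i^{\otimes n}} = \sum_{i=1}^m \frac{1}{\sqrt n}\sum_{k=1}^n h_i(X_{i,k}) - \frac12 \sum_{i=1}^m \int h_i^2 \diff\mu_i + o_P(1)
\end{equation*}
under the null product measure. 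Hence the alternatives are contiguous, and the log-likelihood ratio is asymptotically $\mathcal N(-\tfrac12\tau^2,\tau^2)$ with $\tau^2 = \sum_i \|h_i\|_{L^2(\mu_i)}^2$.

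\textbf{Step 2: joint convergence under the null.} Under $\mu_i$, consider the $U$-processes $\sqrt n (U_{i,n}-U_i)$ on $[C_{i,1},C_{i,2}]$. Their Hoeffding projection is
\begin{equation*}
\frac{2}{\sqrt n}\sum_{k=1}^n \Bigl(\int \mathbbm{1}\bigl(d_i(a,X_{i,k})\le t\bigr)\diff\mu_i(a) - U_i(t)\Bigr),
\end{equation*}
and the degenerate remainder is $o_P(1)$ uniformly in $t$, as in the proof of the joint limit law in the appendix. Both this projection and the log-likelihood ratio are then sums of i.i.d.\ terms, so the multivariate CLT combined with the tightness already established gives joint weak convergence of the $U$-processes and the log-likelihood ratio. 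The cross-covariance at $t$ is
\begin{equation*}
2\int\!\!\int \mathbbm{1}\bigl(d_i(a,y)\le t\bigr)\diff\mu_i(a)\, h_i(y)\diff\mu_i(y) = 2\widetilde D_i(t),
\end{equation*}
where centering drops out because $\int h_i \diff\mu_i=0$. The cross-covariance between sample $i$ and $h_j$ for $j\ne i$ vanishes by independence.

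\textbf{Step 3: Le Cam's third lemma and the delta method.} By Le Cam's third lemma, under $\nu_{i,n}$ we get $\sqrt n (V_{i,n}-U_i) \rightsquigarrow \mathbb K_i + 2\widetilde D_i$ jointly in $i$, where $\mathbb K_i$ is the null $U$-process limit. Applying the Hadamard differentiability of $\invmap$ at $U_i$ under \autoref{cond 1.2}, which is valid for converging sequences and hence for the contiguous sequence, gives
\begin{equation*}
\sqrt n (V_{i,n}^{-1}-U_i^{-1}) \rightsquigarrow -\frac{(\mathbb K_i + 2\widetilde D_i)\circ U_i^{-1}}{u_i\circ U_i^{-1}} = \mathbb G_i + 2D_i
\end{equation*}
in $\ell^\infty([\beta,1-\beta])$, jointly in $i$. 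Here we use that $-\mathbb K_i\circ U_i^{-1}/u_i\circ U_i^{-1}$ has the law of $\mathbb G_i$. Under $H_0$ all $U_i^{-1}$ coincide, so $\Lambda_p(\mathbf U^{-1})=U_1^{-1}$. Since $\Lambda_p$ is translation equivariant and positively homogeneous, $\Lambda_p(a\mathbf 1 + x/\sqrt n) = a + \Lambda_p(x)/\sqrt n$. Therefore $n^{p/2}\widetilde T^p_{\beta,n}$ equals the functional
\begin{equation*}
F(x) = \frac1m\sum_i \int_\beta^{1-\beta} \bigl|x_i-\Lambda_p(x)\bigr|^p \diff t
\end{equation*}
evaluated at the vector of processes $\sqrt n(V_{i,n}^{-1}-U_1^{-1})$. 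The functional $F$ is continuous on $\ell^\infty([\beta,1-\beta])^m$ because $\Lambda_p$ is Lipschitz in sup-norm (its value lies between the minimum and maximum of its arguments, and it is continuous and monotone). The continuous mapping theorem then yields the claim.

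\textbf{Main obstacle.} The hardest step is Step 2: establishing joint tightness and asymptotic linearity of the $U$-process in $\ell^\infty$ jointly with the likelihood ratio, and checking that the degenerate part stays negligible. Contiguity, however, transfers $o_P(1)$ terms from the null to the alternatives, so it suffices to invoke the null-case results from the appendix together with the finite-dimensional CLT. A secondary point is the measurability and Hoffmann-Jørgensen version of the third lemma, which is available in van der Vaart and Wellner, Theorem 3.10.7.
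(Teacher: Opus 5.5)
Your proposal follows essentially the same route as the paper's proof (Hoeffding projection plus the LAN expansion from the Hellinger condition, Le Cam's third lemma with tightness carried over from the null to produce the drift $2\widetilde{D}_i$, the quantile-inversion delta method, and continuous mapping via translation equivariance of $\Lambda_p$). The one structural difference is that you treat the $m$ independent samples at once through the product likelihood ratio, whereas the paper handles each $i$ separately and then invokes independence to get joint convergence.

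There is one check you skip that the paper makes explicitly. The delta method needs the limit $\mathbb{K}_i + 2\widetilde{D}_i$ to have continuous paths, because $\invmap$ is Hadamard differentiable only tangentially to $\mathcal{C}([C_{i,1},C_{i,2}])$. This is easily supplied: by Cauchy--Schwarz, $|\widetilde{D}_i(t)-\widetilde{D}_i(s)| \le \|h_i\|_{L^2(\mu_i)}(U_i(t)-U_i(s))^{1/2}$ for $s<t$, and $U_i$ is continuous on that interval.
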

The proof of \autoref{theorem: local alternatives} is presented in \autoref{proof:thm:local-alternatives}. Note that~\eqref{eq: hellinger derivative} is the standard condition for contiguous local alternatives in the context of empirical processes \citep[Lemma 3.11.11]{wellner2013weak} and may be viewed as a sequential version of differentiability in quadratic mean (see e.g.\ \citet[eq.~(3.1)]{kosorok2008introduction}). In order to provide some intuition about \autoref{theorem: local alternatives}, we give an example in which we explicitly calculate the drift terms.
\begin{example}
    Assume \autoref{setting: local alternatives}. For each $i = 1, \ldots,m$, let $\nu_i$ be a probability measure on $\mathcal{X}_i$ and set $\nu_{i,n} = \left(1-\frac{1}{\sqrt{n}} \right) \mu_i + \frac{1}{\sqrt{n}} \nu_i$, i.e.\ drawing a sample from $\nu_{i,n}$ amounts to drawing a sample from $\nu_i$ with probability $1/\sqrt{n}$ and otherwise drawing from $\mu_i$. Thus, the empirical distance distribution $V_{i,n}$ from \autoref{setting: local alternatives} can be separated into three parts: distances induced by two random variables drawn from $\mu_i$ or two random variables drawn from $\nu_i$, and distances between a random variable drawn from $\mu_i$ and one drawn from $\nu_i$. If additionally $\nu_i \ll \mu_i$ for each $i = 1, \ldots, m$, equation~\eqref{eq: hellinger derivative} holds with $\rho_i = \mu_i$ and $h_i(t) = \frac{\de \nu_i}{\de \mu_i} - 1$. Then $\widetilde{D}_i$ takes the form $\widetilde{D}_i(t) = \mathbb{P}_{X \sim \mu_i, Y \sim \nu_i} \left( d_i(X,Y) \leq t \right) - U_i(t)$. Thus, the following picture emerges: The randomness in the limiting distribution stems only from the samples drawn from $\mu_i$. The drift is influenced by cross-distances of samples from $\mu_i$ and $\nu_i$. And finally, distances between samples drawn from $\nu_i$ become asymptotically negligible.
\end{example}
\section{Statistical Methodology using SLB Barycenters} \label{sec: statistical methodology}
Based on theory developed in \autoref{section: theory: limit distributions} and bootstrap consistency results, a variety of statistical inference tools can be derived. In the following, we focus on the statistical testing for $H_0$ (see~\eqref{eq: nullhypothesis}) and a SLB barycenter based classification method.
\subsection{Bootstrap Test for $\mathbf{H}_0$}\label{subsec: bootstrap}
Consider the limit distribution provided in \autoref{theorem: Convergence under the Null}. In order to carry out an asymptotic $\alpha$-level test for the hypothesis $H_0$ using this result, one would require to calculate the quantiles of this limiting distribution. As in applications, the underlying distance distributions of the mm-spaces which are to be compared are usually unknown, a direct calculation of the limit becomes impossible. Instead, a simple $n$-out-of-$n$ bootstrap can be applied. Recall our statistical framework from \autoref{subsection: the proposed approach} and let $X_i^\ast = (X_{1,i}^\ast, \ldots, X_{m,i}^\ast ) $ for $i = 1, \ldots ,n$ be an i.i.d.\ sample of $\widehat{\mu}_n = \frac{1}{n} \sum_{j=1}^n \delta_{(X_{1,j}, \ldots , X_{m,j})}$. Then, for $k = 1, \ldots m$, define the bootstrap analogues of $U_{k,n}$ via
\begin{equation*}
    U_{k,n}^\ast(t) =  \frac{2}{n(n-1)} \sum_{i<j} \mathbbm{1} \! \left( d_k\left(X_{k,i}^\ast, X_{k,j}^\ast \right) \leq t \right),
\end{equation*}
and denote its generalized inverse $U_{i,n}^{\ast, -1}$. The corresponding bootstrap quantile process is denoted by $\mathbb{U}_{i,n}^{\ast, -1}(t) = \sqrt{n}\left( U_{i,n}^{\ast,-1}(t) - U_{i,n}^{-1}(t)\right)$ for $t \in (\beta,1-\beta)$. The proposed bootstrap statistic takes the form
\begin{equation*}
    T_{\beta,n}^{p, \ast} = \frac{1}{m} \sum_{i=1}^m \int_\beta^{1-\beta} \left| \mathbb{U}_{i,n}^{\ast, -1}(t) - \Lambda_p\left( \mathbb{U}_{1,n}^{\ast,-1}(t), \ldots , \mathbb{U}_{m,n}^{\ast, -1}(t) \right) \right|^p \diff t.
\end{equation*}
\begin{theorem}\label{theorem: bootstrap consistency}
    Assume the bootstrap framework described above and denote $\zeta_{1,n}, \ldots, \zeta_{B,n}$ an iid sample from $T_{\beta,n}^{p,\ast}$, given $X_1, \ldots, X_n$. Further, denote $\zeta_{1-\alpha}^{B,n}$ the $(1\!-\!\alpha)$-quantile of the bootstrap replicates. Then, for $\beta > 0$ and if \autoref{cond 1.2} and $H_0^{(\beta)} : T_\beta^p =0$ are satisfied, 
    \begin{equation*}
                \limsup_{B,n \to \infty}\mathbb{P}\left(n^{p/2}T_{\beta,n}^p > \zeta_{1-\alpha}^{B,n}\right) \leq \alpha.
    \end{equation*}
\end{theorem}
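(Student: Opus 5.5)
The plan is to reduce bootstrap consistency to a conditional weak convergence statement for the bootstrap $U$-quantile process, and then push it through the continuous map that defines the limit in \autoref{theorem: Convergence under the Null}.

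\textbf{Step 1: bootstrap $U$-process.} Conditionally on the data, show that
\begin{equation*}
\sqrt{n}\bigl(U_{1,n}^\ast - U_{1,n}, \ldots, U_{m,n}^\ast - U_{m,n}\bigr) \condconvP \mathbb{K} = (\mathbb{K}_1,\ldots,\mathbb{K}_m)
\end{equation*}
in $\prod_i \ell^\infty([C_{i,1},C_{i,2}])$. Here $\mathbb{K}$ is the centered Gaussian process with covariance $\gammaK{i,j}$, i.e. the limit of the original $U$-process used in the appendix. The natural tool is the Hoeffding decomposition of the bootstrap $U$-statistic. Its linear part is an empirical bootstrap process indexed by the functions $a' \mapsto \int \mathbbm{1}(d_i(a,a')\le t)\diff\widehat\mu_n(a)$, for which I would invoke Arcones and Gin\'e's bootstrap CLT for $U$-processes \citep{arcones1994u}. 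The class of indicators $\mathbbm{1}(d_i(\cdot,\cdot)\le t)$ is VC, so it is uniformly bounded and measurable. The degenerate part is $O_P(n^{-1/2})$ uniformly, conditionally in outer probability. The data-dependent kernel (integration against $\widehat\mu_n$ rather than $\mu_i$) is handled by the Glivenko--Cantelli property of the class. Coupling across $i$ causes no extra difficulty, because resampling the vectors $X_j$ reproduces the cross-covariance through $\mu_{i,j}$.

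\textbf{Step 2: functional delta method.} Under \autoref{cond 1.2}, $\invmap$ is Hadamard differentiable at $U_i$ tangentially to $C[C_{i,1},C_{i,2}]$, with derivative $h\mapsto -h(U_i^{-1})/u_i(U_i^{-1})$. The bootstrap delta method \citep[Theorem 3.9.11]{wellner2013weak} then yields
\begin{equation*}
(\mathbb{U}_{1,n}^{\ast,-1},\ldots,\mathbb{U}_{m,n}^{\ast,-1}) \condconvP \mathbb{G} \quad \text{in } \ell^\infty([\beta,1-\beta])^m.
\end{equation*}
Note that the bootstrap process is centred at $U_{i,n}^{-1}$, not at $U_i^{-1}$. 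This is exactly the structure required by the bootstrap delta method, and it is why $H_0$ is not needed for this step.

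\textbf{Step 3: continuous mapping.} Consider
\begin{equation*}
\Psi(f_1,\ldots,f_m)=\frac1m\sum_i\int_\beta^{1-\beta}\abs{f_i-\Lambda_p(f)}^p\diff t .
\end{equation*}
This map is continuous on $\ell^\infty([\beta,1-\beta])^m$. Indeed, $\Lambda_p$ is Lipschitz in sup-norm: it is $1$-equivariant under translations and monotone, so $\abs{\Lambda_p(x)-\Lambda_p(y)}\le\max_i\abs{x_i-y_i}$. The bootstrap statistic is $T_{\beta,n}^{p,\ast}=\Psi(\mathbb{U}_n^{\ast,-1})$, so the continuous mapping theorem for conditional convergence gives $T^{p,\ast}_{\beta,n}\condconvP \Psi(\mathbb{G})$. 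Under $H_0^{(\beta)}$, \autoref{theorem: Convergence under the Null} gives $n^{p/2}T^p_{\beta,n}\rightsquigarrow\Psi(\mathbb{G})$. Strictly, that theorem is stated under $H_0$, but the $\beta$-trimmed argument only uses equality of the quantiles on $[\beta,1-\beta]$. The reason is translation equivariance: $\Lambda_p(\Uinv+x)=\Lambda_p(\Uinv)+\Lambda_p(x)$ holds when all $U_i^{-1}(t)$ coincide, so $n^{p/2}T^p_{\beta,n}=\Psi(\mathbb{U}_n^{-1})$ exactly.

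\textbf{Step 4: quantiles.} Let $c_{1-\alpha}$ be the $(1-\alpha)$-quantile of $\Psi(\mathbb{G})$. The cdf of $\Psi(\mathbb{G})$ is continuous at $c_{1-\alpha}$ when $c_{1-\alpha}>0$: as a convex functional of a Gaussian process, $\Psi(\mathbb{G})$ has an atom at most at $0$ (Tsirelson-type result). Then $\zeta^{B,n}_{1-\alpha}\to c_{1-\alpha}$ in probability as $B,n\to\infty$, and Slutsky gives
\begin{equation*}
\mathbb{P}\bigl(n^{p/2}T^p_{\beta,n}>\zeta^{B,n}_{1-\alpha}\bigr)\to\mathbb{P}\bigl(\Psi(\mathbb{G})>c_{1-\alpha}\bigr)\le\alpha .
\end{equation*}
In the degenerate case the limit is $0$; using the $\limsup$ with $\le$ covers it, by arguing with $c_{1-\alpha}\pm\epsilon$.

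\textbf{Main obstacle.} The hardest part is Step 1: establishing conditional weak convergence of the bootstrap $U$-process uniformly over the function class, including control of the degenerate component, for coupled multivariate samples. If a direct citation to Arcones--Gin\'e does not cover the coupled vector case, I would first prove finite-dimensional convergence via the bootstrap CLT for the Hoeffding projections. I would then obtain asymptotic equicontinuity from the multiplier/bootstrap inequalities for VC classes applied to the linear part.
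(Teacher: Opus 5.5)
Your Steps~1--3 follow the paper's proof. You use the marginal bootstrap $U$-process limits of Arcones--Gin\'e, and joint convergence from marginal tightness plus joint finite-dimensional limits. The paper also writes out the trivial $O(n^{-1/2})$ correction between the bootstrap's natural $V$-statistic centering and $U_{i,n}$, which you skip. You then apply the conditional delta method for $\invmap$ under \autoref{cond 1.2} and the conditional continuous mapping theorem for $\Psi$. Your Step~4 goes beyond the paper, whose last line simply asserts that agreement of the limit laws implies the level bound. For $c_{1-\alpha}>0$ your continuity argument is sound. The functional $\Psi(f)=\frac1m\int_\beta^{1-\beta}\min_y\sum_i\abs{f_i-y}^p$ is convex, being a partial minimum of a jointly convex function. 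Ehrhard's inequality then makes $\Phi^{-1}\circ G$ concave, where $\Phi$ is the standard normal cdf and $G$ the cdf of $\Psi(\mathbb{G})$. Hence $G$ is continuous on $(0,\infty)$.

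The step that fails is your claim that the degenerate case is covered. On continuous paths, $\{\Psi=0\}$ is the closed subspace $\{f_1=\cdots=f_m\}$, so the Gaussian zero--one law gives $\mathbb{P}(\Psi(\mathbb{G})=0)\in\{0,1\}$. Thus $c_{1-\alpha}=0$ forces $\Psi(\mathbb{G})=0$ almost surely. Then $n^{p/2}T^p_{\beta,n}$ and $\zeta^{B,n}_{1-\alpha}$ both tend to $0$ in probability. Your $c_{1-\alpha}\pm\epsilon$ sandwich only yields the trivial bound $\limsup\le\mathbb{P}(\Psi(\mathbb{G})\ge-\epsilon)=1$. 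The rejection probability is then decided by second-order, degenerate-$U$-statistic fluctuations, which first-order bootstrap consistency does not control; the naive bootstrap is known to be inconsistent in general for degenerate $U$-statistics.

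This case is not vacuous under the stated hypotheses. Take homogeneous spaces, e.g.\ the uniform measure on a circle or sphere with geodesic distance. There $\mu_i(\{a:d_i(a,a')\le x\})$ does not depend on $a'$, so the Hoeffding projection vanishes and $\mathbb{K}\equiv0$, $\mathbb{G}\equiv0$, although \autoref{cond 1.2} holds. You therefore need either a separate higher-order argument or an explicit non-degeneracy assumption: that $\Psi(\mathbb{G})$ is not almost surely zero, i.e.\ $\mathbb{G}_i\neq\mathbb{G}_j$ with positive probability for some $i,j$. Under that assumption $G$ is continuous everywhere and $c_{1-\alpha}>0$. The paper's proof passes over this point silently as well.
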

The proof of the above theorem is given in \autoref{appendix: bootstrap}.
\begin{remark}
\begin{enumerate}
    \item{(Computation).} It is easy to see that carrying out the bootstrap test takes $O(B \cdot m \cdot n^2 \log(n))$ calculation operations.
    \item{(Post-hoc testing).} After rejecting $H_0$, it is furthermore possible to detect pairs of
non-isomorphic mm-spaces using an analogue of Tukey's honestly significant
difference (HSD) test \citep[chap.~2]{miller1981simultaneous}. As the maximum is a continuous
map, an adjustment of the proof of \autoref{theorem: Convergence under the Null} yields
\begin{equation}\label{eq: tukey limit}
\max_{i<j} n^{p/2}
\int_{\beta}^{1-\beta}
\left| U_{i,n}^{-1}(t) - U_{j,n}^{-1}(t) \right|^p \diff t
\rightsquigarrow
\max_{i<j}
\int_{\beta}^{1-\beta}
\left| \mathbb{G}_i(t) - \mathbb{G}_j(t) \right|^p \diff t .
\end{equation}
for $\beta \geq 0$ under equivalent conditions as in
\autoref{theorem: Convergence under the Null}. Writing $t_{1-\alpha}$ for the $(1\!-\!\alpha)$
quantile of the limiting distribution given in~\eqref{eq: tukey limit}, we may thus reject
$H_{0}^{i,j} : \mathcal{SLB}_p(\mathcal{X}_i, \mathcal{X}_j) = 0 $ if
\begin{equation*}
n^{p/2}
\int_{\beta}^{1-\beta}
\left| U_{i,n}^{-1}(t) - U_{j,n}^{-1}(t) \right|^p \diff t
> t_{1-\alpha}.
\end{equation*}
Note that the quantile $t_{1-\alpha}$ may once again be consistently estimated
employing a bootstrap similarly as in \autoref{theorem: bootstrap consistency}. Of course, other post-hoc
testing techniques such as Bonferroni correction \citep[chap.~3]{miller1981simultaneous} or FDR adjustments \citep[chap.~15]{efron2016computer} can also be applied in this context.
\end{enumerate}
\end{remark}
\subsection{Second Lower Bound Barycenter Classifier}\label{subsec: barycenter classifier}
Let $\mathcal{G}_i = \left\{ \left(\mathcal{X}_1^{(i)} , d_1^{(i)}, \mu_1^{(i)}\right), \ldots, \left(\mathcal{X}_{m_i}^{(i)} , d_{m_i}^{(i)}, \mu_{m_i}^{(i)}\right) \right\},\, i=1, \ldots K$ be groups of mm-spaces and denote by $n_{i,j}$ the number of samples drawn from the $j$-th mm-space in the $i$-th group. Further, let $X_{j,1}^{(i)}, \ldots, X_{j,n_{i,j}}^{(i)} \iid \mu_j^{(i)}$. Based on these samples, let $U_{(i,j),n_{i,j}}^{-1}$ be the empirical quantile function of the distances $\left\{ d_j^{(i)}\left( X_{j,k}^{(i)}, X_{j,l}^{(i)} \right) \right\}_{1 \leq k <l \leq n_{i,j}}$ and $U_{(i,j)}^{-1}$ the underlying quantile function of the distance distribution of the $j$-th mm-space from group $\mathcal{G}_i$. The goal will be to classify a new, unlabeled mm-space $(\mathcal{Y}, d_\mathcal{Y}, \nu)$ into one of the groups $\mathcal{G}_1, \ldots, \mathcal{G}_K$ upon observing samples $Y_1, \ldots, Y_{n_V} \iid \nu$. Denote the corresponding empirical quantile function of the distances $\left\{d_\mathcal{Y}(Y_i,Y_j) \right\}_{1 \leq i < j \leq n_V}$ by $V_{n_V}^{-1}$ and the underlying quantile function of the distance distribution by $V^{-1}$. We propose to classify the new mm-space based on the (trimmed) Wasserstein distances between $V^{-1}$ and the SLB barycenter of each group, i.e.
\begin{equation*}
    \gamma_i(\beta,p) = \int_\beta^{1-\beta} \abs{V^{-1}(t) - \Lambda_p\left( U_{(i,1)}^{-1}(t), \ldots, U_{(i,m_i)}^{-1}(t) \right)}^p \diff t.
\end{equation*}
After specifying $\beta$ and $p$, we will simply write $\gamma_i$ in order to ease notation. Write $\widehat{\gamma}_{i,n}$ for the empirical version of $\gamma_i$. By methods employed in the proof of \autoref{theorem: finite sample bound the alternative}, we may provide a risk bound for this classifier.
\begin{theorem}\label{theorem: barycenter classifier}
    Assume the above setting and let $\beta \in [0,1/2)$ and $p \geq 1$. Further, denote by $i^\star$ the index such that $\gamma_{i^\star} < \gamma_j$ for each $j \neq i^\star$. Moreover, let $\widehat{i}^\star = \argmin_{i = 1, \ldots, K} \widehat{\gamma}_{i,n}$ be the empirical SLB barycenter classifier. Letting $D = \max_{i = 1, \ldots,K} \max_{j = 1, \ldots, m_i} \mathrm{diam}\left(\mathcal{X}_j^{(i)} \right) \vee \mathrm{diam}(\mathcal{Y})$, we then obtain
    \begin{equation*}
        \mathbb{P} \left( \widehat{i}^\star \neq i^\star \right) \leq 2^{p+1/2}pD^p \sum_{i \neq i^\star} \frac{ 2n_V^{-1/2} + \sum_{k=1}^{m_{i^\star}} n_{i^\star,k}^{-1/2} + \sum_{k=1}^{m_i} n_{i,k}^{-1/2} }{\gamma_i - \gamma_{i^\star}}.
    \end{equation*}
\end{theorem}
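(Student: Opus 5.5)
The misclassification event forces at least one empirical distance to be far from its population value, so the plan is a union bound followed by Markov's inequality. If $\widehat{i}^\star = i \neq i^\star$, then $\widehat{\gamma}_{i,n} \leq \widehat{\gamma}_{i^\star,n}$. Writing $\delta_i = \gamma_i - \gamma_{i^\star} > 0$, this forces
\[
\abs{\widehat{\gamma}_{i,n} - \gamma_i} + \abs{\widehat{\gamma}_{i^\star,n} - \gamma_{i^\star}} \geq \delta_i .
\]
A union bound over $i \neq i^\star$ and Markov's inequality therefore give
\[
\mathbb{P}\left(\widehat{i}^\star \neq i^\star\right) \leq \sum_{i \neq i^\star} \frac{\mathbb{E}\abs{\widehat{\gamma}_{i,n} - \gamma_i} + \mathbb{E}\abs{\widehat{\gamma}_{i^\star,n} - \gamma_{i^\star}}}{\gamma_i - \gamma_{i^\star}} .
\]
It then suffices to show
\[
\mathbb{E}\abs{\widehat{\gamma}_{j,n} - \gamma_j} \leq 2^{p+1/2} p D^p \Bigl( n_V^{-1/2} + \sum_{k=1}^{m_j} n_{j,k}^{-1/2} \Bigr)
\]
for each group $j$. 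Summing this bound over $j = i$ and $j = i^\star$ produces exactly the stated numerator, including the factor $2 n_V^{-1/2}$.

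To bound $\mathbb{E}\abs{\widehat{\gamma}_{j,n} - \gamma_j}$, I would linearise the $p$-th power. All quantile functions take values in $[0,D]$, so the mean value theorem gives $\abs{\abs{a}^p - \abs{b}^p} \leq p D^{p-1} \abs{a-b}$ whenever $\abs{a}, \abs{b} \leq D$. Hence
\[
\abs{\widehat{\gamma}_{j,n} - \gamma_j} \leq p D^{p-1} \int_\beta^{1-\beta} \Bigl( \abs{V_{n_V}^{-1} - V^{-1}} + \abs{\Lambda_p(\widehat{\mathbf{U}}^{-1}) - \Lambda_p(\mathbf{U}^{-1})} \Bigr) \diff t ,
\]
where $\widehat{\mathbf{U}}^{-1}$ and $\mathbf{U}^{-1}$ denote the empirical and population quantile vectors of group $j$. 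The pointwise barycenter $\Lambda_p$ is monotone in each coordinate and translation equivariant, so it is $1$-Lipschitz with respect to the sup norm on $\mathbb{R}^{m_j}$. The sup norm is in turn bounded by the sum of the coordinatewise differences. This reduces everything to the one-dimensional quantities $\int_0^1 \abs{U_{n}^{-1} - U^{-1}} \diff t$. Each such integral is the Wasserstein-$1$ distance between the empirical and true distance distributions, which equals $\int_0^D \abs{U_n(s) - U(s)} \diff s$.

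The main obstacle is bounding $\mathbb{E} \int_0^D \abs{U_n - U} \diff s$, because $U_n$ is a $U$-statistic rather than an empirical cdf of i.i.d.\ data. I would reuse the technique from the proof of \autoref{theorem: finite sample bound the alternative}: Hoeffding's decomposition of a $U$-statistic into an average over $\lfloor n/2 \rfloor$ independent blocks, combined with a bounded-differences or sub-Gaussian moment bound. That route should give $\mathbb{E} \abs{U_n(s) - U(s)} \lesssim \lfloor n/2 \rfloor^{-1/2}$, up to the $\Gamma(3/2)$ and power-of-two constants. Integrating over $s \in [0,D]$ contributes the remaining factor $D$.

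Finally, I would convert $\lfloor n/2 \rfloor^{-1/2}$ into $n^{-1/2}$ via $\lfloor n/2 \rfloor \geq n/4$ for $n \geq 2$. I expect the bookkeeping of these constants, namely the $\Gamma(3/2) = \sqrt{\pi}/2$ and the powers of two, to be what yields the factor $2^{p+1/2}$. Matching that exact constant is the delicate part; the rate itself follows directly from the block decomposition.
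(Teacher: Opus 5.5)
Your proposal is correct, and its skeleton is exactly the paper's: union bound over $i \neq i^\star$ with Markov's inequality, linearising $\abs{\cdot}^p$, the $1$-Lipschitz property of $\Lambda_p$ in the sup norm, and rewriting $\int_0^1 \abs{U_n^{-1}-U^{-1}}$ as $\int_0^D \abs{U_n-U}$.

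The one real difference is the pointwise bound on $\mathbb{E}\abs{U_n(s)-U(s)}$. The paper's text says the classifier bound follows ``by methods employed in'' the finite-sample theorem, but its proof does not reuse Hoeffding's inequality. It uses Serfling's variance bound for degree-two $U$-statistics, $\Var(U_n(s)) \leq 2/n$, together with Jensen, to get $\mathbb{E}\abs{U_n(s)-U(s)} \leq (2/n)^{1/2}$. Combined with its cruder linearisation factor $p(2D)^{p-1}$, this yields $2^{p-1/2}pD^p$. So the stated $2^{p+1/2}$ carries a spare factor $2$; it is not the product of delicate bookkeeping, and you need not match it exactly.

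Your route also closes with room to spare. Hoeffding's 1963 representation of $U_n(s)$ as an average of means of $\lfloor n/2\rfloor$ independent terms (this, rather than the projection-type ``Hoeffding decomposition'', is what gives the tail bound) yields the tail $2\exp(-2x^2\lfloor n/2\rfloor)$. Integrating it gives $\mathbb{E}\abs{U_n(s)-U(s)} \leq \sqrt{\pi/2}\,\lfloor n/2\rfloor^{-1/2} \leq \sqrt{2\pi}\, n^{-1/2}$, using $\lfloor n/2 \rfloor \geq n/4$ for $n \geq 2$. Your sharper factor $pD^{p-1}$ is legitimate, since $V_{n_V}^{-1}$, $V^{-1}$ and both barycenters lie in $[0,D]$; the barycenters do because $\Lambda_p$ stays within the min--max band. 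You therefore obtain $\mathbb{E}\abs{\widehat{\gamma}_{j,n}-\gamma_j} \leq \sqrt{2\pi}\, pD^p\bigl(n_V^{-1/2}+\sum_{k} n_{j,k}^{-1/2}\bigr)$, and $\sqrt{2\pi} < 2^{3/2} \leq 2^{p+1/2}$.

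As for what each approach buys:
\begin{itemize}
\item The variance argument needs only second moments. Done sharply ($\Var(U_n(s)) \leq 1/(2n)$ for an indicator kernel), it gives the better numerical constant.
\item Your sub-Gaussian argument controls all moments. Replacing Markov by its $q$-th moment version would then give misclassification bounds of order $n^{-q/2}$ for any $q \geq 1$.
\end{itemize}
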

For a proof of the theorem, see \autoref{proof:thm:barycenter-classifier}.\newline 
Note that if all samples are balanced, i.e.\ of the same order $n$, the classification error converges to zero at the rate $O(n^{-1/2})$.
\begin{remark}\label{remark: barycenter classifier} We briefly remark on the appeal of using SLB barycenters for classification purposes and potential problems and remedies of the classifier. \begin{enumerate}
    \item Using the (surrogate) SLB barycenters for classification purposes has the advantage that each class can be assigned a mathematically meaningful representative distribution to serve as a reference. For example, this means that new mm-spaces only have to be compared to $K$ distributions instead of $\sum_{i=1}^K m_i$, making it computationally more tractable when the overall number of mm-spaces is high.
    \item In practical situations, it may be the case that, although the groups are well separated in terms of their distance distributions, the condition $\gamma_{i^\star} < \gamma_j$ for each $j \neq i^\star$ from \autoref{theorem: barycenter classifier} is not satisfied. This may, for instance, be caused by a difference in the variation of the distance distributions within each group. A possible remedy for this, we suggested in \autoref{subsec:applications}: Apply a preclustering to each of the groups $\mathcal{G}_i$, $i = 1, \ldots,K$, e.g.\ by a Wasserstein $k$-means clustering on their distance distributions, resulting in smaller groups $\mathcal{G}_{i,j}$ with $i =1, \ldots,K$ and $j = 1, \ldots k_i$. The number of clusters selected for each group may be chosen after a visual inspection of their distance distributions or tuned on a training dataset. Then, the Wasserstein distances $\widehat{\gamma}_{i,j}$ between the distance distributions of the unlabeled space and the barycenters of the groups $\mathcal{G}_{i,j}$ can be calculated and the predicted label may be chosen as $\widehat{i}^\star = \argmin_{i=1, \ldots,K} \min_{j = 1, \ldots, k_i} \widehat{\gamma}_{i,j}$. Note that the $k$-means preclustering may not only serve to stabilize the SLB barycenter classifier, but also to identify potential subgroups that have been previously undiscovered, see \autoref{sec: Simulations}.
\end{enumerate}
\end{remark}

\section{Simulations}\label{sec: Simulations}
In the following, we provide simulations for the limiting distributions stated in the previous
sections. We furthermore apply the SLB barycenter test and classifier on synthetic data.
\subsection{Convergence under the Null}\label{subsec:sim:NH}
We proceed with simulations for the convergence under the null hypothesis $H_0$ shown in \autoref{theorem: Convergence under the Null}. In order to have a clear reference for our simulations, we thus provide an example in which the distance distributions of the mm-spaces are known explicitly and we can thus numerically calculate the covariance kernel $\gammaG{i,j}$ in \autoref{theorem: Convergence under the Null} for any $i,j = 1, \ldots, m$.
\begin{example}\label{example: rate of covnergence}
Let $m \in N$ and for $i = 1, \ldots ,m$
\begin{equation*}
    (\mathcal{X}_i, d_i, \mu_i) \coloneqq \left([0,1]^2,d_\infty, U\left([0,1]^2 \right) \right)
\end{equation*}
where $d_\infty(x,y) = \max(|x_1 - y_1|, |x_2-y_2|)$ is the Chebyshev distance. Furthermore, we choose the independent coupling $\mu = \otimes_{i=1}^m \mu_i$. By elementary calculations, we find that in this example we can explicitly state the distribution functions of the corresponding distance distributions of the mm-spaces by $U_i(x) = (2x-x^2)^2$ for $x \in [0,1]$ and the corresponding quantile function $U_i^{-1}(t) = 1- \sqrt{1- t^{1/2}}$ for $t \in (0,1)$. This mm-space furthermore satisfies \autoref{cond 1.3} for any $\theta  = p$ with $1 \leq p \leq 2$.
\end{example}
\begin{figure}[t!] 
    \centering
    \includegraphics[width=1\textwidth]{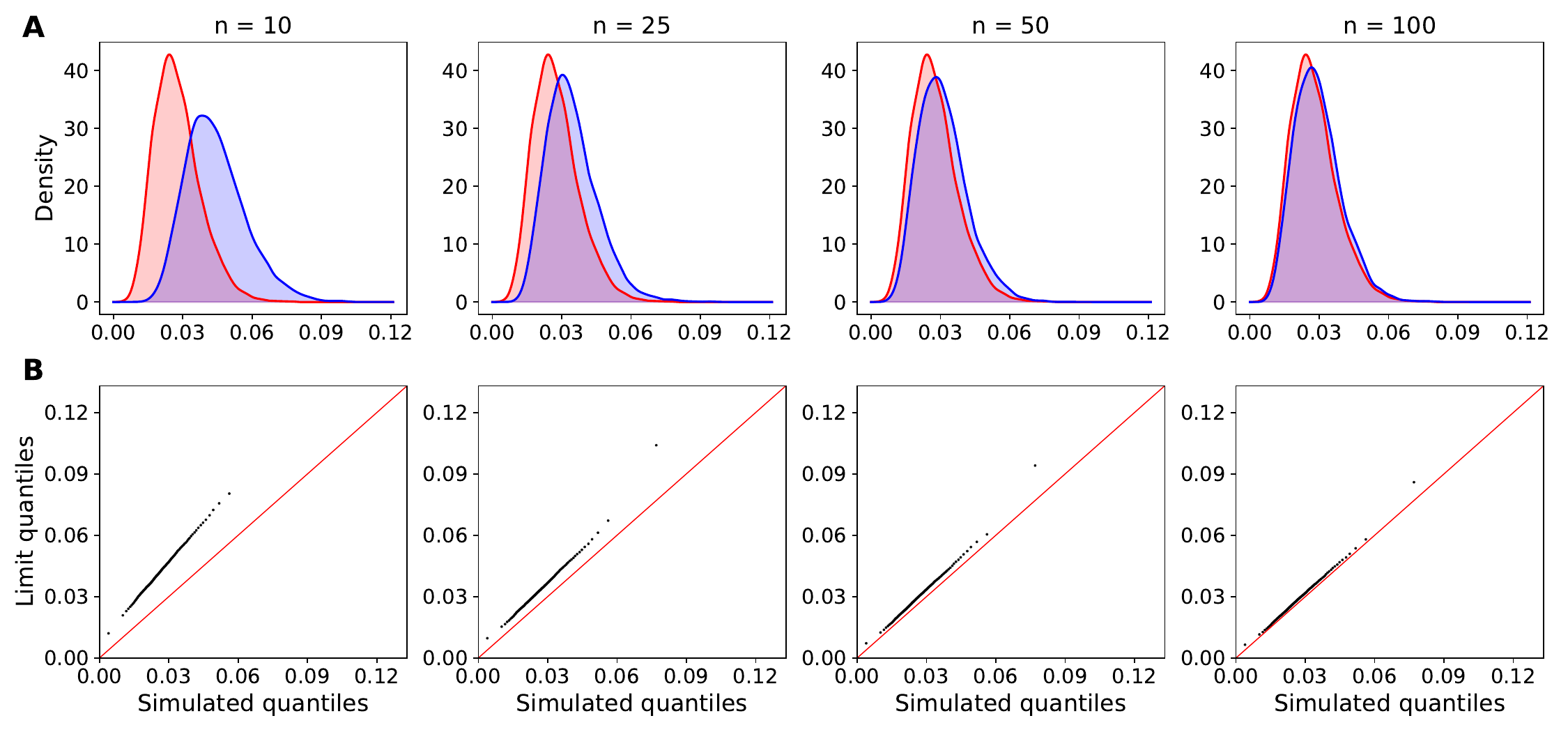} 
    \caption{\textbf{A}: Kernel density estimates of the limit distribution (red) and $n T_{0,n}^2$ (blue) for $n = 10,25,50,100$ in \autoref{example: rate of covnergence} with $m = 15$ based on $10,\!000$ realizations each. \textbf{B}: QQ-plots of the corresponding quantiles and red reference line.}\label{figure: convergence rate of test stat}
\end{figure}
Based on the setting of \autoref{example: rate of covnergence}, we simulated the limiting distribution in \autoref{theorem: Convergence under the Null} by taking Monte Carlo samples based on an explicit calculation of the covariance matrix induced by $\mathbb{G}$ (see \autoref{theorem: Convergence under the Null}) over a discretization of the interval $[\beta,1-\beta]$. In \autoref{figure: convergence rate of test stat}, kernel density estimates with Gaussian kernel and based on Silverman's rule of thumb for $n^{p/2} T_{\beta,n}^p$ are displayed in the setting of \autoref{example: rate of covnergence} and with $p=2,\beta = 0$ and $m=15$ for different sample sizes. These are then compared to the limiting distribution. We furthermore provide QQ-plots between the simulated and limiting distribution. Plots for other hyperparameters are provided in \autoref{appendix: simulations}, see \autoref{figure: Simulation Nullhypothese 1}, \autoref{figure: Simulation Nullhypothese 2} and \autoref{figure: Simulation Nullhypothese 3}.
\subsection{Bootstrap}\label{subsec:sim:bootstrap}
Next, we visually demonstrate the bootstrap consistency of $n^{p/2} T_{\beta,n}^{p,\star}$ proven in \autoref{theorem: bootstrap consistency}. In order to also cover the setting of possible dependencies between the mm-spaces (i.e. $\mu$ is not equal to the product measure in \autoref{subsection: the proposed approach}), we introduce the following example.
\begin{example}\label{example: bs convergence example}
Define $m$ different metric measure spaces via
\begin{equation} \label{eq: example mm spaces uniform euclidean}
    (\mathcal{X}_i, d_i, \mu_i) \coloneqq \left([0,1]^3,d_{\infty}, U\left([0,1]^3\right)\right)
\end{equation}
for $i = 1, \ldots,m$. Furthermore, let $K \in \mathbb{N}$. Then we split the set $[0,1]^3$ into $K^3$ different quadrants by letting
\begin{equation*}
    Q_{i,j,l} =  [i/K,(i+1)/K] \times [j/K, (j+1)/K] \times [l/K, (l+1)/K] 
\end{equation*}
for indices $0 \leq i,j,l \leq K-1$. Based on this definition, we define a coupling $\mu$ of the metric measure spaces via the density
\begin{equation*}
    f_{m,K}(b_1, \ldots, b_m) = \begin{cases}
        K^{3(m-1)} &\text{if } \exists i,j,l \in \mathbb{N}: b_k \in Q_{i,j,l} \text{ for }k = 1, \ldots, m, \\
        0 &\text{else},
    \end{cases}
\end{equation*}
for $b_1, \ldots, b_m \in [0,1]^3$. The distance distributions of the marginal spaces are given by $U_i(x) = \left(2x-x^2\right)^3$ for $x \in [0,1]$ and thus satisfy \autoref{cond 1.2} for each $\beta >0$.
\end{example}
\begin{figure}[t!] 
    \centering
    \includegraphics[width=1\textwidth]{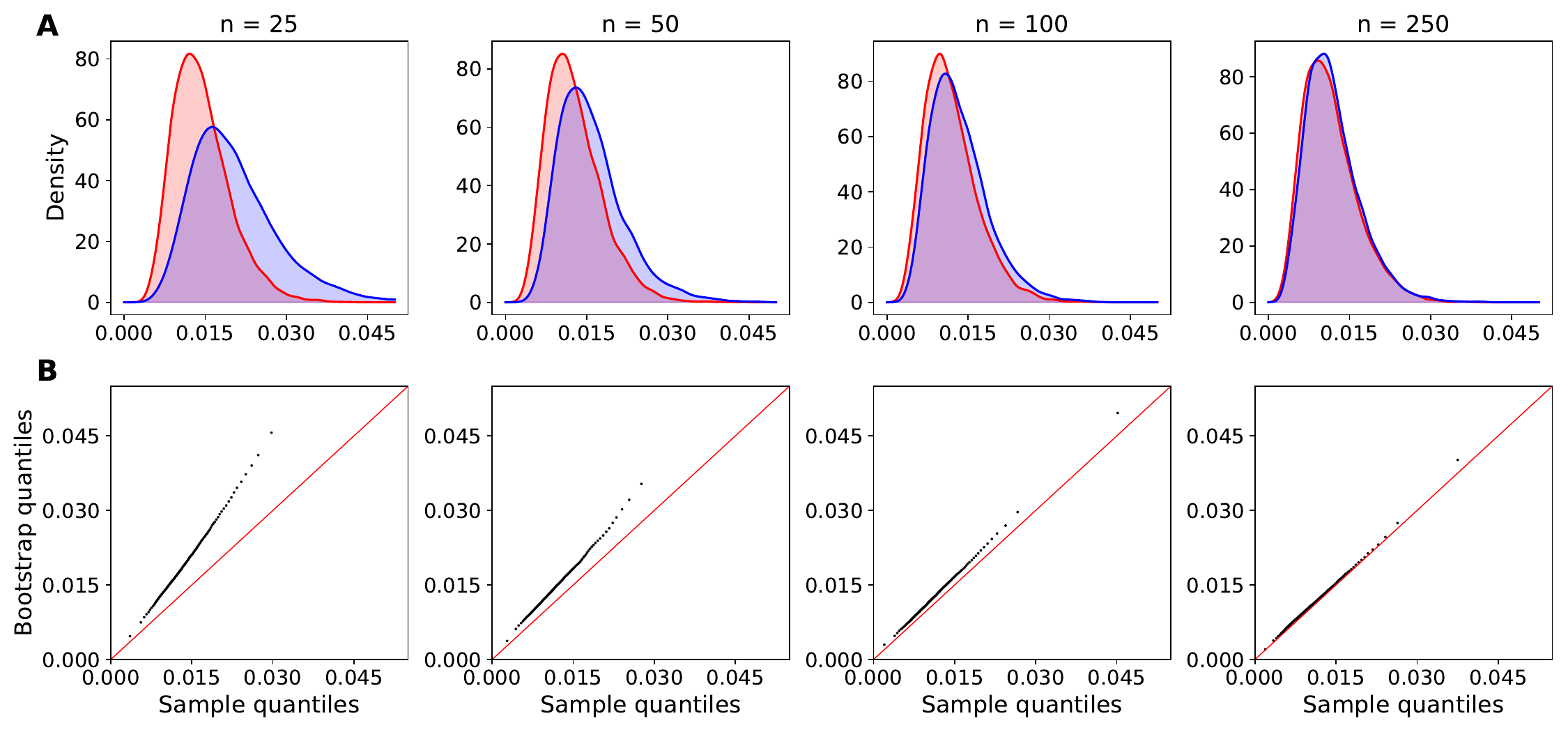} 
    \caption{\textbf{A}: Kernel density estimates of $n T_{0.01,n}^2$ (red) and $n T_{0.01,n}^{2,\ast}$ (blue) for $n = 25,50,100,250$ in \autoref{example: bs convergence example} with $m = 10$ and $K = 3$ based on $1,\!000$ samples from $n T_{0.01,n}^2$ and 100 bootstrap realizations of each sample. \textbf{B}: QQ-plots of the corresponding quantiles.}\label{figure: bs convergence rate}
\end{figure}
By definition, a sample from this distribution always fulfills the property that marginally, every set of coordinates $(x_k,y_k,z_k)$ for $k=1, \ldots,m$ lies in the same quadrant $Q_{i,j,l}$ for some $0 \leq i ,j,l \leq K-1$. For $K = 1$, this corresponds to independence, while $K \to \infty$ means that the dependence increases and the marginal coordinates of the sample become equal. Kernel density estimates and QQ-plots of the bootstrap statistic $n T_{\beta,n}^{p,\star}$ and $n T_{\beta,n}^p$ using the parameters $K = 3, m=10, p=2$ and $\beta = 0.01$ are shown in \autoref{figure: bs convergence rate}. It can be seen that for low sample sizes, the bootstrap distribution tends to be a conservative approximation of the limit distribution. Further plots for different hyperparameters are presented in \autoref{appendix: simulations} (see \autoref{figure: Simulation bootstrap 1} and \autoref{figure: Simulation bootstrap 2}).
\subsection{Convergence under the Alternative}\label{subsec:sim:alternative}
In order to simulate the convergence result under the alternative stated in \autoref{theorem: Convergence under the alt}, we slightly adjust the setting of \autoref{example: rate of covnergence}.
\begin{example}\label{example: convergence alternative}
Let $m \geq 2$. Then for each $i = 1, \ldots, m$, set $K_i = 1 + 5(i-1)/(m-1)$
\begin{equation*}
    (\mathcal{X}_i, d_i, \mu_i) = \left( [0,K_i]^2, d_\infty, U\left([0,K_i]^2 \right)\right).
\end{equation*}
Hence, the mm-spaces are an interpolation between $(\mathcal{X}_1, d_1, \mu_1) = \left( [0,1]^2, d_\infty, U\left([0,1]^2 \right)\right)$ (recall \autoref{example: rate of covnergence}) and $(\mathcal{X}_m, d_m, \mu_m) = \left( [0,6]^2, d_\infty, U\left([0,6]^2 \right)\right)$. Thus, for each $i = 1, \ldots, m$, we have $U_i^{-1}(x) = K_i\left(1- \sqrt{1-\sqrt{x}}\right)$ for $x \in (0,1)$, which satisfies \autoref{cond 1.3} with $\theta = 1 $.
\end{example}
\begin{figure}[t!] 
    \centering
    \includegraphics[width=1\textwidth]{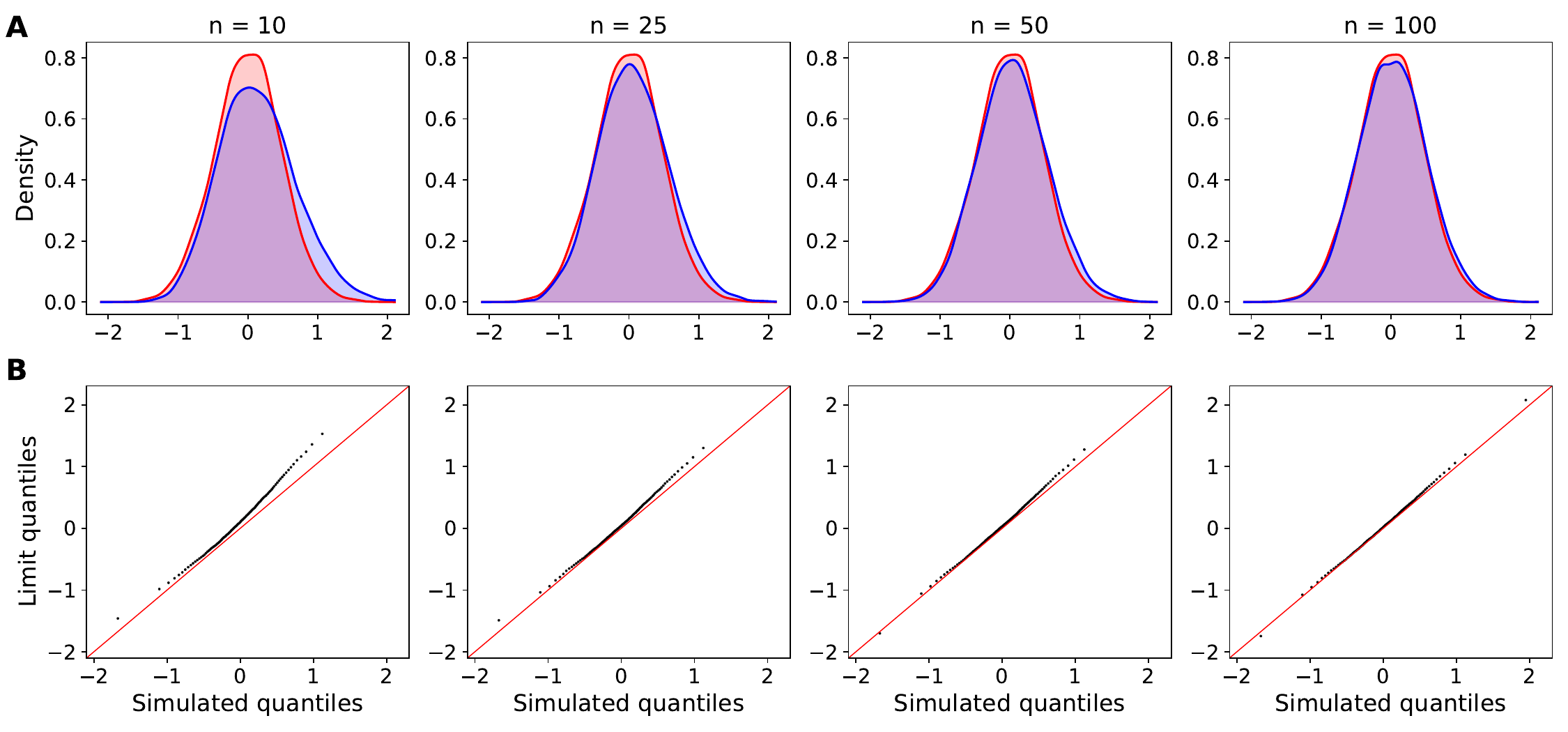} 
    \caption{\textbf{A}: Kernel density estimates of the limit distribution (red) and $\sqrt{n} \left( T_{0,n}^2  - T_{0}^2\right)$ (blue) for $n = 10,25,50,100$ in \autoref{example: convergence alternative} with $m = 6$ based on $10,\!000$ realizations each. \textbf{B}: QQ-plots of the corresponding quantiles and red dotted reference line.}\label{figure: convergence rate of test stat alternative}
\end{figure}
Similarly to the simulations under the null hypothesis, the limit distribution in \autoref{example: convergence alternative} can be simulated via Monte Carlo samples based on discretizing the integral in the limit. \autoref{figure: convergence rate of test stat alternative} shows kernel density estimates and QQ-plots for $m = 6, p = 2$ and $\beta = 0$. Plots for different hyperparameters are presented in \autoref{figure: Simulation alternative 1} and \autoref{figure: Simulation alternative 2} in \autoref{appendix: simulations}.
\subsection{SLB Barycenter Test}\label{subsec:sim:test}
We continue by applying the test $\Phi_{\beta,p}^\ast$ (see \autoref{theorem: bootstrap consistency}) to synthetic data. To this end, we use the publicly available triangulated objects dataset \citep{sumner2004deformation}. In the database, there are meshes corresponding to different classes of objects such as horses, elephants, camels and more, recall \autoref{fig: Meshes pictures} for some examples. For our experimental setup, each mesh induces an mm-space $(\mathcal{X},d, \mu)$ by letting $\mathcal{X}$ be the set of $50,\!000$ randomly sampled points, uniform over the surface of the mesh, $\mu$ the uniform measure on $\mathcal{X}$ and $d$ to be the Euclidean distance. In particular, we applied the test to the comparison of mm-spaces $(\mathcal{X}_i,d_i, \mu_i)_{1 \leq i \leq 10}$ in the following scenarios:
\begin{enumerate}[label=Scenario \Alph*, leftmargin=*]
    \item The null hypothesis $H_0$ is satisfied. In particular, we choose each mm-space according to the same mesh corresponding to a camel.\label{item: scenario A}
    \item Most mm-spaces are isomorphic, but there is an outlier. For $i=1, \ldots 9$, we choose each mm-space according to the same mesh of a camel as in the first scenario. However, $(\mathcal{X}_{10},d_{10},\mu_{10})$ is chosen corresponding to a mesh of a horse.\label{item: scenario B}
    \item All mm-spaces are slightly different. Here, we chose $(\mathcal{X}_i,d_i, \mu_i)_{1 \leq i \leq 10}$ corresponding to the 10 first meshes in the \textit{camel-gallop} class. The meshes correspond to discrete time steps of an animation of the respective animal and consequently vary slightly between time points.\label{item: scenario C}
\end{enumerate}
\begin{figure}[t!]
    \centering
    \includegraphics[width=1\linewidth]{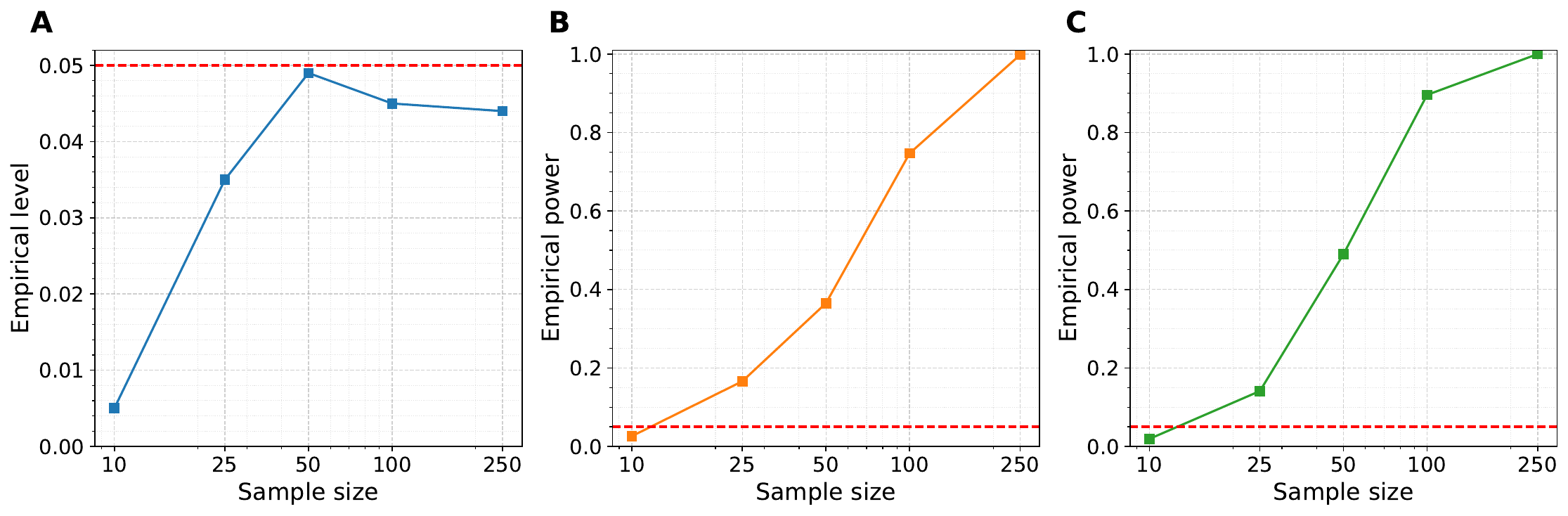}
    \caption{Empirical level/power of $\Phi_{\beta,p}^\ast$ in \hyperref[item: scenario A]{Scenarios~A--C} over $1,\!000$ test repetitions and using $\beta = 0.1$ and $p = 2$.}\label{fig: Meshes test results}
\end{figure}
The empirical rejection rates of $\Phi_{\beta,p}^\ast$ are plotted in \autoref{fig: Meshes test results}. It can be seen that under both types of alternatives, the test rejects $H_0$ reliably for sample sizes $n = 100$ or $n = 250$. In~\ref{item: scenario A}, i.e.\ when $H_0$ holds, the empirical level approaches the significance level $\alpha = 0.05$ as $n$ gets large. For low sample sizes, the empirical level stays below the significance level. For some more extensive simulations using different configurations for the parameters $p$ and $\beta$, we refer to \autoref{subsec:further-sim:test}.
\subsection{SLB Barycenter Classifier}\label{subsec:sim:classifier}
\begin{figure}[b!]
\centering
\includegraphics[width=\linewidth]{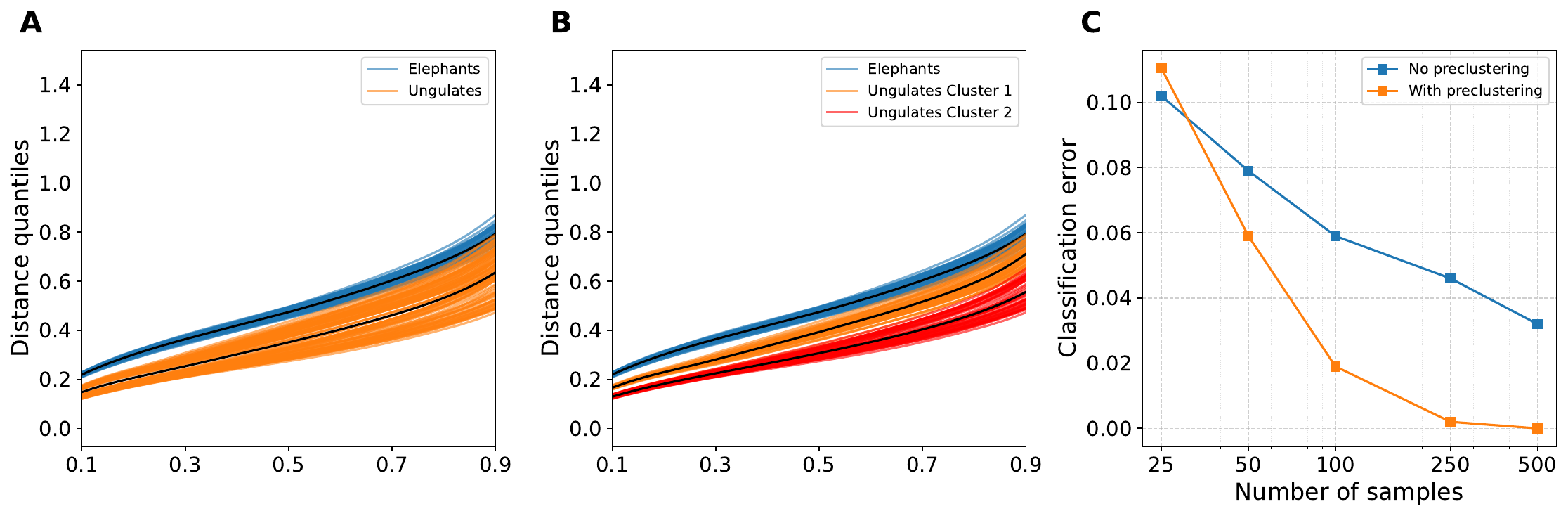}
\caption{\textbf{A:} Quantile functions of Euclidean distances between $n = 500$ randomly sampled points of meshes belonging to $\mathcal{G}_1$ (blue) and $\mathcal{G}_2$ (orange) and the respective barycenter of the quantile functions (black). \textbf{B:} The quantile functions of $\mathcal{G}_2$ have been clustered using $k$-means. The barycenters of the respective subgroups are plotted in black.  \textbf{C:} Performance of the SLB barycenter classifier with $\beta = 0.1$ and $p = 2$ for different sample sizes with and without the usage of $k$-means clustering.}\label{fig: classifier meshes performance and plots}
\end{figure}
We apply the SLB barycenter classifier from \autoref{theorem: barycenter classifier} to the triangulated objects database used in the previous paragraph. We use the classes \textit{camel-gallop}, \textit{horse-gallop} and \textit{elephant-gallop}, which each contain 48 meshes. To illustrate the potential of combining the SLB classifier with $k$-means clustering as commented in \autoref{remark: barycenter classifier}, we consider the following setup: Class $\mathcal{G}_1$ is the set of all mm-spaces corresponding to elephants and $\mathcal{G}_2$ to ungulates, i.e.\ camels and horses. Part A of \autoref{fig: classifier meshes performance and plots} shows a clear separation in the distance distributions of the classes, although $\mathcal{G}_2$ admits a notably higher variation in the distance quantile functions. The results of applying $k$-means clustering with $k = 2$ to $\mathcal{G}_2$ are shown in part B of \autoref{fig: classifier meshes performance and plots}. Notably, the $k$-means clustering recovers the original classes \textit{camel-gallop} and \textit{horse-gallop} almost perfectly. Next, we carry out the SLB classifier for $\mathcal{G}_1$ and $\mathcal{G}_2$ by performing leave-one-out classification, using different sample sizes $n$. In order to assess the impact of including the $k$-means preclustering in the procedure, we implement the SLB barycenter classifier using either no preclusters or clustering $\mathcal{G}_2$ into $k = 2$ groups before classifying the unlabeled space and compare the empirical results over $1,\!000$ repetitions. Part C of \autoref{fig: classifier meshes performance and plots} shows a significantly lower classification error when including the $k$-means, approaching an error of $0$ when $n = 500$.
\section{Further Details: Testing on Structural Protein Data}\label{sec:application} 
In this section, we provide the details about the application of the SLB barycenter test on structural protein data, which was summarized shortly in \autoref{subsec:applications}. 
\begin{figure}[b!]
\centering 
\includegraphics[width=0.66\linewidth]{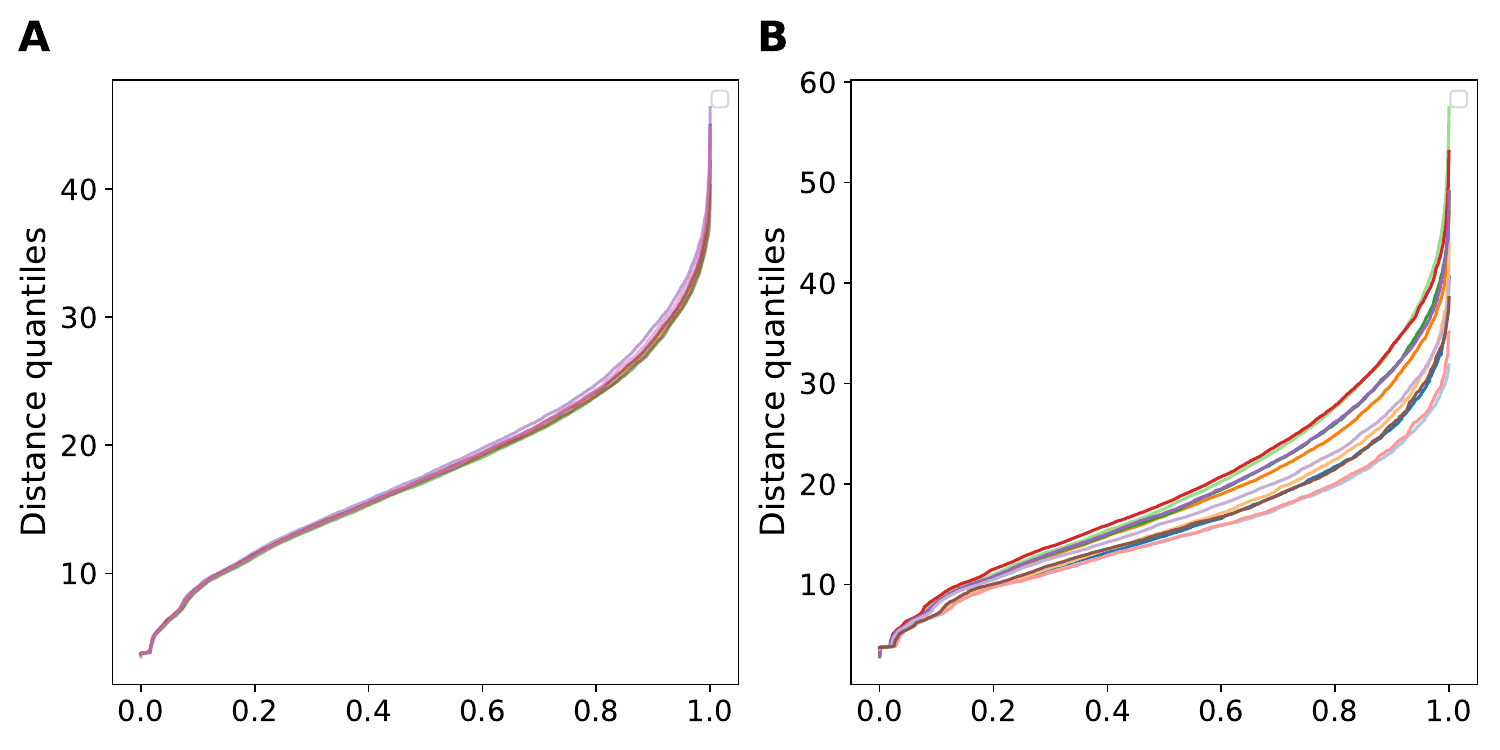}
\caption{\textbf{A:} Empirical quantile functions of Euclidean distances between C-alpha atoms of 7 Immunoglobulin heavy chain variable domains. \textbf{B:} Empirical quantile functions of Euclidean distances between C-alpha atoms of all SCOPe protein domains contained in the 95\%-identity filtered subset of family b.1.1.3.}\label{fig: distance profiles for test}
\end{figure}
\paragraph{Comparison of Immunoglobulin heavy chain variable domains} For the first application of the test $\Phi_{\beta,p}^\ast$, we compare seven Immunoglobulin heavy chain variable domains, which are protein domains responsible for binding antigen. These domains contain $117-123$ C-alpha atoms. As they are not only protein domains from the same SCOPe family (b.1.1.1), but even belong to a smaller biological subgroup of this family, we expect the domains to be structurally very similar. This intuition is reflected by the empirical quantile functions of the distances induced by the C-alpha atoms of the protein domains, see part A of \autoref{fig: distance profiles for test}. We carried out $\Phi_{\beta,p}^\ast$ in this example, using $\beta = 0.01$, $p = 2$, significance level $\alpha = 0.05$ and subsampling $n = 10,25,50,70,100$ of the C-alpha atoms. The empirical level of the test is documented in \autoref{table: power level proteins test}. It is calculated over $1,\!000$ repetitions of the test, for each of which $500$ bootstrap replications were calculated. The empirical level at no point exceeds the significance level, which shows that, in accordance with our prior knowledge about the proteins, there is no evidence against the null hypothesis for these domains.
\begin{table}[t!]
\centering
\caption{Empirical rejection rates of $\Phi_{\beta,p}^{\ast}$ by sample size.}\label{table: power level proteins test}

\begin{tabular}{lccccc}
\toprule
Experiment & $n=10$ & $n=25$ & $n=50$ & $n=70$ & $n=100$ \\
\midrule
A & 0.001 & 0.004 & 0.002 & 0 & 0 \\
B & 0.051 & 0.751 & 1 & 1 & --- \\
\bottomrule
\end{tabular}

\smallskip
\begin{minipage}{\linewidth}
\small
\noindent Notes: Empirical level is based on 1,000 repetitions of the test.
Experiment A compares immunoglobulin heavy-chain variable domains.
Experiment B uses domains from family b.1.1.3.
The symbol --- indicates that the experiment was not performed, since some of the protein domains consist of less than $n$ C-alpha atoms.
\end{minipage}
\end{table}
\paragraph{Comparison of SCOPe family b.1.1.3} Next, we apply the test to all protein domains contained in a representative subset of the SCOPe family b.1.1.3 (i.e.\ the $95\%$ sequence identity filtered subset on the SCOPe website). These domains contain $72-109$ C-alpha atoms and are part of the same superfamily as the Immunoglobulin heavy chain variable domains. In contrast to the previous example however, these domains are not filtered by any further biological subgroup. The empirical quantile functions corresponding to the distances of the C-alpha atoms of these domains are displayed in part B of \autoref{fig: distance profiles for test}. It is visible that the spread between these quantile functions is larger than in the previous example. The empirical power of $\Phi_{\beta,p}^\ast$ over $1,\!000$ repetitions of the test, each using $B = 500$ bootstrap replications, and with parameters $\beta = 0.01$, $\alpha = 0.05$ and $p = 2$ is reported in \autoref{table: power level proteins test}. Already at a small subsample size of $n = 25$, the empirical power of the test exceeds the significance level remarkably, while at $n = 50,70$, the power is one. Thus, the null hypothesis is rejected reliably. This shows the ability of the method to detect deviations in the domains to be compared, even when the protein domains are from the same SCOPe family and thus structurally related.
\section*{Declaration of Generative AI Use and Data Availability} Generative AI (GPT-5.6 Sol) was used in order to check the paper for grammatical and notational errors in the last screening before uploading. \newline
The data used in this paper is available from the SCOPe database at \url{https://scop.berkeley.edu/} and from the triangulated mesh dataset \url{https://people.csail.mit.edu/sumner/research/deftransfer/data.html}.
\bibliographystyle{asa-apalike.bst}
\bibliography{references}
\newpage
\appendix
\section*{Appendix}
\subsection*{Notation}
Throughout the appendix, we are going to use the following notation and conventions: We assume that the random elements $X_1, \ldots, X_n \sim \mu$ live on a common probability space $(\Omega, \mathcal{A}, P)$. Weak convergence in the sense of Hoffmann-J\o rgensen is denoted by $\rightsquigarrow$ \citep[chap.~1.3]{wellner2013weak}. For an arbitrary set $T$, we let $\ell^\infty(T)$ be the space of bounded functions $f: T \to \mathbb{R}$. Furthermore, $\norm{f}_T = \sup_{t \in T} \abs{f(t)}$ stands for the uniform norm. For arbitrary constants $C_{i,1}<C_{i,2},\, i = 1, \ldots,m$, the norm of the product space $\ell^\infty([C_{1,1}, C_{1,2}]) \times \ldots \times \ell^\infty([C_{m,1}, C_{m,2}])$ will be chosen and denoted by
\begin{equation} \label{eq: norm on product space}
    \norm{f}_{[C_1,C_2]} \coloneqq \sum_{i = 1}^m \norm{f_i}_{[C_{i,1}, C_{i,2}]}, \qquad f \in \ell^\infty([C_{1,1}, C_{1,2}]) \times \ldots \times \ell^\infty([C_{m,1}, C_{m,2}]).
\end{equation}
For $p \geq 1$ and probability measures $\mu,\nu \in \mathcal{P}(\mathbb{R})$ with quantile functions $F_\mu^{-1}, F_{\nu}^{-1}$, we write \begin{equation*}
\mathcal{W}_p(\mu,\nu) = \left( \int_0^1 \abs{F_\mu^{-1} (t) - F_\nu^{-1}(t)}^p \diff t \right)^{1/p}
\end{equation*}
for the $p$-Wasserstein distance between the measures, see \citet[chap.~2.2]{villani2021topics}. When $X \subset \mathbb{R}$ and $p \geq 1$, we denote by $\lambda$ the Lebesgue measure on $X$ and $L^p(X) = L^p(X, \mathcal{B}(X), \lambda)$ the space of measurable and $p$-integrable functions equipped with the norm
\begin{equation*}
    \norm{f}_{L^p(X)} = \left( \int_X \abs{f(x)}^p \diff \lambda(x) \right)^{1/p}
\end{equation*}
for any $f \in L^p(X)$. Moreover, for an integrable function $f$ on a set $X$, we often use the convention
\begin{equation*}
    \int_X f = \int_X f(x)\diff x.
\end{equation*}
For sets $A_1, A_2, \ldots \subset X$ for some arbitrary set $X$, write $A_n \nearrow A \subset X$, if the sequence is nested, i.e.\ $A_1 \subseteq A_2 \subseteq \ldots$, and if $A = \cup_{n \in \mathbb{N}} A_n$. Analogously, we write $A_n \searrow A$ if $A_1 \supseteq A_2 \supseteq \ldots$ and $\cap_{n \in \mathbb{N}} A_n = A$.
\section{Joint Weak Convergence of the Quantile and Barycenter Process}\label{section: JWC of DoD process}
Throughout the following, we assume the setting of \autoref{subsection: the proposed approach}. The goal of this section is to derive the joint weak convergence of the quantile processes $\mathbb{U}_{i,n}^{-1} = \sqrt{n}(U_{i,n}^{-1} - U_i^{-1})$, $1 \leq i \leq m$ and the barycenter process \begin{equation}\label{eq: barycenter process}
\lambda_{p,n} \coloneqq \sqrt{n} ( \Lambda_p(\Uninv) - \Lambda_p\left(\Uinv \right) ), 
\end{equation}as this is the pillar for any distributional convergence results given in \autoref{section: theory: limit distributions} and \autoref{sec: statistical methodology}. For $p \geq 2$, we will show (\autoref{lemma: properties barycenter}(a) ahead), that $\Lambda_p$ has derivative 
\begin{equation*}
    \psi_x^p: \mathbb{R}^m \to \mathbb{R}, \, y \mapsto \begin{cases}
        \frac{\sum_{i=1}^m |x_i - \Lambda_p(x)|^{p-2}y_i}{\sum_{i=1}^m |x_i - \Lambda_p(x)|^{p-2}}  &\text{if } \sum_{i=1}^m |x_i- \Lambda_p(x)|>0 \\
        \Lambda_p(y) &\text{else},
    \end{cases}
\end{equation*}
at each $x \in \mathbb{R}^m$ for which there exists some $i \neq j$ such that $x_i \neq x_j$. For $p = 1$, we need to introduce some further notation. For $l \in \lbrace 1, \ldots, m \rbrace$, let $\nu_l : \mathbb{R}^m \to \mathbb{Z}$ be defined as
\begin{equation*}
    \nu_l(x) \coloneqq \nu_{l,x} = l - \# \lbrace i \in \lbrace 1,\ldots, m \rbrace : x_i < x_{(l)} \rbrace,
\end{equation*}
i.e.\ the number of values in $x$ that are strictly smaller than its $l$-th order statistic subtracted from $l$. Moreover, we define $J_l: \mathbb{R}^m \to \mathcal{P}(\lbrace 1, \ldots, m \rbrace)$, where $\mathcal{P}(A)$ is the power set of $A$, via
\begin{equation*}
    J_l(x) \coloneqq J_{l,x} = \lbrace i \in \lbrace 1, \ldots, m \rbrace : x_i = x_{(l)} \rbrace,
\end{equation*}
i.e.\ all indices of $x$ that tie with its $l$-th order statistic. Letting $J \subseteq \lbrace 1, \ldots, m \rbrace$, we denote by $\pi^J : \mathbb{R}^m \to \mathbb{R}^{|J|}$ the coordinate projection of $x \in \mathbb{R}^m$ onto the coordinates $(x_i)_{i \in J}$. Based on all the above auxiliary maps, we can now define
\begin{equation}\label{eq: l-th order quantile process}
    \Psi_{l,x}: \mathbb{R}^m \to \mathbb{R}, \, y \mapsto \left( \pi^{J_{l,x}} (y) \right)_{(\nu_{l,x})}
\end{equation}
which is the $\nu_{l,x}$-th order statistic of $\pi^{J_{l,x}} (y)$. Note that if for some $x,y \in \mathbb{R}^m$, both have same ordering in their coordinates, i.e. $x_i < x_j$ if $y_i < y_j$, then $\Psi_{l,x}(y) = y_{(l)}$. Furthermore, it always holds that $\Psi_{l,x}(x) = x_{(l)}$. Let then
\begin{equation*}
        \psi_x^1(y) = \frac{1}{2} \left( \Psi_{\lfloor \frac{m+1}{2} \rfloor,x}(y) + \Psi_{\lceil \frac{m+1}{2} \rceil,x}(y) \right).
\end{equation*}
For notational convenience, we will write for any $p \geq 1$
\begin{equation*}
    \psi_{(f_1, \ldots, f_m)}^p(g_1, \ldots , g_m)(t) \coloneqq \psi_{(f_1(t), \ldots, f_m(t))}^p(g_1(t), \ldots , g_m(t)),
\end{equation*}
for functions $f_1,\ldots, f_m, g_1, \ldots, g_m : \mathbb{R} \to \mathbb{R}$ and $t \in \mathbb{R}$. Moreover, we will denote $\mathbb{U}_n^{-1}(t) = (\mathbb{U}_{1,n}^{-1}(t), \ldots, \mathbb{U}_{m,n}^{-1}(t))$.
\begin{theorem}[Weak convergence of the barycenter process]\label{theorem: convergence of barycenter process}
    Let $p=1$ or $p \geq 2$ and $\lambda_{p,n}$ as in \eqref{eq: barycenter process}.
    \begin{enumerate}
        \item If $\beta > 0$ and \autoref{cond 1.2} is satisfied, then  
        \begin{equation*}
            \left( \mathbb{U}_n^{-1}, \lambda_{p,n} \right) \rightsquigarrow \left(\mathbb{G}, \psi_{\Uinv}^p(\mathbb{G}) \right) \text{ in }\ell^\infty([\beta,1-\beta])^m \times L^1([\beta,1-\beta]).
        \end{equation*}
        Here $\mathbb{G} = (\mathbb{G}_1, \ldots, \mathbb{G}_m)$ is the Gaussian process defined in \autoref{theorem: Convergence under the Null}.
        \item If \autoref{cond 1.3} is satisfied with $\theta = 1$, it further holds that
        \begin{equation*}
            \left( \mathbb{U}_n^{-1}, \lambda_{p,n} \right) \rightsquigarrow \left(\mathbb{G}, \psi_{\Uinv}^p(\mathbb{G}) \right) \text{ in } L^1((0,1))^{m+1}
        \end{equation*}
        for a Gaussian process $\mathbb{G}$ as specified in the first part.
    \end{enumerate}
\end{theorem}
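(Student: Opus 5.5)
We prove the theorem in two stages. First we establish joint weak convergence of the quantile processes $\mathbb{U}_n^{-1}\rightsquigarrow\mathbb{G}$. Then we transfer this to the barycenter process by a (non-standard) functional delta-method argument based on pointwise directional differentiability of $\Lambda_p$.

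\textbf{Step 1: joint convergence of the empirical distance processes.} Each $\sqrt{n}(U_{i,n}-U_i)$ is a $U$-process indexed by the class $\{(a,a')\mapsto \mathbbm{1}(d_i(a,a')\le x): x\in[C_{i,1},C_{i,2}]\}$. This class is VC, and the kernels are bounded, so the results of \citet{arcones1994u} apply. Via the Hoeffding decomposition, the process equals the empirical process of the Hájek projection $a\mapsto 2(\mu_i(\{a': d_i(a,a')\le x\})-U_i(x))$ plus a degenerate remainder. This remainder is $o_P(1)$ uniformly. The projections for $i=1,\dots,m$ are functions of the same i.i.d.\ vector $X_k=(X_{1,k},\dots,X_{m,k})$. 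Hence the multivariate CLT for Donsker classes gives joint convergence in $\prod_i\ell^\infty([C_{i,1},C_{i,2}])$ to a Gaussian process with covariance $\gammaK{i,j}$. The cross term in $\gammaK{i,j}$ involves $\mu_{i,j}$, as stated.

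\textbf{Step 2: quantile inversion.} Under \autoref{cond 1.2}, the map $\invmap$ is Hadamard differentiable at $U_i$ tangentially to continuous functions, with derivative $h\mapsto -h(U_i^{-1})/u_i(U_i^{-1})$ \citep[Lemma 3.9.23]{wellner2013weak}. Applying the delta method componentwise yields $\mathbb{U}_n^{-1}\rightsquigarrow\mathbb{G}$ in $\ell^\infty([\beta,1-\beta])^m$ with covariance $\gammaG{i,j}$.

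For part (b) we instead work in $L^1((0,1))$. We show convergence of finite-dimensional marginals on $[\delta,1-\delta]$ by (a). We then control the tails via $\lim_{\delta\to0}\limsup_n \mathbb{E}\int_{(0,\delta)\cup(1-\delta,1)}|\mathbb{U}_{i,n}^{-1}|=0$. For this we use the identity $\int_0^1|F_n^{-1}-F^{-1}|=\int|F_n-F|$, restricted to the tails, together with the moment bound $\mathbb{E}|U_{i,n}(x)-U_i(x)|\lesssim \sqrt{U_i(x)(1-U_i(x))/n}$ for $U$-statistics. The growth condition on $(U_i^{-1})'$ with $\theta=1$ ($\gamma>-3/2$) makes $\int \sqrt{t(1-t)}\,(U_i^{-1})'(t)\,dt$ finite, which gives the required integrability. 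This follows the argument of \citet{weitkamp2024distribution}.

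\textbf{Step 3: the barycenter map.} We write $\lambda_{p,n}(t)=\sqrt n[\Lambda_p(\Uinv(t)+n^{-1/2}\mathbb{U}_n^{-1}(t))-\Lambda_p(\Uinv(t))]$. Two properties of $\Lambda_p$ are needed, and we prove them as a lemma.
\begin{enumerate}
\item[(i)] $\Lambda_p$ is translation-equivariant, positively homogeneous and Lipschitz: $|\Lambda_p(x)-\Lambda_p(y)|\le \max_i|x_i-y_i|$. This follows from monotonicity of $\Lambda_p$ together with equivariance.
\item[(ii)] For each $x$, $\Lambda_p$ has directional derivative $\psi_x^p$, with $\sqrt n(\Lambda_p(x+h_n/\sqrt n)-\Lambda_p(x))\to\psi^p_x(h)$ whenever $h_n\to h$. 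For $p\ge2$ and non-constant $x$, this follows from the implicit function theorem applied to $\sum_i|x_i-y|^{p-2}(x_i-y)=0$. When $x$ has all coordinates equal, it follows from homogeneity. For $p=1$, we use the combinatorial description of order statistics under small perturbations: ties within $J_{l,x}$ are resolved by $y$, which yields $\Psi_{l,x}$.
\end{enumerate}
Since the derivative is only pointwise, not uniform in $t$, we argue through Skorokhod/almost-sure representations. We take versions with $\tilde{\mathbb{U}}_n^{-1}\to\tilde{\mathbb{G}}$ uniformly (respectively in $L^1$). Then pointwise convergence of the integrand follows from (ii), and domination by $\max_i|\tilde{\mathbb{U}}^{-1}_{i,n}(t)|$ follows from (i). For part (b) we use generalized dominated convergence (Vitali), since the dominating functions converge in $L^1$. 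This gives $\lambda_{p,n}\to\psi^p_{\Uinv}(\mathbb{G})$ in $L^1$ jointly with the quantile processes. Measurability of $t\mapsto\psi^p_{\Uinv(t)}(\mathbb{G}(t))$ holds because it is built from Borel maps of $t$.

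\textbf{Main obstacle.} We expect Step 3 to be the main difficulty, specifically the directional differentiability for $p=1$ at points with ties (where $J_{l,x}$ has several elements) and the fact that the limit lives only in $L^1$, not $\ell^\infty$. The tie sets can vary with $t$ in a non-regular way, which is why uniform convergence fails. The plan is to avoid uniformity entirely through the almost-sure representation plus dominated convergence, which the Lipschitz bound in (i) makes available. The secondary difficulty is the tail control in Step 2 for $\beta=0$.
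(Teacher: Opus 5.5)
Your Steps 1 and 2(a) match the paper: the Hoeffding projection with joint fidi convergence and marginal tightness, followed by the delta method for the inversion map. Step 3, however, takes a genuinely different route.

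The paper's proof of this theorem uses no representation theorem. It shows $\|\lambda_{p,n}-\psi^p_{\Uinv}(\mathbb{U}_n^{-1})\|_{L^1}\convP 0$ by splitting $[\beta,1-\beta]$ into sets on which the tie pattern of $\Uinv(t)$ is fixed and its smallest positive deviation exceeds $\delta$. On these sets, the uniform LLN of \autoref{lemma: LLN for U quantiles} guarantees that, with probability tending to one, one of two things holds. For $p=1$, the exact identity $\lambda_n^{(l)}=\Psi_{l,\Uinv}(\mathbb{U}_n^{-1})$ holds. For $p\ge 2$, the expansion of \autoref{lemma: properties barycenter}(d), uniform on compacts away from the diagonal, applies. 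The shrinking complements are controlled by the Lipschitz bound together with the limit $\mathbb{G}$, and the ordinary continuous mapping theorem concludes.

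Your argument is in effect the extended continuous mapping theorem for a map that is only pointwise Hadamard directionally differentiable. For $h_n\to h$, property (ii) gives pointwise convergence in $t$ and property (i) gives domination by $\sup_k\|h_k\|_\infty$. This handles $p=1$ and $p\ge 2$ in one stroke, and it dispenses with the LLN, the tie-pattern decomposition and the uniform expansion. The paper's route, by contrast, stays with in-probability statements and exhibits the asymptotic equivalence with $\psi^p_{\Uinv}(\mathbb{U}_n^{-1})$ directly. You recover that equivalence too, since $\psi^p_{\Uinv}$ is Lipschitz.

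For part (b), the paper obtains $L^1((0,1))$-convergence of $\mathbb{U}_n^{-1}$ from $L^1$-Hadamard differentiability of the inversion map, not from tail negligibility. Your tail route works, but the area identity does not localise as you state it. The quantity $\int_0^\delta|U_{i,n}^{-1}-U_i^{-1}|$ is bounded by $\int|U_{i,n}-U_i|$ over the random set $\{x:\min(U_i(x),U_{i,n}(x))<\delta\}$. To control this you must also use $\sqrt{n}\|U_{i,n}-U_i\|_\infty=O_P(1)$ and $\|U_{i,n}-U_i\|_\infty\to 0$. Given compact support, this alone already suffices, without the growth condition.

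Finally, $L^1$-convergence yields pointwise convergence only along subsequences. So the Pratt/Vitali step in your Step 3(b) should be run along sub-subsequences.
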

Before giving the proof of \autoref{theorem: convergence of barycenter process}, we provide some results on the weak convergence of the joint process $\mathbb{U}_n^{-1}$.
\subsection{Joint Weak Convergence of the Quantile Process}\label{subsec: joint convergence of u quantile processes}
\begin{lemma}\label{lemma: joint convergence inverted u process}
    Let $\beta > 0$ and assume \autoref{cond 1.2}. It then holds that 
    \begin{equation*}
        \mathbb{U}_n^{-1} = (\mathbb{U}_{1,n}^{-1}, \ldots, \mathbb{U}_{m,n}^{-1}) \rightsquigarrow \mathbb{G} \text{ in }\ell^\infty([\beta,1-\beta])^m,
    \end{equation*}
    where $\mathbb{G} = (\mathbb{G}_1, \ldots, \mathbb{G}_m)$ is the Gaussian process as defined in \autoref{theorem: Convergence under the Null}. If further, for every $i = 1, \ldots, m$, $U_i$ has compact support on an interval $[0,D_i]$ and is continuously differentiable on its support with strictly positive density on $(0,D_i)$, then we obtain 
    \begin{equation*}
        \mathbb{U}_n^{-1} \rightsquigarrow \mathbb{G} \text{ in }L^1((0,1))^m.
    \end{equation*}
\end{lemma}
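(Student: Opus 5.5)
The plan is to first prove joint weak convergence of the $m$-dimensional $U$-process $\mathbb{U}_n \coloneqq (\sqrt{n}(U_{i,n}-U_i))_{i=1}^m$ in $\prod_i \ell^\infty([C_{i,1},C_{i,2}])$, and then transfer it to the quantile processes through the functional delta method applied to $\invmap$.

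\textbf{Step 1: Hoeffding decomposition.} For each $i$, write
\[
U_{i,n}(t)-U_i(t)=\frac{2}{n}\sum_{k=1}^n h_{i,t}(X_{i,k}) + R_{i,n}(t),
\qquad h_{i,t}(a)=\int \mathbbm{1}(d_i(a,b)\le t)\diff\mu_i(b)-U_i(t),
\]
where $R_{i,n}$ is a degenerate $U$-process. The kernel class $\{(a,b)\mapsto \mathbbm{1}(d_i(a,b)\le t): t\in\mathbb{R}\}$ is a VC-subgraph class, since the sets are nested in $t$. Hence the maximal inequalities for degenerate $U$-processes \citep{arcones1994u} give $\sqrt{n}\,\|R_{i,n}\|_\infty = O_P(n^{-1/2})$.

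\textbf{Step 2: joint limit of the linear part.} The leading term is an i.i.d.\ empirical process over the vectors $X_k=(X_{1,k},\dots,X_{m,k})\sim\mu$. It is indexed by the class $\bigcup_i\{x\mapsto h_{i,t}(x_i)\}$, which is a finite union of uniformly bounded monotone-in-$t$ classes. Such a class is Donsker, so
\[
\mathbb{U}_n\rightsquigarrow \mathbb{K}
\]
jointly in the product space. Here $\mathbb{K}$ is a centered Gaussian process with covariance $4\,\Cov(h_{i,x}(X_{i,1}),h_{j,y}(X_{j,1}))$. Expanding this covariance under the coupling $\mu_{i,j}$ gives exactly $\gammaK{i,j}(x,y)$.

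\textbf{Step 3: inversion by the delta method.} Under \autoref{cond 1.2}, the map $\invmap$ is Hadamard differentiable at $U_i$, tangentially to continuous functions, as a map into $\ell^\infty([\beta,1-\beta])$ \citep[Lemma 3.9.23]{wellner2013weak}. Its derivative is $\alpha\mapsto -\alpha\circ U_i^{-1}/u_i\circ U_i^{-1}$. Because the map acts coordinatewise, it is also Hadamard differentiable on the product space. The functional delta method then gives $\mathbb{U}_n^{-1}\rightsquigarrow\mathbb{G}$ with $\mathbb{G}_i=-\mathbb{K}_i\circ U_i^{-1}/u_i\circ U_i^{-1}$, whose covariance is \eqref{eq:Covariance Operator G}. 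One technical point: $U_{i,n}$ must be restricted to $[C_{i,1},C_{i,2}]$ before inverting, and the inverse on $[\beta,1-\beta]$ depends only on that window with probability tending to one. This holds because $\|U_{i,n}-U_i\|_\infty\to0$ almost surely and the $\varepsilon$-margin is strictly positive.

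\textbf{Step 4: the $L^1((0,1))$ statement.} Here the approach is to combine Step 3 with a tail-negligibility argument. For every fixed $\beta>0$, Step 3 gives convergence on $[\beta,1-\beta]$ with the inherited uniform continuity. It therefore suffices to show
\[
\lim_{\beta\downarrow0}\limsup_{n}\ \mathbb{P}\Bigl(\int_{(0,\beta)\cup(1-\beta,1)}|\mathbb{U}_{i,n}^{-1}|>\eta\Bigr)=0
\quad\text{for every } \eta>0,
\]
together with $\int_{(0,\beta)\cup(1-\beta,1)}|\mathbb{G}_i|\to0$. The standard approximation theorem \citep[Theorem 4.2]{billingsley} then yields convergence in $L^1$.

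\textbf{Main obstacle: the tail bound.} The key tool is the identity
\[
\int_0^\beta |U_{i,n}^{-1}-U_i^{-1}|\diff t=\int_{A_\beta}|U_{i,n}(x)-U_i(x)|\diff x,
\]
which expresses the $L^1$ distance between quantile functions as the area between the distribution functions; here $A_\beta$ is the region bounded by the graphs. Since the support $[0,D_i]$ is compact, this yields the bound
\[
\int_0^{\beta}|\mathbb{U}_{i,n}^{-1}|\le \int_{\{x:\ \min(U_i,U_{i,n})(x)\le\beta\}}\sqrt{n}\,|U_{i,n}-U_i|(x)\diff x .
\]
By Steps 1--2, $\mathbb{E}\bigl[\sqrt{n}\,|U_{i,n}(x)-U_i(x)|\bigr]\lesssim\sqrt{U_i(x)(1-U_i(x))}+n^{-1/2}$, and the $n^{-1/2}$ term is harmless on the bounded support. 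The remaining difficulty is to control the random region $\{U_{i,n}\le\beta\}$. The first attempt would be to replace it by $\{U_i\le 2\beta\}$ using uniform consistency, and then to let $\beta\to0$. This would use $\int_0^{D_i}\sqrt{U_i(1-U_i)}\diff x<\infty$, which holds trivially on a compact support, and no growth condition on $(U_i^{-1})'$. This is consistent with the lemma needing only compact support and positive density.
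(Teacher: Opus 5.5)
Your proof is correct. For the $\ell^\infty([\beta,1-\beta])^m$ statement it is essentially the paper's argument: the paper also reduces $\mathbb{U}_{i,n}$ to its Hoeffding projection via \citet[Corollary 4.2(b)]{arcones1993limit}. It gets joint convergence to $\mathbb{K}$ through marginal tightness plus a multivariate CLT for the finite-dimensional laws, which amounts to your Donsker argument on the product space. It then applies the delta method with the componentwise Hadamard derivative of $\invmap$ on restrictions to $[C_{i,1},C_{i,2}]$.

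The genuine difference is the $L^1((0,1))^m$ statement. The paper does not truncate. It cites Lemma A.18 of \citet{weitkamp2024distribution}, which makes $\invmap$ Hadamard differentiable as a map into $L^1((0,1))^m$ at $\mathbf{U}$, tangentially to $\mathcal{C}([0,D_1])\times\cdots\times\mathcal{C}([0,D_m])$. It then reruns the delta method with the $U$-process limit on the full supports $[0,D_i]$.

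You instead combine the trimmed result for every $\beta>0$ with a tail estimate and Billingsley's approximation theorem. Using the trimmed result is legitimate, since the hypotheses imply \autoref{cond 1.2} for each $\beta$. Your tail estimate does close, as follows:
\begin{itemize}
\item Fubini together with $F^{-1}(t)\le x\iff t\le F(x)$ gives $\int_0^\beta|F^{-1}-G^{-1}|\diff t=\int\bigl(\min\{\max(F,G),\beta\}-\min(F,G)\bigr)_+\diff x$. So your displayed inequality holds. The ``identity over $A_\beta$'' as you wrote it is imprecise, but only the inequality is needed.
\item The pointwise moment bound is best taken from the exact $U$-statistic variance, $n\Var(U_{i,n}(x))\le 4U_i(x)(1-U_i(x))$, rather than from the $O_P$ statements of Steps 1--2.
\item On the event $\{\norm{U_{i,n}-U_i}_\infty\le\beta\}$, this gives $\limsup_n$ of the tail probability at most $2\sqrt{2\beta}\,D_i/\eta$.
\item The same computation combined with Fatou gives $\mathbb{E}\int_0^1|\mathbb{G}_i|\le D_i$, which handles the tails of the limit.
\end{itemize}

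Your route is elementary and self-contained, and it shows that only the boundedness of the supports controls the tails. The paper's route is shorter given the external lemma, and it yields a Hadamard-differentiability statement that can be reused elsewhere.
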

In order to prove \autoref{lemma: joint convergence inverted u process}, we derive a distributional limit for the joint process $\mathbb{U}_n = (\mathbb{U}_1, \ldots, \mathbb{U}_m)$. To this end, note that for each $i=1, \ldots, m$, $U_{i,n}(t)$ (and similarly $U_i(t)$) can be rewritten as 
\begin{equation*}
    U_{i,n}(t) = \widetilde{U}_{i,n}(f_{i,t}) \coloneqq \frac{2}{n(n-1)} \sum_{1 \leq k < l \leq n} f_{i,t}(X_{k,i},X_{l,i}),
\end{equation*}
where $f_{i,t}:\mathcal{X}_i^2 \to \mathbb{R}$ is defined via $f_{i,t}(x,y) = \mathbbm{1} \!(d_i(x,y) \leq t)$. Hence, they are $U$-statistics with kernel function $f_{i,t}$ \citep[chap.~5]{serfling1980approximation} and $\mathbb{U}_{i,n}$ constitutes a $U$-process over the function class
\begin{equation} \label{eq: function class u-process}
    \mathcal{F}_i = \left\{  f_{i,t} \, : \,  t \in [C_{i,1}, C_{i,2}]   \right\},
\end{equation}
where the constants $C_{i,1}, C_{i,2}$ will be chosen depending on the context. This connection was also exploited in \citet[Lemma A.7]{weitkamp2024distribution}, in the proof of which it is shown that $\mathcal{F}_i$ is a \emph{VC-subgraph class} \citep[chap.~2.6]{wellner2013weak}, enabling the derivation of the limit distribution of $\mathbb{U}_{i,n}$. In pursuit of extending these results from weak convergence of $\mathbb{U}_{i,n}, i= 1, \ldots, m$ to weak convergence of $\mathbb{U}_n$, we further use the following characterization of joint weak convergence: Let $T_1, \ldots, T_m$ be arbitrary sets and for $i = 1, \ldots,m$ and $n \in \mathbb{N}$, let $X_{i,n} : \Omega_n \to \ell^\infty(T_i)$. Then by a straightforward extension of \citet[Theorem 1.5.4]{wellner2013weak} to product spaces of arbitrary finite dimension, it holds that 
\begin{equation*}
    \left( X_{1,n}, \ldots, X_{m,n} \right) \rightsquigarrow \left( X_1, \ldots, X_m \right) \text{ in }\ell^\infty(T_1) \times \ldots \times \ell^\infty(T_m)
\end{equation*}
for tight random elements $X_1, \ldots, X_m$, if and only if the sequences $(X_{i,n})_{n \in \N}$ are asymptotically tight for each $i = 1, \ldots, m$ and the joint finite dimensional distributions converge, i.e.\ for each set of indices $t_{i,1}, \ldots, t_{i,k_i} \in T_i$ it holds that 
\begin{equation}
\label{eq: characterization JWC}
\begin{aligned}
&\left(
X_{1,n}(t_{1,1}), \ldots, X_{1,n}(t_{1,k_1}), \ldots,
X_{m,n}(t_{m,1}), \ldots, X_{m,n}(t_{m,k_m})
\right) \\
&\qquad \rightsquigarrow
\left(
X_{1}(t_{1,1}), \ldots, X_{1}(t_{1,k_1}), \ldots,
X_{m}(t_{m,1}), \ldots, X_{m}(t_{m,k_m})
\right).
\end{aligned}
\end{equation}
Hence, weak convergence in $m$-product spaces can be characterized by tightness of the marginals and convergence of certain random vectors in the Euclidean space.
\begin{lemma}\label{lemma: joint convergence of U-processes}
    For each $1 \leq i \leq m$, let $0 \leq C_{i,1} < C_{i,2} < \infty$ and assume that $U_i$ is continuously differentiable on $[C_{i,1},C_{i,2}]$ and denote $\mathbb{U}_{i,n} = \sqrt{n}(U_{i,n}-U_i)$. Then it holds that 
    \begin{equation*}
        \mathbb{U}_n = (\mathbb{U}_{1,n}, \ldots, \mathbb{U}_{m,n}) \rightsquigarrow \mathbb{K} \text{ in }\ell^\infty([C_{1,1}, C_{1,2}]) \times \ldots \times \ell^\infty([C_{m,1}, C_{m,2}]),
    \end{equation*}
    where $\mathbb{K} = (\mathbb{K}_1, \ldots, \mathbb{K}_m)$ is an m-dimensional, centered Gaussian process with covariance structure defined by 
    \begin{equation*}
        \Cov(\mathbb{K}_i(t), \mathbb{K}_j(s)) = \gammaK{i,j}(t,s), \qquad t \in [C_{i,1}, C_{i,2}], s \in [C_{j,1},C_{j,2}],
    \end{equation*}
    with $\gammaK{i,j}(t,s)$ as defined in \autoref{theorem: Convergence under the Null}.
\end{lemma}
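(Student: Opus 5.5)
We use the characterization of joint weak convergence in \eqref{eq: characterization JWC}. The statement then splits into two parts: asymptotic tightness of each marginal $\mathbb{U}_{i,n}$, and convergence of the joint finite-dimensional distributions to those of $\mathbb{K}$.

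\emph{Marginal tightness.} For fixed $i$, $\mathbb{U}_{i,n}$ is a $U$-process indexed by the class $\mathcal{F}_i$ in \eqref{eq: function class u-process}. As noted above, the proof of \citet[Lemma A.7]{weitkamp2024distribution} shows that $\mathcal{F}_i$ is a VC-subgraph class with constant envelope $1$. That lemma also yields $\mathbb{U}_{i,n}\rightsquigarrow \mathbb{K}_i$ in $\ell^\infty([C_{i,1},C_{i,2}])$ when $U_i$ is continuously differentiable there; alternatively, the uniform CLT for $U$-processes over VC classes \citep{arcones1994u} gives the same conclusion. The limit is tight, so each marginal sequence is asymptotically tight. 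Only the $i$-th coordinates $X_{1,i},\ldots,X_{n,i}$ enter here, and these are i.i.d.\ from $\mu_i$, so the marginal theory applies unchanged.

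\emph{Finite-dimensional convergence.} Fix points $t_{i,1},\ldots,t_{i,k_i}$ for each $i$. Each $U_{i,n}(t_{i,r})$ is a $U$-statistic of the i.i.d.\ vectors $X_k=(X_{k,1},\ldots,X_{k,m})\sim\mu$, with bounded kernel $h_{i,r}(x,y)=f_{i,t_{i,r}}(x_i,y_i)$ on $(\bigotimes_j\mathcal{X}_j)^2$. We apply the Hoeffding decomposition \citep[chap.~5]{serfling1980approximation}:
\begin{equation*}
\sqrt{n}\bigl(U_{i,n}(t)-U_i(t)\bigr)=\frac{2}{\sqrt{n}}\sum_{k=1}^n \bigl(g_{i,t}(X_{k,i})-U_i(t)\bigr)+o_P(1),
\end{equation*}
where $g_{i,t}(a')=\int \mathbbm{1}(d_i(a,a')\le t)\diff\mu_i(a)$. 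The remainder is a degenerate $U$-statistic with bounded kernel, so its variance is $O(1/n)$.

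The stacked vector of linear terms is a normalized sum of i.i.d.\ bounded random vectors. The multivariate CLT together with Slutsky's lemma therefore gives a Gaussian limit. Its covariance between the $(i,t)$ and $(j,s)$ entries is
\begin{equation*}
4\,\Cov\bigl(g_{i,t}(X_{1,i}),g_{j,s}(X_{1,j})\bigr)=4\int g_{i,t}(a')g_{j,s}(b')\diff\mu_{i,j}(a',b')-4U_i(t)U_j(s),
\end{equation*}
since $(X_{1,i},X_{1,j})\sim\mu_{i,j}$. This is exactly $\gammaK{i,j}(t,s)$, so the limit agrees with the finite-dimensional laws of a centered Gaussian $\mathbb{K}$ with that covariance.

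\emph{Existence of $\mathbb{K}$ as a tight element.} The joint fidis are consistent, and each marginal limit is tight. By \citet[Theorem 1.5.4]{wellner2013weak} extended to finitely many factors, this yields a tight joint limit whose marginals are the $\mathbb{K}_i$, which identifies $\mathbb{K}$.

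The main obstacle is the marginal tightness step, and in particular transferring the VC/$U$-process equicontinuity argument to an arbitrary range $[C_{i,1},C_{i,2}]$. Here the continuous differentiability of $U_i$ supplies the needed $L^2$-continuity of $t\mapsto f_{i,t}$. The coupling $\mu$ enters only through the covariance computation, which is routine once the Hoeffding projection is written in terms of the joint vectors $X_k$.
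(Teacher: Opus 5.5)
Your argument is correct and uses the same ingredients as the paper: the marginal $U$-process limit from \citet{weitkamp2024distribution}, the product-space characterization \eqref{eq: characterization JWC}, and the Hoeffding projection plus multivariate CLT for the covariance. You organize the projection step differently.

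The paper first uses \citet[Corollary 4.2(b)]{arcones1993limit} to replace each $\mathbb{U}_{i,n}$ by its H\'ajek projection $\widehat{\mathbb{U}}_{i,n}$ \emph{uniformly} in $t$. It then applies \eqref{eq: characterization JWC} to the i.i.d.\ empirical processes $\widehat{\mathbb{U}}_n$, whose marginal tightness it inherits from $\mathbb{U}_{i,n}$ through that uniform $o_p(1)$. Finally it transfers back by Slutsky in the product norm.

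You apply \eqref{eq: characterization JWC} to $\mathbb{U}_n$ itself. The projection is then needed only at finitely many fixed points, where the degenerate remainder is handled by an elementary variance bound. The paper's route already relies on the marginal $U$-process limit for tightness, so its uniform projection step is dispensable for this lemma, along with the image-admissible Suslin check it requires. Your version is therefore the more economical one. What the paper gains is the uniform asymptotically linear representation \eqref{eq: U process via Hoeffding}. That representation exhibits $\mathbb{K}$ as twice a $\mu$-Brownian bridge indexed by $x\mapsto \int\mathbbm{1}(d_i(a,x_i)\le t)\diff\mu_i(a)$.

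One small correction to your closing remark. Asymptotic tightness of a $U$-process over a VC class with constant envelope does not depend on continuity of $U_i$. What continuity of $U_i$ buys is continuity in $t$ of the sample paths of $\mathbb{K}_i$, since its intrinsic semimetric is at most $2\abs{U_i(t)-U_i(s)}^{1/2}$. That is what the subsequent delta-method step, tangential to $\mathcal{C}([C_{i,1},C_{i,2}])$, needs.
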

\begin{proof}
    By \citet[Corollary E.8]{weitkamp2024distribution} and the fact that $\mathcal{F}_i, i=1, \ldots ,m$ as defined in~\eqref{eq: function class u-process} are permissible VC-subgraph classes (see proof of \citet[Lemma A.7]{weitkamp2024distribution}), it holds that $\mathbb{U}_{i,n} \rightsquigarrow \mathbb{K}_i$ in $\ell^\infty([C_{i,1},C_{i,2}])$ for each $i=1, \ldots, m$. As permissibility of a function class implies that it is also image admissible Suslin \citep[Lemma G.6]{weitkamp2024distribution}, and since functions in $\mathcal{F}_i$ are uniformly bounded by $1$, we may apply \citet[Corollary 4.2(b)]{arcones1993limit} to conclude that
    \begin{equation} \label{eq: U process via Hoeffding}
        \left\| \mathbb{U}_{i,n}(t) - 2\sqrt{n} \left( \frac{1}{n} \sum_{j=1}^n  \mathbb{E}_{Y \sim \mu_i}\left[\mathbbm{1} \!(d_i(X_{i,j},Y) \leq t) \right] - U_i(t) \right)  \right\|_{[C_{i,1},C_{i,2}]}  = o_p(1).
    \end{equation}
    Here, $\mathbb{E}_{Y \sim \mu_i}$ denotes taking the expectation only with respect to $Y$ under the distribution $Y \sim \mu_i$. Based on~\eqref{eq: U process via Hoeffding}, define \begin{equation*}
        \widehat{\mathbb{U}}_{i,n}(t) \coloneqq 2\sqrt{n} \left(\frac{1}{n} \sum_{j=1}^n\mathbb{E}_{Y \sim \mu_i}\left[\mathbbm{1} \!(d_i(X_{i,j},Y) \leq t)\right] - U_i(t) \right).\end{equation*} 
        Instead of deriving the asymptotic distribution of $\mathbb{U}_n$ directly, we derive the asymptotic distribution of $(\widehat{\mathbb{U}}_{1,n}, \ldots, \widehat{\mathbb{U}}_{m,n})$ and use equation~\eqref{eq: U process via Hoeffding} combined with Slutzky's Lemma \citep[Lemma 1.10.2(i)]{wellner2013weak} to show that they coincide. Note that for each $i= 1, \ldots, m$, $\widehat{\mathbb{U}}_{i,n}$ converges weakly by~\eqref{eq: U process via Hoeffding} and is thus asymptotically tight. Furthermore, for each $i = 1, \ldots, m$ and a fixed $t\in[C_{i,1},C_{i,2}]$, it clearly holds that $\widehat{\mathbb{U}}_{i,n}(t)$ converges in $\mathbb{R}$ to a normal distribution by the central limit theorem. Similarly, the multivariate CLT yields for a finite number of indices $t_{i,j} \in [C_{i,1}, C_{i,2}]$ where $1\leq i \leq m$ and $1 \leq j \leq k_i$ that 
    \begin{eqnarray} \label{eq: Joint finite dimensional distribution}
        \left(\widehat{\mathbb{U}}_{1,n} (t_{1,1}), \ldots, \widehat{\mathbb{U}}_{1,n}(t_{1,k_1}), \ldots, \widehat{\mathbb{U}}_{m,n} (t_{m,1}), \ldots, \widehat{\mathbb{U}}_{m,n}(t_{m,k_m})\right)
    \end{eqnarray}
    converges to a $(\sum_{i=1}^m k_i)$-dimensional normal distribution with mean vector $0$. Let $1 \leq i,j \leq m$ and $1 \leq l_i \leq k_i$ as well as $1 \leq l_j \leq k_j$. Then the corresponding entry in the covariance matrix of the limiting distribution induced by~\eqref{eq: Joint finite dimensional distribution} is 
    \begin{equation} \label{eq: covariance of the gaussian process}
        4\Cov\left[ \mathbb{E}_{Y \sim \mu_i}\left( \mathbbm{1} \!(d_i(X_{i,1},Y) \leq t_{i,l_i}) \right),  \mathbb{E}_{Y \sim \mu_j}\left( \mathbbm{1} \!(d_j(X_{j,1},Y) \leq t_{j,l_j}) \right) \right]
    \end{equation}
    Thus, by~\eqref{eq: characterization JWC} and the preceding discussion we obtain convergence of $\widehat{\mathbb{U}}_n = (\widehat{\mathbb{U}}_{1,n}, \ldots , \widehat{\mathbb{U}}_{m,n})$ to a centered Gaussian process $\mathbb{K}$. The distribution of $\mathbb{K}$ is uniquely determined by the covariances as given in~\eqref{eq: covariance of the gaussian process}. The Lemma now follows upon realizing that by~\eqref{eq: U process via Hoeffding}
    \begin{equation*}
        \sum_{i=1}^m \norm{ \mathbb{U}_{i,n} - \widehat{\mathbb{U}}_{i,n} }_{[C_{i,1}, C_{i,2}]} = o_p(1),
    \end{equation*}
    where the left hand side is a product norm on $\ell^\infty([C_{1,1}, C_{1,2}]) \times \ldots \times \ell^\infty([C_{m,1}, C_{m,2}])$.
\end{proof}
With \autoref{lemma: joint convergence of U-processes} at our disposal, we are now ready to prove \autoref{lemma: joint convergence inverted u process}.
\begin{proof}[Proof of \autoref{lemma: joint convergence inverted u process}]
    First, let $\beta > 0$ and assume \autoref{cond 1.2} with constants $C_{i,1},C_{i,2}$ as specified there for $i = 1, \ldots, m$. Furthermore, denote by $\mathbb{D}_{1,i}$ the set of all restrictions of distribution functions to $[C_{i,1},C_{i,2}]$ and let $(\mathcal{D}([C_{i,1},C_{i,2}]),\|\cdot\|_{[C_{i,1},C_{i,2}]})$ be the space of c\`adl\`ag functions on $[C_{i,1},C_{i,2}]$  equipped with the uniform norm. For the sake of notation we denote in the following the product spaces $\mathbb{D}_1^m = \mathbb{D}_{1,1} \times \ldots \times \mathbb{D}_{1,m}$ and $\mathcal{D}^m = \mathcal{D}([C_{1,1},C_{1,2}]) \times \ldots \times \mathcal{D}([C_{m,1},C_{m,2}])$. Consider the map $\invmap: \mathbb{D}_1^m \subset \mathcal{D}^m \to \ell^\infty([\beta,1-\beta])^m$ defined via
    \begin{equation*}
        \invmap(F_1, \ldots, F_m) = (F_1^{-1}, \ldots , F_m^{-1})
    \end{equation*}
    mapping each component $F_i$ to its generalized inverse 
    \begin{equation*}
        F^{-1}:(0,1) \to \mathbb{R}, \qquad F^{-1}(t) = \inf \lbrace x \in \mathbb{R} : F(x) \geq t \rbrace.
    \end{equation*}
    Under the assumptions of \autoref{cond 1.2} and by  \citet[Lemma 3.10.24]{wellner2013weak}, we obtain that each of the $i = 1, \ldots,m$ components of this map is Hadamard differentiable at $U_i$ tangentially to $\mathcal{C}([C_{i,1},C_{i,2}])$ with derivative $F \mapsto -(F/u_i) \circ U_i^{-1}$. Thus, $\invmap$ is Hadamard differentiable at $\mathbf{U} = (U_1, \ldots , U_m)$ tangentially to $\mathcal{C}^m = \mathcal{C}([C_{1,1},C_{1,2}]) \times \ldots \times \mathcal{C}([C_{m,1},C_{m,2}])$ with Hadamard derivative 
    \begin{equation} \label{eq: def of invmap}
        D_{\mathbf{U}}\invmap(F_1, \ldots, F_m) = - \left((F_1/u_1) \circ U_1^{-1}, \ldots ,  (F_m/u_m) \circ U_m^{-1}\right).
    \end{equation}
    Combining this with the delta method \citep[Theorem 3.10.4]{wellner2013weak} and the limit law from \autoref{lemma: joint convergence of U-processes} yields that 
    \begin{equation*}
        \sqrt{n}(\invmap(\Un)-\invmap(\mathbf{U})) \rightsquigarrow D_\mathbf{U} \invmap (\mathbb{K}_1, \ldots, \mathbb{K}_m) \text{ in }\ell^\infty([\beta,1-\beta])^m.
    \end{equation*}
    Here, $\mathbb{K} = (\mathbb{K}_1, \ldots, \mathbb{K}_m)$ is the $m$-dimensional centered Gaussian process with covariance structure as defined in \autoref{lemma: joint convergence of U-processes}. As $D_\mathbf{U} \invmap$ is a linear transformation, the limit $D_\mathbf{U} \invmap(\mathbb{K})$ is also a Gaussian process \citep[Lemma 2.2]{bogachev1998gaussian} and the covariance structure follows from elementary calculations. Next, we assume that for each $i =1, \ldots, m$, the support of $U_i$ is $[0,D_i]$ and $U_i$ is continuously differentiable on $[0,D_i]$ with strictly positive derivative $u_i$ on $(0,D_i)$. We then let $\mathbb{D}_{2,i}$ be the set of distribution functions that concentrate on $(0, D_i]$ and $\mathbb{D}_2^m = \mathbb{D}_{2,1} \times \ldots \times \mathbb{D}_{2,m}$. As a consequence of \citet[Lemma A.18]{weitkamp2024distribution}, we obtain that the inversion map when considered as a map $\invmap : \mathbb{D}_2^m \subset \mathcal{D}^m \to L^1((0,1))^m$ is Hadamard-differentiable at $\mathbf{U}$ tangentially to $\mathcal{C}([0,D_1]) \times \ldots \times \mathcal{C}([0,D_m])$ with the same derivative as in equation~\eqref{eq: def of invmap}. Then the rest of the proof follows analogously by an application of the delta method.
\end{proof}
\subsection{Weak Convergence of the $l$-th Order Statistics Process}
For the proof of \autoref{theorem: convergence of barycenter process}, we require some further preparation. Recall that for $p=1$, we define $ \Lambda_1(x) = \frac{1}{2} (x_{( \lceil \frac{m+1}{2} \rceil )} + x_{( \lfloor \frac{m+1}{2} \rfloor )})$. In order to derive the distributional limit of $\lambda_{1,n}$ in~\eqref{eq: barycenter process}, we first study the behavior of the $l$-th order quantile processes
\begin{equation*}
    \lambda_n^{(l)} = \sqrt{n} \left( \left(U_{1,n}^{-1}, \ldots,U_{m,n}^{-1}\right)_{(l)} - \left(U_{1}^{-1}, \ldots,U_{m}^{-1}\right)_{(l)} \right).
\end{equation*}
To this end, recall the definition of $\Psi_{l,x}$ from \eqref{eq: l-th order quantile process}.
\begin{proposition} \label{prop: l-th quantile process}
    Let $\beta > 0$. Under \autoref{cond 1.2}, it holds that for each $l = 1, \ldots, m$
    \begin{equation*}
        \left(\mathbb{U}_n^{-1}, \lambda_n^{(l)} \right) \rightsquigarrow \left(\mathbb{G}, \Psi_{l, \Uinv}(\mathbb{G}) \right) \text{ in }\ell^\infty([\beta,1-\beta])^m \times L^1([\beta,1-\beta]),
    \end{equation*}
    where $\mathbb{G}$ is the $m$-dimensional centered Gaussian process as defined in \autoref{theorem: convergence of barycenter process}. If furthermore \autoref{cond 1.3} is satisfied with $\theta = 1$, then 
    \begin{equation*}
        \left(\mathbb{U}_n^{-1}, \lambda_n^{(l)} \right) \rightsquigarrow \left(\mathbb{G}, \Psi_{l, \Uinv}(\mathbb{G}) \right) \text{ in }
        L^1((0,1))^{m+1}.
    \end{equation*}
\end{proposition}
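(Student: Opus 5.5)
We regard the $l$-th order statistic process as a pointwise functional of the quantile process and deduce its limit from \autoref{lemma: joint convergence inverted u process} by an extended continuous mapping argument.

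\textbf{Step 1: pointwise directional expansion.} Fix $x \in \mathbb{R}^m$. We claim that for every $y \in \mathbb{R}^m$ and every sequence $y_n \to y$, $h_n \downarrow 0$,
\begin{equation*}
\frac{(x+h_n y_n)_{(l)} - x_{(l)}}{h_n} \to \Psi_{l,x}(y).
\end{equation*}
\begin{itemize}
\item For $h_n$ small, coordinates with $x_i < x_{(l)}$ stay strictly below the coordinates in $J_{l,x}$. Coordinates with $x_i > x_{(l)}$ stay strictly above them.
\item So the $l$-th order statistic of $x + h_n y_n$ is the $\nu_{l,x}$-th smallest of $(x_{(l)} + h_n y_{n,i})_{i \in J_{l,x}}$.
\item Hence the difference quotient equals $(\pi^{J_{l,x}}(y_n))_{(\nu_{l,x})}$ exactly, for large $n$.
\item Order statistics are continuous, so this converges to $\Psi_{l,x}(y)$.
\end{itemize}
In addition, the map $z \mapsto z_{(l)}$ is $1$-Lipschitz for the max norm. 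Therefore $|(x+hy)_{(l)} - x_{(l)}|/h \le \max_i |y_i|$, uniformly in $x$ and $h$.

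\textbf{Step 2: almost sure representation.} By \autoref{lemma: joint convergence inverted u process}, $\mathbb{U}_n^{-1} \rightsquigarrow \mathbb{G}$ in $\ell^\infty([\beta,1-\beta])^m$, and $\mathbb{G}$ is tight and separable. We pass to a Skorokhod almost sure representation (\citet[Theorem 1.10.4]{wellner2013weak}), so that $\tilde{\mathbb{U}}_n^{-1} \to \tilde{\mathbb{G}}$ in outer almost sure sense, uniformly on $[\beta,1-\beta]$.

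Write $\lambda_n^{(l)}(t) = \sqrt{n}\bigl[(\Uinv(t) + n^{-1/2}\mathbb{U}_n^{-1}(t))_{(l)} - (\Uinv(t))_{(l)}\bigr]$. By Step 1, applied for each fixed $t$ with $x = \Uinv(t)$, $h_n = n^{-1/2}$ and $y_n = \tilde{\mathbb{U}}_n^{-1}(t)$, we get $\tilde\lambda_n^{(l)}(t) \to \Psi_{l,\Uinv(t)}(\tilde{\mathbb{G}}(t))$ for every $t$.

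The Lipschitz bound gives the domination $|\tilde\lambda_n^{(l)}(t)| \le \sup_n \|\tilde{\mathbb{U}}_n^{-1}\|_{[\beta,1-\beta]} < \infty$ almost surely. Dominated convergence on the bounded interval then yields $L^1$ convergence of $\tilde\lambda_n^{(l)}$. Together with uniform convergence of $\tilde{\mathbb{U}}_n^{-1}$, this gives almost sure joint convergence, hence the first weak convergence claim. Measurability of $t \mapsto \Psi_{l,\Uinv(t)}(\mathbb{G}(t))$ follows since the index sets $J_{l,\Uinv(t)}$ and $\nu_{l,\Uinv(t)}$ are measurable in $t$. Indeed, $\Uinv$ consists of monotone functions, so these sets are determined by finitely many measurable comparisons.

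Convergence in $L^1$ is the natural mode here. Uniform convergence generally fails at the points $t$ where the ordering of the $U_i^{-1}(t)$ changes.

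\textbf{Step 3: the untrimmed case.} Under \autoref{cond 1.3} with $\theta=1$, \autoref{lemma: joint convergence inverted u process} gives $\mathbb{U}_n^{-1} \rightsquigarrow \mathbb{G}$ in $L^1((0,1))^m$. After an almost sure representation, $\tilde{\mathbb{U}}_n^{-1} \to \tilde{\mathbb{G}}$ in $L^1$. The pointwise argument now needs care, since $L^1$ convergence does not give pointwise convergence.

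\emph{This is the main obstacle.} We plan to argue along subsequences:
\begin{itemize}
\item Every subsequence has a further subsequence converging Lebesgue almost everywhere, with an $L^1$-convergent dominating envelope $\sum_i |\tilde{\mathbb{U}}_{i,n}^{-1}|$ (a generalized dominated convergence argument).
\item On that subsequence, Step 1 and the bound $|\tilde\lambda_n^{(l)}| \le \sum_i |\tilde{\mathbb{U}}_{i,n}^{-1}|$ give $L^1$ convergence of $\tilde\lambda_n^{(l)}$ to $\Psi_{l,\Uinv}(\tilde{\mathbb{G}})$, using the generalized (Pratt) dominated convergence theorem.
\item Since the limit does not depend on the subsequence, the whole sequence converges in probability in $L^1((0,1))^{m+1}$. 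This gives the second claim.
\end{itemize}
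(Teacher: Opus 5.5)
Your proof is correct, and it takes a genuinely different route from the paper.

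The paper argues by asymptotic equivalence. It shows $\|\lambda_n^{(l)}-\Psi_{l,\Uinv}(\mathbb{U}_n^{-1})\|_{L^1([\beta,1-\beta])}\to 0$ in probability, then applies the ordinary continuous mapping theorem to the Lipschitz map $z\mapsto(z,\Psi_{l,\Uinv}(z))$. To get the equivalence it:
\begin{itemize}
\item partitions $[\beta,1-\beta]$ into the sets $C^Q$ on which the tie pattern $\kappa(\Uinv(t))$ is constant;
\item restricts to $C^Q_\delta$, where the smallest positive gap exceeds $\delta$;
\item uses the uniform law of large numbers (\autoref{lemma: LLN for U quantiles}) to get exact equality on $C^Q_\delta$ with probability tending to one;
\item controls $C^Q\setminus C^Q_\delta$ via \autoref{lemma: Lipschitz continuity convergence to 0}.
\end{itemize}

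You instead use two facts: $x\mapsto x_{(l)}$ is Hadamard directionally differentiable with derivative $\Psi_{l,x}$, and it is globally $1$-Lipschitz. You push these through an almost sure representation with dominated convergence (Pratt-type in the untrimmed case). This is in effect a delta method for a directionally differentiable map with $L^1$-valued target.

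Your route needs less input. It uses no law of large numbers for $U$-quantiles and no $\delta$-truncation. In the untrimmed case it needs no bound $\mathbb{E}\|\mathbb{G}_i\|_{L^1((0,1))}<\infty$, which the paper obtains from $J_1(\mu^{U_i})<\infty$ in \autoref{lemma: Lipschitz continuity convergence to 0}(b). It also gives joint convergence for several $l$ at once, as needed for $\Lambda_1$. The same template handles $p\ge 2$ in \autoref{theorem: convergence of barycenter process}, since $\Lambda_p$ is $1$-Lipschitz with directional derivative $\psi^p_x$; on the diagonal, simply $\Lambda_p(x+hy)=x_1+h\Lambda_p(y)$ for $h>0$. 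The paper's route is more hands-on, exhibiting exact linearity off a set of small measure, but it pays for this with those auxiliary results.

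One technical point on presentation. Your Steps 2 and 3 are cleanest stated as the deterministic hypothesis of the extended continuous mapping theorem \citep[Theorem 1.11.1]{wellner2013weak}. That hypothesis is that $\sqrt{n}\bigl[(\Uinv+n^{-1/2}z_n)_{(l)}-(\Uinv)_{(l)}\bigr]\to\Psi_{l,\Uinv}(z)$ in $L^1$ whenever $z_n\to z$ (uniformly, resp.\ in $L^1$). Your subsequence argument runs for fixed $\omega$, so it already gives this for the whole sequence, not merely in probability. If you use the Skorokhod representation directly, take the version $\tilde{\mathbb{U}}_n^{-1}=\mathbb{U}_n^{-1}\circ\phi_n$ with perfect maps $\phi_n$. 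Then $\tilde\lambda_n^{(l)}=\lambda_n^{(l)}\circ\phi_n$ has the right law and has paths measurable in $t$, which dominated convergence requires.
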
 
For the proof of \autoref{prop: l-th quantile process}, we require some auxiliary results.
\begin{lemma}\label{lemma: J_p is finite}
    Let $p \geq 1$, assume that \autoref{cond 1.3} holds with $\theta = p$. Then $J_p(\mu^{U_i}) < \infty$ for each $i=1, \ldots, m$, where $\mu^{U_i}$ is the probability measure corresponding to $U_i$.
\end{lemma}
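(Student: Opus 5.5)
The natural route is to change variables $x = U_i^{-1}(t)$ in the integral defining $J_p$, so that the condition becomes an integrability statement on $(0,1)$ in terms of the quantile derivative, which \autoref{cond 1.3} controls directly. Fix $i$ and write $F = U_i$, $f = u_i$. By \autoref{cond 1.3}, $F$ is continuously differentiable on $[0,D_i]$ with $f>0$ on $(0,D_i)$. Hence $F^{-1}$ is a $C^1$ bijection from $(0,1)$ onto $(0,D_i)$ with $(F^{-1})'(t) = 1/f(F^{-1}(t))$. Outside $[0,D_i]$ we have $F(x)(1-F(x))=0$, so the integrand of $J_p$ vanishes there (with the usual convention $0/0=0$ where $f$ vanishes). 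The integral therefore reduces to $\int_0^{D_i}$.

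Substituting $x = F^{-1}(t)$, $\diff x = (F^{-1})'(t)\diff t$, and using $1/f(F^{-1}(t)) = (F^{-1})'(t)$, we get
\begin{equation*}
J_p(\mu^{U_i}) = \int_0^1 \frac{[t(1-t)]^{p/2}}{f(F^{-1}(t))^{p-1}} (F^{-1})'(t) \diff t = \int_0^1 [t(1-t)]^{p/2} \bigl((F^{-1})'(t)\bigr)^{p} \diff t .
\end{equation*}
The change of variables is justified on compact subintervals $[\delta,1-\delta]$, and monotone convergence as $\delta\to 0$ extends it to $(0,1)$, since the integrand is nonnegative.

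Next we apply the bound $(U_i^{-1})'(t) \le c_i t^{\gamma_{i,1}}(1-t)^{\gamma_{i,2}}$ from \autoref{cond 1.3} with $\theta = p$. Since the derivative is nonnegative, raising to the $p$-th power is monotone, and
\begin{equation*}
J_p(\mu^{U_i}) \le c_i^p \int_0^1 t^{p/2 + p\gamma_{i,1}} (1-t)^{p/2+p\gamma_{i,2}} \diff t .
\end{equation*}
This Beta-type integral is finite if and only if both exponents exceed $-1$, that is, $p(1/2+\gamma_{i,k}) > -1$, or equivalently $\gamma_{i,k} > -1/p - 1/2$. This is exactly the constraint imposed in \autoref{cond 1.3} with $\theta=p$. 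So $J_p(\mu^{U_i})<\infty$ for each $i$.

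The main point requiring care is the first step: the endpoints, where $f$ may vanish, and the validity of the substitution on the open interval. Restricting to $(0,D_i)$, where $f>0$, and passing to the limit from compact subintervals handles both issues.
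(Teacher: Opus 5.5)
Your proof is correct and follows the same route as the paper: rewrite $J_p(\mu^{U_i})$ as $\int_0^1 \bigl((U_i^{-1})'(t)\bigr)^p [t(1-t)]^{p/2}\diff t$, insert the bound from \autoref{cond 1.3}, and conclude from finiteness of the resulting Beta integral, using $p(\gamma_{i,k}+1/2)+1>0$. The only difference is that you justify the change of variables yourself, on compact subintervals followed by monotone convergence, whereas the paper takes that identity directly from \citet[Corollary A.22]{bobkov2019one}.
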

\begin{proof}
    Under \autoref{cond 1.3} with $\theta = p$, it holds by \citet[Corollary A.22]{bobkov2019one} that
    \begin{equation*}
        J_p(\mu^{U_1}) = \int_{-\infty}^\infty \frac{\left[ U_1(t)(1-U_1(t)) \right]^{p/2}}{u_1(t)^{p-1}} \diff t = \int_{0}^1 \left((U_1^{-1})^\prime(t)\right)^p \left( t (1-t) \right)^{p/2} \diff t.
    \end{equation*}
    Furthermore, by assumption there are constants $\gamma_{1,1}, \gamma_{1,2}> -1/p - 1/2$, such that for any $t \in (0,1)$, $(U_1^{-1})^\prime(t) \leq c_{U_1} t^{\gamma_{1,1}}(1-t)^{\gamma_{1,2}}$. Applying this inequality to the above display leads to
\begin{equation*}
    J_p(\mu^{U_1}) \leq c_{U_1}^p  \int_0^1 t^{p(\gamma_{1,1} +1/2)} (1-t)^{p(\gamma_{1,2} +1/2)} \diff t.
\end{equation*}
The bounds on $\gamma_{1,1}$ and $\gamma_{1,2}$ further imply that $p(\gamma_{1,i} + 1/2) +1 > 0$ for $i = 1,2$. Hence,
\begin{equation*}
    J_p(\mu^{U_1}) \leq c_{U_1}^p B\Bigl(p(\gamma_{1,1} + 1/2) + 1, p(\gamma_{1,2} + 1/2) + 1\Bigr) < \infty,
\end{equation*}
where $B$ denotes the Beta function.
\end{proof}
\begin{lemma} \label{lemma: Lipschitz continuity convergence to 0}
    Let $\Xi_t : \mathbb{R}^m \to \mathbb{R}$ for $t \in T \subset \mathbb{R}$ be a family of Lipschitz continuous maps with uniform Lipschitz constant and denote
    \begin{equation}
        \zeta_n(t) \coloneqq \sqrt{n} \left( \Xi_t(\Uninv) - \Xi_t(\Uinv)\right).
    \end{equation}
    Let further $(A_\delta)_{\delta > 0} \subset \mathcal{B}(T)$, where $\mathcal{B}(T)$ is the Borel sigma algebra, such that $A_\delta \searrow A$ for some $A$ of measure $0$.
    \begin{enumerate}
        \item Let $\beta >0$, $T = [\beta,1-\beta]$ and assume \autoref{cond 1.2}. Then
        \begin{equation*}
            \lim_{\delta \searrow 0} \limsup_{n \to \infty} \mathbb{P}(\|\zeta_n\mathbbm{1}_{A_\delta}\|_{L^1(T)} > \varepsilon) = 0,
        \end{equation*}
        for each $\varepsilon > 0$.
        \item Let now $T = (0,1)$ and assume \autoref{cond 1.3} with $\theta = 1$. Then
        \begin{equation*}
            \lim_{\delta \searrow 0} \limsup_{n \to \infty} \mathbb{P}(\|\zeta_n\mathbbm{1}_{A_\delta}\|_{L^1(T)} > \varepsilon) = 0,
        \end{equation*}
        for each $\varepsilon > 0$.
    \end{enumerate}
\end{lemma}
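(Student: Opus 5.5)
By Lipschitz continuity with a uniform constant $L$, we have pointwise
\begin{equation*}
\abs{\zeta_n(t)} \leq L \sqrt{n}\sum_{i=1}^m \abs{U_{i,n}^{-1}(t) - U_i^{-1}(t)} = L\sum_{i=1}^m \abs{\mathbb{U}_{i,n}^{-1}(t)}.
\end{equation*}
The plan is therefore to reduce the claim to the single quantile processes. It suffices to show, for each $i$,
\begin{equation*}
\lim_{\delta\searrow 0}\limsup_{n\to\infty}\mathbb{P}\left(\norm{\mathbb{U}_{i,n}^{-1}\mathbbm{1}_{A_\delta}}_{L^1(T)}>\varepsilon/(mL)\right)=0,
\end{equation*}
and then combine via a union bound. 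The key idea is that the map $f\mapsto \norm{f\mathbbm{1}_{A_\delta}}_{L^1(T)}$ is continuous (indeed $1$-Lipschitz) on $\ell^\infty(T)$ for part (a), and on $L^1((0,1))$ for part (b). Hence \autoref{lemma: joint convergence inverted u process} and the continuous mapping theorem give $\norm{\mathbb{U}_{i,n}^{-1}\mathbbm{1}_{A_\delta}}_{L^1(T)} \rightsquigarrow \norm{\mathbb{G}_i\mathbbm{1}_{A_\delta}}_{L^1(T)}$ for each fixed $\delta$.

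By the portmanteau theorem (closed set $[\varepsilon',\infty)$),
\begin{equation*}
\limsup_n \mathbb{P}\left(\norm{\mathbb{U}_{i,n}^{-1}\mathbbm{1}_{A_\delta}}_{L^1}>\varepsilon'\right)\leq \mathbb{P}\left(\norm{\mathbb{G}_i\mathbbm{1}_{A_\delta}}_{L^1}\geq\varepsilon'\right).
\end{equation*}
It remains to let $\delta\searrow0$. Since $\mathbb{G}_i$ is a tight limit, its sample paths lie almost surely in $\ell^\infty(T)$ in case (a), so they are bounded and integrable; in case (b) they lie in $L^1((0,1))$. We may assume $A_\delta\subseteq T$ and that the nesting is monotone along a sequence $\delta_k\searrow0$, which is enough because the quantity is monotone in $\delta$ whenever the $A_\delta$ are nested. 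Then $\abs{\mathbb{G}_i}\mathbbm{1}_{A_\delta}\to \abs{\mathbb{G}_i}\mathbbm{1}_A$ pointwise, with integrable dominating function $\abs{\mathbb{G}_i}$. Since $A$ is a null set, dominated convergence gives $\norm{\mathbb{G}_i\mathbbm{1}_{A_\delta}}_{L^1}\to0$ almost surely. Almost sure convergence implies convergence in probability, so $\mathbb{P}(\norm{\mathbb{G}_i\mathbbm{1}_{A_\delta}}_{L^1}\geq\varepsilon')\to0$.

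Part (b) requires the $L^1((0,1))$ convergence in \autoref{lemma: joint convergence inverted u process}. Its hypotheses (compact support $[0,D_i]$, $C^1$ on the support, positive density inside) are contained in \autoref{cond 1.3}, so the same argument applies verbatim in $L^1$. The measurability of the event under outer probabilities is handled by using outer probability throughout, since portmanteau holds for Hoffmann-J\o rgensen weak convergence. The only mildly delicate step is justifying that $\mathbb{G}_i$ has integrable paths in case (b); this follows directly from its being a Borel-measurable tight element of $L^1((0,1))$, so I expect no real obstacle.
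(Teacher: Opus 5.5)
Your proof is correct. Its skeleton (Lipschitz reduction to the marginal quantile processes, union bound, continuous mapping with $f\mapsto\norm{f\mathbbm{1}_{A_\delta}}_{L^1(T)}$, portmanteau) is the same as the paper's. The two routes differ only in how $\delta\searrow 0$ is handled.

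The paper applies Markov's inequality to get $\limsup_n\mathbb{P}(\norm{\zeta_n\mathbbm{1}_{A_\delta}}_{L^1(T)}>\varepsilon)\leq (mL/\varepsilon)\sum_{i}\mathbb{E}\norm{\mathbb{G}_i\mathbbm{1}_{A_\delta}}_{L^1(T)}$. It then needs $\mathbb{E}\norm{\mathbb{G}_i}<\infty$ to apply dominated convergence to the expectations.
\begin{itemize}
\item In (a), this integrability is taken from a Gaussian moment result of Adler and Taylor.
\item In (b), it needs a separate chain of arguments: the symmetrization inequality of Bobkov and Ledoux, the moment bound of Weitkamp et al.\ in terms of $J_1(\mu^{U_i})$, finiteness of $J_1$ via \autoref{lemma: J_p is finite}, and a portmanteau step transferring the uniform moment bound to the limit. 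The finiteness of $J_1$ is exactly where the growth exponents $\gamma_{i,1},\gamma_{i,2}$ of \autoref{cond 1.3} with $\theta=1$ are used.
\end{itemize}

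You instead apply dominated convergence pathwise and pass from almost sure convergence to convergence in probability. This only needs each path of $\mathbb{G}_i$ to be integrable over $T$. That holds automatically: in (a), $\mathbb{G}_i$ is a random element of $\ell^\infty(T)$, so its paths are bounded on a set of finite measure; in (b), it is a random element of $L^1((0,1))$.

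Your route is therefore shorter. In (b) it never uses the exponent condition in \autoref{cond 1.3}, only the hypotheses of the $L^1$ part of \autoref{lemma: joint convergence inverted u process}. The paper's route gives an explicit bound in terms of $\mathbb{E}\norm{\mathbb{G}_i\mathbbm{1}_{A_\delta}}_{L^1(T)}$, which is more than the qualitative claim requires.
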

\begin{proof}
    Denote $L > 0$ the uniform Lipschitz constant of the functions $\Xi_t$. By the Lipschitz continuity assumption, we obtain
    \begin{align}
        \mathbb{P}(\left\| \zeta_n \mathbbm{1}_{A_\delta}\right\|_{L^1(T)} > \varepsilon) &\leq \mathbb{P} \left(L \sum_{i=1}^m \left\|\mathbbm{1}_{A_\delta} \mathbb{U}_{i,n}^{-1}\right\|_{L^1(T)}  > \varepsilon\right) \nonumber \\
        & \leq \sum_{i=1}^m \mathbb{P}\left(\left\|\mathbbm{1}_{A_\delta} \mathbb{U}_{i,n}^{-1} \right\|_{L^1(T)} > \frac{\varepsilon}{mL} \right). \label{eq: equation in proof lipschitz maps}
    \end{align} 
    \begin{enumerate}
        \item Let $\beta > 0 $ and $T = [\beta,1-\beta]$. By \autoref{lemma: joint convergence inverted u process}, it holds that $\mathbb{U}_n^{-1} \rightsquigarrow \mathbb{G}$ in $\ell^\infty(T)^m$ under \autoref{cond 1.2}. Hence, $\|\mathbbm{1}_{A_\delta} \mathbb{U}_{i,n}^{-1}\|_{L^1(T)}$ converges weakly to $\|\mathbbm{1}_{A_\delta}\mathbb{G}_i\|_{L^1(T)}$ by the continuous mapping theorem. As a consequence, taking $\limsup_{n \to \infty}$ on both sides of the above display and applying the Portmanteau theorem \citep[Theorem 1.3.4]{wellner2013weak} and Markov's inequality yields 
    \begin{align*}
        \limsup_{n \to \infty} \mathbb{P}(\| \zeta_n \mathbbm{1}_{A_\delta}\|_{L^1(T)} > \varepsilon) &\leq \sum_{i=1}^m \mathbb{P}\left(\| \mathbb{G}_i 1 _{A_\delta} \|_{L^1(T)} \geq \frac{\varepsilon}{mL} \right) \\
        &\leq \frac{mL}{\varepsilon}\sum_{i=1}^m \mathbb{E}\left(\| \mathbb{G}_i 1 _{A_\delta} \|_{L^1(T)}  \right).
    \end{align*}
    Note that by \citet[Theorem 2.1.1]{adler2007random}, $\mathbb{E}(\|\mathbb{G}_i\|_T) < \infty$ for each $i = 1, \ldots, m$. Hence, we may apply the dominated convergence theorem to conclude that the right hand side converges to $0$ as $\delta \searrow 0$.
    \item Let $T = (0,1)$ and assume that \autoref{cond 1.3} is satisfied with $\theta = 1$. By the second part of \autoref{lemma: joint convergence inverted u process}, it then holds that $\mathbb{U}_n^{-1}\rightsquigarrow \mathbb{G}$ in $L^1((0,1))$. Starting from equation \eqref{eq: equation in proof lipschitz maps}, we thus again apply the Portmanteau Theorem and Markov inequality to obtain
    \begin{equation} \label{eq: equation in proof of lipschitz}
        \limsup_{n \to \infty} \mathbb{P}(\| \zeta_n \mathbbm{1}_{A_\delta}\|_{L^1(T)} > \varepsilon) \leq \frac{mL}{\varepsilon}\sum_{i=1}^m \mathbb{E}\left(\| \mathbb{G}_i 1 _{A_\delta} \|_{L^1(T)}  \right).
    \end{equation}
    Now, let $Y_{i,1}, \ldots, Y_{i,n}$ be an independent copy of $X_{i,1}, \ldots, X_{i,n}$ and denote by $\Tilde{\mu}_n^{V_{i}}$ the empirical measure corresponding to the distances induced by $Y_{i,1}, \ldots, Y_{i,n}$. It is remarked in \citet[sec. 4]{bobkov2019one} that
    \begin{equation*}
        \mathbb{E}\left[ \mathcal{W}_1\left(\mu_n^{U_i}, \mu^{U_i} \right)  \right] \leq 2  \mathbb{E}\left[ \mathcal{W}_1\left(\mu_n^{U_i}, \Tilde{\mu}_n^{V_{i}} \right)  \right].
    \end{equation*}
    Using this fact and then applying \citet[Theorem 2.5 and Remark A.6]{weitkamp2024distribution} with $p=1$, we thus obtain that 
    \begin{align*}
        \mathbb{E}\left(\| \mathbb{U}_{i,n}^{-1} \|_{L^1(0,1)} \right) &\leq 2 \sqrt{n} \mathbb{E}\left[ \mathcal{W}_1\left(\mu_n^{U_i}, \Tilde{\mu}_n^{V_{i}} \right)  \right] \\
        & \leq C J_1(\mu^{U_i}),
    \end{align*}
    for some finite constant $C < \infty$. Furthermore $J_1(\mu^{U_i})$ is finite under \autoref{cond 1.3} with $\theta = 1$ by \autoref{lemma: J_p is finite}. Thus, the expectations of $\|\mathbb{U}_n^{-1}\|_{L^1((0,1))}$ are uniformly bounded. As the map $x \mapsto \abs{x}$ is continuous and bounded from below, the Portmanteau Theorem yields
    \begin{equation*}
        \mathbb{E}\left( \| \mathbb{G}_i\|_{L^1(0,1)} \right) \leq \liminf_{n \to \infty} \mathbb{E}(\| \mathbb{U}_{i,n}^{-1} \| ) < \infty.
    \end{equation*}
    Hence, we can apply the dominated convergence theorem in \eqref{eq: equation in proof of lipschitz} when letting $\delta \searrow 0$ in order to obtain the desired result.
    \end{enumerate}
\end{proof}
\begin{lemma}\label{lemma: LLN for U quantiles}
    Let \autoref{cond 1.2} be satisfied. Then it holds that
    \begin{equation*}
        \|U_{i,n}^{-1} - U_i^{-1} \|_{[\beta,1-\beta]} \convAS 0, \qquad i=1, \ldots, m.
    \end{equation*}
    If further, $U_1, \ldots, U_m$ are continuous and strictly increasing on their supports, then 
    \begin{equation*}
        \|U_{i,n}^{-1} - U_i^{-1} \|_{(0,1)} \convAS 0, \qquad i = 1, \ldots, m.
    \end{equation*}
\end{lemma}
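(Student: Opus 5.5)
The plan is to first prove a uniform strong law for the empirical distance distributions, $\|U_{i,n}-U_i\|_{\mathbb{R}} \convAS 0$. From this I would deduce the quantile statements by a deterministic inversion argument.

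\textbf{Step 1: Glivenko--Cantelli for $U$-statistics.} For fixed $t$, $U_{i,n}(t)$ is a $U$-statistic with bounded kernel $f_{i,t}$. By Hoeffding's strong law for $U$-statistics (e.g.\ \citet[chap.~5]{serfling1980approximation}), $U_{i,n}(t)\to U_i(t)$ almost surely, and likewise $U_{i,n}(t-)\to U_i(t-)$. Both $U_{i,n}$ and $U_i$ are monotone and bounded in $[0,1]$. I would therefore run the classical Glivenko--Cantelli bracketing argument:
\begin{itemize}
\item choose finitely many points $t_0<\dots<t_K$ with $U_i(t_{j}-)-U_i(t_{j-1})\le\varepsilon$;
\item intersect the countably many almost-sure events over rational $\varepsilon$;
\item conclude $\sup_t|U_{i,n}(t)-U_i(t)|\to0$ almost surely.
\end{itemize}
Alternatively, I could invoke that $\mathcal{F}_i$ is a permissible VC-subgraph class and apply the uniform SLLN for $U$-processes of \citet{arcones1993limit}. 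The bracketing route is more elementary and needs no measurability discussion.

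\textbf{Step 2: Inversion on $[\beta,1-\beta]$.} Under \autoref{cond 1.2}, $U_i$ is continuous and strictly increasing on $[C_{i,1},C_{i,2}]$, which contains a neighbourhood of $U_i^{-1}([\beta,1-\beta])$. Fix $\eta\in(0,\varepsilon)$. Compactness and strict monotonicity give
\[
\kappa=\min_{t\in[\beta,1-\beta]}\min\{t-U_i(U_i^{-1}(t)-\eta),\,U_i(U_i^{-1}(t)+\eta)-t\}>0.
\]
Positivity follows from continuity in $t$ together with $u_i>0$; uniform continuity of $U_i^{-1}$ on this interval is what makes the minimum well defined.

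On the event $\|U_{i,n}-U_i\|_\infty<\kappa$, for every $t\in[\beta,1-\beta]$:
\begin{itemize}
\item $U_{i,n}(U_i^{-1}(t)+\eta)>t$, so $U_{i,n}^{-1}(t)\le U_i^{-1}(t)+\eta$;
\item $U_{i,n}(U_i^{-1}(t)-\eta)<t$, so $U_{i,n}^{-1}(t)\ge U_i^{-1}(t)-\eta$.
\end{itemize}
Both implications follow from the definition of the generalized inverse. Hence $\|U_{i,n}^{-1}-U_i^{-1}\|_{[\beta,1-\beta]}\le\eta$ eventually, almost surely, which proves the first claim.

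\textbf{Step 3: the whole interval $(0,1)$.} This is the main obstacle, because $\kappa$ above degenerates as $t\to0$ or $t\to1$. I would handle the ends using compact support. Distances lie in $[0,D_i]$ with $D_i=\diam(\mathcal{X}_i)<\infty$, and $U_i^{-1}$ is continuous and increasing on $(0,1)$ with limits $a_i=\inf\mathrm{supp}$ and $b_i=\sup\mathrm{supp}$; these limits exist because $U_i$ is continuous and strictly increasing on its support. Given $\eta>0$, choose $\beta'$ so that $U_i^{-1}(\beta')<a_i+\eta$ and $U_i^{-1}(1-\beta')>b_i-\eta$. Step 2 applies on $[\beta',1-\beta']$, since it only used continuity and strict increase, not differentiability.

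For $t<\beta'$, I would use the following two-sided bound.
\begin{itemize}
\item \emph{Upper bound.} Monotonicity gives $U_{i,n}^{-1}(t)\le U_{i,n}^{-1}(\beta')\le U_i^{-1}(\beta')+\eta<a_i+2\eta$.
\item \emph{Lower bound.} Almost surely all sample distances lie in the support, so $U_{i,n}^{-1}(t)\ge a_i$. Since $U_i^{-1}(t)\in[a_i,a_i+\eta)$, this gives $|U_{i,n}^{-1}(t)-U_i^{-1}(t)|<2\eta$.
\end{itemize}
The upper end $t>1-\beta'$ is symmetric, using that sample distances are at most $b_i$ almost surely. Taking $\eta$ rational and intersecting the countably many null sets finishes the proof.

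The delicate points are checking that sample distances almost surely lie in $[a_i,b_i]$ (immediate from the definition of support) and keeping all almost-sure statements countable.
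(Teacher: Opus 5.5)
Your proof is correct, and it takes a more elementary, self-contained route than the paper.

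\emph{Step 1.} The paper gets $\|\mathbf{U}_n-\mathbf{U}\|_{[C_1,C_2]}\to0$ a.s.\ from the uniform strong law for $U$-processes of \citet[Theorem 3.1(i)]{arcones1993limit}. That route needs the image-admissible Suslin property of $\mathcal{F}_i$, which costs the paper little because this machinery is already set up for its weak limits. Your pointwise Hoeffding strong law plus Glivenko--Cantelli bracketing gives the same result, on all of $\mathbb{R}$, with no conditions on the class.

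\emph{Step 2.} On $[\beta,1-\beta]$ the paper first shows pointwise convergence of the quantiles. It then upgrades this using the fact that monotone functions converging pointwise to a continuous limit on a compact interval converge uniformly. Your modulus $\kappa(\eta)$ does this in one step and quantitatively.

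\emph{Step 3.} For $(0,1)$ the paper simply cites \citet[Theorem 1.1]{bogoya2016convergence}. Your tail squeeze instead makes explicit what is needed: sample distances cannot leave $[a_i,b_i]$, and $U_i^{-1}$ is continuous. The second ingredient means you are reading the hypothesis as ``the support is an interval on which $U_i$ is continuous and strictly increasing''. This is the only reading under which the second claim is true, since a gap in the support makes $U_i^{-1}$ jump and uniform convergence then fails. It also matches the support $[0,D_i]$ in \autoref{cond 1.3}.

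One small point: in Step 3 you cannot literally keep the restriction $\eta<\varepsilon$ from Step 2, because $\beta'$ is chosen depending on $\eta$. This is harmless, because you only use Step 2's conclusion, namely a.s.\ uniform convergence on each fixed $[\beta',1-\beta']$.
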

\begin{proof}
    The proof goes along the lines of \citet{bogoya2016convergence}. For now, assume the conditions of the first part. Recall that, by the discussion preceding \autoref{lemma: joint convergence of U-processes}, we may view $U_i$ as a $U$-statistic indexed by a function class which is image admissible Suslin and uniformly bounded. Hence, an application of \citet[Theorem 3.1(i)]{arcones1993limit} yields that
    \begin{equation} \label{eq: LLN for distance kernels}
        \| \Un- \U \|_{[C_1,C_2]} \convAS 0,
    \end{equation}
    with the norm $\| \cdot \|_{[C_1,C_2]}$ as defined in~\eqref{eq: norm on product space}. We now consider a fixed $\omega \in \Omega$ such that equation~\eqref{eq: LLN for distance kernels} holds. Then, clearly for each $t \in [\beta,1-\beta]$, it holds that $\Uninv(\omega,t) \to \Uinv(t)$, due to the componentwise continuity and monotonicity of $\mathbf{U}$. Hence, $\Uninv(\omega, \cdot)$ is a sequence of componentwise monotone functions (as they are quantile functions), which converges pointwise to $\Uinv$ on the compact set $[\beta,1-\beta]$, which implies that the convergence is uniform. As this holds for every $\omega$ in a set of probability one, this also implies that the uniform convergence takes place almost surely and thus proves the first part of the lemma. The second part follows immediately by \citet[Theorem 1.1]{bogoya2016convergence}.
\end{proof}
For the derivation of \autoref{prop: l-th quantile process}, it will be convenient to introduce the smallest positive deviation (SPD) as a map $\text{SPD}: \mathbb{R}^m \to \mathbb{R}$ via
\begin{equation}\label{eq: def of SPD}
    \text{SPD}(x) = \inf \lbrace |x_i-x_j| : 1 \leq i,j \leq m, |x_i - x_j |>0  \rbrace
\end{equation}
with the convention that $\inf \emptyset = \infty$. Furthermore, we will view $\lambda_n^{(l)}$ separately on the intervals, where $\Uinv(t) = (U_1^{-1}(t), \ldots, U_m^{-1}(t))$ has the same ordering in the sense that the indices of the corresponding order statistics are the same. Thus, we define the auxiliary map $\kappa: \mathbb{R}^m \to \mathcal{P}(\lbrace 1, \ldots, m \rbrace)^m$ by
\begin{equation*}
    \kappa(x) = \left(\lbrace i = 1, \ldots, m: x_i = x_{(l)}  \rbrace \right)_{1 \leq l \leq m}.
\end{equation*}
For example, $\kappa(1,3,2,2) = (\lbrace 1 \rbrace, \lbrace 3,4 \rbrace, \lbrace 3,4 \rbrace, \lbrace 2 \rbrace)$. For each $Q \in \mathcal{P}(\lbrace 1, \ldots,m \rbrace)^m$, we then regard the process $\lambda_{n}^{(l)}$ on the set of $t$ such that $\kappa(\Uinv(t)) = Q$.
\begin{proof}[Proof of \autoref{prop: l-th quantile process}]
First, let $\beta > 0$ and assume \autoref{cond 1.2} and  $l \in \lbrace 1 , \ldots, m \rbrace$. We start by showing that
\begin{equation}\label{eq: order statistic quantile process in proof}
    \norm{\lambda_n^{(l)}- \Psi_{l, \Uinv}(\mathbb{U}_n^{-1})}_{L^1([\beta,1-\beta])} \convP 0.
\end{equation}
To this end, for each $Q \in \mathcal{P}(\lbrace 1, \ldots , m \rbrace)^m$, we define
\begin{equation*}
    C^Q = \left\{ t \in [\beta,1-\beta]: \kappa(\Uinv(t)) = Q \right\}. 
\end{equation*}
It is clear that 
\begin{equation*}
    [\beta,1-\beta] = \cup_{Q \in \mathcal{P}(\lbrace 1, \ldots, m \rbrace)} C^Q
\end{equation*}
and for $Q_1 \neq Q_2$ it holds that $C^{Q_1} \cap C^{Q_2} = \emptyset$. 
Hence it holds that 
\begin{equation} \label{eq: L^1 decomp of median}
     \norm{\lambda_n^{(l)}- \Psi_{l, \Uinv}(\mathbb{U}_n^{-1}) }_{L^1([\beta,1-\beta])} = \sum_{Q \in \mathcal{P}(\lbrace 1, \ldots,m \rbrace)^m} \norm{\lambda_n^{(l)}- \Psi_{l, \Uinv}(\mathbb{U}_n^{-1}) }_{L^1(C^Q)}
\end{equation}
Once we have proven that each of the summands above converges to $0$ in probability, \eqref{eq: order statistic quantile process in proof} follows. Let $Q_0 = \lbrace 1, \ldots,m\rbrace^m$. For any $t \in C^{Q_0}$ it holds that $U_1^{-1}(t) =  ... = U_m^{-1}(t)$, from which can be deduced that  
\begin{equation*}
    \lambda_{n}^{(l)}\mathbbm{1}_{C^{Q_0}} =\left(\mathbb{U}_n^{-1} \right)_{(l)}\mathbbm{1}_{C^{Q_0}} = \Psi_{l, \Uinv}(\mathbb{U}_n^{-1})\mathbbm{1}_{C^{Q_0}}.
\end{equation*}
Hence, we now fix some $Q \in \mathcal{P}(\lbrace 1, \ldots, m \rbrace)^m$ such that $Q \neq Q_0$. For this $Q$ and $\delta>0$, we then let
\begin{equation*}
    C_\delta^{Q} = \left\{ t \in C^Q: \text{SPD}(\Uinv(t)) > \delta \right\}.
\end{equation*}
Here, SPD is the smallest positive deviation defined in \eqref{eq: def of SPD}. Since $Q \neq Q_0$, we know that for each $t \in C^Q$ the smallest positive deviation of $\Uinv$ is greater than 0, meaning that $C_\delta^Q \nearrow C^Q$ as $\delta \searrow 0$. We then expand
    \begin{align*}
        \left(\lambda_{n}^{(l)} - \Psi_{l,\Uinv}(\mathbb{U}_n^{-1}) \right)\mathbbm{1}_{C^Q} = & \underbrace{\left(\lambda_{n}^{(l)}\mathbbm{1}_{C^Q} - \lambda_{n}^{(l)}\mathbbm{1}_{C_\delta^Q}\right)}_{\Delta_{n, \delta}^{(1)}} + \underbrace{\left(\lambda_{n}^{(l)}\mathbbm{1}_{C_\delta^Q} - \Psi_{l,\Uinv}(\mathbb{U}_n^{-1}) \mathbbm{1}_{C_\delta^Q} \right)}_{\Delta_{n,\delta}^{(2)}}
        \\& +\underbrace{\left(\Psi_{l,\Uinv}(\mathbb{U}_n^{-1})\mathbbm{1}_{C_\delta^Q} - \Psi_{l,\Uinv}(\mathbb{U}_n^{-1})\mathbbm{1}_{C^Q} \right)}_{\Delta_{n,\delta}^{(3)}}.
    \end{align*}
    By \citet[Example 2.29]{wainwright2019high}, the $l$-th order statistics map
    \begin{equation*}
        \mathrm{os}_l : \mathbb{R}^m \to \mathbb{R}, \qquad x \mapsto x_{(l)}
    \end{equation*}
    is Lipschitz continuous. Hence, we may apply \autoref{lemma: Lipschitz continuity convergence to 0}(a) to conclude that 
    \begin{equation*}
         \lim_{\delta \searrow 0} \limsup_{n \to \infty} \mathbb{P} \left( \left\|\lambda_{n}^{(l)}\mathbbm{1}_{\left(C_\delta^Q\right)^c}   \right\|_{L^1(C^Q)} > \varepsilon\right) = 0
    \end{equation*}
    for any $\varepsilon > 0$. Regarding $\Delta_{n,\delta}^{(3)}$, note that $\kappa(\Uinv(t)) = Q$ for every $t \in C^Q$. Together with Lipschitz continuity of $\mathrm{os}_l$, this implies that $\Psi_{l,\Uinv}$ is Lipschitz continuous as a map into $L^1(C^Q)$. By similar arguments as in \autoref{lemma: Lipschitz continuity convergence to 0}, we thus obtain that also
    \begin{equation*}
         \lim_{\delta \searrow 0} \limsup_{n \to \infty} \mathbb{P} \left( \left\|\Psi_{l, \Uinv}(\mathbb{U}_n^{-1})\mathbbm{1}_{\left(C_\delta^Q\right)^c}   \right\|_{L^1(C^Q)} > \varepsilon\right) = 0
    \end{equation*}
    for any $\varepsilon > 0$. Thus, only the treatment of $\Delta_{n,\delta}^{(2)}$ remains. Define the set 
    \begin{equation*}
        E_{n,\delta} = \left\{ \omega: \max_{i=1,\ldots,m}\left\| U_{i,n}^{-1}(\omega,t) - U_{i}^{-1}(t) \right\|_{C_\delta^Q}  < \frac{\delta}{2} \right\}.
    \end{equation*}
    Clearly, for $\omega \in E_{n,\delta}$ and $t \in C_\delta^Q$, we have $\lambda_n^{(l)}(\omega,t) = \Psi_{l,\Uinv}(\mathbb{U}_n^{-1}(\omega,t))$, which implies that 
    \begin{equation} \label{eq: second summand order statistic}
        \Delta_{n,\delta}^{(2)} = \left[ \lambda_{n}^{(l)} - \Psi_{l,\Uinv}(\mathbb{U}_n^{-1}) \right] \mathbbm{1}_{C_\delta^Q}\mathbbm{1}_{E_{n,\delta}^c}.
    \end{equation}
    By the LLN for $U_i^{-1}$ in \autoref{lemma: LLN for U quantiles}, for any fixed $\delta > 0$ it holds that $\mathbb{P}(E_{n,\delta}^c) \to 0$ as $n \to \infty$. Combined with~\eqref{eq: second summand order statistic}, this shows 
    \begin{equation*}
        \limsup_{n \to \infty} \mathbb{P} \left( \left\| \lambda_{n}^{(l)} \mathbbm{1}_{C_\delta^Q} - \Psi_{l,\Uinv}(\mathbb{U}_n^{-1}) \mathbbm{1}_{C_\delta^Q} \right\|_{L^1(C^Q)} > \varepsilon \right) = 0
    \end{equation*}
    Combining our findings, we have thus shown that 
    \begin{equation} \label{eq: convergence second summand order stat}
         \lim_{n \to \infty}\mathbb{P}\left(\left\| \lambda_{n}^{(l)} - \Psi_{l,\Uinv}(\mathbb{U}_n^{-1}) \right\|_{L^1(C^Q)} > \varepsilon \right) = 0
    \end{equation}
    As equation~\eqref{eq: convergence second summand order stat} holds for any choice of $Q$, combined with~\eqref{eq: L^1 decomp of median} this implies
    \begin{equation*}
        \norm{\lambda_n^{(l)}- \Psi_{l, \Uinv}(\mathbb{U}_n^{-1})}_{L^1([\beta,1-\beta])} \convP 0
    \end{equation*}
    Moreover, $\Psi_{l,\Uinv}:L^1(C^Q)^m \to L^1(C^Q)$ is continuous for each $Q \in \mathcal{P}(\lbrace 1, \ldots,m \rbrace)^m$, since $\mathrm{os}_l$ is Lipschitz continuous and the ordering of $\Uinv$ remains unchanged on $C^Q$. Denoting $\mathcal{C}^{Q,m} \coloneqq [\beta,1-\beta]^m \times C^Q$, it thus holds that
    \begin{align*}      \left(\mathbb{U}_n^{-1},\Psi_{l,\Uinv}\left(\mathbb{U}_n^{-1}\right) \right) &= \sum_{Q \in \mathcal{P}(\lbrace 1, \ldots,m \rbrace)^m} \left(\mathbb{U}_n^{-1},\Psi_{l,\Uinv}\left(\mathbb{U}_n^{-1}\right) \right)\mathbbm{1}_{C^{Q,m}} \\
    &\rightsquigarrow\left(\mathbb{G}, \Psi_{l,\Uinv}(\mathbb{G}) \right) \text{ in }\ell^\infty([\beta,1-\beta])^m \times L^1([\beta,1-\beta])
    \end{align*}
    by the continuous mapping theorem and \autoref{lemma: joint convergence inverted u process}. Note that when \autoref{cond 1.3} is satisfied with $\theta = 1$, then \autoref{lemma: Lipschitz continuity convergence to 0} holds on $(0,1)$ as well and $\mathbb{U}_n^{-1}$ converges to $\mathbb{G}$ in $L^1((0,1))^m$ by \autoref{lemma: joint convergence inverted u process}. Hence, the proof can be carried out analogously. 
\end{proof}

\subsection{Proof of \autoref{theorem: convergence of barycenter process}}
We begin by collecting some auxiliary results.

\begin{lemma}\label{lemma: properties barycenter}
    Let $D^m = \lbrace x \in \mathbb{R}^m : x_1 = \ldots =  x_m \rbrace$.
    \begin{enumerate}
        \item For $p \geq 2$, it holds that $\Lambda_p \in \mathcal{C}^1(\mathbb{R}^m \setminus D^m)$ and the derivative at a point $x \in \mathbb{R}^m \setminus D^m$ in the direction $y\in \mathbb{R}^m$ is given by 
        \begin{equation} \label{eq: def of pointwise barycenter derivative}
            (d\Lambda_p)_x(y) = \frac{\sum_{i=1}^m |x_i - \Lambda_p(x)|^{p-2}y_i}{\sum_{i=1}^m |x_i - \Lambda_p(x)|^{p-2}} \eqqcolon \psi_x^p(y).
        \end{equation}
        \item For $p \geq 1$, $\Lambda_p$ is Lipschitz continuous with constant $L=1$ w.r.t.\ the norm $\| x \|_\infty = \max_{i=1, \ldots, m} \abs{x_i}$ for $x \in \mathbb{R}^m$.
        \item For $p \geq 2$, the map $\psi_x^p: \mathbb{R}^m \to \mathbb{R}$ as stated in~\eqref{eq: def of pointwise barycenter derivative} is Lipschitz continuous for each $x \in \mathbb{R}^m \setminus D^m$ with uniform Lipschitz constant in $x$.
        \item For any compact set $K \subset \mathbb{R}^m$ satisfying $\inf \lbrace \|x - y \|: x \in K, y \in D^m\rbrace > 0$, it holds that 
        \begin{equation}
            \sup_{x \in K} \left|\Lambda_p(x+y)- \Lambda_p(x) - \psi_x^p(y) \right| = o(\|y \|).
        \end{equation}
    \end{enumerate}
\end{lemma}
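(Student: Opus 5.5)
The plan is to study $\Lambda_p$ through its first-order condition. For $p>1$ define
\[
F(x,y)=\sum_{i=1}^m \sgn(x_i-y)\,|x_i-y|^{p-1}.
\]
For fixed $x$, the function $y\mapsto\sum_i|x_i-y|^p$ is strictly convex, so $\Lambda_p(x)$ is the unique zero of $y\mapsto F(x,y)$. This zero lies in $[\min_i x_i,\max_i x_i]$.

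\emph{Part (b).} Take $x,x'$ with $\|x-x'\|_\infty=\delta$. Each summand of $F$ is nondecreasing in $x_i$ and nonincreasing in $y$, and it is a function of $x_i-y$ only. Hence $F(x',\Lambda_p(x)+\delta)\le F(x,\Lambda_p(x))=0$, and similarly $F(x',\Lambda_p(x)-\delta)\ge 0$. Since $F(x',\cdot)$ is decreasing, its zero $\Lambda_p(x')$ lies in $[\Lambda_p(x)-\delta,\Lambda_p(x)+\delta]$. For $p=1$ the claim follows from $1$-Lipschitz continuity of order statistics in $\|\cdot\|_\infty$, which is elementary and already used via \citet[Example 2.29]{wainwright2019high}. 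The midpoint of two $1$-Lipschitz maps is again $1$-Lipschitz.

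\emph{Part (a).} For $p\ge2$ the map $s\mapsto\sgn(s)|s|^{p-1}$ is $C^1$ with derivative $(p-1)|s|^{p-2}$, so $F\in C^1(\mathbb{R}^{m+1})$. We have
\[
\partial_yF(x,y)=-(p-1)\sum_i|x_i-y|^{p-2},
\]
and this is strictly negative whenever some $x_i\ne y$. At $y=\Lambda_p(x)$ with $x\notin D^m$, not all $x_i$ can equal $\Lambda_p(x)$, so $\partial_yF\neq0$. The implicit function theorem then gives $\Lambda_p\in C^1$ near $x$, with
\[
(d\Lambda_p)_x(y)=-\frac{\partial_xF\cdot y}{\partial_yF},
\]
which is exactly \eqref{eq: def of pointwise barycenter derivative} after cancelling the factor $p-1$. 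The case $p=2$ needs no separate treatment: the weights are $1$ and the formula reduces to the mean.

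\emph{Part (c).} $\psi_x^p$ is linear in $y$ with nonnegative weights $w_i=|x_i-\Lambda_p(x)|^{p-2}/\sum_j|x_j-\Lambda_p(x)|^{p-2}$ that sum to one. Therefore $|\psi_x^p(y)-\psi_x^p(y')|\le\|y-y'\|_\infty$, and the Lipschitz constant $1$ is uniform in $x$.

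\emph{Part (d).} Write
\[
\Lambda_p(x+y)-\Lambda_p(x)-\psi_x^p(y)=\int_0^1\bigl[\psi^p_{x+sy}(y)-\psi^p_x(y)\bigr]\,ds,
\]
which is valid once $\|y\|$ is smaller than half the distance $\eta$ of $K$ to $D^m$, so that the whole segment stays in the $C^1$ region. The integrand is bounded by $\|y\|\cdot\sup_{\|z-x\|\le\|y\|}\|\nabla\Lambda_p(z)-\nabla\Lambda_p(x)\|$. The neighbourhood $K_{\eta/2}=\{z:\operatorname{dist}(z,K)\le\eta/2\}$ is compact and avoids $D^m$, and $\nabla\Lambda_p$ is continuous there, hence uniformly continuous. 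So the supremum tends to $0$ uniformly in $x\in K$, which gives the $o(\|y\|)$ bound.

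The main obstacle is part (a): we must check that the implicit function theorem hypotheses really hold off $D^m$. For $1<p<2$ they fail ($F$ is not $C^1$), which is exactly why the lemma restricts to $p\ge2$. The rest is routine once (a) is in place.
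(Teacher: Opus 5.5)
Your proof is correct and takes essentially the same route as the paper: the implicit function theorem applied to the first-order condition for (a), monotonicity together with translation equivariance for (b), nonnegative weights of $\psi_x^p$ summing to one for (c), and uniform continuity of the derivative on a compact neighbourhood of $K$ bounded away from $D^m$ for (d). Your version is more explicit than the paper's sketch. In (b) you derive the coordinatewise monotonicity of $\Lambda_p$ from the strictly decreasing map $F(x',\cdot)$, treating $p=1$ via order statistics, where the paper simply asserts it "by definition". In (d) you write out the integral form of the remainder, where the paper only appeals to compactness. The underlying ideas are the same.
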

\begin{proof}
    \begin{enumerate}
        \item This is a direct consequence of the implicit function theorem. A sketch of the proof is as follows: Let $g: \mathbb{R}^m \times \mathbb{R} \to \mathbb{R}$ be defined via 
        \begin{equation*}
            g(x,y) = \sum_{i=1}^m \abs{x_i-y}^p.
        \end{equation*}
        Then clearly
        \begin{equation*}
            \partial_y g(x,y) = -\sum_{i=1}^m p\abs{x_i - y}^{p-1} \sgn(x_i-y),
        \end{equation*}
        which is a continuously differentiable function as $p\geq 2$. Moreover, for each $x \in \mathbb{R}^m$ it holds that $\partial_y g(x, \Lambda_p(x)) = 0$, as $\Lambda_p(x)$ is by definition the minimizer of $g(x, \cdot)$. Furthermore, if $x \in \mathbb{R}^m \setminus D^m$, then \begin{equation*}
            \partial_y^2 g(x, \Lambda_p(x)) = \sum_{i=1}^m p(p-1) \abs{x_i - \Lambda_p(x)}^{p-2} > 0.        \end{equation*}
        Thus, at each such $x$, the assumptions of the implicit function theorem are satisfied for $\partial_y g(x, \Lambda_p(x))$ and the explicit form of the derivative is then a simple consequence of the theorem.
        \item Note that for any $a,b \in \mathbb{R}^m$, it holds by definition of $\Lambda_p$ that $a \leq b$ coordinatewise implies $\Lambda_p(a) \leq \Lambda_p(b)$. Let $x, y \in \mathbb{R}^m$ and $\delta = \left\| x - y \right\|_\infty$. Then $x - \delta \leq y \leq x + \delta$ and hence it holds that $\Lambda_p(x-\delta) \leq \Lambda_p(y) \leq \Lambda_p(x + \delta)$. Combining this with the fact that for $z \in \mathbb{R}^m$ and $t \in \mathbb{R}$, it holds that $\Lambda_p(z + t) = \Lambda_p(z) + t$, we thus obtain
        \begin{equation*}
            \Lambda_p(x) - \delta \leq \Lambda_p(y) \leq \Lambda_p(x) + \delta,
        \end{equation*}
        which yields the claim.
        \item Let $x \in \mathbb{R}^m\setminus D^m$ and $z,y \in \mathbb{R}^m$. Then by the Cauchy-Schwarz inequality
        \begin{align*}
            \abs{\psi_x^p(z-y)} &\leq \frac{\sum_{i=1}^m |x_i - \Lambda_p(x)|^{p-2}| z_i-y_i |}{\sum_{i=1}^m |x_i - \Lambda_p(x)|^{p-2}} \\
            &\leq \sqrt{\sum_{i=1}^m \left( \frac{|x_i - \Lambda_p(x)|^{p-2}}{\sum_{j=1}^m |x_j - \Lambda_p(x)|^{p-2}} \right)^2 } \sqrt{\sum_{i=1}^m |z_i-y_i|^2}.
        \end{align*}
        Now the claim follows upon realizing that the first factor of the above display is uniformly bounded in $x$ by $1$.
        \item This is a direct consequence of the first statement, since $\psi_x^p$ is the continuous derivative of $\Lambda_p$ on $K$. The rest follows by compactness of $K$ and simple analytical arguments.
    \end{enumerate}
\end{proof}

Equipped with these results, we are now ready to tackle the weak convergence of $(\mathbb{U}_n^{-1}, \lambda_{p,n})$.
\begin{proof}[Proof of \autoref{theorem: convergence of barycenter process}]
    We first provide the proof of \autoref{theorem: convergence of barycenter process}(a). 
    \par \medskip \noindent
    \textbf{Case $\mathbf{p = 1}$:} Note that 
        \begin{equation*}
            \left( \mathbb{U}_n^{-1}, \lambda_{1,n} \right) = \frac{1}{2}\left(\mathbb{U}_n^{-1}, \lambda_{n}^{ \left( \lfloor \frac{m+1}{2} \rfloor \right)}   \right) + \frac{1}{2}\left(\mathbb{U}_n^{-1}, \lambda_{n}^{ \left( \lceil \frac{m+1}{2} \rceil \right)}   \right).
        \end{equation*}
        The asymptotic distribution of both summands is a consequence of \autoref{prop: l-th quantile process}. A quick inspection of the proof of \autoref{prop: l-th quantile process} then yields that the sum converges weakly to the sum of the limiting distributions, since these derivations hinge on a continuous mapping argument used on the same sequence of random elements.
        \par \medskip \noindent
    \textbf{Case $\mathbf{p \geq 2}$}. Consider the sets
    \begin{equation*}
        C \coloneqq \left\{ t \in [\beta,1-\beta]: \sum_{i=1}^m |U_{i}^{-1}(t) - \Lambda_p(\Uinv(t))| >0 \right\}
    \end{equation*}
    and for each $\delta > 0$
    \begin{equation*}
        C_\delta \coloneqq \left\{ t \in [\beta,1-\beta]:  \text{SPD}(U_1^{-1}(t), \ldots, U_m^{-1}(t)) > \delta  \right\}.
    \end{equation*}
    Notice that by definition, $C = \lbrace t \in [\beta,1-\beta]: \text{SPD}(\Uinv(t)) > 0 \rbrace$ and thus $C_\delta \subset C$ for each $\delta >0$. Furthermore, it holds that $C_{\delta_1} \subset C_{\delta_2}$ for $\delta_2 \leq \delta_1$. Similarly to the the proof of \autoref{prop: l-th quantile process}, we first show that 
    \begin{equation*} 
        \norm{\left(\lambda_{p,n} - \psi_{\Uinv}^p(\mathbb{U}_n^{-1}) \right)\mathbbm{1}_C}_{L^1([\beta,1-\beta])} \convP 0.
    \end{equation*}
    To this end, we expand
    \begin{align*}
        \left(\lambda_{p,n} - \psi_{\Uinv}^p(\mathbb{U}_n^{-1}) \right)\mathbbm{1}_C = & \underbrace{\lambda_{p,n}\mathbbm{1}_C - \lambda_{p,n}\mathbbm{1}_{C_\delta}}_{\Delta_{n,\delta}^{(1)}} + \underbrace{\lambda_{p,n}\mathbbm{1}_{C_\delta} - \psi_{\Uinv}^p(\mathbb{U}_n^{-1})\mathbbm{1}_{C_\delta}}_{\Delta_{n,\delta}^{(2)}}
        \\& +\underbrace{\psi_{\Uinv}^p(\mathbb{U}_n^{-1})\mathbbm{1}_{C_\delta} - \psi_{\Uinv}^p(\mathbb{U}_n^{-1})\mathbbm{1}_{C}}_{\Delta_{n,\delta}^{(3)}}.
    \end{align*}
    We treat each of the summands individually. Regarding the first summand, note that $\Lambda_p$ is Lipschitz continuous by \autoref{lemma: properties barycenter}(b). Thus, the assumptions of \autoref{lemma: Lipschitz continuity convergence to 0} are fulfilled for $\Delta_{n,\delta}^{(1)}$ and
    \begin{equation*}
        \lim_{\delta \searrow 0} \limsup_{n \to \infty}\mathbb{P}(\| \lambda_{p,n} \mathbbm{1}_{C \setminus C_\delta}\|_{L^1([\beta,1-\beta])} > \varepsilon) = 0.
    \end{equation*}
    for each $\varepsilon > 0$. Next, we turn our attention to $\Delta_{n,\delta}^{(3)}$. It holds that the map $\psi_{\Uinv(t)}^p$ is Lipschitz continuous for each $t \in (\beta, 1-\beta)$ with uniform Lipschitz constant by \autoref{lemma: properties barycenter}(c). By linearity of $\psi_{\Uinv}^p$,
    \begin{equation*}
        \psi_{\Uinv}^p(\mathbb{U}_n^{-1}) = \sqrt{n} \left(\psi_{\Uinv}^p(\Uninv) - \psi_{\Uinv}^p(\Uinv) \right)
    \end{equation*}
    and thus we can again apply \autoref{lemma: Lipschitz continuity convergence to 0} to show that 
    \begin{equation*}
        \lim_{\delta \searrow 0} \limsup_{n \to \infty}\mathbb{P}(\| \psi_{\Uinv}^p(\mathbb{U}_n^{-1}) \mathbbm{1}_{C \setminus C_\delta}\|_{L^1([\beta,1-\beta])} > \varepsilon) = 0.
    \end{equation*}
    Regarding $\Delta_{n,\delta}^{(2)}$, let
    \begin{equation*}
        E_{n, \delta} \coloneqq \left\{ \omega : \max_{1 \leq i \leq m} |U_{i,n}^{-1}(\omega,t)- U_i^{-1}(t)| < \frac{1}{2} \text{SPD}(U_1^{-1}(t), \ldots, U_m^{-1}(t)), \quad  \forall t \in C_\delta \right\}.
    \end{equation*}
    For $\omega \in E_{n , \delta}, t \in C_\delta$ it holds that 
    \begin{equation} \label{eq: Uninv not all equal}
        \Uninv (\omega, t) \notin D^m \coloneqq \lbrace x \in \mathbb{R}^m: x_1 = \ldots = x_m \rbrace.
    \end{equation}
    This holds as for any $t \in C_\delta$, there is a pair $i \neq j$ such that $\left| U_i^{-1}(t) - U_j^{-1}(t) \right|= \text{SPD}\left(\Uinv(t)\right)$. Now fix this $i,j$ and let $\omega \in E_{n,\delta}$. Then,
    \begin{align*}
        \left| U_{i,n}^{-1}(\omega, t) - U_{j,n}^{-1}(\omega, t)  \right| > 0,
    \end{align*}
    which shows \eqref{eq: Uninv not all equal}.
    Furthermore, on $\left\{ \Uinv(t) : t \in C_\delta \right\}$, $\Lambda_p$ is differentiable and we can write 
    \begin{align*}
        &\left( \lambda_{p,n}- \psi_{\Uinv}^p(\mathbb{U}_n^{-1}) \right) \mathbbm{1}_{C_\delta}\mathbbm{1}_{E_{n,\delta}}  \\
        = &\sqrt{n}\left[ \Lambda_p\left(\Uinv + \left(\Uninv - \Uinv\right)\right)- \Lambda_p\left(\Uinv\right) - \psi_{\Uinv}^p\left(\Uninv - \Uinv\right) \right] \mathbbm{1}_{C_\delta}\mathbbm{1}_{E_{n,\delta}}.
    \end{align*}
    \autoref{lemma: properties barycenter}(d) now implies that 
    \begin{equation*}
        \| \left( \lambda_{p,n}- \psi_{\Uinv}^p(\mathbb{U}_n^{-1}) \right) \mathbbm{1}_{E_{n,\delta}}\|_{C_\delta} = \sqrt{n} o\! \left( O_p (1/\sqrt{n}) \right) = o_p(1).
    \end{equation*}
    On the other hand, the LLN for $U$-quantile processes from \autoref{lemma: LLN for U quantiles} implies $\mathbb{P}\left(E_{n,\delta}^c\right) \to 0$, from which we overall conclude that 
    \begin{equation*}
        \norm{\left( \lambda_{p,n}- \psi_{\Uinv}^p(\mathbb{U}_n^{-1}) \right) }_{C_\delta} \convP 0
    \end{equation*}
    for each $\delta > 0$. Combining our findings for $\Delta_{n,\delta}^{(1)}, \Delta_{n,\delta}^{(2)}$ and $\Delta_{n,\delta}^{(3)}$ yields  
    \begin{equation*}
        \limsup_{n \to \infty} \mathbb{P} \left( \norm{ \left(\lambda_{p,n} - \psi_{\Uinv}^p(\mathbb{U}_n^{-1}) \right)\mathbbm{1}_C }_{L^1([\beta,1-\beta])} > \varepsilon \right)  = 0
    \end{equation*}
    By \autoref{lemma: properties barycenter}(c), $\psi_{\Uinv}^p$ is Lipschitz continuous for each $\Uinv(t)$ where $t \in C$ with uniform Lipschitz constant. Hence, by the continuous mapping theorem and \autoref{lemma: joint convergence inverted u process}
    \begin{equation*}
        \left(\mathbb{U}_n^{-1},\psi_{\Uinv}^p(\mathbb{U}_n^{-1}) \right) \rightsquigarrow  \left( \mathbb{G}, \psi_{\Uinv}^p(\mathbb{G}) \right) \text{ in } \ell^\infty([\beta,1-\beta])^m \times L^1(C),
    \end{equation*}
    and in conjunction with our previous findings this also means that 
    \begin{equation*}
        \left(\mathbb{U}_n^{-1},\lambda_{p,n} \right) \rightsquigarrow  \left( \mathbb{G}, \psi_{\Uinv}^p(\mathbb{G}) \right) \text{ in } \ell^\infty([\beta,1-\beta])^m \times L^1(C).
    \end{equation*}
    On $C^c$, note that $U_1^{-1}(t) = \ldots = U_m^{-1}(t)$ and thus
    \begin{equation*}
        \lambda_{p,n}\mathbbm{1}_{C^c} = \sqrt{n}\Lambda_p(\Uninv-\Uinv)\mathbbm{1}_{C^c} = \Lambda_p(\mathbb{U}_n^{-1})\mathbbm{1}_{C^c}.
    \end{equation*}
    As $\Lambda_p$ is continuous, we thus conclude that 
    \begin{equation*}
        \left(\mathbb{U}_n^{-1},\lambda_{p,n} \right) \rightsquigarrow  \left( \mathbb{G}, \Lambda_p(\mathbb{G}) \right) \text{ in }\ell^\infty([\beta,1-\beta])^m \times L^1(C^c).
    \end{equation*}
    Now clearly $(\mathbb{U}_n^{-1}, \lambda_{p,n})$ as an element of $\ell^\infty([\beta,1-\beta])^m \times L^1([\beta,1-\beta])$ is a sum of the restrictions to $[\beta,1-\beta]^m \times C$ and $[\beta,1-\beta]^m \times C^c$, which implies the statement. \\
    \par \medskip \noindent
    The proof of \autoref{theorem: convergence of barycenter process}(b) can be carried out in exact analogy to the one of part (a), by substituting $[\beta,1-\beta]$ with $(0,1)$ and using the respective parts of \autoref{lemma: joint convergence inverted u process} and \autoref{lemma: Lipschitz continuity convergence to 0} for the case $p \geq 2$ as well as the second part of \autoref{prop: l-th quantile process} for $p = 1$.
\end{proof}
\newpage

\section{Deferred Proofs of the Main Results}\label{appendix: proof of main results}
\subsection{Proof of \autoref{prop: SLB bounds GW}}\label{proof:prop:barycenter-bound} 
Define the set $\mathcal{Q}$ to be the set of all quantile functions belonging to some probability measure on the real line, and further recall the definition of the set of all quantile functions induced by distances of an mm-space $\tilde{\mathcal{Q}}$ from \autoref{rem: comments SLB approach}. Then it holds that  
\begin{align*}
    T_{GW}^p &= \inf_{\mathcal{X} \in \mathcal{G}_w} \frac{1}{m} \sum_{i=1}^m \mathcal{GW}_p^p(\mathcal{X}_i, \mathcal{X}) \\
     &\geq \inf_{\mathcal{X} \in \mathcal{G}_w} \frac{1}{m} \sum_{i=1}^m \mathcal{SLB}_p^p(\mathcal{X}_i, \mathcal{X}) \\
     &= \inf_{U^{-1} \in \tilde{\mathcal{Q}}} \frac{1}{m} \sum_{i=1}^{m} \int_0^1 \abs{U_i^{-1}(t) - U^{-1}(t)}^p \diff t \\
     &\geq \inf_{U^{-1} \in \mathcal{Q}} \frac{1}{m} \sum_{i=1}^{m} \int_0^1 \abs{U_i^{-1}(t) - U^{-1}(t)}^p \diff t = T^p.
\end{align*}
Here, it was used in the first inequality that the SLB bounds the GW distance from below, and in the last step that $\tilde{\mathcal{Q}} \subset \mathcal{Q}$. \qed 
\subsection{Proof of \autoref{theorem: finite sample bound the alternative}}\label{sec: proof finite sample bound alternative}\label{proof:thm:finitesample-bias-alt}
In the following, we denote by $D = \max_{i=1, \ldots,m} \diam(\mathcal{X}_i)$. Note that the diameter of an mm-space is always finite as it is by definition compact. Thus, it holds that for each $t \in (\beta,1-\beta)$,
\begin{equation*}
    \abs{U_{i,n}^{-1}(t) - U_i^{-1}(t)} \leq D, \qquad \abs{\Lambda_p(\Uninv(t)) - \Lambda_p(\Uinv(t))} \leq D,
\end{equation*}
where the second inequality is a consequence of the Lipschitz continuity of $\Lambda_p$, see \autoref{lemma: properties barycenter}. Hence, for $p>1$, the mean value theorem yields, for any fixed $t \in (\beta,1-\beta)$
\begin{align*}
    &\Biggl| \abs{U_{i,n}^{-1}(t) - \Lambda_p\left( \Uninv(t) \right)}^p - \abs{U_{i}^{-1}(t) - \Lambda_p\left( \Uinv(t) \right)}^p \Biggr|\\
    \leq &p (2D)^{p-1} \abs{U_{i,n}^{-1}(t) - U_i^{-1}(t) - \left( \Lambda_p\left(\Uninv(t)\right) -\Lambda_p\left(\Uinv(t)\right) \right)} \\
    \leq & p (2D)^{p-1} \left[ \abs{U_{i,n}^{-1}(t) - U_i^{-1}(t)}+ \abs{ \Lambda_p\left(\Uninv(t)\right) -\Lambda_p\left(\Uinv(t)\right) }\right].
\end{align*}
While for $p = 1$ the mean value theorem is not applicable due to the lack of differentiability of the absolute value at $0$, we want to remark that the above display still holds true due to the reverse triangle inequality. Hence, applying the above inequality and in a second step using Lipschitz continuity of $\Lambda_p$ (see \autoref{lemma: properties barycenter}), we obtain
\begin{align}
    \abs{T_{\beta,n}^p -T_\beta^p} &\leq \frac{p(2D)^{p-1}}{m}\sum_{i=1}^m \int_\beta^{1-\beta} \left[ \abs{U_{i,n}^{-1} - U_i^{-1}}+ \abs{ \Lambda_p\left(\Uninv\right) -\Lambda_p\left(\Uinv\right)  }\right] \nonumber \\
    &\leq \frac{p(2D)^{p-1}}{m}\sum_{i=1}^m \int_0^{1} \left[ \abs{U_{i,n}^{-1} - U_i^{-1}}+ \max_{1 \leq j \leq m} \abs{U_{j,n}^{-1} -U_j^{-1}}\right] \nonumber \\  
    & \leq \frac{p(2D)^{p-1}(m+1)}{m}\sum_{i=1}^m \int_0^{1}  \abs{U_{i,n}^{-1} - U_i^{-1}}     \label{eq: Inequality finite sample bias}
\end{align}
Note that~\eqref{eq: Inequality finite sample bias} can be viewed as a sum of $1$-Wasserstein distances between the empirical and underlying distance-to-distance distributions, and as such we can write 
\begin{equation} \label{eq: finite sample bias wasserstein}
    \abs{T_{\beta,n}^p -T_\beta^p} \leq \frac{p(2D)^{p-1}(m+1)}{m}\sum_{i=1}^m \int_{0}^{D}  \abs{U_{i,n}(t) - U_i(t)} \diff t,
\end{equation}
by \citet[Remark 2.19(iii)]{villani2021topics} and since $\abs{U_{i,n}(t) - U_i(t)} = 0$ for $t<0$ and $t > D$. By the inequality $(x_1 + \ldots + x_m)^q \leq m^{q-1}(x_1^q + \ldots + x_m^q)$ for $q \geq 1$ and $x_1, \ldots, x_m \in \mathbb{R}_{\geq 0}$ and Jensen's inequality,~\eqref{eq: finite sample bias wasserstein} further implies 
\begin{equation*}
    \abs{T_{\beta,n}^p -T_\beta^p}^q \leq \left(p2^{p-1}(m+1) \right)^q D^{pq-1} m^{-1} \sum_{i=1}^m \int_{0}^{D}  \abs{U_{i,n}(t) - U_i(t)}^q \diff t.
\end{equation*}
Taking the expectation on both sides of the above display and in a second step applying Tonelli's theorem leads to 
\begin{equation} \label{eq: bound finite sample risk}
    \mathbb{E} \left[\abs{T_{\beta,n}^p -T_\beta^p}^q \right] \leq \left(p2^{p-1}(m+1) \right)^q D^{pq-1} m^{-1} \sum_{i=1}^m \int_{0}^{D}  \mathbb{E} \left[ \abs{U_{i,n}(t) - U_i(t)}^q \right] \diff t.
\end{equation}
We continue by bounding the expectations in the above integrals in a pointwise manner. To this end, note that for each $t \in \mathbb{R}$, $U_{i,n}(t)$ is a $U$-statistic with kernel function $h_t(x,y) = \mathbbm{1} \!(d_i(x,y) \leq t)$. Hence, Hoeffding‘s inequality for $U$-statistics \citep{Hoeffding01031963} and the fact that $\abs{h_t(x,y)} \leq 1$ implies that 
\begin{equation*}
    \mathbb{P}\left( \abs{U_{i,n}(t) - U_i(t)} \geq x \right) \leq 2 \exp\left( - 2 x^{2} \lfloor n/2 \rfloor \right), \qquad x > 0.
\end{equation*}
Here, $\lfloor y \rfloor$ denotes the largest integer less or equal to $y \in \mathbb{R}$. Hence, we can bound the expectation of the $q$-th power of the $U$-statistic by writing
\begin{align*}
    \mathbb{E} \left[ \abs{U_{i,n}(t) - U_i(t)}^q \right] &= q \int_{0}^{\infty} x^{q-1} \mathbb{P}\left( \abs{U_{i,n}(t) - U_i(t)} \geq x \right) \diff x \\
    &\leq 2q \int_{0}^{\infty} x^{q-1} \exp\left( - 2 x^{2} \lfloor n/2 \rfloor \right) \diff x \\
    &=  q 2^{-q/2}  \lfloor n/2 \rfloor^{-q/2}  \int_{0}^{\infty} x^{q/2 - 1} \exp\left( - x \right) \diff x \\
    &=  2^{1-q/2} \lfloor n/2 \rfloor^{-q/2} \Gamma(1 + q/2),
\end{align*}
where $\Gamma$ denotes the Gamma-function. Plugging in this moment bound into~\eqref{eq: bound finite sample risk}, it follows that 
\begin{equation} \label{eq: second inequality finite sample bias}
    \mathbb{E} \left[\abs{T_{\beta,n}^p -T_\beta^p}^q \right] \leq 2^{qp - 3q/2 + 1} \left(p(m+1) \right)^q D^{pq} \Gamma(1 + q/2) \lfloor n/2 \rfloor ^{-q/2},
\end{equation}
which concludes the proof.
\qed
\subsection{Proof of \autoref{theorem: sample bias of DoD}}\label{proof:thm:finite-sample-bias} 
   Denote by $Y_1, \ldots, Y_n \sim \mu_1$ a sample independent from $\lbrace X_{i,j} | 1 \leq i \leq m; 1 \leq j \leq n \rbrace$. Define the cdf of the distribution of distances of this sample as
    \begin{equation*}
        V_n(t) = \frac{2}{n(n-1)}\sum_{i<j} 1(d_1(Y_i, Y_j) \leq t ).
   \end{equation*}
    Furthermore, denote the quantile function corresponding to $V_n$ by $V_n^{-1}$. By definition, it holds that
    \begin{align*}
        T_{\beta, n}^p &= \underset{V^{-1} \in \mathcal{Q}}{\inf} \frac{1}{m} \sum_{i=1}^m \int_\beta^{1-\beta} \left| U_{i,n}^{-1}(t) - V^{-1}(t)\right|^p \diff t\\
        &\leq \frac{1}{m} \sum_{i=1}^m \int_0^1 \left| U_{i,n}^{-1}(t) - V_n^{-1}(t)\right|^p \diff t,
    \end{align*}
    as $V_n^{-1}$ is clearly contained in the set of quantile functions. By Jensen's inequality, it thus follows that 
    \begin{equation} \label{eq: finite sample bound null}
        \left( T_{\beta, n}^p \right)^q \leq \frac{1}{m} \sum_{i=1}^m \int_0^1 \left| U_{i,n}^{-1}(t) - V_n^{-1}(t)\right|^{pq} \diff t.
    \end{equation}
    Note that $J_{pq}(\mu^{U_1}) < \infty$ by assumption and $\mu^{U_1} = \ldots = \mu^{U_m} = \mu^{V}$, where $\mu^V$ is the measure corresponding to $V$, the underlying distance-to-distance distribution function of $V_n$. Further, a careful consideration of \citet[Theorem 2.5 and Remark A.6]{weitkamp2024distribution} yields that
    \begin{equation*}
        \mathbb{E}\left(\int_0^1 \abs{U_{i,n}^{-1}(t) - V_n^{-1}(t)}^{pq} \diff t \right) \leq \left( \frac{10 pq}{\sqrt{n+2}} \right)^{pq} J_{pq}(\mu^{U_1}).
    \end{equation*}
    Combining this with~\eqref{eq: finite sample bound null}, we have thus shown
    \begin{equation*}
        \mathbb{E}\left[\left(T_{\beta, n}^p \right)^q \right] \leq \left( \frac{10 pq}{\sqrt{n+2}} \right)^{pq} J_{pq}(\mu^{U_1}),
    \end{equation*} 
    which finishes the proof. \qed
\subsection{Convergence under the Null Hypothesis}\label{proof:thm:convergence-null}
In this section, we will present the proof for \autoref{theorem: Convergence under the Null}. The two parts of the theorem will be proven individually, since the case $\beta = 0$ requires more careful consideration. Thus, we start with the case $\beta > 0$.
\subsubsection*{Proof of \autoref{theorem: Convergence under the Null}(a)}
    For $\beta >0$ and under \autoref{cond 1.2} we have by the first part of \autoref{lemma: joint convergence inverted u process} that 
    \begin{equation*}
        \mathbb{U}_n^{-1}  \rightsquigarrow \mathbb{G} = \left( \mathbb{G}_1 , \ldots, \mathbb{G}_m \right) \text{ in }\ell^\infty\left([\beta,1-\beta]\right)^m.
    \end{equation*}
    Now, define the map $\phi: \ell^\infty([\beta,1-\beta])^m \to \mathbb{R}$ via
    \begin{equation*}
        \phi(f_1, \ldots , f_m) = \frac{1}{m} \sum_{i=1}^m \int_\beta^{1-\beta} \left|f_i(x) - \Lambda_p(f_1(x),\ldots, f_m(x)) \right|^p \diff x.
    \end{equation*}
    As the barycenter operator $\Lambda_p$ is continuous by \autoref{lemma: properties barycenter}, the continuity of $\phi$ is readily verified. Furthermore, we can rewrite
    \begin{align*}
         &\frac{n^{p/2}}{m} \sum_{i=1}^m \int_\beta^{1-\beta} \left| U_{i,n}^{-1} - \Lambda_p\left(\Uninv \right)\right|^p \\
         = &\frac{1}{m} \sum_{i=1}^m \int_\beta^{1-\beta} \left|  \sqrt{n}\left(U_{i,n}^{-1} - U_i^{-1} \right) - \sqrt{n} \left( \Lambda_p\left(\Uninv \right) - U_i^{-1} \right) \right|^p \\
         = &\frac{1}{m} \sum_{i=1}^m \int_\beta^{1-\beta} \left|  \sqrt{n}\left(U_{i,n}^{-1} - U_i^{-1} \right) -\Lambda_p\left(\sqrt{n}(U_{1,n}^{-1} - U_1^{-1}),\ldots,\sqrt{n} ( U_{m,n}^{-1} - U_m^{-1}) \right)  \right|^p.
    \end{align*}
    In the second line, we used the fact that $T_\beta^p = 0$, which implies that $U_1^{-1} = \ldots = U_m^{-1}$ on the interval $[\beta,1-\beta]$. Further, in the last equality, it was used that for $x \in \mathbb{R}^m$ and $t \in \mathbb{R}$, it holds that $\Lambda_p(x) -t = \Lambda_p(x_1-t, \ldots,x_m-t)$. Note that the above display can be rewritten in terms of $\phi$ which yields
    \begin{equation*}
        \frac{n^{p/2}}{m} \sum_{i=1}^m \int_\beta^{1-\beta} \left| U_{i,n}^{-1} - \Lambda_p\left(\Uninv \right)\right|^p = \phi \left( 
        \mathbb{U}_n^{-1}\right)
    \end{equation*}
    Now, an application of the continuous mapping theorem leads to
    \begin{equation*}
        \frac{n^{p/2}}{m} \sum_{i=1}^m \int_\beta^{1-\beta} \left| U_{i,n}^{-1} - \Lambda_p(U_{1,n}^{-1},\ldots,U_{m,n}^{-1})\right|^p \rightsquigarrow \phi \left( \mathbb{G}_1, \ldots , \mathbb{G}_m \right),
    \end{equation*}
    which is precisely the limiting distribution stated in the first part of \autoref{theorem: Convergence under the Null}. \qed
\subsubsection*{Proof of \autoref{theorem: Convergence under the Null}(b)}
We follow a similar approach as \citet{weitkamp2024distribution}. Define the map
\begin{equation} \label{eq: def of Xi_n}
    \Xi_n(\beta) \coloneqq \frac{n^{p/2}}{m} \sum_{i=1}^m \int_\beta^{1-\beta} \left| U_{i,n}^{-1} - \Lambda_p(\Uninv)\right|^p
\end{equation}
for $\beta \in [0,1/2]$. In particular, $\Xi_n$ can be seen as a random map into $(\mathcal{C}([0,1/2]), \| \cdot \|_{[0,1/2]})$. This is due to the fact that a metric measure space is compact, which means that the corresponding distance-to-distance distributions have bounded support. Hence, the integrand above is bounded and an application of the dominated convergence theorem with respect to $\beta$ yields the continuity of $\Xi_n: [0,1/2] \to \mathbb{R}$. 
\begin{lemma}\label{lemma: tightness of xi_n}
    Assume that \autoref{cond 1.3} is satisfied with $\theta = p$ and that $\mu^{U_1} = \ldots = \mu^{U_m}$. Then, the sequence $(\Xi_n)_{n \in \mathbb{N}}$ is a tight sequence of random elements in $(\mathcal{C}([0, 1/2]), \|\cdot \|_{[0, 1/2]})$.
\end{lemma}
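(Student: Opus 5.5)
To prove tightness of $(\Xi_n)$ in $\mathcal{C}([0,1/2])$, I would use the standard characterization for $\mathcal{C}([0,1/2])$: a sequence is tight if $\Xi_n(0)$ is tight in $\mathbb{R}$ and an asymptotic equicontinuity (modulus of continuity) condition holds. That is, for every $\varepsilon,\eta>0$ there should exist $\delta>0$ with $\limsup_n \mathbb{P}(\sup_{|\beta-\beta'|\le\delta}|\Xi_n(\beta)-\Xi_n(\beta')|>\varepsilon)<\eta$.

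The structural fact that drives everything is monotonicity. $\Xi_n$ is nonincreasing in $\beta$, and for $\beta\le\beta'$,
\begin{equation*}
\Xi_n(\beta)-\Xi_n(\beta') = \frac{n^{p/2}}{m}\sum_{i=1}^m \int_{[\beta,\beta')\cup(1-\beta',1-\beta]} \left|U_{i,n}^{-1}-\Lambda_p(\Uninv)\right|^p .
\end{equation*}
Next I would bound the integrand via the null hypothesis. Since $\mu^{U_1}=\ldots=\mu^{U_m}$, write $U^{-1}$ for the common quantile function. By Lipschitz continuity of $\Lambda_p$ (\autoref{lemma: properties barycenter}(b)) and translation equivariance $\Lambda_p(x)-t=\Lambda_p(x-t)$, we get $|U_{i,n}^{-1}-\Lambda_p(\Uninv)|\le 2\max_j|U_{j,n}^{-1}-U^{-1}|$. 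Hence
\begin{equation*}
|\Xi_n(\beta)-\Xi_n(\beta')| \le \frac{2^p}{m}\sum_{i=1}^m\sum_{j=1}^m \int_{A_{\beta,\beta'}} \left|\mathbb{U}_{j,n}^{-1}(t)\right|^p \diff t ,
\end{equation*}
where $A_{\beta,\beta'}$ is the union of the two short intervals above, with total length $2|\beta-\beta'|$. Tightness of $\Xi_n(0)$ then follows from the same bound with $A=(0,1)$ together with Markov's inequality. The expectation $\mathbb{E}\int_0^1|\mathbb{U}_{j,n}^{-1}|^p$ is uniformly bounded in $n$, by the argument used in \autoref{lemma: Lipschitz continuity convergence to 0}(b): symmetrization against an independent copy, followed by \citet[Theorem 2.5 and Remark A.6]{weitkamp2024distribution} with exponent $p$. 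This gives a bound of the form $C J_p(\mu^{U_j})$, which is finite by \autoref{lemma: J_p is finite} under \autoref{cond 1.3} with $\theta=p$.

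The main obstacle is the modulus of continuity near the endpoints, since $\mathbb{U}_{j,n}^{-1}$ may blow up at $0$ and $1$. I would split $[0,1/2]$ into $[0,\gamma]$ and $[\gamma,1/2]$ for a small $\gamma>0$.

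On $[\gamma,1/2]$, \autoref{cond 1.2} holds with trimming $\gamma$, because \autoref{cond 1.3} gives continuous differentiability with a positive density on the interior. So $\mathbb{U}_{n}^{-1}\rightsquigarrow\mathbb{G}$ in $\ell^\infty([\gamma,1-\gamma])^m$ by \autoref{lemma: joint convergence inverted u process}. The sup-norms are therefore $O_p(1)$, and the increments are at most $4\delta\, O_p(1)$, which can be made small.

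On $[0,\gamma]$, any two points lie within $\gamma$ of each other. Using monotonicity, the oscillation there is bounded by $\Xi_n(0)-\Xi_n(\gamma)$, and this is controlled by $\sum_j\int_{(0,\gamma)\cup(1-\gamma,1)}|\mathbb{U}_{j,n}^{-1}|^p$.

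For this last quantity I need a uniform-in-$n$ bound that vanishes as $\gamma\to0$. I would aim to show $\sup_n\mathbb{E}\int_{(0,\gamma)\cup(1-\gamma,1)}|\mathbb{U}_{j,n}^{-1}|^p\to0$ by redoing the Bobkov--Ledoux-type bound restricted to the tails. That bound is pointwise in $t$, roughly $\mathbb{E}|\mathbb{U}^{-1}_{j,n}(t)|^p\lesssim ((U^{-1})'(t))^p(t(1-t))^{p/2}$ up to edge terms, and the right-hand side is integrable by the proof of \autoref{lemma: J_p is finite}. Its tail integral then tends to $0$. If the pointwise version is unavailable, I would instead apply the $J_p$ bound to the measure conditioned to the tails. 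Combining the two regimes with Markov's inequality gives the equicontinuity condition and hence tightness.
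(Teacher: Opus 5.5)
Your architecture matches the paper's: Billingsley's criterion in $\mathcal{C}([0,1/2])$, monotonicity of $\Xi_n$ so that increments are integrals over $[\beta,\beta')\cup(1-\beta',1-\beta]$, and a pointwise domination of the integrand. Your bound for $\Xi_n(0)$ is fine. Convexity of $\mathcal{W}_p^p$ in one argument, together with $\mathbb{E}\tilde\mu_n=\mu^{U_j}$, gives $\mathbb{E}\mathcal{W}_p^p(\mu_n^{U_j},\mu^{U_j})\le\mathbb{E}\mathcal{W}_p^p(\mu_n^{U_j},\tilde\mu_n)$. The regime $[\gamma,1/2]$ is also fine.

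The gap is the endpoint regime, which is the only genuinely hard part of the lemma. You need
\[
\lim_{\gamma\searrow 0}\limsup_{n\to\infty}\mathbb{E}\int_{(0,\gamma)\cup(1-\gamma,1)}\abs{\mathbb{U}_{j,n}^{-1}(t)}^p\diff t=0,
\]
and neither of your routes delivers it.
\begin{itemize}
\item The pointwise estimate $\mathbb{E}\abs{\mathbb{U}_{j,n}^{-1}(t)}^p\lesssim((U^{-1})'(t))^p(t(1-t))^{p/2}$ is a heuristic, not an available result. The deviation $U_{j,n}^{-1}(t)-U^{-1}(t)$ is governed by $(U^{-1})'$ on the whole random interval between $t$ and the corresponding empirical quantile of the dependent uniforms $U(d_j(X_{j,k},X_{j,l}))$. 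Near $t=0$ that interval is not comparable to $t$. Moreover, the $J_p$-type results for pairwise-distance $U$-quantiles in \citet{weitkamp2024distribution} concern the two-sample quantity $\int\abs{U_{n}^{-1}-V_n^{-1}}^p$, not $\int\abs{U_n^{-1}-U^{-1}}^p$.
\item Conditioning on the tails does not work directly. The fraction of sample distances below $U^{-1}(\gamma)$ deviates from $\gamma$ by $O_p(n^{-1/2})$, which is exactly the scale you are measuring. So $U_{j,n}^{-1}$ restricted to $(0,\gamma)$ is not the quantile function of a conditioned empirical measure.
\item The convexity step that handled $\Xi_n(0)$ does not localize. $\mu^{U_j}=\mathbb{E}\tilde\mu_n$ is a mixture of measures, not of quantile functions, so it has no analogue for $\int_{(0,\gamma)\cup(1-\gamma,1)}$.
\end{itemize}

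The missing idea is the choice of comparator. Do not dominate by the population quantile $U^{-1}$ via Lipschitz continuity of $\Lambda_p$. Instead use that $\Lambda_p$ is the pointwise minimizer. If $V_n^{-1}$ is the empirical distance quantile function of an independent sample $Y_1,\ldots,Y_n\iid\mu_1$, then pointwise
\[
\sum_{i=1}^m\abs{U_{i,n}^{-1}-\Lambda_p(\Uninv)}^p\le\sum_{i=1}^m\abs{U_{i,n}^{-1}-V_n^{-1}}^p .
\]
With this bound, $\mathbb{E}\,\Xi_n(0)$ is controlled by \autoref{theorem: sample bias of DoD}. The modulus $\omega(\Xi_n,\delta)$ is dominated by the moduli of continuity of the two-sample processes $\beta\mapsto n^{p/2}\int_\beta^{1-\beta}\abs{U_{i,n}^{-1}-V_n^{-1}}^p$. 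These are exactly the objects handled in \citet[Lemma A.13]{weitkamp2024distribution}, extended to general $p$. This is the paper's proof, and it also makes your split at $\gamma$ and the appeal to \autoref{lemma: joint convergence inverted u process} unnecessary.
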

\begin{proof}
    In order to show measurability of $\Xi_n$, fix $n \in \mathbb{N}$ and $\beta \in [0,1/2]$. By left continuity of the quantile function and the continuity of the pointwise barycenter $\Lambda_p$ (see \autoref{lemma: properties barycenter}), \citet[Proposition 2.33 and Proposition 2.43]{capasso2021introduction} implies that
    \begin{equation*}
        Z_n = \frac{1}{m} \sum_{i=1}^m \abs{U_{i,n}^{-1} - \Lambda_p\left( \Uninv \right)}
    \end{equation*}
    as a function $[\beta,1-\beta] \times \Omega \to \mathbb{R}$ is $\mathcal{B}([\beta,1-\beta]) \otimes \mathcal{A} - \mathcal{B}(\mathbb{R})$ measurable. Then, the measurability of $\Xi_n(\beta)$ for arbitrary $\beta \in [0,1/2)$ follows by an application of \citet[Theorem 18.3]{billingsley1979probability}. The fact that $\Xi_n$ is measurable as a map into $(\mathcal{C}([0,1/2]), \| \cdot \|_{[0,1/2]})$ then follows directly by the measurability of its coordinates $\Xi_n(\beta)$. \\
    The tightness of $\Xi_n$ follows by an application of \citet[Theorem 7.3]{billingsley1999convergence} once we have shown that
    \begin{enumerate}
        \item The sequence $(\Xi_n(0))_{n \in \mathbb{N}}$ is measurable and tight.
        \item Defining the modulus of continuity $\omega(\Xi_n,\delta) = \sup_{\abs{x-y} \leq \delta} \abs{\Xi_n(x)- \Xi_n(y)}$, it holds that \begin{equation} \label{eq: modulus of continuity}
            \lim_{\delta \searrow 0} \limsup_{n \to \infty} \mathbb{P}(\omega(\Xi_n, \delta) > \varepsilon) = 0, \qquad \varepsilon > 0.
        \end{equation}
    \end{enumerate}
    The measurability of $\Xi_n(0)$ has already been confirmed at the beginning of this proof. Further, by \autoref{lemma: J_p is finite}, $J_p(\mu^{U_1})$ is finite. Hence, we may apply \autoref{theorem: sample bias of DoD}, by which
    \begin{equation*} 
        \mathbb{E}(\abs{\Xi_n(0)}) \leq n^{p/2}\left( \frac{10p}{\sqrt{n+2}} \right)^p J_p(\mu^{U_1}).
    \end{equation*}
    The right hand side of the above display is uniformly bounded in $n \in \mathbb{N}$, which proves the tightness of $\Xi_n$. Thus, only~\eqref{eq: modulus of continuity} remains to be shown. Let $x,y \in [0,1/2]$ and assume without loss of generality that $x< y$. Furthermore, let $Y_1, \ldots, Y_n \iid \mu_1$ be independent of $\lbrace X_{i,j}: i = 1, \ldots, m; j = 1, \ldots, n \rbrace$ and denote by $V_n$ the empirical distribution function of the distances $\lbrace d_1(Y_i, Y_j) \rbrace_{1 \leq i < j \leq n}$. Then 
    \begin{align*}
        \abs{\Xi_n(x) - \Xi_n(y)} &= \frac{n^{p/2}}{m}\sum_{i=1}^m \int_x^y \abs{U_{i,n}^{-1} - \Lambda_p\left(\Uninv \right)}^p + \int_{1-y}^{1-x} \abs{U_{i,n}^{-1} - \Lambda_p\left(\Uninv \right)}^p \\
        &\leq \frac{n^{p/2}}{m}\sum_{i=1}^m \int_x^y \abs{U_{i,n}^{-1} - V_n^{-1}}^p + \int_{1-y}^{1-x} \abs{U_{i,n}^{-1} - V_n^{-1}}^p \\
        &\eqqcolon \frac{n^{p/2}}{m} \sum_{i=1}^m \omega_{i,n}(x,y).
    \end{align*}
    Here, the inequality in the second line is a consequence of the fact that by definition, the barycenter operator $\Lambda_p$ is the pointwise minimizer of the integrand. Using these findings, it follows that 
    \begin{align*}
        \mathbb{P} \left( \omega(\Xi_n,\delta) > \varepsilon \right) &\leq \mathbb{P}\left( \frac{1}{m} \sum_{i=1}^m \sup_{\abs{x-y} \leq \delta} \omega_{i,n}(x,y) > \varepsilon\right) \\
        & \leq \sum_{i=1}^m \mathbb{P}\left( \sup_{\abs{x-y} \leq \delta} \omega_{i,n}(x,y) > \varepsilon \right) \\
        &= \sum_{i=1}^m \mathbb{P} \left( \omega(\Xi_{i,n}, \delta) > \varepsilon \right),
    \end{align*}
    where $\Xi_{i,n}:[0,1/2] \to \mathbb{R}$ is defined as the random map
    \begin{equation*}
        \Xi_{i,n}(\beta) = n^{p/2} \int_\beta^{1-\beta} \abs{U_{i,n}^{-1} - V_n^{-1}}^p.
    \end{equation*}
    By a straightforward extension of \citet[Lemma A.13]{weitkamp2024distribution} to general $p \geq 1$, it holds that for each $i = 1, \ldots, m$
    \begin{equation*}
        \lim_{\delta \searrow 0} \limsup_{n \to \infty} \mathbb{P} \left( \omega(\Xi_{i,n}, \delta) > \varepsilon \right) = 0,
    \end{equation*}
    when \autoref{cond 1.3} is met with $\theta = p$. Hence, we conclude that
    \begin{align*}
        \lim_{\delta \searrow 0} \limsup_{n \to \infty} \mathbb{P} \left( \omega(\Xi_n,\delta) > \varepsilon \right) \leq \sum_{i=1}^m \lim_{\delta \searrow 0} \limsup_{n \to \infty} \mathbb{P} \left( \omega(\Xi_{i,n}, \delta) > \varepsilon \right) = 0.
    \end{align*}
\end{proof}
Equipped with this result, we now give the proof of the second part of \autoref{theorem: Convergence under the Null}.
\begin{proof}[Proof of  \autoref{theorem: Convergence under the Null}(b)]
    As stated in \autoref{lemma: tightness of xi_n}, $(\Xi_n)_{n \in \mathbb{N}}$ is a tight sequence of random elements in $(\mathcal{C}([0,1/2]), \| \cdot \|_{[0,1/2]})$. Hence, by Prohorov's theorem \citep[Theorem 1.3.9]{wellner2013weak}, there is a weakly convergent subsequence $(\Xi_{a_k})_{k \in \mathbb{N}}$, the tight limit of which we denote by $\Xi$. Note that a slight adjustment of the proof of \autoref{theorem: Convergence under the Null}(a) yields that, for any collection of indices $t_1, \ldots, t_k \in (0,1/2)$, 
    \begin{equation*}
        (\Xi(t_1), \ldots, \Xi(t_k)) \eqD (X(t_1), \ldots, X(t_k)),
    \end{equation*}
    where $X(t_i) = 1/m \sum_{i=1}^m \int_\beta^{1-\beta} | \mathbb{G}_i^{-1} - \Lambda_p(\mathbb{G}_1^{-1},\ldots,\mathbb{G}_m^{-1})|^p$. By \citet[Lemma 1.5.3]{wellner2013weak}, this uniquely determines the distribution of $\Xi$ on $(0,1)$. Further, tightness of $\Xi$ implies separability \citep[Lemma 1.3.2]{wellner2013weak}, and hence we may apply the Skorohod representation theorem \citep[Theorem 6.7]{billingsley1999convergence} in order to find a probability space $(\Tilde{\Omega}, \Tilde{\mathcal{A}}, \Tilde{\mathbb{P}})$ and random elements $(\Tilde{\Xi}_{a_k})_{k \in \mathbb{N}}, \Tilde{\Xi}$ on this probability space such that $\Tilde{\Xi}_{a_k} \stackrel{D}{=} \Xi_{a_k}$ for each $k \in \mathbb{N}$ as well as $\Tilde{\Xi} \stackrel{D}{=} \Xi$ with the property that $\Tilde{\Xi}_{a_k}(\omega) \to \Tilde{\Xi}(\omega)$ in $(\mathcal{C}([0,1/2]), \| \cdot \|_{[0,1/2]})$ for almost every $\omega \in \Tilde{\Omega}$. From now on we will investigate only the Skorohod representation of the random elements, and hence we will drop the tilde for the ease of notation. Note that $\Xi_{a_k}(\omega) \to \Xi(\omega)$ for each $\omega \in \Omega$ in $(\mathcal{C}([0,1/2]), \| \cdot \|_{[0,1/2]})$ particularly implies that $\Xi_{a_k}(\beta)(\omega) \to \Xi(\beta)(\omega)$ pointwise for each $\beta \in [0,1/2]$ and almost every $\omega \in \Omega$. Combining these facts with part one of \autoref{theorem: Convergence under the Null}, it holds for any $\beta > 0$ and almost every $\omega \in \Omega$ that 
    \begin{equation} \label{eq: pointwise convergence of Xi_n}
         \frac{a_k^{p/2}}{m} \sum_{i=1}^m \int_\beta^{1-\beta} \left| U_{i,a_k}^{-1}(t) - \Lambda_p(\Uinv_{a_k}(t))\right|^p\diff t \to \frac{1}{m} \sum_{i=1}^m \int_\beta^{1-\beta} \left| \mathbb{G}_i(t) -\Lambda_p(\mathbb{G}(t))\right|^p \diff t.
    \end{equation}
    Furthermore, by definition, $\Xi_{a_k}(\cdot)$ is monotonically decreasing for each fixed $\omega$ and $\Xi_{a_k}(0) \geq 0$. Together with~\eqref{eq: pointwise convergence of Xi_n} this implies
    \begin{align*}
        \Xi(0) = \lim_{k \to \infty} \Xi_{a_k}(0) \geq \lim_{k \to \infty} \Xi_{a_k}(\beta) = \frac{1}{m} \sum_{i=1}^m \int_\beta^{1-\beta} \left| \mathbb{G}_i(t) -\Lambda_p(\mathbb{G}(t))\right|^p \diff t
    \end{align*}
    for each $\beta \in (0,1/2]$. Taking the limit of $\beta \searrow 0$, we thus conclude that for each $\omega \in \Omega$
    \begin{equation*}
        \Xi(0) \geq \frac{1}{m} \sum_{i=1}^m \int_0^{1}\left| \mathbb{G}_i(t) -\Lambda_p(\mathbb{G}(t))\right|^p  \diff t,
    \end{equation*}
    by an application of the monotone convergence theorem. We will now show that the set of elements in $\Omega$ for which the above inequality becomes strict is a null-set. To this end, consider a specific $\omega \in \Omega$ s.t.
    \begin{equation} \label{eq: strict ineq}
        \lim_{k \to \infty} \frac{a_k^{p/2}}{m} \sum_{i=1}^m \int_0^{1} \left| U_{i,a_k}^{-1} - \Lambda_p(\Uinv_{a_k})\right|^p = \Xi(0) > \frac{1}{m} \sum_{i=1}^m \int_0^{1} \left| \mathbb{G}_i -\Lambda_p(\mathbb{G})\right|^p.
    \end{equation}
    Combining~\eqref{eq: pointwise convergence of Xi_n} and~\eqref{eq: strict ineq} yields, for any $\beta > 0$
    \begin{equation} \label{eq: inquality in proof beta=0}
         \lim_{k \to \infty} \frac{a_k^{p/2}}{m} \sum_{i=1}^m \int_{[0, \beta] \cup [1-\beta, 1]} \left| U_{i,a_k}^{-1} - \Lambda_p(\Uinv_{a_k})\right|^p  > \frac{1}{m} \sum_{i=1}^m \int_{[0, \beta] \cup [1-\beta, 1]} \left| \mathbb{G}_i -\Lambda_p(\mathbb{G})\right|^p.
    \end{equation}
    More specifically, due to the fact that~\eqref{eq: pointwise convergence of Xi_n} holds, the difference in~\eqref{eq: inquality in proof beta=0} is constant in $\beta$, i.e.\ we can choose $\Delta > 0 $ depending only on $\omega$ such that 
    \begin{equation*}
        \lim_{k \to \infty} \frac{a_k^{p/2}}{m} \sum_{i=1}^m \int_{[0, \beta] \cup [1-\beta, 1]} \left| U_{i,a_k}^{-1} - \Lambda_p(\Uinv_{a_k})\right|^p  = \frac{1}{m} \sum_{i=1}^m \int_{[0, \beta] \cup [1-\beta, 1]} \left| \mathbb{G}_i -\Lambda_p(\mathbb{G})\right|^p + \Delta,
    \end{equation*}
    for each $\beta > 0$. Hence
    \begin{equation*}
        \lim_{k \to \infty} \frac{a_k^{p/2}}{m} \sum_{i=1}^m \int_{[0, \beta] \cup [1-\beta, 1]} \left| U_{i,a_k}^{-1} - \Lambda_p(\Uinv_{a_k})\right|^p \geq \Delta,
    \end{equation*}
    independently of the choice $\beta \in (0,1/2)$. Taking the infimum on both sides leads to
    \begin{equation*}
        \inf_{0< \beta \leq 1/2} \lim_{k \to \infty} \frac{a_k^{p/2}}{m} \sum_{i=1}^m \int_{[0, \beta] \cup [1-\beta, 1]} \left| U_{i,a_k}^{-1} - \Lambda_p(\Uinv_{a_k})\right|^p \geq \Delta.
    \end{equation*}
    The term that the infimum is taken over is monotonically increasing in $\beta$ and as such one can write
    \begin{align*}
         \mathbb{I}\coloneqq &\lim_{\beta \searrow 0 } \liminf_{k \to \infty} \frac{a_k^{p/2}}{m} \sum_{i=1}^m \int_{[0, \beta] \cup [1-\beta, 1]} \left| U_{i,a_k}^{-1} - \Lambda_p(\Uinv_{a_k})\right|^p \\
         = &\inf_{0< \beta \leq 1/2} \lim_{k \to \infty} \frac{a_k^{p/2}}{m} \sum_{i=1}^m \int_{[0, \beta] \cup [1-\beta, 1]} \left| U_{i,a_k}^{-1} - \Lambda_p(\Uinv_{a_k})\right|^p \geq \Delta.
    \end{align*}
   Summarizing the results obtained thus far, we have established that if~\eqref{eq: strict ineq} holds for some $\omega \in \Omega$, then it is contained in the set
    \begin{equation} \label{eq: set inclusion}
         A \coloneqq \lbrace \omega \in \Omega : \mathbb{I}(\omega) >0 \rbrace.
    \end{equation}
    We proceed by showing that $A$ is a null set. Note that
    \begin{align*}
        \mathbb{E}(\mathbb{I})=&\mathbb{E}\left[ \lim_{\beta \searrow 0 } \liminf_{k \to \infty} \frac{a_k^{p/2}}{m} \sum_{i=1}^m \int_{[0, \beta] \cup [1-\beta, 1]} \left| U_{i,a_k}^{-1} - \Lambda_p(\Uinv_{a_k})\right|^p \right] \\
        = &  \lim_{\beta \searrow 0 }\mathbb{E}\left[ \liminf_{k \to \infty} \frac{a_k^{p/2}}{m} \sum_{i=1}^m \int_{[0, \beta] \cup [1-\beta, 1]} \left| U_{i,a_k}^{-1} - \Lambda_p(\Uinv_{a_k})\right|^p \right] \\
        \leq &\lim_{\beta \searrow 0 }\liminf_{k \to \infty} \mathbb{E}\left[ \frac{a_k^{p/2}}{m} \sum_{i=1}^m \int_{[0, \beta] \cup [1-\beta, 1]} \left| U_{i,a_k}^{-1} - \Lambda_p(\Uinv_{a_k})\right|^p \right]
    \end{align*}
   by an application of the monotone convergence theorem and Fatou's lemma. Now, once again consider $Y_1, \ldots, Y_{a_k} \iid \mu_1$ independent of $\lbrace X_{i,j}: i = 1, \ldots, m; j = 1, \ldots, {a_k} \rbrace$ and denote by $V_{a_k}$ the empirical distribution function of the distances $\lbrace d_1(Y_i, Y_j) \rbrace_{1 \leq i < j \leq a_k}$. Then
    \begin{align*}
        \mathbb{E}\left[ \frac{a_k^{p/2}}{m} \sum_{i=1}^m \int_{[0, \beta] \cup [1-\beta, 1]} \left| U_{i,a_k}^{-1} - \Lambda_p(\Uinv_{a_k})\right|^p \right] &\leq \mathbb{E}\left[ \frac{a_k^{p/2}}{m} \sum_{i=1}^m \int_{[0, \beta] \cup [1-\beta, 1]} \left| U_{i,a_k}^{-1} - V_{a_k}^{-1}\right|^p \right] \\
        & \leq g_{a_k}(\beta),
    \end{align*}
    where $g_{n}(\beta)$ can be chosen such that $\lim_{\beta \searrow 0} \limsup_{n \to \infty} g_n(\beta) = 0$ by \citet[Lemma A.12 and Remark A.16]{weitkamp2024distribution}. Thus, we have derived
    \begin{equation*}
        \mathbb{E}(\mathbb{I}) \leq \lim_{\beta \searrow 0 }\liminf_{k \to \infty} \mathbb{E}\left[ \frac{a_k^{p/2}}{m} \sum_{i=1}^m \int_{[0, \beta] \cup [1-\beta, 1]} \left| U_{i,a_k}^{-1} - \Lambda_p(U_{a_k}^{-1})\right|^p \right] = 0.
    \end{equation*} 
    Hence, $A$ is a null set, from which we conclude together with~\eqref{eq: set inclusion} that for almost every $\omega \in \Omega$, the equality
    \begin{equation*}
        \Xi(0) = \frac{1}{m} \sum_{i=1}^m \int_0^{1} |\mathbb{G}_i(t) - \Lambda_p(\mathbb{G}(t))|^p \diff t
    \end{equation*}
    holds. To be more precise and to reintroduce the notation of the Skorohod representation, we have shown that 
    \begin{equation*}
        \Tilde{\Xi}(0) \stackrel{D}{=} \frac{1}{m} \sum_{i=1}^m \int_0^{1} |\mathbb{G}_i(t) - \Lambda_p(\mathbb{G}(t))|^p \diff t
    \end{equation*}
    and due to the fact that $\Tilde{\Xi}(0) \stackrel{D}{=} \Xi(0)$ this implies that 
    \begin{equation*}
        \Xi_{a_k}(0) \rightsquigarrow \frac{1}{m} \sum_{i=1}^m \int_0^{1} |\mathbb{G}_i(t) - \Lambda_p(\mathbb{G}(t))|^p \diff t.
    \end{equation*}
    As the subsequence was chosen arbitrarily, the same limit holds for every convergent subsequence, which concludes the proof.
\end{proof}
\subsection{Convergence under the Alternative}\label{proof:thm:convergence-alt}
In this section, we provide the proof for \autoref{theorem: Convergence under the alt}. The proof is again split between the first and second part of the theorem. 
\subsubsection*{Proof of \autoref{theorem: Convergence under the alt}(a)}
In order to provide a distributional limit for $\sqrt{n}\left(T_{\beta,n}^p - T_\beta^p \right)$, we subdivide the interval $(\beta,1-\beta)$ into different subsets and treat each of the subsets individually. To this end, set for each $i=1,\ldots, m$
    \begin{equation*}
        C_{i, \delta}^> \coloneqq \left\{ t \in (\beta,1-\beta): |U_i^{-1}(t) - \Lambda_p(\Uinv(t))| > \delta \right\}.
    \end{equation*}
    In order to ease the notation, in the following we will write
    \begin{equation*}
        \Delta_i \coloneqq U_i^{-1} - \Lambda_p\left(\Uinv \right) 
    \end{equation*} and let $\Delta_{i,n} \coloneqq  U_{i,n}^{-1} - \Lambda_p(\Uninv)$ be the empirical counterpart.
\begin{lemma}\label{lemma: convergence alternative > subset}
    Let $\beta >0$, $p = 1$ or $p \geq 2$ and $\delta > 0$. Then under \autoref{cond 1.2} it holds that
    \begin{equation*}
        \frac{\sqrt{n}}{m}\sum_{i=1}^m \int_{C_{i, \delta}^>} |\Delta_{i,n}|^p - |\Delta_i|^p \\
        \rightsquigarrow \frac{1}{m} \sum_{i=1}^m \int_{C_{i, \delta}^>}  p  |\Delta_i|^{p-1}(\mathbb{G}_i - \psi_{\Uinv}^p(\mathbb{G}))\sgn(\Delta_i).
    \end{equation*}
    where $\left( \mathbb{G}, \psi_{\Uinv}^p(\mathbb{G}) \right)$ is the Gaussian process defined in \autoref{theorem: convergence of barycenter process}.
\end{lemma}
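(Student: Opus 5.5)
The idea is a functional delta method: on $C_{i,\delta}^>$ the map $x \mapsto |x|^p$ is differentiable at $\Delta_i(t)$, uniformly in $t$, because $|\Delta_i(t)|>\delta$ keeps us away from the kink at $0$. I would combine \autoref{theorem: convergence of barycenter process}(a) with a first-order expansion and then apply the continuous mapping theorem.

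First I write
\begin{equation*}
\sqrt{n}(\Delta_{i,n}-\Delta_i) = \mathbb{U}_{i,n}^{-1} - \lambda_{p,n}.
\end{equation*}
By \autoref{theorem: convergence of barycenter process}(a), the pair $(\mathbb{U}_n^{-1},\lambda_{p,n})$ converges jointly in $\ell^\infty([\beta,1-\beta])^m\times L^1([\beta,1-\beta])$. Hence $\sqrt{n}(\Delta_{i,n}-\Delta_i)\rightsquigarrow \mathbb{G}_i-\psi^p_{\Uinv}(\mathbb{G})$ in $L^1([\beta,1-\beta])$, jointly over all $i$.

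Next I consider the linear functional
\begin{equation*}
\phi(h_1,\ldots,h_m)=\frac1m\sum_i\int_{C_{i,\delta}^>}p|\Delta_i|^{p-1}\sgn(\Delta_i)\,h_i .
\end{equation*}
The weight $p|\Delta_i|^{p-1}\sgn(\Delta_i)$ is bounded, since $|\Delta_i|\le D$. So $\phi$ is continuous on $L^1([\beta,1-\beta])^m$, and the continuous mapping theorem gives convergence of $\phi$ applied to the processes $\sqrt{n}(\Delta_{i,n}-\Delta_i)$ to the claimed limit. It remains to show the remainder is negligible:
\begin{equation*}
R_n=\frac{\sqrt n}{m}\sum_i\int_{C_{i,\delta}^>}\Bigl(|\Delta_{i,n}|^p-|\Delta_i|^p-p|\Delta_i|^{p-1}\sgn(\Delta_i)(\Delta_{i,n}-\Delta_i)\Bigr)=o_p(1).
\end{equation*}

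To control $R_n$, I would use \autoref{lemma: LLN for U quantiles} together with the Lipschitz property of $\Lambda_p$ from \autoref{lemma: properties barycenter}(b). Together they give $\sup_{t\in[\beta,1-\beta]}|\Delta_{i,n}-\Delta_i|\to0$ almost surely. Let $F_n$ be the event that this supremum is below $\delta/2$; then $\mathbb{P}(F_n)\to1$. On $F_n$, for $t\in C_{i,\delta}^>$, the values $\Delta_{i,n}(t)$ and $\Delta_i(t)$ have the same sign and both have modulus at least $\delta/2$. The function $x\mapsto|x|^p$ is $C^\infty$ on $\{|x|\ge\delta/2\}\cap[-2D,2D]$, so Taylor's theorem bounds the integrand by $K_\delta|\Delta_{i,n}-\Delta_i|^2$. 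Consequently
\begin{equation*}
|R_n|\mathbbm 1_{F_n}\le K_\delta\sqrt n\,\|\Delta_{i,n}-\Delta_i\|_\infty\,\|\Delta_{i,n}-\Delta_i\|_{L^1}.
\end{equation*}
Here the factor $\sqrt n\,\|\Delta_{i,n}-\Delta_i\|_{L^1}$ is tight by the weak convergence above, and the sup-norm factor tends to zero. This works for every $p\ge1$, including $p=1$, where the derivative is just $\sgn(\Delta_i)$ and is constant near $\Delta_i$, so the remainder vanishes identically on $F_n$. Slutsky's lemma then finishes the argument.

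The main obstacle is the uniform, almost sure convergence of $\Lambda_p(\Uninv)$ to $\Lambda_p(\Uinv)$. It is needed to guarantee that signs are preserved on all of $C_{i,\delta}^>$, since convergence in $L^1$ alone is insufficient. I resolve this with the $\|\cdot\|_\infty$-Lipschitz bound $|\Lambda_p(\Uninv)-\Lambda_p(\Uinv)|\le\max_j|U_{j,n}^{-1}-U_j^{-1}|$ combined with \autoref{lemma: LLN for U quantiles}. This step uses \autoref{cond 1.2} and $\beta>0$ essentially.
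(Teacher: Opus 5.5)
Your proposal is correct and follows the paper's proof essentially step by step. You restrict to the high-probability event $\|\Delta_{i,n}-\Delta_i\|_\infty<\delta/2$, which preserves signs on $C_{i,\delta}^>$, and Taylor-expand $|x|^p$ there. The quadratic remainder is killed by the $o_p(1)\cdot O_p(1)$ bound $\sqrt n\,\|\Delta_{i,n}-\Delta_i\|_\infty\,\|\Delta_{i,n}-\Delta_i\|_{L^1}$, the linear term is handled by the continuous mapping theorem applied to the joint limit in \autoref{theorem: convergence of barycenter process}, and Slutsky's lemma finishes. Your explicit justification of the uniform convergence of $\Lambda_p(\Uninv)$, via the $\|\cdot\|_\infty$-Lipschitz property of $\Lambda_p$, spells out a step the paper leaves implicit.
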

\begin{proof}
   We begin by defining
    \begin{equation*}
        E_{n, \delta} \coloneqq \left\{ \omega: \left\| \Delta_{i,n}(\omega, \cdot)- \Delta_i(\cdot) \right\|_{C_{i, \delta}^>} < \frac{\delta}{2} \text{ for }i = 1, \ldots, m\right\}.
    \end{equation*}
    By a Taylor expansion with Lagrange remainder on the integrand, we can write
    \begin{align}
        \left(\frac{\sqrt{n}}{m}\sum_{i=1}^m \int_{C_{i, \delta}^>} |\Delta_{i,n}|^p - |\Delta_i|^p \right)\mathbbm{1} \!_{E_{n,\delta}}& = \left(\frac{\sqrt{n}}{m}\sum_{i=1}^m \int_{C_{i, \delta}^>} p|\Delta_i|^{p-1} \sgn(\Delta_i)(\Delta_{i,n} - \Delta_i) \right)\mathbbm{1} \!_{E_{n,\delta}} \label{eq: equation in proof delta superset} \\
        &+ \left(\frac{\sqrt{n}}{m}\sum_{i=1}^m \int_{C_{i, \delta}^>} \frac{g(\zeta_i)}{2} (\Delta_{i,n} - \Delta_i)^2 \right)\mathbbm{1} \!_{E_{n,\delta}}, \nonumber
    \end{align}
    where $g: \mathbb{R}\setminus\lbrace0\rbrace \to \mathbb{R}$ is given by $g(x) = p(p-1)|x|^{p-2}$ and \begin{equation*}
        \zeta_i(\omega,t) \in \left(\min \lbrace |\Delta_{i,n}(\omega,t)|,| \Delta_i(t)| \rbrace, \max \lbrace |\Delta_{i,n}(\omega,t)|, |\Delta_i(t)| \rbrace \right).
    \end{equation*}
    We now establish that the second part of the sum in~\eqref{eq: equation in proof delta superset} converges to $0$ in probability. By the construction of $E_{n, \delta}$ and the definition of $C_{i,\delta}^>$, there is a constant $K \in \mathbb{R}$ such that \begin{equation} \label{eq: max and sup less than constant}
        \max_{i=1,\ldots,m} \sup_{t \in C_{i,\delta}^>} |g(\zeta_i(\omega,t))| < K
    \end{equation}
    for any $\omega \in E_{n,\delta}$. Furthermore, for each $1 \leq i \leq m$,
    \begin{align}
        \sqrt{n}\int_{C_{i,\delta}^>} (\Delta_{i,n}-\Delta_i)^2 &\leq \| \Delta_{i,n}-\Delta_i \|_{[\beta,1-\beta]} \sqrt{n}\int_{C_{i,\delta}^>}\abs{\Delta_{i,n}-\Delta_i} \label{eq: o_p(1) term in proof} \\
        &= o_p(1)O_p(1) = o_p(1) \nonumber,
    \end{align}
    where the first factor is $o_p(1)$ due to the LLN given in \autoref{lemma: LLN for U quantiles} and the integral is tight, since $\sqrt{n}(\Delta_{i,n}-\Delta_i)$ converges weakly in $L^1([\beta,1-\beta])$ by \autoref{theorem: convergence of barycenter process}. Putting together~\eqref{eq: max and sup less than constant} and~\eqref{eq: o_p(1) term in proof}, we can thus conclude that 
    \begin{equation*}
        \left(\frac{\sqrt{n}}{m}\sum_{i=1}^m \int_{C_{i, \delta}^>} \frac{g(\zeta_i)}{2} (\Delta_{i,n} - \Delta_i)^2 \right)\mathbbm{1} \!_{E_{n,\delta}} = o_p(1).
    \end{equation*}
    Next, we treat the first summand of~\eqref{eq: equation in proof delta superset}. Define the map $\phi: \ell^\infty([\beta,1-\beta])^{m} \times L^1([\beta,1-\beta]) \to \mathbb{R}$ via
    \begin{equation*}
        \phi(f_1,\ldots,f_{m+1}) = \frac{1}{m} \sum_{i=1}^m \int_{C_{i,\delta}^>} p |\Delta_i|^{p-1}\sgn(\Delta_i)(f_i-f_{m+1}).
    \end{equation*}
    This map is clearly continuous. Furthermore, \begin{equation} \label{eq: limit law in proof alternative}
        \left( \mathbb{U}_n^{-1},\lambda_{p,n} \right)\mathbbm{1} \!_{E_{n,\delta}} \rightsquigarrow \left(\mathbb{G}, \psi_{\Uinv}^p(\mathbb{G})\right) \text{ in }\ell^\infty([\beta,1-\beta])^{m} \times L^1([\beta,1-\beta])
    \end{equation} 
    by \autoref{theorem: convergence of barycenter process} and the LLN from \autoref{lemma: LLN for U quantiles}, which implies that $\mathbb{P} \left(E_{n,\delta} \right) \to 1$. We may apply continuous mapping theorem, using the limit law~\eqref{eq: limit law in proof alternative} with the map $\phi$ to obtain that 
    \begin{equation*}
        \left(\frac{\sqrt{n}}{m}\sum_{i=1}^m \int_{C_{i, \delta}^>} |\Delta_{i,n}|^p - |\Delta_i|^p \right)\mathbbm{1} \!_{E_{n,\delta}} \rightsquigarrow \frac{1}{m} \sum_{i=1}^m \int_{C_{i, \delta}^>}  p  |\Delta_i|^{p-1}(\mathbb{G}_i - \psi_{\Uinv}^p(\mathbb{G}))\sgn(\Delta_i).
    \end{equation*}
    Since $\mathbb{P} \left(E_{n,\delta} \right) \to 1$, the statement follows upon an application of Slutzky's Lemma \citep[Lemma 1.10.2(i)]{wellner2013weak}.
\end{proof}
Next, we define the sets 
\begin{equation*}
    C_i^= = \left\{ t \in (\beta,1-\beta):U_i^{-1}(t) = \Lambda_p\left(\Uinv(t)\right) \right\}
\end{equation*}
for $i = 1, \ldots,m$. On these sets, we distinguish the analysis for $p \geq 2$ and $p = 1$.
\begin{lemma}\label{lemma: convergence alternative on = subset p=2}
    Let $p \geq 2$ and $1 \leq i \leq m$. For $\beta >0$ and under \autoref{cond 1.2}, it holds that 
        \begin{equation*}
            \sqrt{n} \int_{C_i^=} \left( \abs{\Delta_{i,n}}^p - \abs{\Delta_i}^p \right) = o_p(1).
        \end{equation*}
\end{lemma}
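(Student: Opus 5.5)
On $C_i^=$ we have $\Delta_i\equiv 0$, so the claim reduces to showing $\sqrt{n}\int_{C_i^=}\abs{\Delta_{i,n}}^p = o_p(1)$. The plan is to extract one factor of order $n^{-1/2}$ and use the fact that the remaining factor vanishes uniformly.

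The first step is a pointwise bound on $\Delta_{i,n}$. Since $\Delta_i(t)=0$ for $t\in C_i^=$, we can write $\Delta_{i,n}=\Delta_{i,n}-\Delta_i = (U_{i,n}^{-1}-U_i^{-1}) - (\Lambda_p(\Uninv)-\Lambda_p(\Uinv))$. The Lipschitz property of $\Lambda_p$ with constant $1$ in $\|\cdot\|_\infty$, from \autoref{lemma: properties barycenter}(b), then gives
\begin{equation*}
\abs{\Delta_{i,n}(t)} \leq 2\max_{1\le j\le m}\abs{U_{j,n}^{-1}(t)-U_j^{-1}(t)} \quad\text{on } C_i^=.
\end{equation*}

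The second step splits $\abs{\Delta_{i,n}}^p = \abs{\Delta_{i,n}}^{p-1}\abs{\Delta_{i,n}}$ and bounds
\begin{equation*}
\sqrt{n}\int_{C_i^=}\abs{\Delta_{i,n}}^p \leq \Bigl(2\max_j\norm{U_{j,n}^{-1}-U_j^{-1}}_{[\beta,1-\beta]}\Bigr)^{p-1}\cdot 2\sum_{j=1}^m\int_\beta^{1-\beta}\abs{\mathbb{U}_{j,n}^{-1}}.
\end{equation*}
We control the two factors separately:
\begin{itemize}
\item[(i)] Since $p\ge 2$, the exponent $p-1$ is at least $1$. By \autoref{lemma: LLN for U quantiles}, the first factor tends to $0$ almost surely, hence in probability.
\item[(ii)] By \autoref{lemma: joint convergence inverted u process}, $\mathbb{U}_n^{-1}\rightsquigarrow\mathbb{G}$ in $\ell^\infty([\beta,1-\beta])^m$. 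The continuous mapping theorem then makes each $\int_\beta^{1-\beta}\abs{\mathbb{U}_{j,n}^{-1}}$ converge weakly, so the second factor is $O_p(1)$.
\end{itemize}
The product of an $o_p(1)$ term and an $O_p(1)$ term is $o_p(1)$, which proves the lemma.

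No step is a genuine obstacle; the argument rests on the factorization. One must check that $p\ge2$ (indeed, $p>1$ suffices) leaves a positive power $p-1$ for the uniform LLN to absorb. For $p=1$ this fails, and the term instead contributes the nondegenerate limit $\abs{\mathbb{G}_i-\psi^1(\mathbb{G})}$ on $C_i^=$. This explains why the paper treats $p=1$ separately.
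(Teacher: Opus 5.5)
Your proof is correct and follows the paper's argument. Both reduce the claim to $\sqrt{n}\int_{C_i^=}\abs{\Delta_{i,n}}^p \leq \norm{\Delta_{i,n}}_{C_i^=}^{p-1}\int_{C_i^=}\sqrt{n}\abs{\Delta_{i,n}}$ and kill the first factor with \autoref{lemma: LLN for U quantiles}. The paper gets the $O_p(1)$ bound on the second factor from the barycenter-process limit in $L^1(C_i^=)$ (\autoref{theorem: convergence of barycenter process}). You instead use the $1$-Lipschitz property of $\Lambda_p$ to reduce it to the quantile-process limit of \autoref{lemma: joint convergence inverted u process} alone. This is more elementary and, as you note, works for every $p>1$, whereas \autoref{theorem: convergence of barycenter process} is only stated for $p=1$ and $p\geq 2$.
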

\begin{proof}
    As the integral is restricted to $C_i^=$ and $p \geq 2$, we can rewrite
    \begin{align} \label{eq: 1st equality C= set}
        \abs{ \sqrt{n} \int_{C_i^=} \left( \abs{\Delta_{i,n}}^p - \abs{\Delta_i}^p \right)} = \abs{ \sqrt{n} \int_{C_{i}^=}\abs{\Delta_{i,n}}^p }
        \leq \norm{\Delta_{i,n}}_{C_i^=}^{p-1} \int_{C_{i}^=} \abs{ \sqrt{n}\Delta_{i,n}}.
    \end{align}
    Now it holds that 
    \begin{equation} \label{eq: second equality C= set}
        \norm{\Delta_{i,n}}_{C_i^=} = \norm{U_{i,n}^{-1} - U_i^{-1} - \Lambda_p\left(\Uninv\right)+ \Lambda_p\left(\Uinv\right)}_{C_i^=} = o_p(1),
    \end{equation}
    by the first part of \autoref{lemma: LLN for U quantiles}, which holds under \autoref{cond 1.2}. Moreover, we may apply \autoref{theorem: convergence of barycenter process} and thus on $C_i^=$ we observe
    \begin{equation*}
        \sqrt{n}\Delta_{i,n} = \sqrt{n}\left( \Delta_{i,n} - \Delta_i \right)\rightsquigarrow \mathbb{G}_i- \psi_{\Uinv}^p(\mathbb{G}) \text{ in }L^1(C_i^=).
    \end{equation*}
    Thus it follows that 
    \begin{equation} \label{eq: third equality C= set}
        \int_{C_i^=} \abs{\sqrt{n}\Delta_{i,n}} = O_p(1).
    \end{equation}
    Putting together~\eqref{eq: 1st equality C= set},~\eqref{eq: second equality C= set} and~\eqref{eq: third equality C= set}, we conclude 
    \begin{equation*}
        \sqrt{n} \int_{C_i^=} \left( \abs{\Delta_{i,n}}^p - \abs{\Delta_i}^p \right) = o_p(1)O_p(1) = o_p(1).
    \end{equation*}
\end{proof}
\begin{lemma}\label{lemma: convergence alternative on = subset p=1}
    Let $p=1$. For $\beta>0$ and under \autoref{cond 1.2}
    \begin{equation*}
        \frac{\sqrt{n}}{m}\sum_{i=1}^m \int_{C_i^=} \abs{\Delta_{i,n}} - \abs{\Delta_i}\rightsquigarrow \frac{1}{m}\sum_{i=1}^m\int_{C_i^=} \abs{\mathbb{G}_i - \psi_{\Uinv}^1(\mathbb{G})},
    \end{equation*}
    where $\mathbb{G}$ is the Gaussian process from \autoref{lemma: joint convergence inverted u process}.
\end{lemma}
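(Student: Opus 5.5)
On each $C_i^=$ we have $\Delta_i \equiv 0$, so the summand reduces to $\sqrt{n}\int_{C_i^=} \abs{\Delta_{i,n}}$. The plan is to write $\sqrt{n}\Delta_{i,n}$ in terms of the processes whose joint limit is already known, and then apply the continuous mapping theorem to a continuous functional.

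Since $\Delta_i = 0$ on $C_i^=$, we can write, on that set,
\begin{equation*}
    \sqrt{n}\Delta_{i,n} = \sqrt{n}\left(\Delta_{i,n} - \Delta_i\right) = \mathbb{U}_{i,n}^{-1} - \lambda_{1,n}.
\end{equation*}
Hence the left-hand side of the claim equals
\begin{equation*}
    \phi\left(\mathbb{U}_n^{-1}, \lambda_{1,n}\right), \qquad \phi(f_1,\ldots,f_{m+1}) \coloneqq \frac{1}{m}\sum_{i=1}^m \int_{C_i^=} \abs{f_i - f_{m+1}},
\end{equation*}
viewed as a map on $\ell^\infty([\beta,1-\beta])^m \times L^1([\beta,1-\beta])$. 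The sets $C_i^=$ are deterministic Borel subsets of $[\beta,1-\beta]$. This holds because $\Lambda_1$ is continuous and the $U_i^{-1}$ are continuous under \autoref{cond 1.2}; measurability alone would already suffice.

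Next we check that $\phi$ is Lipschitz. By the reverse triangle inequality,
\begin{equation*}
    \abs{\phi(f) - \phi(g)} \leq \frac{1}{m}\sum_{i=1}^m \left( (1-2\beta)\norm{f_i - g_i}_{[\beta,1-\beta]} + \norm{f_{m+1} - g_{m+1}}_{L^1} \right).
\end{equation*}
So $\phi$ is Lipschitz with respect to the product norm.

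By \autoref{theorem: convergence of barycenter process}(a) with $p=1$, under \autoref{cond 1.2} we have
\begin{equation*}
    \left(\mathbb{U}_n^{-1}, \lambda_{1,n}\right) \rightsquigarrow \left(\mathbb{G}, \psi_{\Uinv}^1(\mathbb{G})\right)
\end{equation*}
jointly in this product space. The continuous mapping theorem then gives
\begin{equation*}
    \phi\left(\mathbb{U}_n^{-1}, \lambda_{1,n}\right) \rightsquigarrow \frac{1}{m}\sum_{i=1}^m \int_{C_i^=} \abs{\mathbb{G}_i - \psi_{\Uinv}^1(\mathbb{G})},
\end{equation*}
which is the claim.

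The only delicate point is the joint convergence. It matters that $\lambda_{1,n}$ and the $\mathbb{U}_{i,n}^{-1}$ converge together, rather than only marginally, because $\phi$ mixes the components. This joint statement is exactly what \autoref{theorem: convergence of barycenter process} provides, via \autoref{prop: l-th quantile process} applied to the two middle order statistics. A minor remaining check is measurability of $\phi$ of the random elements in the Hoffmann-Jørgensen sense; continuity of $\phi$ makes this unproblematic.
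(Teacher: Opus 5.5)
Your proposal is correct and is essentially the paper's argument. The paper also uses the joint convergence from \autoref{theorem: convergence of barycenter process} to get $\sqrt{n}\Delta_{i,n} \rightsquigarrow \mathbb{G}_i - \psi_{\Uinv}^1(\mathbb{G})$ in $L^1(C_i^=)$ jointly in $i$ (using $\Delta_i = 0$ there), and then applies the continuous mapping theorem with $\phi(f_1,\ldots,f_m) = \frac{1}{m}\sum_{i=1}^m \int_{C_i^=} \abs{f_i}$; you just make the difference map $(f_1,\ldots,f_{m+1}) \mapsto f_i - f_{m+1}$ explicit and fold it into a single Lipschitz functional.
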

\begin{proof}
    By \autoref{theorem: convergence of barycenter process},
    \begin{equation*}
        \sqrt{n}\Delta_{i,n}\rightsquigarrow \mathbb{G}_i- \psi_{\Uinv}^1(\mathbb{G}) \text{ in }L^1(C_i^=)
    \end{equation*}
    jointly for $i = 1, \ldots,m$. Hence, applying the continuous mapping theorem using $\phi: L^1(C_1^=) \times \ldots \times L^1(C_m^=) \to \mathbb{R}$, 
    \begin{equation*}
        \phi(f_1, \ldots, f_m) = \frac{1}{m} \sum_{i=1}^m \int_{C_i^=} |f_i|
    \end{equation*}
    yields the result.
\end{proof}
Finally, it remains to analyze the sets 
\begin{equation*}
    C_{i,\delta}^\leq = \left\{  t \in (\beta,1-\beta) : 0 < \abs{U_i^{-1}(t) - \Lambda_p\left(\Uinv(t)\right)} \leq \delta \right\}
\end{equation*}
for $i = 1, \ldots, m$. To this end, define the random variable
\begin{equation} \label{eq: lower bound Cdelta alternative}
        \Xi_{\delta,n}^p =\frac{\sqrt{n}}{m}\sum_{i=1}^m \int_{C_{i, \delta}^\leq} p\max\left\{ \abs{\Delta_{i,n}}^{p-1}, \abs{\Delta_i}^{p-1} \right\} \abs{\Delta_{i,n} - \Delta_i}.
    \end{equation}
\begin{lemma}\label{lemma: convergence alternative on < subset}
    Let $\delta > 0$ and $p \geq 1$. Further let $\Xi_{\delta,n}^p$ as defined in~\eqref{eq: lower bound Cdelta alternative}. Then for $\beta>0$ and under \autoref{cond 1.2}, we have that pointwise in $\omega$
    \begin{equation} \label{eq: omega-wise boundedness}
        - \Xi_{\delta,n}^p \leq \frac{\sqrt{n}}{m}\sum_{i=1}^m \int_{C_{i, \delta}^\leq} |\Delta_{i,n}|^p - |\Delta_i|^p \leq \Xi_{\delta,n}^p,
    \end{equation}
    and furthermore
    \begin{equation} \label{eq: limit of omega wise bound}
        \Xi_{\delta,n}^p \rightsquigarrow \frac{1}{m}\sum_{i=1}^m \int_{C_{i, \delta}^\leq} p\abs{\Delta_i}^{p-1} \abs{\mathbb{G}_i - \psi_{\Uinv}^p(\mathbb{G})}.
    \end{equation}
\end{lemma}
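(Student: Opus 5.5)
The plan is to derive the sandwich \eqref{eq: omega-wise boundedness} from an elementary pointwise inequality. The weak limit \eqref{eq: limit of omega wise bound} will then come from the joint convergence of $(\mathbb{U}_n^{-1},\lambda_{p,n})$ combined with the uniform law of large numbers.

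\textbf{The sandwich.} For all $a,b\in\mathbb{R}$ and $p\ge 1$ we have
\begin{equation*}
\bigl||a|^p-|b|^p\bigr|\le p\max\{|a|^{p-1},|b|^{p-1}\}\,\bigl||a|-|b|\bigr|\le p\max\{|a|^{p-1},|b|^{p-1}\}\,|a-b| .
\end{equation*}
For $p>1$ this follows from the mean value theorem applied to $x\mapsto x^p$ on $[0,\infty)$. For $p=1$ it is the reverse triangle inequality. Apply it pointwise in $(\omega,t)$ with $a=\Delta_{i,n}(\omega,t)$ and $b=\Delta_i(t)$, integrate over $C_{i,\delta}^{\leq}$, multiply by $\sqrt n/m$ and sum over $i$. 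This gives $-\Xi_{\delta,n}^p\le\cdots\le\Xi_{\delta,n}^p$ for every $\omega$. No conditions are needed for this step.

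\textbf{Reduction of $\Xi_{\delta,n}^p$.} Note that
\begin{equation*}
\sqrt n(\Delta_{i,n}-\Delta_i)=\mathbb{U}_{i,n}^{-1}-\lambda_{p,n}.
\end{equation*}
Split $\Xi_{\delta,n}^p=A_n+R_n$, where
\begin{equation*}
A_n=\frac1m\sum_i\int_{C_{i,\delta}^{\leq}}p|\Delta_i|^{p-1}\,\bigl|\mathbb{U}_{i,n}^{-1}-\lambda_{p,n}\bigr|
\end{equation*}
and
\begin{equation*}
|R_n|\le \frac pm\sum_i \sup_{t\in[\beta,1-\beta]}\Bigl|\max\{|\Delta_{i,n}|^{p-1},|\Delta_i|^{p-1}\}-|\Delta_i|^{p-1}\Bigr|\;\bigl\|\mathbb{U}_{i,n}^{-1}-\lambda_{p,n}\bigr\|_{L^1([\beta,1-\beta])}.
\end{equation*}
For $p=1$ the supremum vanishes identically. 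For $p>1$, \autoref{lemma: LLN for U quantiles} and the $1$-Lipschitz property of $\Lambda_p$ (\autoref{lemma: properties barycenter}(b)) give $\|\Delta_{i,n}-\Delta_i\|_{[\beta,1-\beta]}\to0$ almost surely. All quantities are bounded by $D$, and $x\mapsto|x|^{p-1}$ is uniformly continuous on $[-2D,2D]$, so the supremum is $o_p(1)$. The $L^1$-norm is $O_p(1)$ by tightness of the weakly convergent pair $(\mathbb{U}_n^{-1},\lambda_{p,n})$. Hence $R_n=o_p(1)$. For $A_n$, consider the map $(f_1,\dots,f_{m+1})\mapsto \frac1m\sum_i\int_{C_{i,\delta}^{\leq}}p|\Delta_i|^{p-1}|f_i-f_{m+1}|$ on $\ell^\infty([\beta,1-\beta])^m\times L^1([\beta,1-\beta])$. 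Its weight is bounded by $pD^{p-1}$, so the map is Lipschitz. The continuous mapping theorem and Slutzky's lemma then yield \eqref{eq: limit of omega wise bound}.

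\textbf{Main obstacle: joint convergence for $1<p<2$.} The argument above needs $(\mathbb{U}_n^{-1},\lambda_{p,n})\rightsquigarrow(\mathbb{G},\psi^p_{\Uinv}(\mathbb{G}))$ for every $p\ge1$. \autoref{theorem: convergence of barycenter process} provides this only for $p=1$ and $p\ge2$. For $1<p<2$ I would redo the proof of that theorem with $C$ and $C_\delta$ as there. The decomposition into $\Delta^{(1)}_{n,\delta}$ and $\Delta^{(3)}_{n,\delta}$ only uses the $1$-Lipschitz property of $\Lambda_p$, valid for all $p\ge1$, together with \autoref{lemma: Lipschitz continuity convergence to 0}. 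So everything reduces to the local expansion \autoref{lemma: properties barycenter}(d) on $\{\Uinv(t):t\in C_\delta\}$.

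For $1<p<2$ the map $y\mapsto\partial_y g(x,y)=-p\sum_i|x_i-y|^{p-1}\sgn(x_i-y)$ is still strictly decreasing and continuous in $(x,y)$. Where no $x_i$ equals $\Lambda_p(x)$ it is $\mathcal C^1$ with nonzero derivative, and the implicit function theorem gives $\psi^p_x$ as in \eqref{eq: def of pointwise barycenter derivative}. The critical points are those where $x_j=\Lambda_p(x)$ for some $j$. There the weights $|x_j-\Lambda_p(x)|^{p-2}$ blow up, and I expect the derivative to degenerate to $y\mapsto$ the average of $y_j$ over these tie indices $j$. This holds because the term $|x_j-y|^{p-1}$ dominates any linear perturbation.

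I would try to establish this Hadamard-type expansion directly from the first-order condition by a monotonicity and squeezing argument, uniformly on compacts away from $D^m$. If that succeeds, the expansion is a directional derivative that is $1$-Lipschitz in $y$, so the same proof of the theorem goes through. This step is where I expect the real difficulty to lie.
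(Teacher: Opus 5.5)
For $p=1$ and $p\geq 2$ your argument is the paper's proof. It proceeds by the pointwise sandwich from the mean value theorem (reverse triangle inequality when $p=1$), then the continuous mapping theorem applied to the joint limit of $(\mathbb{U}_n^{-1},\lambda_{p,n})$ from \autoref{theorem: convergence of barycenter process}. The remainder $\bigl\|\,\abs{\Delta_{i,n}}^{p-1}-\abs{\Delta_i}^{p-1}\bigr\|\cdot\sqrt n\int\abs{\Delta_{i,n}-\Delta_i}=o_p(1)O_p(1)$ is handled via \autoref{lemma: LLN for U quantiles}. You are also right that the paper proves nothing for $1<p<2$. Its proof simply invokes \autoref{theorem: convergence of barycenter process}, which, like the formula for $\psi^p_x$, is only set up for $p=1$ and $p\geq 2$. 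The lemma is only used for those $p$, in the proof of \autoref{theorem: Convergence under the alt}, so the ``$p\geq1$'' should be read with that restriction.

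Your sketch for $1<p<2$, however, contains a wrong claim and an unattainable step.

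\emph{The derivative at ties.} Take $x\notin D^m$ with tie set $J=\{j:x_j=\Lambda_p(x)\}\neq\emptyset$. Along $x+\varepsilon y$, the tied terms in the first-order condition are of exact order $\varepsilon^{p-1}$, while the untied ones contribute $O(\varepsilon)$. Dividing by $\varepsilon^{p-1}$ shows that $(\Lambda_p(x+\varepsilon y)-\Lambda_p(x))/\varepsilon$ tends to the root $h$ of $\sum_{j\in J}\abs{y_j-h}^{p-1}\sgn(y_j-h)=0$. That root is $\Lambda_p\bigl((y_j)_{j\in J}\bigr)$, the $p$-barycenter of the tied coordinates, not their average. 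The two agree only when $|J|\leq 2$. For example, with $y_J=(0,0,1)$ and $p=3/2$ one gets $1/5$, not $1/3$, and the map is nonlinear in $y$.

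\emph{Uniformity fails.} The expansion cannot hold uniformly on compacts away from $D^m$. Arbitrarily close to a configuration with two tied coordinates $j_1,j_2$, there are untied points whose gradient $\psi^p_x$ (weights $\abs{x_i-\Lambda_p(x)}^{p-2}$) approximates any point of the segment between $e_{j_1}$ and $e_{j_2}$. A uniform first-order expansion would force $x\mapsto\psi^p_x$ to be continuous there, which it is not.

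\emph{The repair.} Give up uniformity. $\Lambda_p$ is $1$-Lipschitz and directionally differentiable everywhere on $\mathbb{R}^m$, hence Hadamard directionally differentiable. So $f_n\to f$ uniformly gives $\sqrt n\bigl(\Lambda_p(\Uinv+f_n/\sqrt n)-\Lambda_p(\Uinv)\bigr)(t)\to\Lambda_p'\bigl(\Uinv(t);f(t)\bigr)$ for every $t$. These terms are dominated by $\sup_n\max_i\norm{f_{n,i}}_{[\beta,1-\beta]}$. Dominated convergence and the extended continuous mapping theorem then give the joint limit in $\ell^\infty([\beta,1-\beta])^m\times L^1([\beta,1-\beta])$. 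The right-hand side of \eqref{eq: limit of omega wise bound} must then be read with $\Lambda_p'(\Uinv(t);\cdot)$ in place of $\psi^p_{\Uinv}$. The paper's formula is undefined ($0^{p-2}$) at tie points, and these can fill a set of positive measure.
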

\begin{proof}
    First, consider $p \geq 2$. Then by the mean value theorem we have that for each $\omega$ and $t \in C_{i,\delta}^\leq$
    \begin{equation*}
        \abs{|\Delta_{i,n}|^p - |\Delta_i|^p} \leq p\max\left\{ \abs{\Delta_{i,n}}^{p-1}, \abs{\Delta_i}^{p-1} \right\} \abs{\Delta_{i,n} - \Delta_i},
    \end{equation*}
    which implies~\eqref{eq: omega-wise boundedness}. For $p =1$,~\eqref{eq: omega-wise boundedness} follows by the reverse triangle inequality. Hence, it remains to prove~\eqref{eq: limit of omega wise bound}. Under \autoref{cond 1.2} and for $\beta > 0$, \autoref{theorem: convergence of barycenter process} holds and thus 
    \begin{equation*}
        \sqrt{n} \left( \Delta_{i,n} - \Delta_i \right)\rightsquigarrow \mathbb{G}_i - \psi_{\Uinv}^p(\mathbb{G}) \text{ in }L^1([\beta,1-\beta])
    \end{equation*}
    jointly for $i = 1, \ldots,m$. Define the map $\phi:L^1([\beta,1-\beta])^m \to \mathbb{R}$ via
    \begin{equation*}
        \phi(f_1, \ldots, f_m) = \frac{1}{m} \sum_{i=1}^m \int_{C_{i,\delta}^\leq} p\abs{\Delta_i}^{p-1} \abs{f_i}.
    \end{equation*}
    Clearly, this map is continuous and thus
    \begin{equation*}
        \phi\left[\sqrt{n} \left( \Delta_{1,n} - \Delta_1 \right), \ldots, \sqrt{n} \left( \Delta_{m,n} - \Delta_m \right) \right] \rightsquigarrow \phi\left[ \mathbb{G}_1 - \psi_{\Uinv}^p(\mathbb{G}), \ldots, \mathbb{G}_m - \psi_{\Uinv}^p(\mathbb{G}) \right].
    \end{equation*}
    Finally, realize that  
    \begin{eqnarray*}
        \abs{\Xi_{\delta,n}^p - \phi\left[\sqrt{n} \left( \Delta_{1,n} - \Delta_1 \right), \ldots, \sqrt{n} \left( \Delta_{m,n} - \Delta_m \right) \right]} \\
        \leq \frac{\sqrt{n}}{m}\sum_{i=1}^m \Bigl\| \abs{\Delta_{i,n}}^{p-1} - \abs{\Delta_i}^{p-1} \Bigr\|_{C_{i,\delta}^\leq} \int_{C_{i, \delta}^\leq} p \abs{\Delta_{i,n} - \Delta_i},
    \end{eqnarray*}
    which converges to $0$ in probability by \autoref{lemma: LLN for U quantiles}.
\end{proof}
With these results at our disposal, we are now ready to show \autoref{theorem: Convergence under the alt}(a)
\begin{proof}[Proof of \autoref{theorem: Convergence under the alt}(a)]
    Throughout this proof, write $X_{\beta,n}^p = \sqrt{n}(T_{\beta,n}^p - T_\beta^p)$. Further, recall the definition of $\Xi_{\delta,n}^p$ in~\eqref{eq: lower bound Cdelta alternative}. \autoref{lemma: convergence alternative on < subset} shows that $\omega$-wise
    \begin{equation}\label{eq: sandwich in proof alternative}
        \begin{aligned}
        \Xi_{\delta,n}^{p, \leq} \coloneqq -\Xi_{\delta,n}^p +\frac{\sqrt{n}}{m}\sum_{i=1}^m \int_{C_{i, \delta}^> \cup C_i^=}  |\Delta_{i,n}|^p - |\Delta_i|^p  \leq X_{\beta,n}^p   \\
        \leq \Xi_{\delta,n}^p +\frac{\sqrt{n}}{m}\sum_{i=1}^m \int_{C_{i, \delta}^> \cup C_i^=} |\Delta_{i,n}|^p - |\Delta_i|^p \eqqcolon \Xi_{\delta,n}^{p, \geq}. 
        \end{aligned}
    \end{equation}
    We first restrict ourselves to $p \geq 2$. Then a combination of the limit laws found in \autoref{lemma: convergence alternative > subset}, \autoref{lemma: convergence alternative on = subset p=2} and \autoref{lemma: convergence alternative on < subset} yields
    \begin{equation} \label{eq: lower limit law alternative proof}
        \Xi_{\delta,n}^{p, \leq} \rightsquigarrow \Xi_\delta^{p, \leq}
    \end{equation}
    where 
    \begin{equation}\label{eq: upper limit law alternative proof}
        \Xi_\delta^{p, \leq} = \frac{1}{m} \sum_{i=1}^m \left[ \int_{C_{i,\delta}^\leq} -p\abs{\Delta_i}^{p-1} \abs{\mathbb{G}_i - \psi_{\Uinv}^p(\mathbb{G})} + \int_{C_{i,\delta}^>}   p  |\Delta_i|^{p-1}(\mathbb{G}_i - \psi_{\Uinv}^p(\mathbb{G}))\sgn(\Delta_i)\right].
    \end{equation}
    Likewise, it holds that 
    \begin{equation*}
        \Xi_{\delta,n}^{p, \geq} \rightsquigarrow \Xi_{\delta}^{p, \geq},
    \end{equation*}
    where 
    \begin{equation*}
        \Xi_\delta^{p, \geq } = \frac{1}{m} \sum_{i=1}^m \left[ \int_{C_{i,\delta}^\leq} p\abs{\Delta_i}^{p-1} \abs{\mathbb{G}_i - \psi_{\Uinv}^p(\mathbb{G})} + \int_{C_{i,\delta}^>}   p  |\Delta_i|^{p-1}(\mathbb{G}_i - \psi_{\Uinv}^p(\mathbb{G}))\sgn(\Delta_i)\right].
    \end{equation*}
    A repeated application of the Portmanteau theorem, together with the sandwich inequality~\eqref{eq: sandwich in proof alternative} and limit laws~\eqref{eq: lower limit law alternative proof} and~\eqref{eq: upper limit law alternative proof} results in
    \begin{equation} \label{eq: limit law sandwich alternative}
        \mathbb{P}(\Xi_\delta^{p, \leq } < t) \leq \liminf_{n \to \infty} \mathbb{P}(X_{\beta,n}^p \leq t) \leq \limsup_{n \to \infty} \mathbb{P}(X_{\beta,n}^p \leq t) \leq  \mathbb{P}(\Xi_\delta^{p, \geq } \leq t).
    \end{equation}
    We now prove that as $\delta \searrow 0$, the distributions of $\Xi_\delta^{p, \geq}$ and $\Xi_\delta^{p, \leq}$ agree. To this end, note that $\omega$-wise
    \begin{equation} \label{eq: bound of upper bound for dominated conv theorem}
        \abs{\Xi_\delta^{p, \geq}} \leq \frac{1}{m}\sum_{i=1}^m \int_{\beta}^{1-\beta}p\abs{\Delta_i}^{p-1} \abs{\mathbb{G}_i - \psi_{\Uinv}^p(\mathbb{G})}
    \end{equation}
    and that furthermore 
    \begin{align*}
        &\mathbb{E} \left[ \frac{1}{m}\sum_{i=1}^m \int_{\beta}^{1-\beta}p\abs{\Delta_i}^{p-1} \abs{\mathbb{G}_i - \psi_{\Uinv}^p(\mathbb{G})} \right] \\= &\frac{1}{m}\sum_{i=1}^m \int_{\beta}^{1-\beta}p\abs{\Delta_i}^{p-1} \mathbb{E} \left[ \abs{\mathbb{G}_i - \psi_{\Uinv}^p(\mathbb{G})} \right] < \infty
    \end{align*}
    by Tonelli's theorem and due to the fact that $\mathbb{G}$ is a Gaussian process and thus has normally distributed finite dimensional distributions. The above equality together with~\eqref{eq: bound of upper bound for dominated conv theorem} implies that we may use the dominated convergence theorem to conclude that for $\delta_n = 1/n$
    \begin{equation*}
        \lim_{n \to \infty} \Xi_{\delta_n}^{p,\geq} = \frac{1}{m} \sum_{i=1}^m  \int_{C_{i,0}^>}   p  |\Delta_i|^{p-1}(\mathbb{G}_i - \psi_{\Uinv}^p(\mathbb{G}))\sgn(\Delta_i)
    \end{equation*}
    holds $\omega$-wise. Here, we have used that $C_{i,\delta}^\leq \searrow \emptyset$ and $C_{i,\delta}^> \nearrow C_{i,0}^>$ as $\delta \searrow 0$. Moreover, note that the bound from~\eqref{eq: bound of upper bound for dominated conv theorem} also holds for $\abs{\Xi_\delta^{p, \leq}}$ and thus another application of the dominated convergence theorem yields 
    \begin{equation*}
        \lim_{n \to \infty} \Xi_{\delta_n}^{p,\leq} = \frac{1}{m} \sum_{i=1}^m  \int_{C_{i,0}^>}   p  |\Delta_i|^{p-1}(\mathbb{G}_i - \psi_{\Uinv}^p(\mathbb{G}))\sgn(\Delta_i).
    \end{equation*}
    Hence we have shown that 
    \begin{equation*}
        \lim_{n \to \infty} \Xi_{\delta_n}^{p,\leq} \eqD \frac{1}{m} \sum_{i=1}^m  \int_{\beta}^{1-\beta}   p  |\Delta_i|^{p-1}(\mathbb{G}_i - \psi_{\Uinv}^p(\mathbb{G}))\sgn(\Delta_i) \eqD \lim_{n \to \infty} \Xi_{\delta_n}^{p,\geq}
    \end{equation*}
    and in combination with~\eqref{eq: limit law sandwich alternative} and the continuity of $\Xi_{\delta}^{p, \leq}$ it follows that
    \begin{equation} 
        \sqrt{n} \left( T_{\beta,n}^p - T_\beta^p \right) \rightsquigarrow \frac{1}{m} \sum_{i=1}^m  \int_{\beta}^{1-\beta}   p  |\Delta_i|^{p-1}(\mathbb{G}_i - \psi_{\Uinv}^p(\mathbb{G}))\sgn(\Delta_i).
    \end{equation} 
The proof for $p \geq 2$ is then concluded upon noticing that $\sum_{i=1}^m p \abs{\Delta_i(t)}^{p-1} \sgn(\Delta_i(t)) = 0$ for any $t\in (0,1)$, by the first order optimality condition of $\Lambda_p$.\newline
In the case $p = 1$, note that by \autoref{lemma: convergence alternative > subset}, \autoref{lemma: convergence alternative on = subset p=1} and \autoref{lemma: convergence alternative on < subset} it holds that 
\begin{equation*}
        \Xi_{\delta,n}^{1, \leq} \rightsquigarrow \Xi_\delta^{1, \leq} \text{ and } \Xi_{\delta,n}^{1, \geq} \rightsquigarrow \Xi_\delta^{1, \geq},
    \end{equation*}
    where 
    \begin{equation*}
        \begin{aligned}
        \Xi_\delta^{1, \leq} = \frac{1}{m} \sum_{i=1}^m \Biggl[ \int_{C_{i,\delta}^\leq} - \abs{\mathbb{G}_i - \psi_{\Uinv}^1(\mathbb{G})} + \int_{C_{i,\delta}^>}   &(\mathbb{G}_i - \psi_{\Uinv}^1(\mathbb{G}))\sgn(\Delta_i) \\
        &+  \int_{C_{i}^=}  \abs{\mathbb{G}_i - \psi_{\Uinv}^1(\mathbb{G})}\Biggr],
        \end{aligned}
    \end{equation*}
    and
    \begin{equation*}
        \begin{aligned}
        \Xi_\delta^{1, \geq} = \frac{1}{m} \sum_{i=1}^m \Biggl[ \int_{C_{i,\delta}^\leq} \abs{\mathbb{G}_i - \psi_{\Uinv}^1(\mathbb{G})} + \int_{C_{i,\delta}^>}   &(\mathbb{G}_i - \psi_{\Uinv}^1(\mathbb{G}))\sgn(\Delta_i)\\
        &+  \int_{C_{i}^=}  \abs{\mathbb{G}_i - \psi_{\Uinv}^1(\mathbb{G})}\Biggr].
        \end{aligned}
    \end{equation*}
    Further, by the dominated convergence theorem
    \begin{equation}
        \lim_{n \to \infty} \Xi_{\delta_n}^{1,\leq} \eqD \frac{1}{m} \sum_{i=1}^m  \int_{C_{i,0}^>}     (\mathbb{G}_i - \psi_{\Uinv}^1(\mathbb{G}))\sgn(\Delta_i) + \int_{C_i^=} \abs{\mathbb{G}_i - \psi_{\Uinv}^1(\mathbb{G})} \eqD \lim_{n \to \infty} \Xi_{\delta_n}^{1,\geq}.
    \end{equation}
    Hence, the rest of the proof can be argued analogously as for $p \geq 2$.
\end{proof}
\subsubsection*{Proof of \autoref{theorem: Convergence under the alt}(b)}
Note that in the setting of \autoref{theorem: Convergence under the alt}(b), it is implied by \autoref{theorem: convergence of barycenter process} that 
\begin{equation*}
    (\mathbb{U}_n^{-1}, \lambda_{p,n}) \rightsquigarrow (\mathbb{G}, \psi_{\Uinv}^p(\mathbb{G})) \text{ in }L^1((0,1))^{m+1},
\end{equation*}
for $p = 1$ and $p \geq 2$. An inspection of the proof of \autoref{theorem: Convergence under the alt}(a) reveals that at any point in the proof, it is sufficient that the joint process $(\mathbb{U}_n^{-1}, \lambda_{p,n})$ converges in $L^1((0,1))^{m+1}$ instead of $\ell^\infty((0,1))^m \times L^1((0,1))$. As such, the proof can be carried out in exact analogy.
\subsection{Proof of \autoref{theorem: local alternatives}}\label{proof:thm:local-alternatives}  
Recall the notation from \autoref{setting: local alternatives} and denote by $U_i$ the distribution function of the distance distribution induced by $\mu_i$. We first show weak convergence of the process $\sqrt{n}(V_{i,n} - U_i)$ in the space of bounded functions. In what follows, for any sequence of probability measures $\lambda_n$ on $\mathcal{X}_i$, we denote by $\overset{\lambda_n}{\rightsquigarrow}$ weak convergence under the assumption that the random variables $(X_{i,1}^n, \ldots, X_{i,n}^n)$ are distributed according to $\lambda_n^n = \bigotimes_{k = 1}^n \lambda_n$.
\begin{lemma}\label{lemma: convergene of perturbed u process}
    Assume that for each $i = 1, \ldots,m$, $U_i$ is continuously differentiable on some interval $[C_{i,1},C_{i,2}]$ and that $h_i$ is a measurable map satisfying 
    \begin{equation} \label{eq: hellinger derivative appendix}
        \int_{\mathcal{X}_i} \left[ \sqrt{n} \left( \sqrt{g_{i,n}(x)} - \sqrt{g_{i}(x)} \right) - \frac{1}{2}h_i(x) \sqrt{g_{i}(x)} \right]^2 \diff \rho_i(x) \overset{n \to \infty}{\longrightarrow} 0.
    \end{equation}
    Then
    \begin{equation*}
        \sqrt{n}(V_{i,n} - U_i) \overset{\nu_{i,n}}{\rightsquigarrow} \mathbb{K}_i  + 2\widetilde{D}_i \quad \text{in }\ell^\infty([C_{i,1},C_{i,2}]),
    \end{equation*}
    where $C_{i,1},C_{i,2}$ are the constants given in \autoref{cond 1.2}, $\mathbb{K}_i$ is a centered and continuous Gaussian process with covariance operator $\gammaK{i,i}$ as defined in \autoref{lemma: joint convergence of U-processes} and $\widetilde{D}_i$ is a drift given by
    \begin{equation*}
        \widetilde{D}_i(t) = \int_{\mathcal{X}_i} \int_{\mathcal{X}_i} \mathbbm{1} \!(d_i(x,y) \leq t)h_i(y) \diff \mu_i(x) \diff \mu_i(y).
    \end{equation*}
\end{lemma}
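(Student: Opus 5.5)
The plan is to reduce the statement to Le Cam's third lemma for empirical/U-processes under contiguous alternatives, in the spirit of \citet[Theorem 3.10.12 and Lemma 3.11.11]{wellner2013weak}.

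First, under the fixed law $\mu_i^n$, I would use the Hoeffding projection from the proof of \autoref{lemma: joint convergence of U-processes}. By \eqref{eq: U process via Hoeffding},
\begin{equation*}
\sqrt{n}(V_{i,n}-U_i)(t) = \frac{2}{\sqrt{n}}\sum_{k=1}^n \bigl(\phi_t(X_{i,k}^n) - U_i(t)\bigr) + o_p(1)
\end{equation*}
uniformly on $[C_{i,1},C_{i,2}]$, where $\phi_t(x) = \int \mathbbm{1}(d_i(x,y)\le t)\diff\mu_i(y)$. The class $\{\phi_t\}$ is uniformly bounded and monotone in $t$, so it is $\mu_i$-Donsker. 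Next, \eqref{eq: hellinger derivative appendix} is exactly differentiability in quadratic mean along the sequence $\nu_{i,n}$. By \citet[Lemma 3.10.11]{wellner2013weak}, or the classical LAN expansion, this gives
\begin{equation*}
\log \frac{\de\nu_{i,n}^n}{\de\mu_i^n} = \frac{1}{\sqrt n}\sum_{k=1}^n h_i(X_{i,k}^n) - \frac{1}{2}\mathbb{E}_{\mu_i}h_i^2 + o_{p}(1)
\end{equation*}
under $\mu_i^n$, with $\mathbb{E}_{\mu_i} h_i = 0$ and $h_i\in L^2(\mu_i)$. Consequently $\nu_{i,n}^n$ and $\mu_i^n$ are mutually contiguous.

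Second, I would establish joint convergence under $\mu_i^n$. The pair (projected process, log-likelihood ratio) is a sum of i.i.d.\ terms. Its finite-dimensional marginals converge by the multivariate CLT, and tightness of the first coordinate is already known. Hence the pair converges to $(\mathbb{K}_i, N)$, where $N\sim\mathcal N(-\tfrac12\sigma^2,\sigma^2)$ and
\begin{equation*}
\Cov(\mathbb{K}_i(t),N)=2\,\mathbb{E}_{\mu_i}[\phi_t(X)h_i(X)] = 2\widetilde D_i(t).
\end{equation*}
Le Cam's third lemma in its process version \citep[Theorem 3.10.7]{wellner2013weak} then yields $\sqrt n(V_{i,n}-U_i)\overset{\nu_{i,n}}{\rightsquigarrow}\mathbb{K}_i+2\widetilde D_i$ in $\ell^\infty([C_{i,1},C_{i,2}])$.

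The main obstacle is that the $o_p(1)$ Hoeffding remainder and the tightness are obtained under $\mu_i^n$, whereas the sample is drawn from $\nu_{i,n}$. This is handled by contiguity. The remainder is $o_p(1)$ under $\mu_i^n$, so it is also $o_p(1)$ under $\nu_{i,n}^n$. The third lemma is applied to the projected linear process, for which the joint Gaussian limit with the log-likelihood ratio is clean. One further point needs care: the statistic evaluated at the $\nu_{i,n}$-sample is the same measurable function of the data. The centering by $U_i$ (not by $\mathbb{E}_{\nu_{i,n}}V_{i,n}$) is exactly what produces the drift. Finally, the measurability and permissibility issues are inherited from the VC-subgraph property used in \autoref{lemma: joint convergence of U-processes}.
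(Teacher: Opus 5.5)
Your proposal is correct and follows essentially the same route as the paper: the Hoeffding projection under $\mu_i^n$, the LAN expansion and contiguity obtained from the Hellinger condition, the joint CLT giving covariance $2\widetilde{D}_i(t)$, and Le Cam's third lemma. The only difference is organizational. The paper applies the finite-dimensional third lemma to obtain the finite-dimensional limits and uses the process version only to transfer asymptotic tightness to $\nu_{i,n}^n$, concluding by a subsequence argument. You instead apply the process version directly to the projected linear process and move the uniform Hoeffding remainder across by contiguity, which is equally valid.
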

\begin{proof}
First, note that for each $i = 1, \ldots, m$, by \citet[Lemma 3.11.11]{wellner2013weak} the condition~\eqref{eq: hellinger derivative appendix} implies $\int h_i \diff \mu_i = 0$, $\int h_i^2 \diff \mu_i < \infty$ and
\begin{equation} \label{eq: log likelihood expansion}
    \Gamma_n(\nu_{i,n}, \mu_i) \coloneqq \sum_{j=1}^n \log \frac{g_{i,n}}{g_{i}}\left(X_{i,j}^n\right) = \frac{1}{\sqrt{n}} \sum_{j=1}^n h_i\left(X_{i,j}^n\right) - \frac{1}{2} \int h_i^2 \diff \mu_i + R_n\left(X_{i,1}^n, \ldots, X_{i,n}^n \right),
\end{equation}
where $R_n \coloneqq  R_n\bigl(X_{i,1}^n, \ldots, X_{i,n}^n \bigr)$ converges to $0$ in probability, under both $\nu_{i,n}^n$ and $\mu_i^n$ (meaning, under the assumption that either $X_i^n = (X_{i,1}^n, \ldots, X_{i,n}^n) \sim \nu_{i,n}^n$ or $X_i^n \sim \mu_i^n$). Also, we set
\begin{equation*}
    \frac{g_{i,n}}{g_{i}}(x) = \begin{cases}
        \frac{g_{i,n}}{g_{i}}(x)  &\text{if }g_{i}(x)> 0, \\
        1  &\text{if }g_{i}(x) = g_{i,n}(x) = 0, \\
        \infty  &\text{if }g_{i,n}(x)>g_{i}(x) = 0, \\
    \end{cases}
\end{equation*}
such that the likelihood ratios are well-defined. Assuming that $X_i^n \sim \mu_i^n$ for each $n \in \N$, the centered statistic $\sqrt{n}(V_{i,n}(t)- U_i(t))$ can be viewed through the lens of U-statistics using the distance kernel $f_{i,t}(x,y) = \mathbbm{1} \! (d_i(x,y) \leq t)$ for $t \in [C_{i,1},C_{i,2}]$. Hence, by analogous arguments as given in the derivation of~\eqref{eq: U process via Hoeffding} in the proof of \autoref{lemma: joint convergence of U-processes}, it holds that
\begin{equation} \label{eq: fidi of U-process decomposition}
    \sqrt{n}(V_{i,n}(t)- U_i(t)) = 2 \sqrt{n} \left( \frac{1}{n}\sum_{j=1}^n \mathbb{E}_{Y \sim \mu_i} \left[ \mathbbm{1} \! (d_i(X_{i,j}^n,Y) \leq t) \right] - U_i(t) \right) + o_p(1)
\end{equation}
under $\mu_i^n$ and for any fixed $t \in [C_{i,1}, C_{i,2}]$. For notational ease, denote in the following 
\begin{equation*}
    \phi_{i,t}(x) = \mathbb{E}_{Y \sim \mu_i} \left[ \mathbbm{1} \! (d_i(x,Y) \leq t) \right]
\end{equation*}
Putting together~\eqref{eq: log likelihood expansion} and~\eqref{eq: fidi of U-process decomposition} and using the notation $\mu (f) = \int f \diff \mu$, an application of the central limit theorem yields that for each $t \in \mathbb{R}$,
\begin{equation} \label{eq: tightness of likelihood ratio}
    \left(\Gamma_n(\nu_{i,n}, \mu_i), \sqrt{n}(V_{i,n}(t)- U_i(t)) \right) \overset{\mu_i}{\rightsquigarrow} \mathcal{N}\left(  \begin{pmatrix}
-\frac{1}{2} \mu_i(h_i^2)\\
0 
\end{pmatrix}, \begin{pmatrix}
\mu_i(h_i^2) & 2\mu_i (h_i \phi_{i,t})\\
2\mu_i (h_i \phi_{i,t}) & \gammaK{i,i}(t,t) 
\end{pmatrix} \right)
\end{equation} 
where $\gammaK{i,i}$ is the covariance operator as defined in \autoref{lemma: joint convergence of U-processes}. By Le Cam's third lemma \citep[Appendix A.9, Lemma 3]{bickel1993efficient}, it thus holds that 
\begin{equation} \label{eq: fidi under local alternative}
    \sqrt{n}(V_{i,n}(t)- U_i(t)) \overset{\nu_{i,n}}{\rightsquigarrow} \mathcal{N} \left( 2\mu_i( h_i \phi_{i,t}), \gammaK{i,i}(t,t) \right)
\end{equation}
for each $i = 1, \ldots, m$ and $t \in \R$. Note that the above statement trivially also holds for any finite collection of indices $t_1,\ldots, t_k$. Next, we consider the processes $\sqrt{n}(V_{i,n} - U_i)$ in $\ell^\infty([C_{i,1},C_{i,2}])$. \autoref{lemma: joint convergence of U-processes} implies that 
\begin{equation*}
    \sqrt{n}(V_{i,n}- U_i) \overset{\mu_{i}}{\rightsquigarrow} \mathbb{K}_i \qquad \text{in }\ell^\infty([C_{i,1},C_{i,2}]).
\end{equation*}
Thus, $\sqrt{n}(V_{i,n} - U_i)$ is asymptotically tight in $\ell^\infty([C_{i,1},C_{i,2}])$ under $\mu_i^n$ by \citet[Lemma 1.3.8]{wellner2013weak}. Furthermore,~\eqref{eq: tightness of likelihood ratio} implies that $\Gamma_n(\nu_{i,n}, \mu_i)$ is asymptotically tight under $\mu_i^n$. Hence, $(\sqrt{n}(V_{i,n} - U_i), \Gamma_n(\nu_{i,n}, \mu_i))$ is jointly asymptotically tight in $\ell^\infty([C_{i,1},C_{i,2}]) \times \bar{\mathbb{R}}$ under $\mu_i^n$ by an application of \citet[Lemma 1.4.3]{wellner2013weak}. Consider now any subsequence $(\sqrt{n}(V_{i,n_k} - U_i), \Gamma_{n_k}(\nu_{i,n_k}, \mu_i))$ which converges weakly in $\ell^\infty([C_{i,1},C_{i,2}]) \times \bar{\R}$ under $\mu_i^n$. Such a subsequence exists by Prohorov's lemma \citep[Theorem 1.3.9]{wellner2013weak}. Then \citet[Theorem 3.11.7]{wellner2013weak} yields that $\sqrt{n}(V_{i,n_k} - U_i)$ is asymptotically tight in $\ell^\infty([C_{i,1},C_{i,2}])$ under $\nu_{i,n}^n$. The theorem is applicable, since $\nu_{i,n}^n$ is contiguous with respect to $\mu_i^n$, which is a consequence of the fact that the log-likelihood ratio $\Gamma_n(\nu_{i,n}, \mu_i)$ converges weakly under $\mu_i^n$ and its limit is normally distributed with mean $-1/2 \mu_i(h_i^2)$ and variance $\mu_i(h_i^2)$, see \citet[Example 3.11.6]{wellner2013weak}. Combining the asymptotic tightness of the subsequence $\sqrt{n}(V_{i,n_k} - U_i)$ under $\nu_{i,n}^n$ with convergence of the finite dimensional distributions~\eqref{eq: fidi under local alternative} yields that 
\begin{equation*}
    \sqrt{n}(V_{i,n_k} - U_i) \overset{\nu_{i,n_k}}{\rightsquigarrow} \mathbb{K}_i + 2 \widetilde{D}_i \text{ in }\ell^\infty([C_{i,1},C_{i,2}])
\end{equation*}
by \citet[Theorem 1.5.4]{wellner2013weak}. As the limit distribution in the above display is independent of the specific converging subsequence chosen, the claim follows.
\end{proof}
\begin{lemma} \label{lemma: inverted u process contiguous alternatives}
    Let $\beta >0$ and assume that \autoref{cond 1.2} is satisfied for $U_1, \ldots, U_m$ for some constants $C_{i,1},C_{i,2}$, $i =1, \ldots, m$. Further, suppose that~\eqref{eq: hellinger derivative appendix} is satisfied. Then it holds that for each $i = 1, \ldots,m$
    \begin{equation*}
        \sqrt{n}\left(V_{i,n}^{-1} - U_i^{-1}\right) \rightsquigarrow \mathbb{G}_i + 2 D_i \quad \text{in }\ell^\infty([\beta,1-\beta]),
    \end{equation*}
    where $\mathbb{G}_i$ is a centered Gaussian process with covariance operator $\gammaG{i,i}$ as stated in \autoref{lemma: joint convergence inverted u process} and $D_i$ is a drift term given by 
    \begin{equation*}
        D_i(t) = - \frac{\widetilde{D}_i\left(U_i^{-1}(t) \right)}{u_i\left( U_i^{-1}(t) \right)}.
    \end{equation*}
    Here, $\widetilde{D}_i$ is the drift term given in \autoref{lemma: convergene of perturbed u process}.
\end{lemma}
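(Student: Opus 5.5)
The plan is to apply the functional delta method to the quantile inversion map, now under the sequence of local alternatives $\nu_{i,n}^n$ instead of the fixed measures. By \autoref{lemma: convergene of perturbed u process}, under $\nu_{i,n}^n$ we have
\begin{equation*}
\sqrt{n}(V_{i,n}-U_i) \overset{\nu_{i,n}}{\rightsquigarrow} \mathbb{K}_i + 2\widetilde{D}_i \quad \text{in } \ell^\infty([C_{i,1},C_{i,2}]).
\end{equation*}
Here $C_{i,1},C_{i,2}$ are the constants from \autoref{cond 1.2}. As shown in the proof of \autoref{lemma: joint convergence inverted u process}, \autoref{cond 1.2} together with \citet[Lemma 3.10.24]{wellner2013weak} gives Hadamard differentiability of $\invmap$ at $U_i$, tangentially to $\mathcal{C}([C_{i,1},C_{i,2}])$, with derivative $F \mapsto -(F/u_i)\circ U_i^{-1}$, mapping into $\ell^\infty([\beta,1-\beta])$. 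The centering $U_i$ is deterministic and does not depend on $n$; only the law of the sample changes. The delta method \citep[Theorem 3.10.4]{wellner2013weak} is a purely deterministic statement applied along the sequence of laws, so it applies verbatim. It yields
\begin{equation*}
\sqrt{n}(V_{i,n}^{-1}-U_i^{-1}) \rightsquigarrow -\frac{(\mathbb{K}_i+2\widetilde{D}_i)\circ U_i^{-1}}{u_i\circ U_i^{-1}} = \mathbb{G}_i + 2D_i,
\end{equation*}
because the derivative is linear and $-(\mathbb{K}_i/u_i)\circ U_i^{-1}$ is the Gaussian process $\mathbb{G}_i$ of \autoref{lemma: joint convergence inverted u process}, with covariance $\gammaG{i,i}$.

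The key steps are as follows.
\begin{enumerate}
\item Check that the limit $\mathbb{K}_i + 2\widetilde{D}_i$ is concentrated on $\mathcal{C}([C_{i,1},C_{i,2}])$, which is needed for the tangential delta method.
\item Verify that $\sqrt{n}(V_{i,n}-U_i)$ takes values in the domain $\mathbb{D}_{1,i}$, i.e.\ that $V_{i,n}$ restricted to $[C_{i,1},C_{i,2}]$ is a distribution function. This holds trivially.
\item Compute the image of the limit under the linear derivative.
\end{enumerate}

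Step 1 is where I expect the only real work. For $\mathbb{K}_i$, continuity follows from \autoref{lemma: convergene of perturbed u process}, which states that $\mathbb{K}_i$ is continuous. Alternatively, its sample paths are uniformly continuous with respect to the intrinsic metric, and that metric is dominated by $|U_i(s)-U_i(t)|$, which is continuous. For the drift I would argue directly. We have $\int h_i^2\,\de\mu_i<\infty$, and
\begin{equation*}
|\widetilde{D}_i(t)-\widetilde{D}_i(s)| \le \int |h_i(y)|\,\mu_i\bigl(\{x: s<d_i(x,y)\le t\}\bigr)\,\de\mu_i(y).
\end{equation*}
By Cauchy--Schwarz this is at most $\|h_i\|_{L^2(\mu_i)}\,\bigl(\int \mu_i(\{x:s<d_i(x,y)\le t\})^2\,\de\mu_i(y)\bigr)^{1/2}$. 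Bounding the squared inner probability by the probability itself, this is at most $\|h_i\|_{L^2(\mu_i)}\,|U_i(t)-U_i(s)|^{1/2}$, which tends to $0$ as $s\to t$ because $U_i$ is continuous on $[C_{i,1},C_{i,2}]$. Hence $\widetilde{D}_i$ is continuous and deterministic, and the limit lies in $\mathcal{C}([C_{i,1},C_{i,2}])$.

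Finally, measurability and separability issues are handled exactly as in the fixed-measure case, since \autoref{lemma: convergene of perturbed u process} already provides the tight limit. This completes the argument for each fixed $i$.
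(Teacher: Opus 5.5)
Your proposal is correct and matches the paper's proof: apply the inversion delta method marginally to $\sqrt{n}(V_{i,n}-U_i)$ under $\nu_{i,n}^n$, where the only extra point is that the limit, and hence the drift $\widetilde{D}_i$, is continuous. The one difference is in that continuity step: the paper uses dominated convergence together with $\mu_i\otimes\mu_i(\{d_i=t\})=0$, while your Cauchy--Schwarz bound $\|h_i\|_{L^2(\mu_i)}\,|U_i(t)-U_i(s)|^{1/2}$ reaches the same conclusion quantitatively.
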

\begin{proof}
    The arguments are analogous to the ones given in the proof of \autoref{lemma: joint convergence inverted u process}, with the only exception that the inversion map and delta method are not applied on the joint processes, but instead on each marginal $\sqrt{n}(V_{i,n} - U_i)$. The only additional fact which needs to be shown is that the limit distribution of $\sqrt{n}(V_{i,n} - U_i)$ given in \autoref{lemma: convergene of perturbed u process} is also continuous. Since $\mathbb{K}_i$ is continuous for each $i = 1, \ldots, m$, this is reduced to showing that the drift term $\widetilde{D}_i$ is continuous. But this is a consequence of the dominated convergence theorem, since $\abs{\mathbbm{1} \!(d_i(x,y) \leq t) h_i(y)} \leq \abs{h_i(y)}$, which is integrable with respect to $\mu_i$ (see proof of \autoref{lemma: convergene of perturbed u process}), combined with the fact that $\mu_i \otimes \mu_i (\lbrace (x,y) \in \mathcal{X}_i^2 : d_i(x,y) = t \rbrace) = 0$ for any $t \in \mathbb{R}$, by continuity of $U_i$.
\end{proof}
Given the weak convergence of the inverted processes from the previous lemma, we are now ready to prove \autoref{theorem: local alternatives}.
\begin{proof}[Proof of \autoref{theorem: local alternatives}]
    Note that we are in the setting of \autoref{lemma: inverted u process contiguous alternatives} and thus 
    \begin{equation*}
        \sqrt{n}\left(V_{i,n}^{-1} - U_i^{-1}\right) \rightsquigarrow \mathbb{G}_i + 2 D_i \quad \text{in }\ell^\infty([\beta,1-\beta])
    \end{equation*}
    for each $i = 1, \ldots,m$. As the processes are independent (recall \autoref{setting: local alternatives}), this also implies (denoting $\mathbf{D} = (D_1, \ldots, D_m)$) 
    \begin{equation*}
        \mathbb{V}_n^{-1} \coloneqq \sqrt{n}\left(V_{1,n}^{-1} - U_1^{-1}, \ldots ,V_{m,n}^{-1} - U_m^{-1}\right) \rightsquigarrow  \mathbb{G} + 2\mathbf{D} \text{ in }\ell^\infty([\beta,1-\beta])^m.
    \end{equation*}
    As in the proof of \autoref{theorem: Convergence under the Null}, define the (continuous) map $\phi: \ell^\infty([\beta,1-\beta])^m \to \mathbb{R}$ via
    \begin{equation*}
        \phi(f_1, \ldots , f_m) = \frac{1}{m} \sum_{i=1}^m \int_\beta^{1-\beta} \left|f_i(x) - \Lambda_p(f_1(x),\ldots, f_m(x)) \right|^p \diff x.
    \end{equation*}
    Due to the assumption that $\mu^{U_1} = \ldots = \mu^{U_m}$, we thus obtain by an application of the continuous mapping theorem
    \begin{align*}
        n^{p/2}\widetilde{T}_{\beta,n}^p &=  \frac{n^{p/2}}{m} \sum_{i=1}^m \int_\beta^{1-\beta} \left| V_{i,n}^{-1} -U_i^{-1} - \Lambda_p(V_{1,n}^{-1}- U_1^{-1},\ldots,V_{m,n}^{-1} - U_m^{-1})\right|^p \\
        &= \phi(\mathbb{V}_n^{-1}) \rightsquigarrow \phi\left( \mathbb{G}_1 + 2 D_1, \ldots, \mathbb{G}_m + 2 D_m \right),
    \end{align*}
    which concludes the proof.
\end{proof}
\subsection{Proof of \autoref{theorem: barycenter classifier}}\label{proof:thm:barycenter-classifier} 
The first step is to realize that 
\begin{align}
    \mathbb{P} \left( \widehat{i}^\star  \neq i^\star \right) &= \mathbb{P}\Bigl( \exists j \neq i^\star : \widehat{\gamma}_{j,n} \leq \widehat{\gamma}_{i^\star,n} \Bigr) \nonumber \\
    & \leq \sum_{j \neq i^\star} \mathbb{P} \left( \widehat{\gamma}_{j,n} \leq \widehat{\gamma}_{i^\star,n}  \right) \nonumber \\
    & = \sum_{j \neq i^\star} \mathbb{P} \left( \widehat{\gamma}_{i^\star} - \gamma_{i^\star} - (\widehat{\gamma}_j - \gamma_j)  \geq \gamma_j - \gamma_{i^\star} \right) \nonumber \\
    & \leq \sum_{j \neq i^\star } \frac{\mathbb{E} \left[ \abs{\widehat{\gamma}_{i^\star,n} - \gamma_{i^\star}} \right] + \mathbb{E} \left[ \abs{\widehat{\gamma}_{j,n} - \gamma_j} \right]}{\gamma_j - \gamma_{i^\star}}. \label{eq: classifier decomposition}
\end{align}
Here, in the last line we have used the Markov inequality, which is applicable since by assumption $\gamma_j - \gamma_{i^\star} >0$ for each $j \neq i^\star$. It remains to bound the expectations in the above display. To ease the notation, fix some $j \in \lbrace 1, \ldots, K \rbrace$ and write, for each $k = 1, \ldots , m_j$,
\begin{equation*}
    n_k \coloneqq n_{j,k}, \qquad U_k^{-1} \coloneqq U_{j,k}^{-1}, \qquad U_{k,n_k}^{-1} \coloneqq U_{(j,k), n_{j,k}}^{-1}.
\end{equation*}
Then, by analogous arguments as the ones used in the derivations of~\eqref{eq: finite sample bias wasserstein} in the proof of Theorem 2.2, it then holds that 
\begin{equation} \label{eq: classifier inequality}
    \abs{\widehat{\gamma}_j - \gamma_j} \leq p(2D)^{p-1} \left[ \int_0^D \abs{V_{n_V}(t) - V(t)} \diff t + \sum_{k=1}^{m_j} \int_0^D \abs{U_{k,n_k}(t) - U_k(t)} \diff t \right].
\end{equation}
As described in the discussion following~\eqref{eq: bound finite sample risk}, $V_{n_V}(t)$ constitutes a $U$-statistic for every $t \in \mathbb{R}$. Thus by \citet[chap.~5.2 Lemma A]{serfling1980approximation} 
\begin{equation} \label{eq: variance bound U-stat}
    \Var(V_{n_V}(t)) \leq \frac{2}{n_V}.
\end{equation}
By Tonelli's theorem, Jensen's inequality and~\eqref{eq: variance bound U-stat}, it follows that 
\begin{align} 
    \mathbb{E}\left[ \int_0^D \abs{V_{n_V}(t) - V(t)} \diff t \right] &=  \int_0^D \mathbb{E}\left[ \abs{V_{n_V}(t) - V(t)} \right] \diff t \nonumber \\
    &\leq \int_0^D \Var(V_{n_V}(t) - V(t))^{1/2} \diff t \nonumber \\
    & \leq D \left( \frac{2}{n_V} \right)^{1/2}.
    \label{eq: classifier second ineq}
\end{align}
Analogous arguments lead to
\begin{equation} \label{eq: classifier third ineq}
    \mathbb{E}\left[ \int_0^1 \abs{U_{k,n_k}^{-1} - U_k^{-1}} \right] \leq D \left( \frac{2}{n_k} \right)^{1/2}.
\end{equation}
Thus, a combination of~\eqref{eq: classifier inequality},~\eqref{eq: classifier second ineq} and~\eqref{eq: classifier third ineq} leads to 
\begin{equation*}
    \mathbb{E}( \abs{\widehat{\gamma}_j - \gamma_j}) \leq 2^{p+1/2} pD^{p} \left( n_V^{-1/2} + \sum_{k=1}^{m_j} n_k^{-1/2} \right).
\end{equation*}
Then it only remains to apply the above bound to~\eqref{eq: classifier decomposition}. \qed

\newpage

\section{Bootstrap Consistency}\label{appendix: bootstrap} 
This section is devoted to the proof of \autoref{theorem: bootstrap consistency}. In order to deal with measurability issues, we use theory for non-measurable random maps developed by \citet[chap.~1]{wellner2013weak}. Throughout this section, we denote for any random map $X: \Omega \to \mathbb{D}$ mapping from a probability space $(\Omega, \mathcal{A}, \mathbb{P})$ to a metric space $(\mathbb{D},d)$, its measurable majorant by $X^\star$ and the respective measurable minorant by $X_\star$. Furthermore, we denote by $\mathbb{E}^\star$ and $\mathbb{E}_\star$ the outer and inner expectations, respectively. Similarly, the outer and inner probabilities are $\mathbb{P}^\star$ and $\mathbb{P}_\star$. Convergence in outer probability of random maps $X_n$ to a random element $X$ is denoted by $X_n \outerconvP X$ and outer almost sure convergence via $X_n \outerconvAS X$. For an introduction to these concepts, we refer the reader to \citet[chap.~1]{wellner2013weak}. It is also noteworthy that we use the notation $\star$ for everything related to outer expectations and measurable majorants, while we use $\ast$ for bootstrap-related objects.\\
Proving the consistency of the bootstrap requires a notion of conditional weak convergence. In general, consider random maps $Z_n, n \in \mathbb{N}$ into a metric space $(\mathbb{D},d)$ and let $Y_n = Y_n(\mathcal{Z}_n, M_n)$ be random maps in $(\mathbb{D},d)$ based on the observations $\mathcal{Z}_n = \lbrace Z_1, \ldots, Z_n\rbrace$ and some random element $M_n$ (which, in the bootstrap framework, is the random element responsible for the resampling). Furthermore, define the set 
\begin{equation*}
    BL_{1}(\mathbb{D}) = \left\{ f: \mathbb{D} \to \mathbb{R}: \sup_{x \in \mathbb{D}} \abs{f(x)} \leq 1 \text{ and }\abs{f(x)-f(y)} \leq d(x,y) \text{ for }x,y\in \mathbb{D} \right\}
\end{equation*}
of bounded Lipschitz functions with Lipschitz constant $1$. In many cases, we will just write $BL_1$, as the underlying space $\mathbb{D}$ will be clear from context. We then say that $Y_n$ converges in distribution conditionally outer almost surely if 
\begin{equation} \label{eq: definition BL convergence conv part}
    \sup_{h \in BL_1} \abs{\mathbb{E}_M^\star(h(Y_n)) - \mathbb{E}(h(Y))} \outerconvAS 0
\end{equation}
where $\mathbb{E}_M$ denotes the expectation only with respect to the resampling weights $M_n$, and if 
\begin{equation} \label{eq: definition BL convergence measurability part}
    \mathbb{E}_M\left[ h(Y_n)^\star \right] - \mathbb{E}_M \left[ h(Y_n)_\star \right] \outerconvAS 0 
\end{equation}
for every $h \in BL_1$. Moreover, $h(Y_n)^\star$ and $h(Y)_\star$ denote the measurable majorant and minorant with respect to both the observations $\mathcal{Z}_n$ and $M_n$. Overall,~\eqref{eq: definition BL convergence conv part} and~\eqref{eq: definition BL convergence measurability part} will be denoted by $Y_n \condconvAS Y$. We use the notation $Y_n \condconvP Y$ to denote conditional weak convergence in outer probability, i.e.\ we replace outer almost sure convergence by convergence in outer probability in the definition. Note that the bootstrap U-statistic $U_{i,n}^\ast$, as defined in \autoref{subsec: bootstrap}, does indeed fit the previously introduced framework, as for a multinomially distributed $M_n$ with parameters $n$ and (probabilities) $(1/n, \ldots, 1/n)$, it is possible to write
\begin{align*}
    U_{i,n}^\ast\left(t; \left\{ X_{1,i}, \ldots, X_{n,i} \right\},M_n \right) = \frac{2}{n(n-1)} \Bigg(& \sum_{1 \leq k < l  \leq n} M_{n,k}M_{n,l} \mathbbm{1} \!\left( d_i(X_{i,k}, X_{i,l}) \leq t) \right)\\
    & + \sum_{1\leq k \leq n } \frac{M_{n,k}(M_{n,k}-1)}{2} \mathbbm{1} \!\left( d_i(X_{i,k}, X_{i,k}) \leq t) \right) \Bigg).
\end{align*} 
\subsection{Bootstrap Consistency of Quantile Process}
We first give an analogous result to \autoref{lemma: joint convergence inverted u process} for the bootstrap quantile process
\begin{equation*}
    \mathbb{U}_{n}^{\ast, -1} = \sqrt{n} \left(  U_{1,n}^{\ast,-1} - U_{1,n}^{-1} , \ldots , U_{m,n}^{\ast, -1} - U_{m,n}^{-1} \right),
\end{equation*}
where $U_{i,n}^{\ast,-1}$ denotes the quantile function of $U_{i,n}^\ast$. 
\begin{lemma}\label{lemma: inverted bootstrap u process}
    Let $\beta \in (0,1/2)$ and assume \autoref{cond 1.2}. Then
    \begin{equation*}
         \mathbb{U}_{n}^{\ast,-1} \condconvP \mathbb{G} \coloneqq (\mathbb{G}_1, \ldots, \mathbb{G}_m) \text{ in }\ell^\infty([\beta, 1-\beta])^m
    \end{equation*}
    where $\mathbb{G}$ is the centered Gaussian process that follows the same distribution as in \autoref{lemma: joint convergence inverted u process}.
\end{lemma}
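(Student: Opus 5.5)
Our plan mirrors the proof of \autoref{lemma: joint convergence inverted u process}: first a conditional functional CLT for the bootstrap $U$-processes, then the functional delta method for the bootstrap. Write $\mathbb{U}_n^\ast = \sqrt{n}(U_{1,n}^\ast - U_{1,n}, \ldots, U_{m,n}^\ast - U_{m,n})$ on $\ell^\infty([C_{1,1},C_{1,2}]) \times \ldots \times \ell^\infty([C_{m,1},C_{m,2}])$, with the constants from \autoref{cond 1.2}.

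\textbf{Step 1: conditional convergence $\mathbb{U}_n^\ast \condconvP \mathbb{K}$.} For each marginal we invoke the bootstrap of $U$-processes over permissible, uniformly bounded VC-subgraph classes. This is Arcones and Giné's bootstrap CLT for degenerate-free $U$-processes, or its form recorded in \citet{weitkamp2024distribution}. It gives $\mathbb{U}_{i,n}^\ast \condconvAS \mathbb{K}_i$ for the class $\mathcal{F}_i$ of \eqref{eq: function class u-process}, where permissibility and the VC property come from the proof of \citet[Lemma A.7]{weitkamp2024distribution}.

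The diagonal terms $M_{n,k}(M_{n,k}-1)/2 \cdot \mathbbm{1}(0 \leq t)$ in the multinomial representation contribute $O_P(n^{-1/2})$ uniformly in $t$. This is because their sum is $O_P(n)$ and they are weighted by $2/(n(n-1))$ times $\sqrt{n}$.

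To obtain joint convergence, we use the Hoeffding projection. The bootstrap $U$-process equals, up to a uniform $o_{P}$ term, $2\sqrt{n}\,(\mathbb{P}_n^\ast - \mathbb{P}_n)\widehat{\phi}_{i,t}$, where $\widehat\phi_{i,t}(x)$ is the empirical projection $n^{-1}\sum_l \mathbbm{1}(d_i(x,X_{i,l})\le t)$. This reduces the problem to a multiplier/empirical bootstrap for the stacked class $\{\widehat\phi_{i,t}\}_{i,t}$ on the product space $\prod_i \mathcal{X}_i$, which is again of uniformly bounded VC type. The joint limit then follows from the Giné–Zinn bootstrap theorem \citep[Theorem 3.6.1]{wellner2013weak}. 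We also use that $\widehat\phi_{i,t}$ is uniformly close to $\phi_{i,t}$ by \autoref{lemma: LLN for U quantiles}-type Glivenko–Cantelli arguments, so that the covariances converge to those in \eqref{eq: covariance of the gaussian process}. Stacking the $m$ coordinates gives the product-space statement.

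\textbf{Step 2: delta method.} The map $\invmap$ is Hadamard differentiable at $\mathbf{U}$ tangentially to $\mathcal{C}^m$, with derivative \eqref{eq: def of invmap}; this was shown in the proof of \autoref{lemma: joint convergence inverted u process}. The limit $\mathbb{K}$ has continuous paths, and $\sqrt{n}(\Un-\U)\rightsquigarrow\mathbb{K}$ by \autoref{lemma: joint convergence of U-processes}. The bootstrap delta method \citep[Theorem 3.9.11]{wellner2013weak} then yields
\[
\sqrt{n}\bigl(\invmap(\Un^\ast)-\invmap(\Un)\bigr) \condconvP D_{\U}\invmap(\mathbb{K}) \eqD \mathbb{G}.
\]
The left-hand side is precisely $\mathbb{U}_n^{\ast,-1}$ on $[\beta,1-\beta]$. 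The only technicality is that $U_{i,n}^\ast$ lies in the domain $\mathbb{D}_{1,i}$, which holds trivially.

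\textbf{Main obstacle.} We expect the main difficulty to be Step 1, specifically the joint conditional convergence and the measurability requirement \eqref{eq: definition BL convergence measurability part}. The cited bootstrap results are marginal, and the reduction to a linear empirical bootstrap requires a uniform-in-$t$ bound on the degenerate part of the bootstrap Hoeffding decomposition. We would try to control this bound via a decoupling inequality for the multinomial-weighted degenerate $U$-process, showing it is $O_P(n^{-1/2})$ in outer probability. Measurability is handled by permissibility (image admissible Suslin), exactly as in \autoref{lemma: joint convergence of U-processes}.
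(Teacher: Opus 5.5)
Your Step~2 is the paper's final step. The paper applies the conditional delta method \citep[Theorem 12.1]{kosorok2008introduction} to $\invmap$, and gets measurability in $M_n$ from the discreteness of the weights on \eqref{eq: sigma algebra resampling weights}. Your diagonal-term remark plays the role of the paper's identity $V_{i,n}=1/n+(n-1)U_{i,n}/n$. That identity moves the centering from the $V$-statistic used in \citet[Theorem 2.1]{arcones1994u} to $U_{i,n}$, at a deterministic cost of $2m/\sqrt{n}$.

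The real difference is how you get \emph{joint} conditional convergence in Step~1. For the bootstrap, the paper never linearizes. It takes marginal conditional convergence, hence marginal asymptotic tightness, from Arcones--Gin\'e. It takes conditional convergence of the joint finite-dimensional laws from Arcones and Gin\'e's finite-dimensional bootstrap results for $U$- and $V$-statistics. It then combines the two through the characterization \eqref{eq: characterization JWC}. The measurability part \eqref{eq: definition BL convergence measurability part} comes from \emph{unconditional} weak convergence, via Fubini for outer expectations and \citet[Lemma 1.3.8]{wellner2013weak}; permissibility is not what delivers that condition. Your route linearizes each coordinate and runs one empirical bootstrap over a stacked class. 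It gives the joint limit and its covariance in one stroke, but it needs two $U$-process-type bounds that the paper's route avoids.

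As written, those two steps are not yet in order.

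First, the degenerate remainder is not an obstacle to \emph{joint} convergence. In the product norm \eqref{eq: norm on product space} it is a finite sum of coordinatewise remainders, so marginal negligibility suffices. That is exactly the kind of estimate underlying the Arcones--Gin\'e bootstrap theorem you already cite, so no new decoupling argument is needed.

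Second, the Gin\'e--Zinn theorem \citep[Theorem 3.6.1]{wellner2013weak} concerns a \emph{fixed} Donsker class, while $\widehat\phi_{i,t}$ depends on the sample. Convergence of covariances does not license its use. You have two options. One is a Donsker theorem for data-dependent classes under $\mathbb{P}_n$; uniform entropy is harmless because $t\mapsto\widehat\phi_{i,t}(x)$ is monotone, but you must still show $L^2(\mathbb{P}_n)$-equicontinuity in $t$. The other is to pass to $\phi_{i,t}(x)=\mathbb{E}_{Y\sim\mu_i}[\mathbbm{1}(d_i(x,Y)\le t)]$ by proving
\[
\sup_{t\in[C_{i,1},C_{i,2}]}\bigl|\sqrt{n}\,(\mathbb{P}_n^\ast-\mathbb{P}_n)(\widehat\phi_{i,t}-\phi_{i,t})\bigr|\to 0 \quad\text{in conditional outer probability.}
\]
This quantity is $n^{-1/2}$ times a product of a bootstrap and an ordinary empirical process applied to the kernels of $\mathcal{F}_i$, so it is another $U$-process bound. \autoref{lemma: LLN for U quantiles} controls $U_{i,n}$, not $\widehat\phi_{i,t}$ uniformly in $(x,t)$, so it does not supply this.

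Finally, stacking is cheaper at the level of kernels than at the level of projections. The joint process $\mathbb{U}_n^\ast$ is the bootstrap $U$-process on $\prod_i\mathcal{X}_i$ indexed by the union over $i=1,\ldots,m$ of the classes $\{(x,y)\mapsto\mathbbm{1}(d_i(x_i,y_i)\le t): t\in[C_{i,1},C_{i,2}]\}$. This union is again a uniformly bounded VC-subgraph class. A single application of the Arcones--Gin\'e bootstrap theorem to it therefore gives the joint statement without any linearization; the measurability part is still handled as in the paper.
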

In order to prove \autoref{lemma: inverted bootstrap u process}, we first provide a distributional limit for the process
\begin{equation*}
    \mathbb{U}_{n}^{\ast, V}(t) = \sqrt{n} \left(  U_{1,n}^\ast(t) - V_{1,n}(t), \ldots ,  U_{m,n}^\ast(t) - V_{m,n}(t)\right),
\end{equation*}
where we define
\begin{equation*}
    V_{i,n}(t) = \frac{1}{n^2} \sum_{k,l = 1}^n \mathbbm{1} \! \left( d_i(X_{i,k}, X_{i,l}) \leq t\right).
\end{equation*}
Note that $V_{i,n}$ only differs from $U_{i,n}$ by the fact that it sums over all possible combinations of indices instead of excluding symmetric indices. \newline
In order to derive a convergence result for $\mathbb{U}_n^{\ast,V}$, we require the concept of covering numbers. For a pseudometric space $(\mathbb{D},d)$, we can define its $\varepsilon$-covering number by
\begin{equation*}
    N(\varepsilon,\mathbb{D},d)=\min\left\{ n \in \mathbb{N}| \exists x_1, \ldots, x_n \in \mathbb{D} \text{ such that }\mathbb{D} \subset \cup_{i=1}^n B_\varepsilon(x_i) \right\}.    
\end{equation*}
This notion can be extended to classes of functions. If $\mathcal{F}$ is a class of functions $f:\mathcal{X} \to \mathbb{R}$ and $\mu$ is a measure on $\mathcal{X}$, then
\begin{equation*}
    N_2(\varepsilon,\mathcal{F}, \mu) = N(\varepsilon, \mathcal{F}, \| \cdot \|_{L^2(\mu)})
\end{equation*}
is the related covering number of the class. Furthermore, we say that $F$ is an envelope of a function class $\mathcal{F}$ if $\abs{f(x)} \leq F(x)$ for each $f \in \mathcal{F}$ and $x \in \mathcal{X}$.
\begin{lemma}\label{lemma: U process bootstrap limit for U^V}
    Assume that for each $i = 1, \ldots,m$, there are constants $C_{i,1},C_{i,2}$ such that $U_i$ is continuously differentiable on $[C_{i,1},C_{i,2}]$. Then, as $n \to \infty$, it holds that 
    \begin{equation*}
        \mathbb{U}_{n}^{\ast, V} \condconvP \mathbb{K} \coloneqq (\mathbb{K}_1, \ldots, \mathbb{K}_m) \text{ in }\ell^\infty([C_{1,1},C_{1,2}]) \times \ldots \times \ell^\infty([C_{m,1},C_{m,2}]),
    \end{equation*}
    where $\mathbb{K}$ is the Gaussian process defined in \autoref{lemma: joint convergence of U-processes}.
\end{lemma}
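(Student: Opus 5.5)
Our plan is to mirror the proof of \autoref{lemma: joint convergence of U-processes}: establish conditional convergence of each marginal, then get the joint statement from conditional convergence of the finite dimensional distributions together with marginal asymptotic tightness. Everything is reduced to a bootstrap of a (degree-one) empirical process via the Hoeffding decomposition.

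\textbf{Step 1 (marginal bootstrap for $V$-processes).} For fixed $i$, write $\sqrt{n}(U_{i,n}^\ast - V_{i,n})$ through the multinomial weights $M_n$ as in the display above. The process $U_{i,n}^\ast$ is, up to the normalization $n^2/(n(n-1))$ and the diagonal terms, the $V$-statistic of the empirical measure of the bootstrap sample. The diagonal terms contribute $O(1/n)$ uniformly in $t$, since the kernel is bounded by one and $\sum_k M_{n,k}^2 = O_p(n)$. The normalization discrepancy is likewise $O(1/n)$, so both are negligible after multiplying by $\sqrt{n}$. For the leading part we invoke the bootstrap of degenerate/nondegenerate $U$-processes of \citet{arcones1994u}. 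The class $\mathcal{F}_i$ is a permissible, uniformly bounded VC-subgraph class (proof of \citet[Lemma A.7]{weitkamp2024distribution}), so its $L^2$ covering numbers satisfy the uniform entropy condition required there. This yields, conditionally in outer probability, the Hoeffding-type linearization
\[
\sqrt{n}\bigl(U_{i,n}^\ast - V_{i,n}\bigr)(t) = \frac{2}{\sqrt{n}}\sum_{k=1}^n (M_{n,k}-1)\,\widehat{\phi}_{i,t}(X_{i,k}) + o_{P}(1),
\]
uniformly in $t\in[C_{i,1},C_{i,2}]$, where $\widehat{\phi}_{i,t}(x)=\frac1n\sum_l \mathbbm{1}(d_i(x,X_{i,l})\le t)$. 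Because $\widehat\phi_{i,t}\to\phi_{i,t}$ uniformly almost surely by a Glivenko--Cantelli argument for the same class, $\widehat\phi_{i,t}$ may be replaced by $\phi_{i,t}$. We then apply the multiplier/empirical bootstrap theorem \citep[Theorem 3.6.1]{wellner2013weak} to the Donsker class $\{\phi_{i,t}\}$. This class is Donsker as a uniformly bounded class of monotone-in-$t$ functions, or via VC preservation under integration. The result is $\mathbb{U}^{\ast,V}_{i,n}\condconvP \mathbb{K}_i$.

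\textbf{Step 2 (joint statement).} Stack the coordinates. The linear leading term $\frac{2}{\sqrt n}\sum_k(M_{n,k}-1)(\phi_{1,t_1}(X_{1,k}),\dots,\phi_{m,t_m}(X_{m,k}))$ is the bootstrap empirical process indexed by the disjoint union class $\bigcup_i\{x\mapsto\phi_{i,t}(x_i)\}$ on $\mathcal{X}_1\times\dots\times\mathcal{X}_m$ under $\mu$. A finite union of Donsker classes is Donsker, so \citet[Theorem 3.6.1]{wellner2013weak} applies directly to the product-space class. Its limit is the Gaussian process with covariance $4\Cov(\phi_{i,t}(X_{i,1}),\phi_{j,s}(X_{j,1}))=\gammaK{i,j}(t,s)$, which is exactly $\mathbb{K}$. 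The remainders are controlled coordinatewise in the product norm~\eqref{eq: norm on product space}, and conditional Slutsky via the $BL_1$ characterization then concludes the argument.

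\textbf{Main obstacle.} The hardest step is the uniform negligibility of the degenerate (second-order Hoeffding) part of the bootstrap $U$-process, conditionally in outer probability. We would first try \citet{arcones1994u}'s bootstrap CLT for $U$-processes over VC classes. Failing that, we would bound the conditional expectation of the supremum of the bootstrap degenerate part by a decoupling plus Rademacher chaos inequality. The uniform entropy of VC-subgraph classes makes this supremum of order $O_P(1/\sqrt n)$ after the $\sqrt n$ scaling. Measurability is handled by permissibility (image admissible Suslin), exactly as in the proof of \autoref{lemma: joint convergence of U-processes}.
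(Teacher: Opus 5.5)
Your argument is a valid alternative to the paper's, reached by a different route. The paper never linearizes. It quotes \citet[Theorem 2.1]{arcones1994u} coordinatewise, after checking the uniform entropy bound for the VC class $\mathcal{F}_i$ with envelope $F\equiv 1$ and the unconditional limit from \autoref{lemma: joint convergence of U-processes}. This gives $\mathbb{U}_{i,n}^{\ast,V}\rightsquigarrow\mathbb{K}_i$ conditionally for almost every sample. Conditional convergence of the joint finite-dimensional laws comes from \citet{arcones1992bootstrap}, and the two are glued via~\eqref{eq: characterization JWC}. The measurability half of $\condconvP$ is then proved separately: unconditional convergence of each coordinate, then joint asymptotic measurability, then Markov's inequality.

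You instead use the Hoeffding projection with respect to the empirical measure to reduce the whole vector to a single bootstrap empirical process. It is indexed by a finite union of Donsker classes on $\mathcal{X}_1\times\dots\times\mathcal{X}_m$, with common weights across coordinates, matching the paper's resampling of whole vectors. Gin\'e--Zinn then gives the joint limit and asymptotic measurability of the leading term at once. The identification $4\Cov(\phi_{i,t}(X_{i,1}),\phi_{j,s}(X_{j,1}))=\gammaK{i,j}(t,s)$ is immediate, and no separate finite-dimensional argument is needed. You also never use differentiability of $U_i$, since $\{\phi_{i,t}\}$ is nested in $t$ and hence Donsker for every $\mu_i$.

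The price is that the uniform conditional negligibility of the second-order term, which the paper takes as a black box, becomes yours to prove. The clean way is the one you hint at. Conditionally on the data, $U^\ast_{i,n}$ is a $U$-process of an i.i.d.\ sample from the empirical measure, and your decomposition is exactly its Hoeffding decomposition under that measure. The degenerate part is then controlled by the VC entropy bound, which holds uniformly over all underlying probability measures. Note also that the paper obtains conditionally almost sure marginal convergence, whereas you get it only in probability; that is all the lemma claims.

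One justification should be changed. Uniform convergence of $\widehat\phi_{i,t}(x)$ to $\phi_{i,t}(x)$ over $x$ is a Glivenko--Cantelli statement for the class of closed $d_i$-balls, not for $\mathcal{F}_i$, which is indexed by $t$ alone. It is not automatic on a general compact metric space. Moreover, small coefficients alone would not make a multiplier sum uniformly small in $t$. You do not need it: because $\sum_k(M_{n,k}-1)=0$,
\[
\frac{2}{\sqrt{n}}\sum_{k=1}^n (M_{n,k}-1)\bigl(\widehat\phi_{i,t}-\phi_{i,t}\bigr)(X_{i,k})
=\frac{2}{n^{3/2}}\sum_{k,l=1}^n (M_{n,k}-1)\,\bar h_{i,t}(X_{i,k},X_{i,l}),
\]
where $\bar h_{i,t}(x,y)=\mathbbm{1}(d_i(x,y)\le t)-\phi_{i,t}(x)-\phi_{i,t}(y)+U_i(t)$ is the $\mu_i$-degenerate kernel. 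This term has conditional variance $O_P(1/n)$ for each fixed $t$. It is controlled uniformly in $t$ by the same degenerate $U$-process bounds you need for the second-order remainder.
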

\begin{proof}
    Recall that for each $i=1, \ldots,m$, the process $\mathbb{U}_{i,n} = \sqrt{n}(U_{i,n} - U_i)$ constitutes a $U$-process indexed over the function class~\eqref{eq: function class u-process}, see also the discussion preceding the equation. By \citet[Theorem 2.1]{arcones1994u}, it holds that, conditionally on a sample $X_1,X_2, \ldots$,
    \begin{equation} \label{eq: marginal bootstrap convergence V}
        \mathbb{U}_{i,n}^{\ast, V} = \sqrt{n}(U_{i,n}^\ast - V_{i,n}) \rightsquigarrow \mathbb{K}_i \text{ in }\ell^\infty([C_{i,1},C_{i,2}]),
    \end{equation}
    for almost every sample $X_1,X_2, \ldots$, where $\mathbb{K}_i$ is the $i$-th marginal of $\mathbb{K}$ defined in \autoref{lemma: joint convergence of U-processes}, if we can show the following conditions:
    \begin{enumerate}
        \item There is a function $\lambda: (0, \infty) \to [0,\infty)$ with $\int_0^\infty \lambda(x)dx < \infty$ such that for each probability measure $\nu$ on $\mathcal{X}_i$ it holds $\int F^2 \diff \nu < \infty$ for an envelope function $F$ of $\mathcal{F}_i$ and such that 
        \begin{equation} \label{eq: entropy condition}
            \sqrt{\log N_2\left(x \norm{F}_{L^2(\nu)}, \mathcal{F}_i, \nu  \right) } \leq \lambda(x)
        \end{equation}
        for any $x > 0$. 
        \item For any integers $i_1,i_2$ it holds that 
        \begin{equation} \label{eq: second requirement bootstrap u process}
            \mathbb{E} \left( \abs{F(X_{i_1},X_{i_2})}^{\abs{\left\{ i_1,i_2 \right\}}} \right) < \infty.
        \end{equation} 
        \item We have convergence of
        \begin{equation} \label{eq: thirds requiremtn bootstrap u process}
            \sqrt{n}(U_{i,n} - U_i) \rightsquigarrow \mathbb{K}_i \text{ in }\ell^\infty([C_{i,1},C_{i,2}]).
        \end{equation}
    \end{enumerate}
    Regarding the first requirement, note that $F \equiv 1$ is an envelope function of $\mathcal{F}_i$, which immediately yields $\int F^2 \diff \nu = 1$ for any probability measure $\nu$. The condition~\eqref{eq: entropy condition} follows by \citet[Theorem 2.6.7]{wellner2013weak} since $\mathcal{F}_i$ is a VC class (see proof of \citet[Lemma A.7]{weitkamp2024distribution}). The second requirement~\eqref{eq: second requirement bootstrap u process} is also a simple consequence of the fact that $F \equiv 1$ is an envelope of $\mathcal{F}_i$. Lastly, due to the continuous differentiability of $U_i$ on the respective intervals, the third requirement~\eqref{eq: thirds requiremtn bootstrap u process} is given by \autoref{lemma: joint convergence of U-processes}. Hence,~\eqref{eq: marginal bootstrap convergence V} holds for each $i=1, \ldots,m$ outer almost surely, which implies that $\mathbb{U}_{i,n}^{\ast, V}$ is outer almost surely asymptotically tight for each $i=1, \ldots, m$. Further, Theorem 2.4, Corollary 2.6, Remark 2.7 and Remark 2.10]{arcones1992bootstrap} guarantees that the finite dimensional distributions of $\mathbb{U}_{n}^{\ast, V}$ converge weakly to the finite dimensional distributions of $\mathbb{K}$, conditioned on $X_1, X_2, \ldots$ for almost every such sample. Overall, by the characterization of joint weak convergence given in~\eqref{eq: characterization JWC} and the preceding discussion, we obtain
    \begin{equation*}
        \mathbb{U}_{n}^{\ast, V} \rightsquigarrow \mathbb{K}  \text{ in }\ell^\infty([C_{1,1},C_{1,2}]) \times \ldots \times \ell^\infty([C_{m,1},C_{m,2}]),
    \end{equation*}
    conditionally on $X_1, X_2, \ldots$ for outer almost every sequence of observations. As the bounded Lipschitz metric metrizes weak convergence \citep[chap.~1.12]{wellner2013weak}, we thus obtain 
    \begin{equation*}
        \sup_{h \in BL_1} \left| \mathbb{E}_M(h(\mathbb{U}_{n}^{\ast, V})) - \mathbb{E}(h(\mathbb{K})) \right| \outerconvAS 0,
    \end{equation*}
    which proves part~\eqref{eq: definition BL convergence conv part} of the definition of outer almost sure conditional weak convergence. Note that we need not take the outer expectation $\mathbb{E}_M^\star$ above. This is due to the fact that the resampling weights $M_n$ take their values in the set
    \begin{equation} \label{eq: sigma algebra resampling weights}
        \mathbb{M}_n \coloneqq \left\{ (M_{n,1}, \ldots, M_{n,n}) \in \mathbb{N}^n : \sum_{i=1}^n M_{n,i} = n \right\}
    \end{equation}
    equipped with the power-set $\sigma$-algebra. As any map is measurable with respect to the power-set $\sigma$-algebra, this implies the measurability of $h(\mathbb{U}_{n}^{\ast, V})$ with respect to $M_n$ and thus $\mathbb{E}_M^\star(h(\mathbb{U}_{n}^{\ast, V})) = \mathbb{E}_M(h(\mathbb{U}_{n}^{\ast, V}))$.\newline 
    It remains to be shown that part~\eqref{eq: definition BL convergence measurability part} of the definition of conditional convergence holds. In order to see this, we first show unconditional weak convergence of $\mathbb{U}_{i,n}^{\ast, V} \coloneqq \sqrt{n}\left( U_{i,n}^\ast - V_{i,n} \right)$ to $\mathbb{K}_i$ holds. To this end, note that for any function $h\in BL_1$,
    \begin{align*}
        &\left| \mathbb{E}^\star \left[ h(\mathbb{U}_{i,n}^{\ast, V}) \right] - \mathbb{E}\left[h(\mathbb{K}_i) \right] \right| \\
        \leq &\left| \mathbb{E}^\star \left[ h(\mathbb{U}_{i,n}^{\ast, V}) \right] -  \mathbb{E}_X^\star \mathbb{E}_M \left[ h\left(\mathbb{U}_{i,n}^{\ast, V}\right) \right] \right| + \left| \mathbb{E}_X^\star \mathbb{E}_M \left[ h \left(\mathbb{U}_{i,n}^{\ast, V} \right) \right] - \mathbb{E} \left[ h(\mathbb{K}_i) \right] \right| \\
        \eqqcolon &\mathbb{I} + \mathbb{II}.
    \end{align*}
    By definition, the map $h$ is bounded, which means that the outer expectation is finite and we can write $\mathbb{E}^\star(h(\mathbb{U}_{i,n}^{\ast, V})) = \mathbb{E}(h(\mathbb{U}_{i,n}^{\ast, V})^\star)$. Moreover, by an application of Fubini's theorem for outer expectations \citep[Lemma 1.2.7]{wellner2013weak}, it holds that
    \begin{equation*}
        \mathbb{E} \left[ h \left(  \mathbb{U}_{i,n}^{\ast, V} \right)_\star \right] \leq \mathbb{E}_X^\star \mathbb{E}_M \left[ h \left(\mathbb{U}_{i,n}^{\ast, V} \right) \right] \leq  \mathbb{E} \left[ h\left(\mathbb{U}_{i,n}^{\ast, V}\right)^\star \right].
    \end{equation*}
    Thus, we obtain
    \begin{align*}
        \mathbb{I} \leq \mathbb{E} \left[ h\left(\mathbb{U}_{i,n}^{\ast, V}\right)^\star \right] - \mathbb{E}\left[h\left(\mathbb{U}_{i,n}^{\ast, V}\right)_\star\right] \leq \mathbb{E}_X^\star\left[ \mathbb{E}_M \left[ h\left(\mathbb{U}_{i,n}^{\ast, V}\right)^\star\right] - \mathbb{E}_M\left[h\left(\mathbb{U}_{i,n}^{\ast, V}\right)_\star \right] \right].
    \end{align*}
    Since $\mathbb{U}_{i,n}^{\ast, V} \condconvAS \mathbb{K}$ as shown in the discussion surrounding~\eqref{eq: marginal bootstrap convergence V}, the term inside the expectation $\mathbb{E}_X^\star$ converges to $0$ outer almost surely for each $h \in BL_1$ by~\eqref{eq: definition BL convergence measurability part}. Hence, $\mathbb{I}$ converges to $0$ for each $h \in BL_1$. Regarding the second summand $\mathbb{II}$, note that by an application of Jensen's inequality,
    \begin{align*}
        \mathbb{II} = \left| \mathbb{E}_X^\star \left[ \mathbb{E}_M \left[ h\left(\mathbb{U}_{i,n}^{\ast, V}\right)\right] - \mathbb{E}\left[h(\mathbb{K}_i) \right] \right] \right| \leq \mathbb{E}_X^\star \left[  \left| \mathbb{E}_M \left[ h\left(\mathbb{U}_{i,n}^{\ast, V}\right)\right] - \mathbb{E}\left[h(\mathbb{K}_i) \right] \right| \right].
    \end{align*}
    Once again, the term inside the expectation $\mathbb{E}_X^\star$ converges to 0 outer almost surely for every $h \in BL_1$ due to the fact that $\mathbb{U}_{i,n}^{\ast, V} \condconvAS \mathbb{K}_i$. Thus, also term $\mathbb{II}$ converges to 0 for every $h \in BL_1$. Hence, it follows that
    \begin{equation*}
        \left| \mathbb{E}^\star \left[ h(\mathbb{U}_{i,n}^{\ast, V}) \right] - \mathbb{E}\left[ h(\mathbb{K}_i) \right] \right| \leq \mathbb{I} + \mathbb{II} \to 0
    \end{equation*}
    for every $h \in BL_1$, implying that $\mathbb{U}_{i,n}^{\ast, V} \rightsquigarrow \mathbb{K}_i$ unconditionally. Weak convergence implies asymptotic measurability and asymptotic tightness. Therefore, for every $i=1, \ldots, m$, $\mathbb{U}_{i,n}^{\ast, V}$ is asymptotically measurable and asymptotically tight. Moreover, this implies that the joint process $\mathbb{U}_{n}^{\ast, V}$ is asymptotically measurable by \citet[Lemma 1.3.8]{wellner2013weak}, i.e.\
    \begin{equation*}
         \mathbb{E}_X \left[ \mathbb{E}_M \left[ h\left(\mathbb{U}_{n}^{\ast, V}\right)^\star \right] - \mathbb{E}_M\left[ h\left(\mathbb{U}_{n}^{\ast, V}\right)_\star \right] \right] = \mathbb{E} \left[ h\left(\mathbb{U}_{n}^{\ast, V}\right)^\star \right] - \mathbb{E}\left[ h\left(\mathbb{U}_{n}^{\ast, V}\right)_\star \right] \to 0
    \end{equation*}
    for every continuous bounded function $h$ and thus also for every $h \in BL_1$. Combined with the fact that by Markov's inequality
    \begin{equation*}
        \mathbb{P}^\star\left[ \left| \mathbb{E}_M \left[ h\left(\mathbb{U}_{n}^{\ast, V}\right)^\star \right] - \mathbb{E}_M\left[h\left(\mathbb{U}_{n}^{\ast, V}\right)_\star \right] \right| > \varepsilon \right] \leq \varepsilon^{-1}\mathbb{E}_X^\star \left[ \mathbb{E}_M \left[ h\left(\mathbb{U}_{n}^{\ast, V}\right)^\star \right] - \mathbb{E}_M\left[ h\left(\mathbb{U}_{n}^{\ast, V}\right)_\star\right] \right],
    \end{equation*}
    we thus conclude that 
    \begin{equation*}
        \mathbb{E}_M(h(\mathbb{U}_{n}^{\ast, V})^\star) - \mathbb{E}_M(h(\mathbb{U}_{n}^{\ast, V})_\star) \to 0
    \end{equation*}
    in outer probability.
\end{proof}
In a next step, the distributional convergence of $\mathbb{U}_n^{\ast,V}$ is related to the convergence of the process
\begin{equation*}
    \mathbb{U}_n^\ast = \sqrt{n} \left( U_{1,n}^\ast - U_{1,n}, \ldots, U_{m,n}^\ast - U_{m,n}  \right).
\end{equation*}
\begin{lemma}\label{lemma: convergence of bootstrap u process}
    Assume that for each $i = 1, \ldots,m$, there are constants $C_{i,1},C_{i,2}$ such that $U_i$ is continuously differentiable on $[C_{i,1},C_{i,2}]$. Then 
    \begin{equation*}
        \mathbb{U}_{n}^{\ast} \condconvP \mathbb{K} \coloneqq (\mathbb{K}_1, \ldots, \mathbb{K}_m) \text{ in }\ell^\infty([C_{1,1},C_{1,2}]) \times \ldots \times \ell^\infty([C_{m,1},C_{m,2}])
    \end{equation*}
    where $\mathbb{K}$ is the distributional limit of $\mathbb{U}_{n}^{\ast, V}$ derived in \autoref{lemma: U process bootstrap limit for U^V}.
\end{lemma}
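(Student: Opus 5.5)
The plan is to reduce the statement to \autoref{lemma: U process bootstrap limit for U^V} by showing that $\mathbb{U}_n^\ast$ and $\mathbb{U}_n^{\ast,V}$ differ by a term that is uniformly negligible, deterministically or at least in outer probability. Then a Slutsky-type argument for conditional weak convergence in the bounded Lipschitz sense transfers the limit. Write componentwise
\begin{equation*}
    \mathbb{U}_{i,n}^\ast = \mathbb{U}_{i,n}^{\ast,V} + \sqrt{n}\left( V_{i,n} - U_{i,n} \right),
\end{equation*}
so it suffices to show $\sqrt{n}\,\|V_{i,n}-U_{i,n}\|_{[C_{i,1},C_{i,2}]} \to 0$ for each $i$. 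This term does not depend on the resampling weights $M_n$.

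\textbf{Step 1: the deterministic bound.} Split the double sum in $V_{i,n}$ into diagonal and off-diagonal parts. The diagonal terms satisfy $d_i(X_{i,k},X_{i,k})=0\le t$ for $t\ge 0$, so they contribute exactly $n/n^2 = 1/n$. The off-diagonal part equals $\frac{2}{n^2}\sum_{k<l}\mathbbm{1}(d_i(X_{i,k},X_{i,l})\le t) = \frac{n-1}{n}U_{i,n}(t)$. Hence
\begin{equation*}
    V_{i,n}(t)-U_{i,n}(t) = \frac{1}{n} - \frac{1}{n}U_{i,n}(t),
\end{equation*}
which gives $\sup_t |V_{i,n}(t)-U_{i,n}(t)| \le 1/n$ for every sample. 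Multiplying by $\sqrt{n}$ yields the bound $n^{-1/2}\to 0$, uniformly over $t$ and $\omega$. Summing over $i$ in the product norm \eqref{eq: norm on product space} gives $\|\mathbb{U}_n^\ast-\mathbb{U}_n^{\ast,V}\| \le m n^{-1/2}$ surely.

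\textbf{Step 2: transfer of conditional convergence.} For $h\in BL_1$ on the product space, the Lipschitz property gives
\begin{equation*}
    |h(\mathbb{U}_n^\ast)-h(\mathbb{U}_n^{\ast,V})|\le m n^{-1/2}.
\end{equation*}
This bound holds pointwise in the sample and in $M_n$, so it holds for measurable majorants and minorants as well, and after taking $\mathbb{E}_M$. It follows that
\begin{equation*}
    \sup_{h\in BL_1}\left|\mathbb{E}_M h(\mathbb{U}_n^\ast)-\mathbb{E}\,h(\mathbb{K})\right| \le m n^{-1/2} + \sup_{h\in BL_1}\left|\mathbb{E}_M h(\mathbb{U}_n^{\ast,V})-\mathbb{E}\,h(\mathbb{K})\right|.
\end{equation*}
The right-hand side tends to zero in outer probability by \autoref{lemma: U process bootstrap limit for U^V}. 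As there, measurability with respect to $M_n$ is automatic because $M_n$ takes values in the finite set $\mathbb{M}_n$ with the power-set $\sigma$-algebra. The same estimate gives $\mathbb{E}_M[h(\mathbb{U}_n^\ast)^\star]-\mathbb{E}_M[h(\mathbb{U}_n^\ast)_\star] \le 2mn^{-1/2} + \big(\mathbb{E}_M[h(\mathbb{U}_n^{\ast,V})^\star]-\mathbb{E}_M[h(\mathbb{U}_n^{\ast,V})_\star]\big)$, which tends to zero in outer probability by the same lemma. Together these verify both parts \eqref{eq: definition BL convergence conv part} and \eqref{eq: definition BL convergence measurability part} of the definition, in outer probability.

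\textbf{Main obstacle.} The only point needing care is the outer-measure bookkeeping in Step 2. I expect it to be harmless here because the perturbation is bounded by a deterministic null sequence. One should also note that $C_{i,1}\ge 0$, so the diagonal indicator is identically one on the index set. If $C_{i,1}<0$ were allowed, the diagonal term would instead vanish there, but the difference would still be bounded by $1/n$ in absolute value.
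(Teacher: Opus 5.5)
Your proposal is correct and follows essentially the same route as the paper. Both use the identity $V_{i,n}=\tfrac1n+\tfrac{n-1}{n}U_{i,n}$ to get a deterministic $O(n^{-1/2})$ bound on $\|\mathbb{U}_n^\ast-\mathbb{U}_n^{\ast,V}\|$, then transfer it through the Lipschitz property to both the bounded-Lipschitz part and the majorant/minorant part of conditional convergence, using the preceding lemma for $\mathbb{U}_n^{\ast,V}$. Your constant $mn^{-1/2}$, which uses $0\le 1-U_{i,n}\le 1$, is slightly sharper than the paper's $2mn^{-1/2}$, but the difference is immaterial.
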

\begin{proof}
    This proof essentially follows \citet[Theorem F.4]{weitkamp2024distribution}, slightly adjusted to fit the multidimensional process framework. First, we will check that 
    \begin{equation} \label{eq: U bootstrap process first condition}
        \sup_{h \in BL_1} \abs{\mathbb{E}_M^\star \left[ h\left(\mathbb{U}_{n}^{\ast}\right) \right] - \mathbb{E}\left[ h \left( \mathbb{K} \right)\right]} \outerconvP 0.
    \end{equation}
    In order to see this, we relate the above term to $\mathbb{U}_{n}^{\ast, V}$ as follows.
    \begin{align} \label{eq: split into two BL1 norms}
        \sup_{h \in BL_1}  \abs{\mathbb{E}_M^\star \left[ h\left(\mathbb{U}_{n}^{\ast}\right) \right] - \mathbb{E}\left[ h \left( \mathbb{K} \right)\right]} \leq &\sup_{h \in BL_1}  \abs{\mathbb{E}_M^\star \left[ h\left(\mathbb{U}_{n}^{\ast}\right) \right] - \mathbb{E}_M^\star\left[ h \left( \mathbb{U}_n^{\ast,V} \right)\right]} \\
        + &\sup_{h \in BL_1} \abs{\mathbb{E}_M^\star\left[ h \left( \mathbb{U}_n^{\ast,V} \right)\right] - \mathbb{E} \left[ h\left(\mathbb{K} \right) \right]} \nonumber
    \end{align}
    by the triangle inequality. In a first step, consider the first summand of~\eqref{eq: split into two BL1 norms}. As the norm is taken over Lipschitz functions with constant $1$, it holds that
    \begin{align}
        \sup_{h \in BL_1} \abs{\mathbb{E}_M^\star \left[ h \left(\mathbb{U}_{n}^{\ast} \right) \right] - \mathbb{E}_M^\star\left[ h\left(\mathbb{U}_{n}^{\ast, V}\right) \right]} &\leq  \mathbb{E}_M^\star \left[ \norm{ \mathbb{U}_{n}^\ast - \mathbb{U}_{n}^{\ast, V} }_{[C_1,C_2]} \right] \nonumber \\
        &=\sum_{i=1}^m\mathbb{E}_M^\star \left[ \norm{ \mathbb{U}_{i,n}^\ast - \mathbb{U}_{i,n}^{\ast, V} }_{[C_{i,1},C_{i,2}]} \right]     \label{eq: difference U and U^V}   
    \end{align}
    where $\mathbb{U}_{i,n}^\ast$ and $\mathbb{U}_{i,n}^{\ast, V}$ denote the $i$-th component of the respective processes $\mathbb{U}_{n}^{\ast}$ and $\mathbb{U}_{n}^{\ast, V}$ and $\norm{\cdot}_{[C_1,C_2]}$ is defined in~\eqref{eq: norm on product space}. We now focus on controlling the term inside the norm in~\eqref{eq: difference U and U^V}. Firstly, note that since $d_i$ is a metric, it holds that $\mathbbm{1} \! \left( d_i(X_{i,j},X_{i,j}) \leq t \right) = 1$ for each $t \geq 0$ and thus one can rewrite 
    \begin{align*}
        V_{i,n}(t) &= n^{-2} \sum_{j,k = 1}^n \mathbbm{1} \!\left( d_i \left( X_{i,j}, X_{i,k} \right) \leq t \right) \\
        &=\frac{1}{n} + n^{-2} \sum_{j\neq k} \mathbbm{1} \!\left( d_i \left(X_{i,k}, X_{i,j}\right) \leq t \right) \\
        &= \frac{1}{n}+\frac{n-1}{n}U_{i,n}(t)
    \end{align*}
    where the last equation is a consequence of the symmetry of a metric. Thus, we can bound the difference between the two processes as follows:
    \begin{align}
        \abs{\mathbb{U}_{i,n}^\ast(t) - \mathbb{U}_{i,n}^{\ast, V}(t)} &= \sqrt{n}\left|U_{i,n}(t) - V_{i,n}(t) \right| \nonumber \\
        &=\sqrt{n} \left| \frac{1}{n} U_{i,n}(t) - \frac{1}{n} \right| \nonumber \\
        &\leq \frac{1}{\sqrt{n}}\Big|U_{i,n}(t)\Big| + \frac{1}{\sqrt{n}} \label{eq: bound difference U and V process}
    \end{align}
    Clearly, $U_{i,n}(t) \leq 1$ for each $t \in \mathbb{R}$ by the fact that the kernel function of the $U$-statistic is bounded by $1$. Overall, the above considerations yield that 
    \begin{equation*}
        \sum_{i=1}^m\mathbb{E}_M^\star \left[ \norm{ \mathbb{U}_{i,n}^\ast - \mathbb{U}_{i,n}^{\ast, V} }_{[C_{i,1},C_{i,2}]} \right] \leq \frac{2m}{\sqrt{n}},
    \end{equation*}
    which converges to 0. Combined with~\eqref{eq: difference U and U^V}, we have thus proven that the first summand in~\eqref{eq: split into two BL1 norms} converges to $0$. Further, the second summand converges to $0$ as well, due to the fact that $\mathbb{U}_{n}^{\ast, V} \condconvP \mathbb{K}$. This concludes the proof of~\eqref{eq: U bootstrap process first condition}. \newline
    In a second step, we show that 
    \begin{equation*}
        \mathbb{E}_M \left[ h\left(\mathbb{U}_{n}^{\ast}\right)^\star \right] - \mathbb{E}_M \left[ h \left(\mathbb{U}_{n}^{\ast}\right)_\star\right] \outerconvP 0,
    \end{equation*}
    for each $h \in BL_1$. To this end, note that
    \begin{align}
        \mathbb{E}_M\left[h\left(\mathbb{U}_{n}^{\ast}\right)^\star\right] &= \mathbb{E}_M\left[ \left( h\left(\mathbb{U}_{n}^{\ast} \right) - h\left(\mathbb{U}_{n}^{\ast, V}\right) + h\left(\mathbb{U}_{n}^{\ast, V}\right) \right)^\star\right] \nonumber \\
        &\leq \mathbb{E}_M\left[ \left( h\left(\mathbb{U}_{n}^{\ast} \right) - h\left(\mathbb{U}_{n}^{\ast, V}\right) \right)^\star\right] + \mathbb{E}_M\left(h\left(\mathbb{U}_{n}^{\ast, V}\right)^\star\right). \label{eq: second condition bootstrap u process}
    \end{align}
    Since $h$ is Lipschitz continuous with Lipschitz constant 1, analogous considerations as in the derivation of~\eqref{eq: bound difference U and V process} yield
    \begin{align*}
        \mathbb{E}_M\left[ \left( h\left(\mathbb{U}_{n}^{\ast} \right) - h\left(\mathbb{U}_{n}^{\ast, V}\right) \right)^\star\right] &\leq \mathbb{E}_M\left[ \left(\norm{ U_{n}^\ast - U_{n}^{\ast,V} }_{[C_1,C_2]}\right)^\star\right] \\
        &\leq \frac{2m}{\sqrt{n}}.
    \end{align*}
    Combined with~\eqref{eq: second condition bootstrap u process}, this leads to
    \begin{equation} \label{eq: bound first part measurability cond}
        \mathbb{E}_M \left[ h \left(\mathbb{U}_{n}^{\ast}\right)^\star \right] \leq \mathbb{E}_M\left[h\left(\mathbb{U}_{n}^{\ast, V}\right)^\star\right] + \frac{2m}{\sqrt{n}}.
    \end{equation}
    Note that it also holds that \begin{equation*}
        -h(\mathbb{U}_{n}^{\ast})_\star \leq - \left(h\left(\mathbb{U}_{n}^{\ast} \right) - h\left(\mathbb{U}_{n}^{\ast, V}\right) \right)_\star - h\left(\mathbb{U}_{n}^{\ast, V}\right)_\star.
    \end{equation*}
    Again, it can be argued analogously to~\eqref{eq: bound difference U and V process} that
    \begin{equation} \label{eq: bound negative expectation bootstrap u process}
        - \mathbb{E}_M \left[ h\left(\mathbb{U}_{n}^{\ast}\right)_\star \right] \leq \frac{2m}{\sqrt{n}} - \mathbb{E}_M \left[ h\left(\mathbb{U}_{n}^{\ast, V}\right)_\star \right].
    \end{equation}
    Now, it remains to combine~\eqref{eq: bound first part measurability cond} and~\eqref{eq: bound negative expectation bootstrap u process}, which results in
    \begin{align*}
        \mathbb{E}_M\left[h\left(\mathbb{U}_{n}^{\ast}\right)^\star\right] - \mathbb{E}_M \left[ h\left(\mathbb{U}_{n}^{\ast}\right)_\star \right] \leq o(1) + \mathbb{E}_M\left[h\left(\mathbb{U}_{n}^{\ast, V}\right)^\star\right] - \mathbb{E}_M \left[ h\left(\mathbb{U}_{n}^{\ast, V}\right)_\star \right].
    \end{align*}
    The right hand side of the above equation converges to 0 in outer probability as $n \to \infty$, since $\mathbb{U}_{n}^{\ast, V} \condconvP \mathbb{K}$.
\end{proof}
Now we are equipped to provide the proof of \autoref{lemma: inverted bootstrap u process}.
\begin{proof}[Proof of \autoref{lemma: inverted bootstrap u process}]
    Note that we are in the correct setting to apply \autoref{lemma: convergence of bootstrap u process}, i.e.\ under the given conditions it holds that 
    \begin{equation*}
        \mathbb{U}_{n}^{\ast} \condconvP \mathbb{K} \coloneqq (\mathbb{K}_1, \ldots, \mathbb{K}_m) \text{ in }\ell^\infty([C_{1,1},C_{1,2}]) \times \ldots \times \ell^\infty([C_{m,1},C_{m,2}]).
    \end{equation*}
    Then, the rest of the proof is carried out analogously to the proof of \autoref{lemma: joint convergence inverted u process}, using the conditional delta method \citep[Theorem 12.1]{kosorok2008introduction} in conjunction with the inversion map $\invmap : \mathbb{D}_1^m \to \ell^\infty([\beta,1-\beta])^m$ defined in the proof of \autoref{lemma: joint convergence inverted u process}. The only additional requirement for applying the conditional delta method is that the map  $h(\mathbb{U}_n^\ast)$ is measurable with respect to the resampling weights $M_n$ for every bounded and continuous $h$ outer almost surely. But this was already remarked in the discussion surrounding~\eqref{eq: sigma algebra resampling weights} in the proof of \autoref{lemma: U process bootstrap limit for U^V}.
\end{proof}
\subsection*{Proof of \autoref{theorem: bootstrap consistency}}
    By \autoref{lemma: inverted bootstrap u process}, it follows that $\mathbb{U}_{n}^{\ast,-1} \condconvP \mathbb{G}$. The rest follows by the conditional continuous mapping theorem \citep[Proposition 10.7]{kosorok2008introduction} applied with the map $\phi : \ell^\infty([\beta, 1-\beta])^m \to \mathbb{R}$ defined via 
    \begin{equation*}
        \phi (f_1, \ldots, f_m) = \frac{1}{m} \sum_{i=1}^m \int_\beta^{1-\beta}|f_i -  \Lambda_p(f_1, \ldots, f_m)|^p,
    \end{equation*}
    which is clearly continuous. Also, $\ell^\infty((\beta, 1-\beta))^m$ is a Banach space with respect to the uniform norm. Thus, in order to apply the conditional continuous mapping theorem, it remains to be shown that $h\left(\mathbb{U}_{n}^{\ast,-1} \right)$ is measurable with respect to the resampling weights $M_n$ for every $h \in \mathcal{C}_b$ outer almost surely, where $\mathcal{C}_b$ is the set of bounded and continuous functions. This is the case for equivalent reasons as in the discussion surrounding~\eqref{eq: sigma algebra resampling weights} in the proof of \autoref{lemma: inverted bootstrap u process}. Thus we conclude that 
    \begin{align*}
        \Xi_{n}^\ast = \phi \left( \mathbb{U}_{n}^{\ast,-1} \right)\condconvP \frac{1}{m} \sum_{i=1}^m \int_\beta^{1-\beta} \left| \mathbb{G}_i(t) - \Lambda_p\left(\mathbb{G}_1(t), \ldots, \mathbb{G}_m(t) \right) \right|^p \diff t.
    \end{align*}
    Hence, we have also shown that the quantiles of the asymptotic distribution of $\Xi_{n}^\ast$ and $n^{p/2}T_{\beta,n}^p$ agree, which implies \autoref{theorem: bootstrap consistency}. \qed
\clearpage
\newpage
\section{Further Simulations}\label{appendix: simulations}
\begin{figure}[h!]
    \centering
    \begin{subfigure}{\textwidth}
        \centering
        \includegraphics[width=1\textwidth]{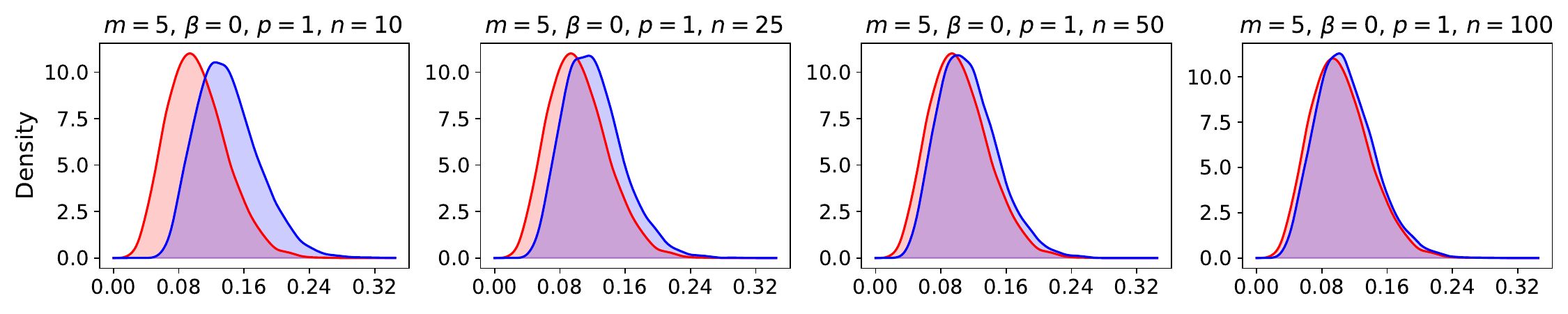}
    \end{subfigure}

    \begin{subfigure}{\textwidth}
        \centering
        \includegraphics[width=1\textwidth]{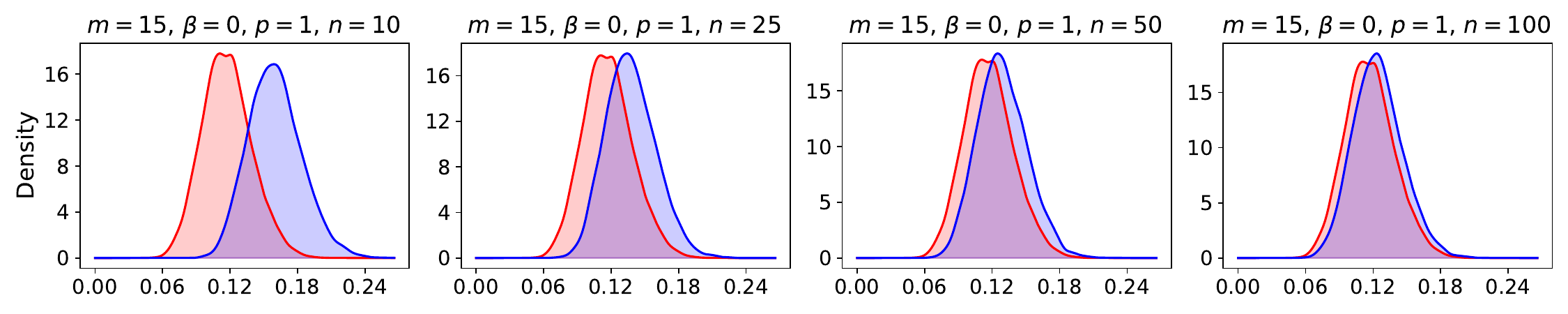}
    \end{subfigure}
    \begin{subfigure}{\textwidth}
        \centering
        \includegraphics[width=1\textwidth]{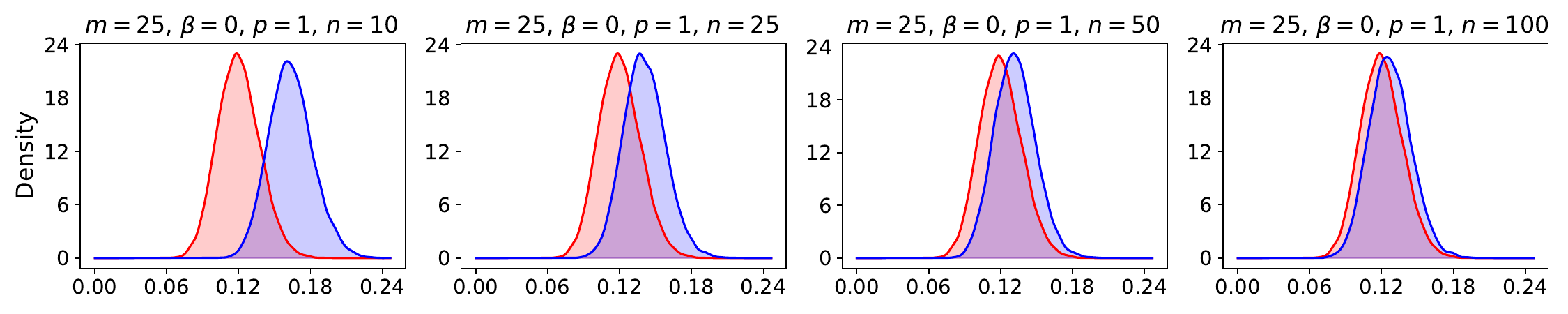}
    \end{subfigure}
    \begin{subfigure}{\textwidth}
        \centering
        \includegraphics[width=1\textwidth]{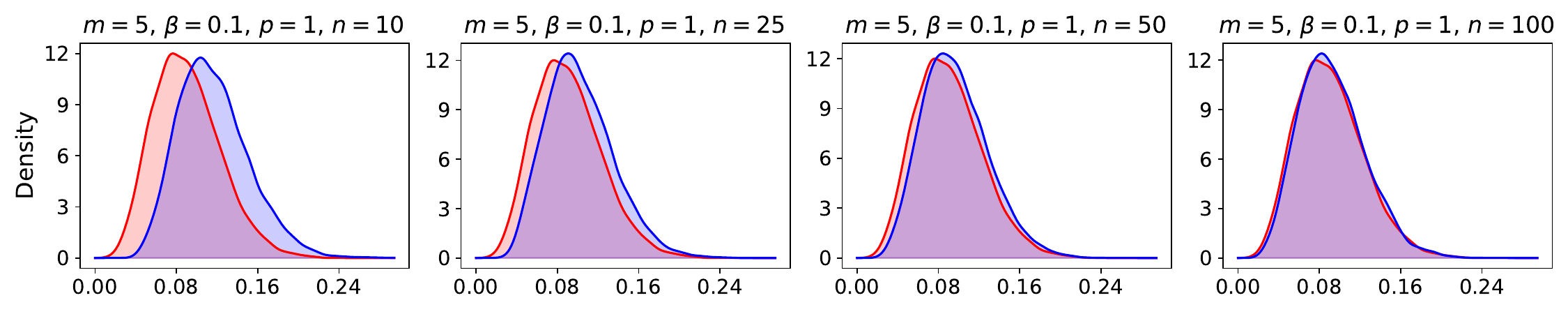}
    \end{subfigure}
    \begin{subfigure}{\textwidth}
        \centering
        \includegraphics[width=1\textwidth]{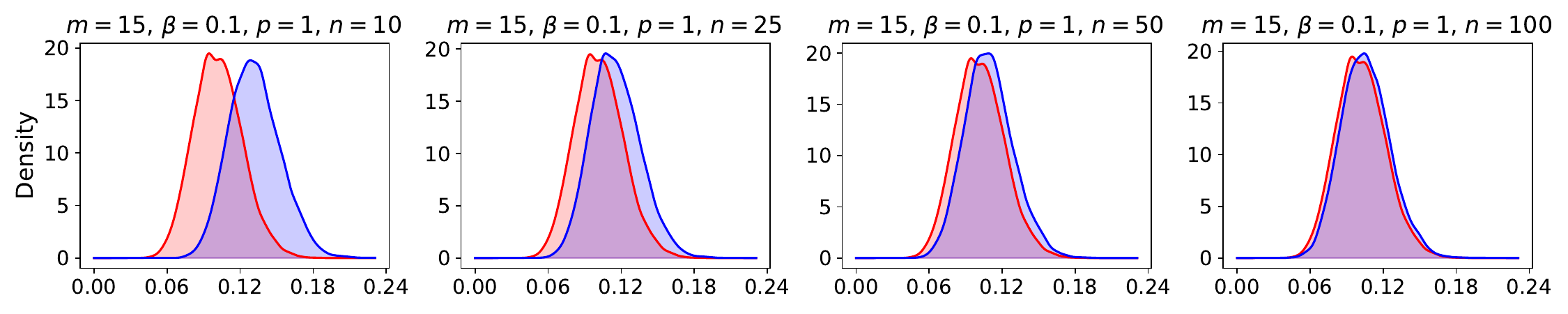}
    \end{subfigure}
    \begin{subfigure}{\textwidth}
        \centering
        \includegraphics[width=1\textwidth]{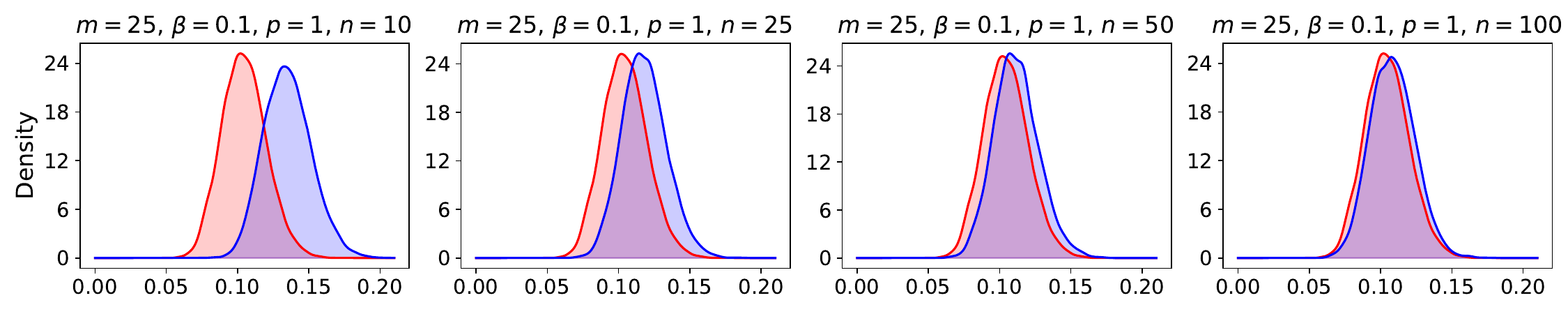}
    \end{subfigure}

    \caption{Kernel density estimates of $10,\!000$ realizations of $n^{p/2} T_{\beta,n}^p$ (blue) and limiting distribution in \autoref{theorem: Convergence under the Null} (red) for $n = 10,25,50,100$ using \autoref{example: rate of covnergence} with different hyperparameters.}\label{figure: Simulation Nullhypothese 1}
\end{figure}
\newpage
\begin{figure}[h!]
    \centering
    \begin{subfigure}{\textwidth}
        \centering
        \includegraphics[width=1\textwidth]{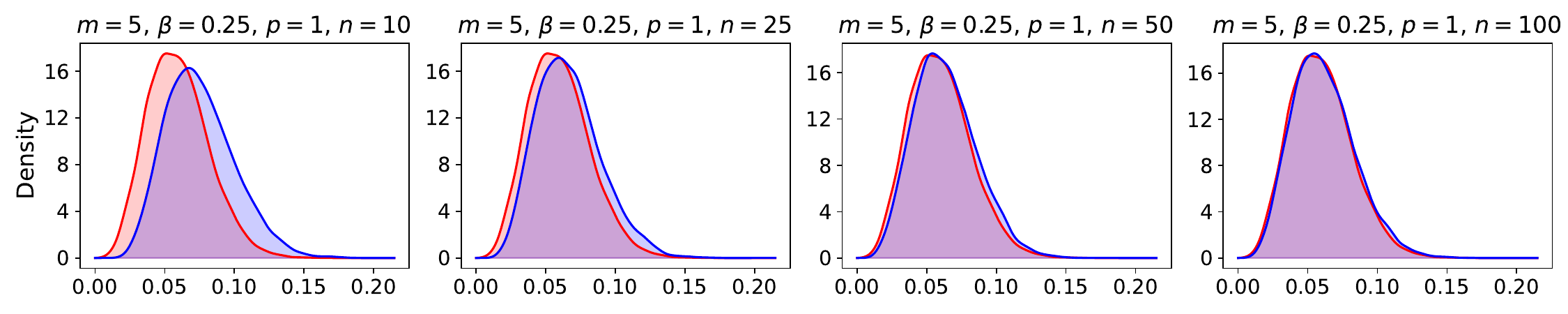}
    \end{subfigure}

    \begin{subfigure}{\textwidth}
        \centering
        \includegraphics[width=1\textwidth]{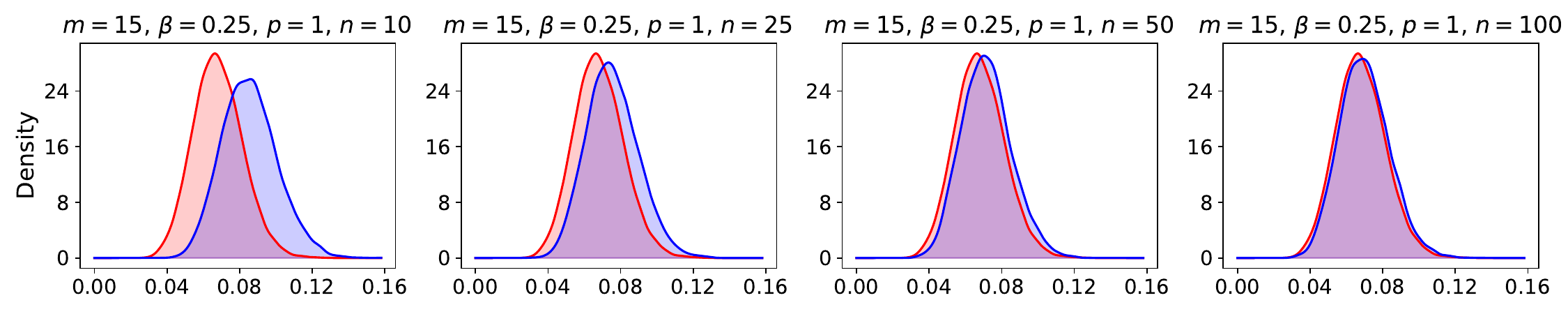}
    \end{subfigure}
    \begin{subfigure}{\textwidth}
        \centering
        \includegraphics[width=1\textwidth]{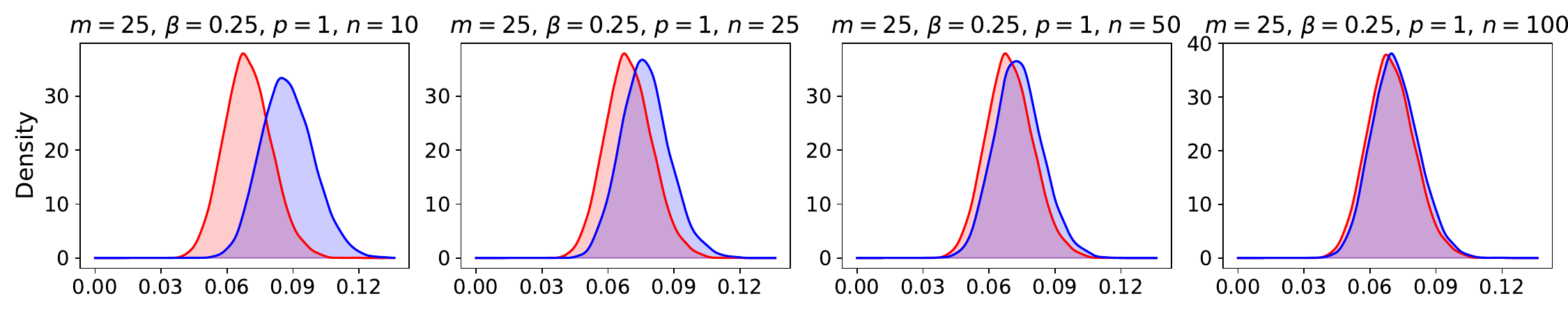}
    \end{subfigure}
    \begin{subfigure}{\textwidth}
        \centering
        \includegraphics[width=1\textwidth]{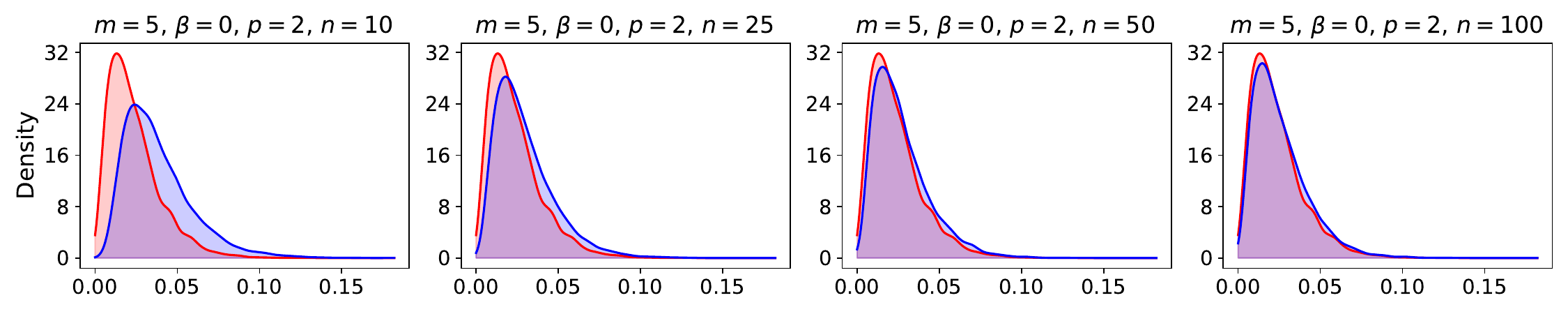}
    \end{subfigure}
    \begin{subfigure}{\textwidth}
        \centering
        \includegraphics[width=1\textwidth]{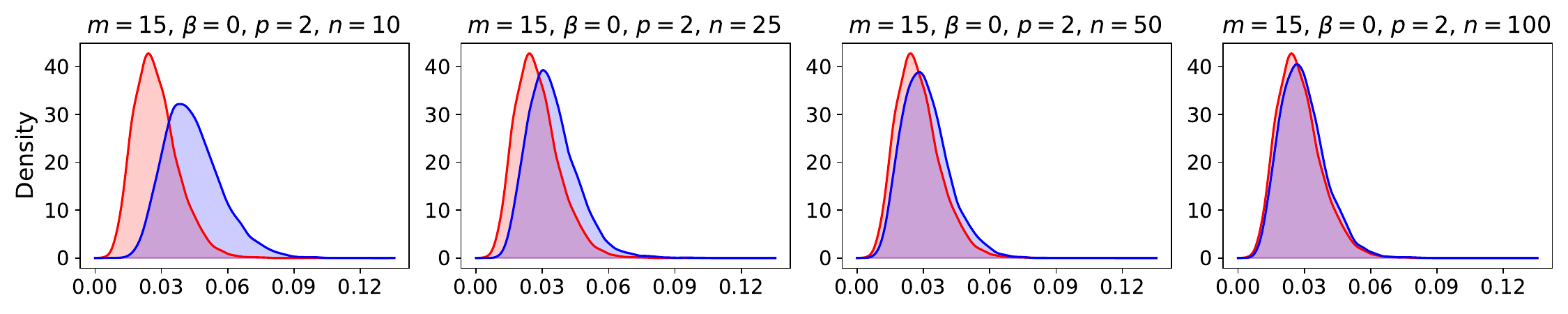}
    \end{subfigure}
    \begin{subfigure}{\textwidth}
        \centering
        \includegraphics[width=1\textwidth]{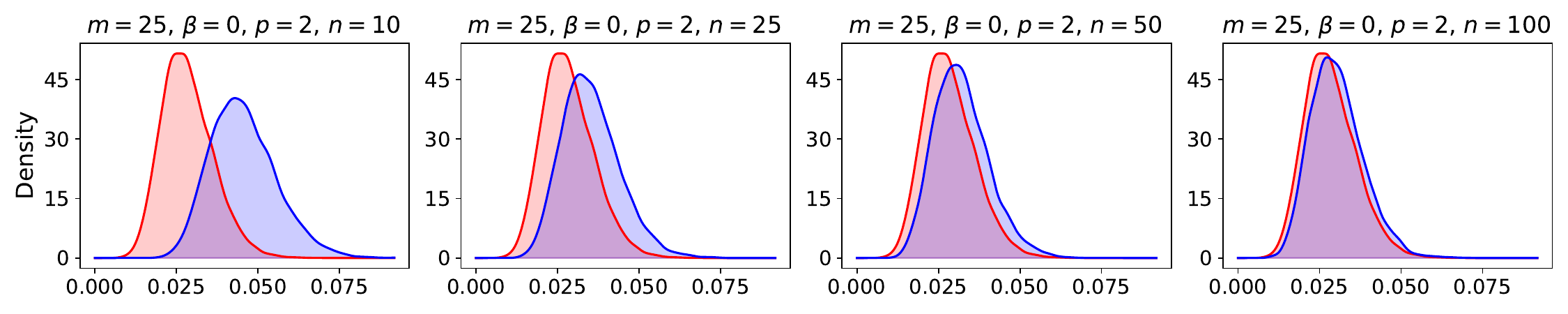}
    \end{subfigure}
    \caption{Kernel density estimates of $10,\!000$ realizations of $n^{p/2} T_{\beta,n}^p$ (blue) and limiting distribution in \autoref{theorem: Convergence under the Null} (red) for $n = 10,25,50,100$ using \autoref{example: rate of covnergence} with different hyperparameters.}\label{figure: Simulation Nullhypothese 2}
\end{figure}
\newpage
\begin{figure}[h!]
    \centering
    \begin{subfigure}{\textwidth}
        \centering
        \includegraphics[width=1\textwidth]{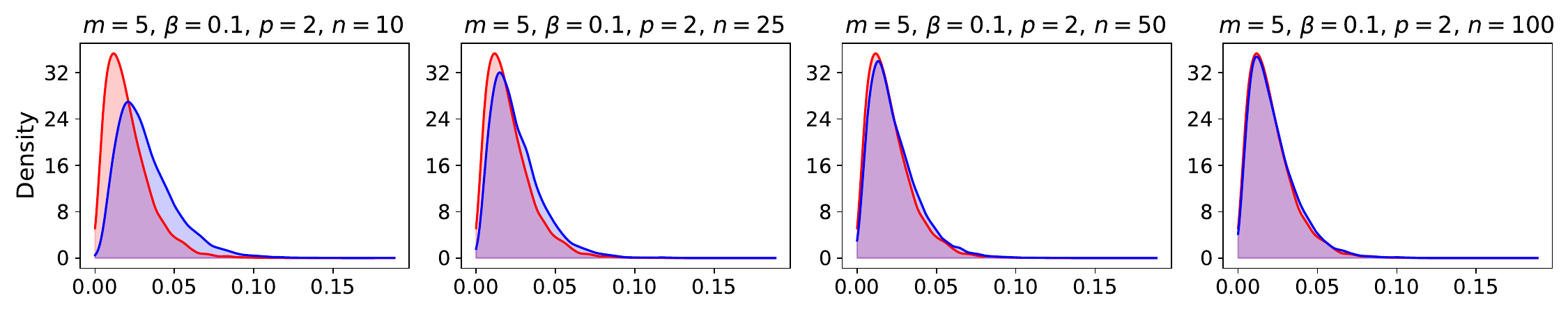}
    \end{subfigure}

    \begin{subfigure}{\textwidth}
        \centering
        \includegraphics[width=1\textwidth]{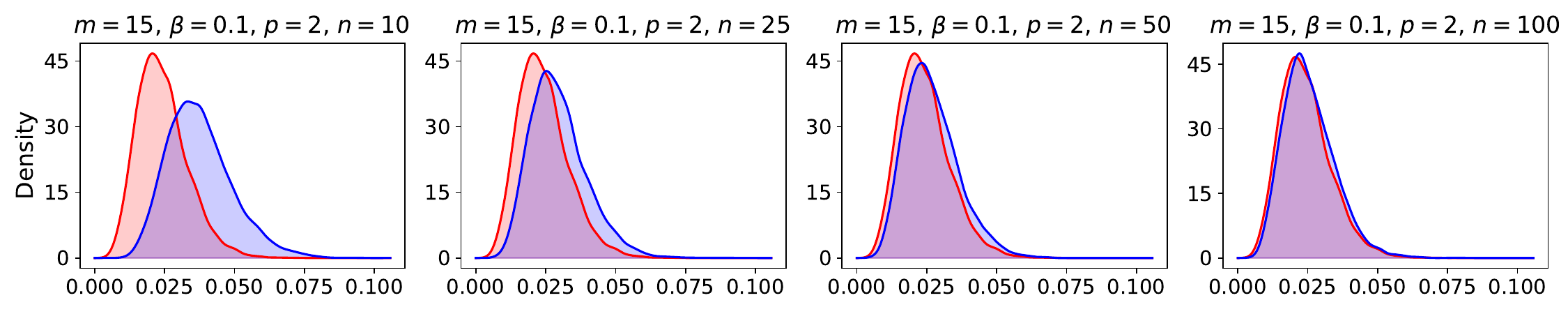}
    \end{subfigure}
    \begin{subfigure}{\textwidth}
        \centering
        \includegraphics[width=1\textwidth]{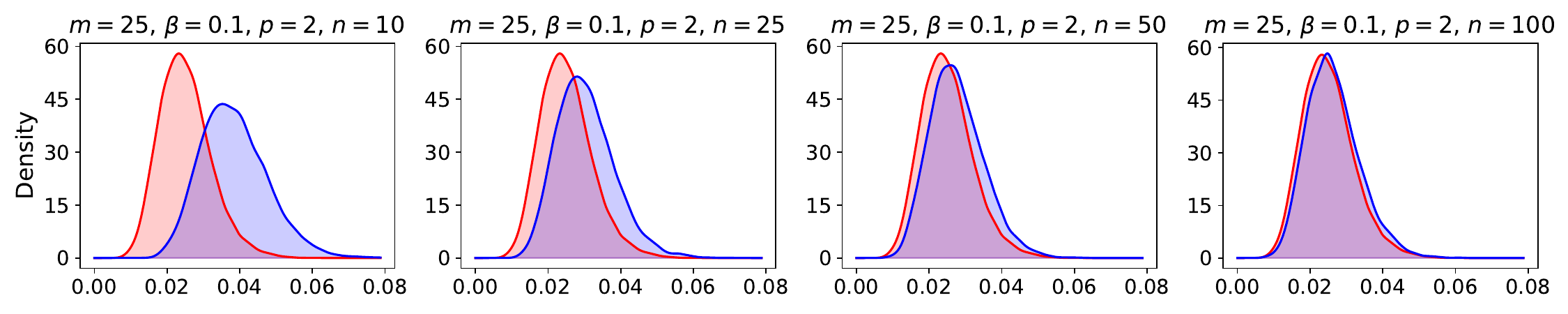}
    \end{subfigure}
    \begin{subfigure}{\textwidth}
        \centering
        \includegraphics[width=1\textwidth]{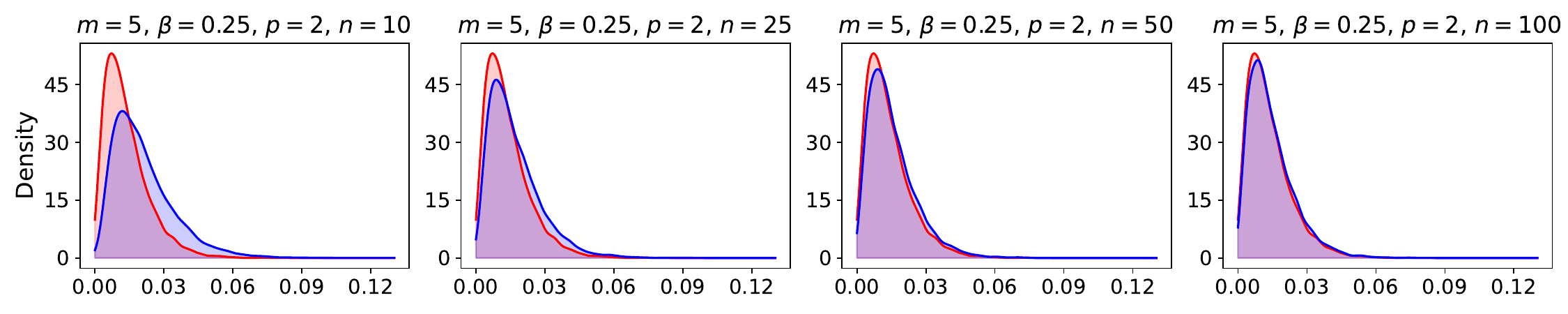}
    \end{subfigure}
    \begin{subfigure}{\textwidth}
        \centering
        \includegraphics[width=1\textwidth]{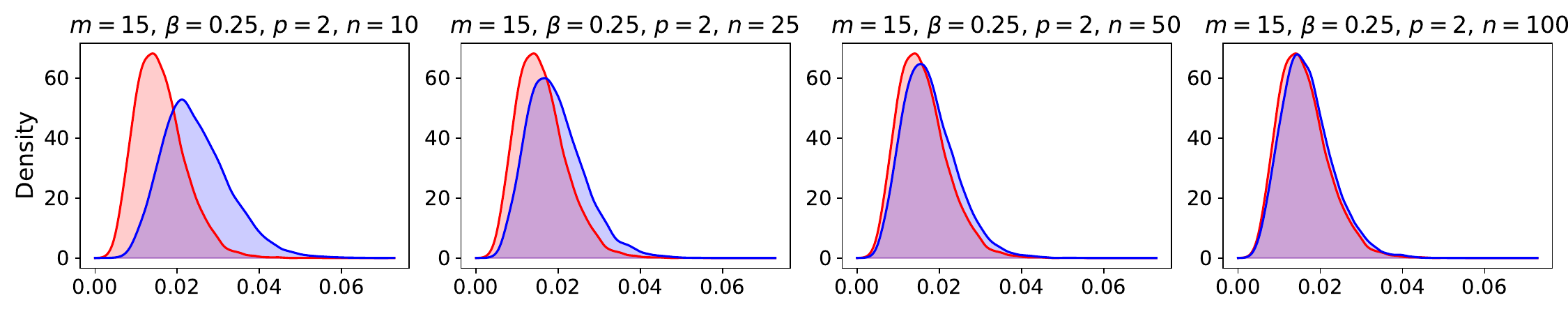}
    \end{subfigure}
    \begin{subfigure}{\textwidth}
        \centering
        \includegraphics[width=1\textwidth]{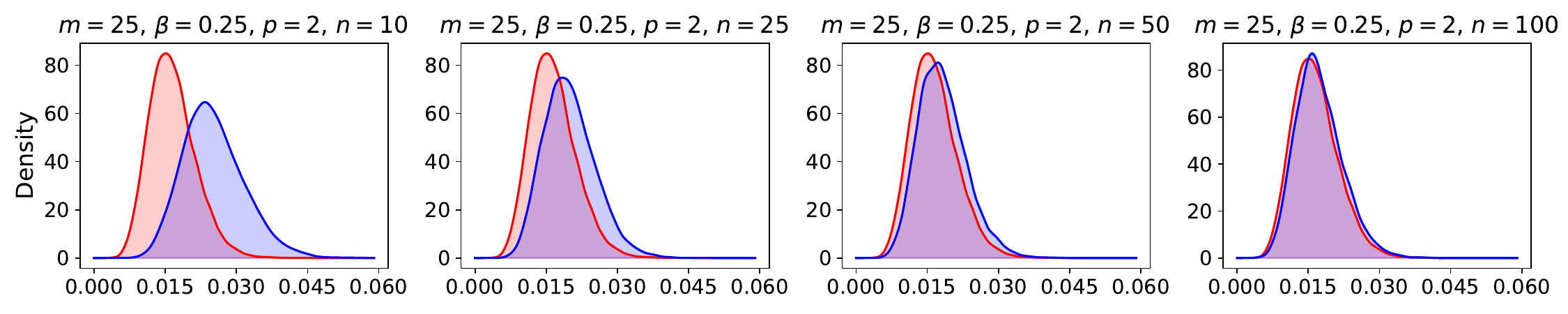}
    \end{subfigure}    
    \caption{Kernel density estimates of $10,\!000$ realizations of $n^{p/2} T_{\beta,n}^p$ (blue) and limiting distribution in \autoref{theorem: Convergence under the Null} (red) for $n = 10,25,50,100$ using \autoref{example: rate of covnergence} with different hyperparameters.}\label{figure: Simulation Nullhypothese 3}
\end{figure}
\newpage
\begin{figure}[h!]
    \centering
    \begin{subfigure}{\textwidth}
        \centering
        \includegraphics[width=1\textwidth]{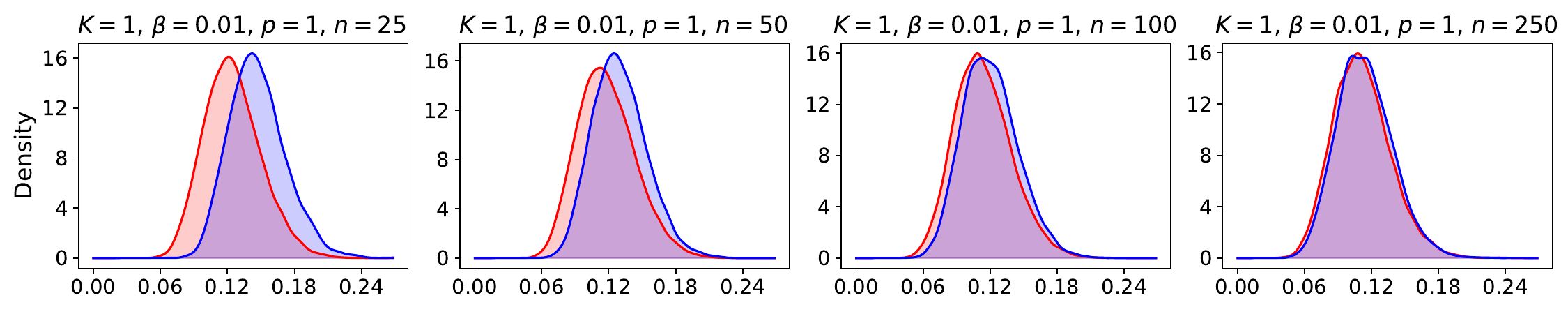}
    \end{subfigure}

    \begin{subfigure}{\textwidth}
        \centering
        \includegraphics[width=1\textwidth]{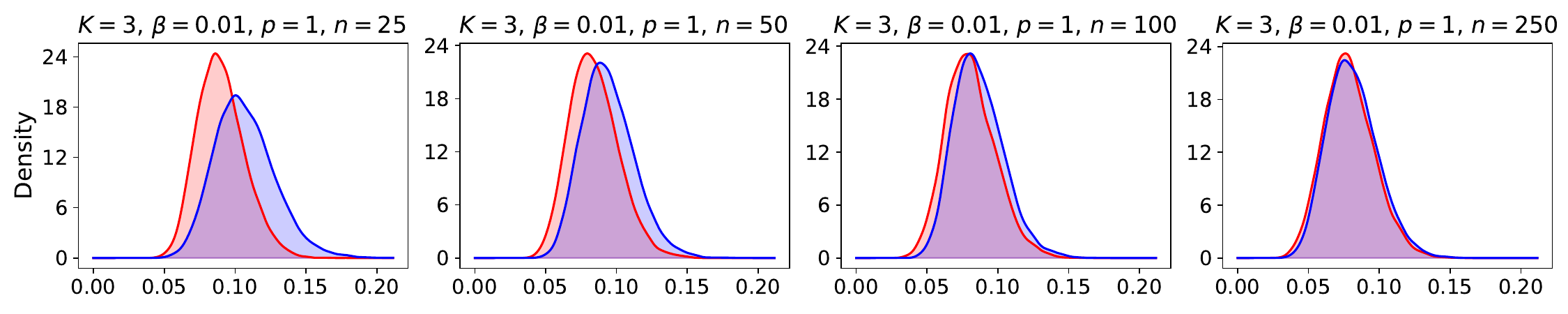}
    \end{subfigure}
    \begin{subfigure}{\textwidth}
        \centering
        \includegraphics[width=1\textwidth]{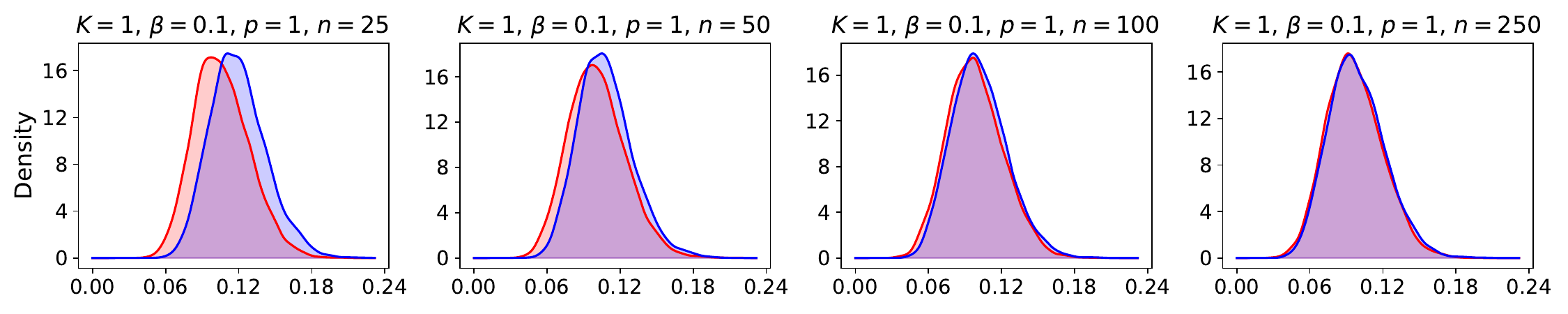}
    \end{subfigure}
    \begin{subfigure}{\textwidth}
        \centering
        \includegraphics[width=1\textwidth]{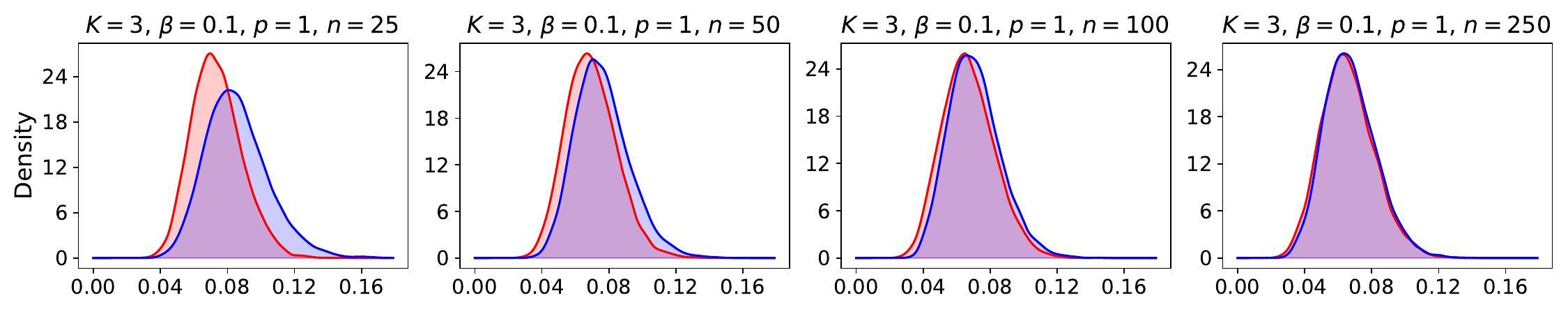}
    \end{subfigure}
    \begin{subfigure}{\textwidth}
        \centering
        \includegraphics[width=1\textwidth]{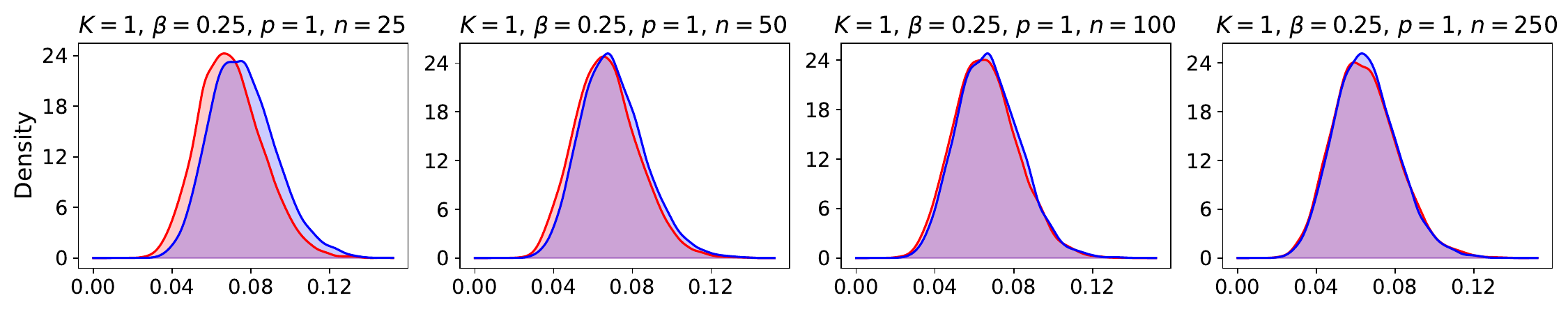}
    \end{subfigure}
    \begin{subfigure}{\textwidth}
        \centering
        \includegraphics[width=1\textwidth]{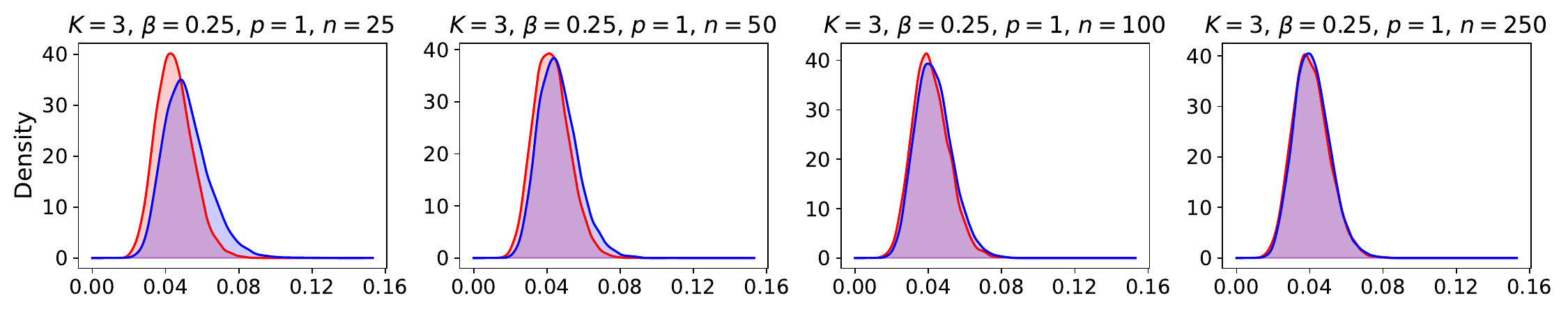}
    \end{subfigure}

    \caption{Kernel density estimates of $n^{p/2} T_{\beta,n}^{p, \star}$ (blue) and $n^{p/2}T_{\beta,n}^p$ (red) for $n = 25,50,100,250$ using \autoref{example: bs convergence example} based on $10,\!000$ realizations of $n^{p/2} T_{\beta,n}^p$ and one bootstrap sample from each realization, using different hyperparameters.}\label{figure: Simulation bootstrap 1}
\end{figure}
\newpage
\begin{figure}[h!]
    \centering
    \begin{subfigure}{\textwidth}
        \centering
        \includegraphics[width=1\textwidth]{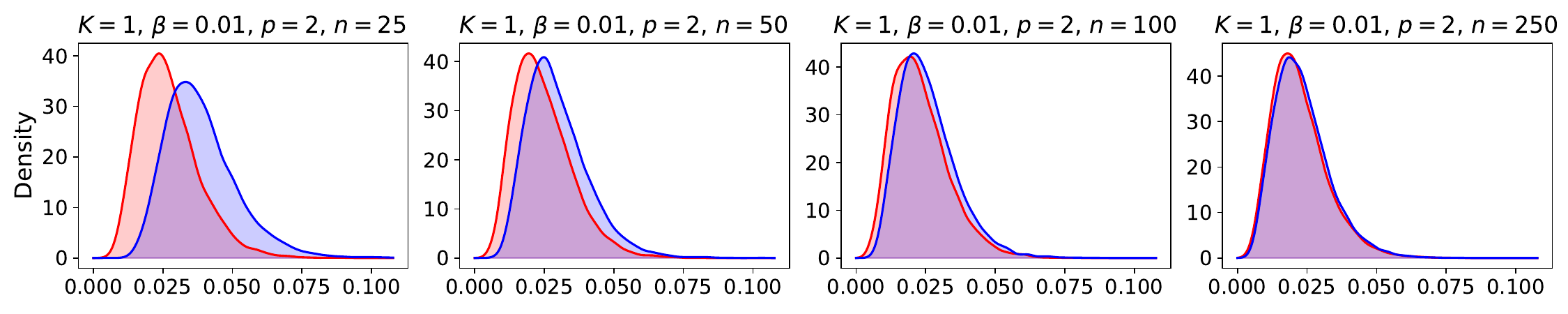}
    \end{subfigure}

    \begin{subfigure}{\textwidth}
        \centering
        \includegraphics[width=1\textwidth]{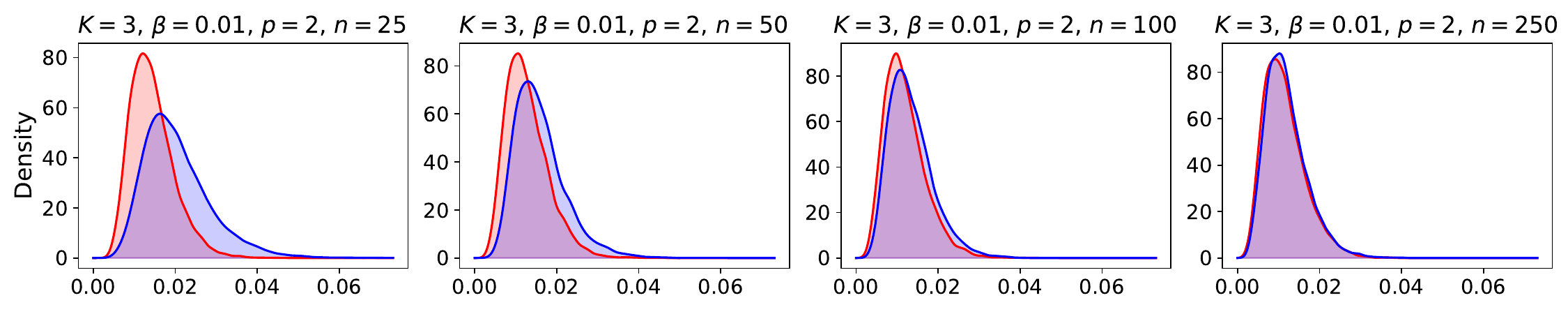}
    \end{subfigure}
    \begin{subfigure}{\textwidth}
        \centering
        \includegraphics[width=1\textwidth]{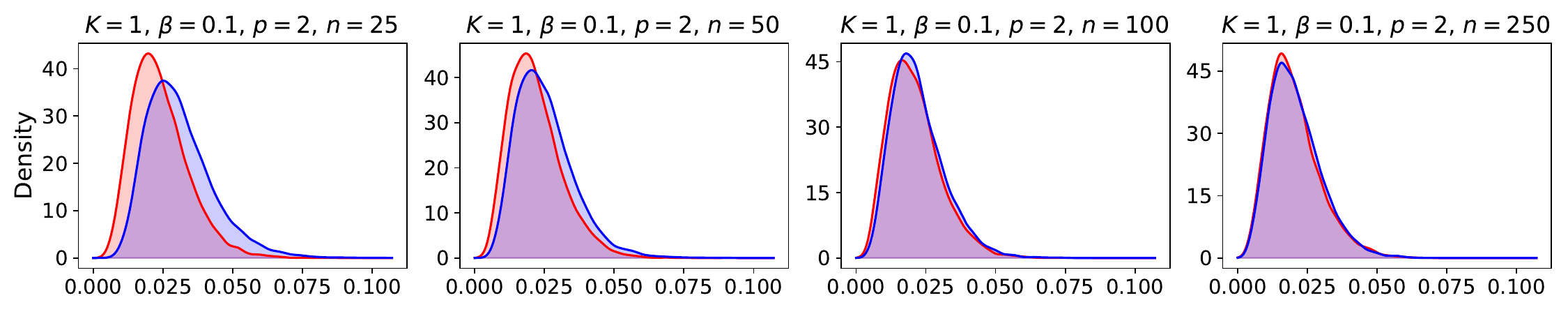}
    \end{subfigure}
    \begin{subfigure}{\textwidth}
        \centering
        \includegraphics[width=1\textwidth]{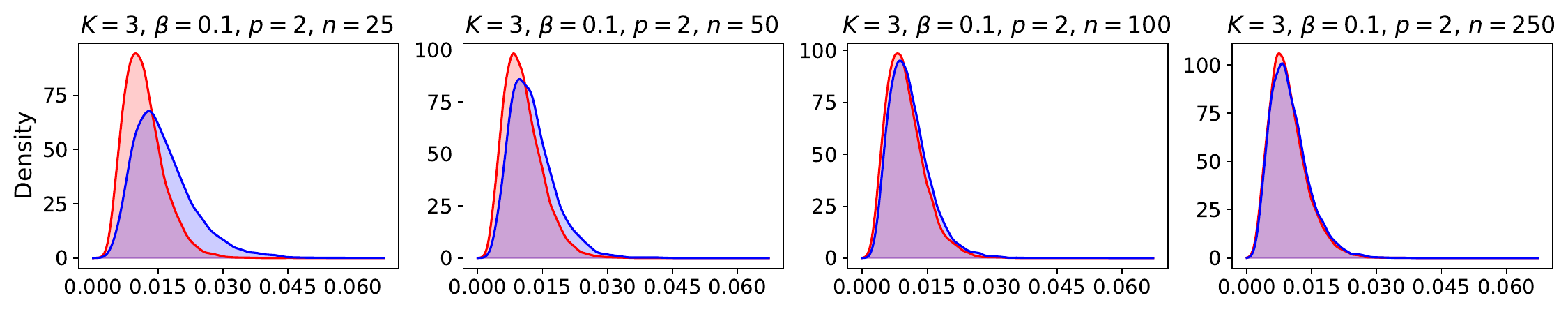}
    \end{subfigure}
    \begin{subfigure}{\textwidth}
        \centering
        \includegraphics[width=1\textwidth]{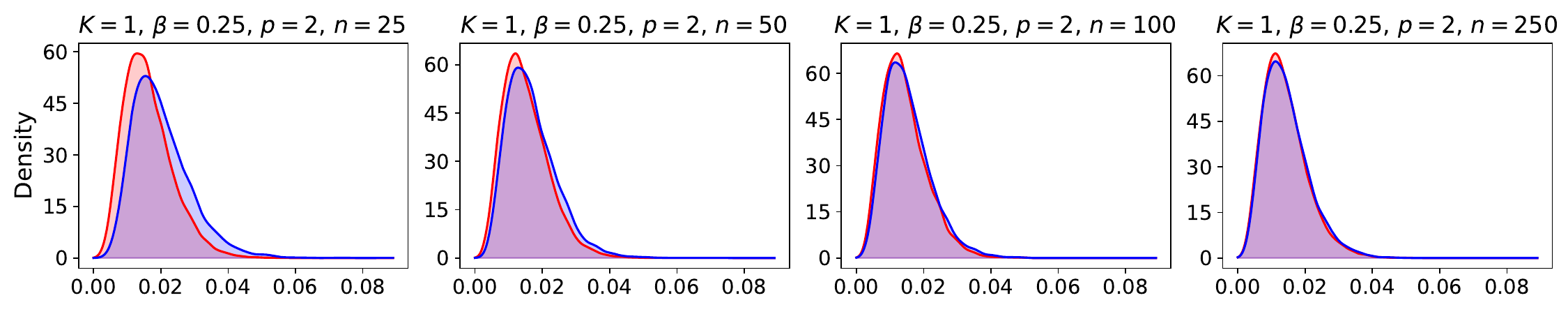}
    \end{subfigure}
    \begin{subfigure}{\textwidth}
        \centering
        \includegraphics[width=1\textwidth]{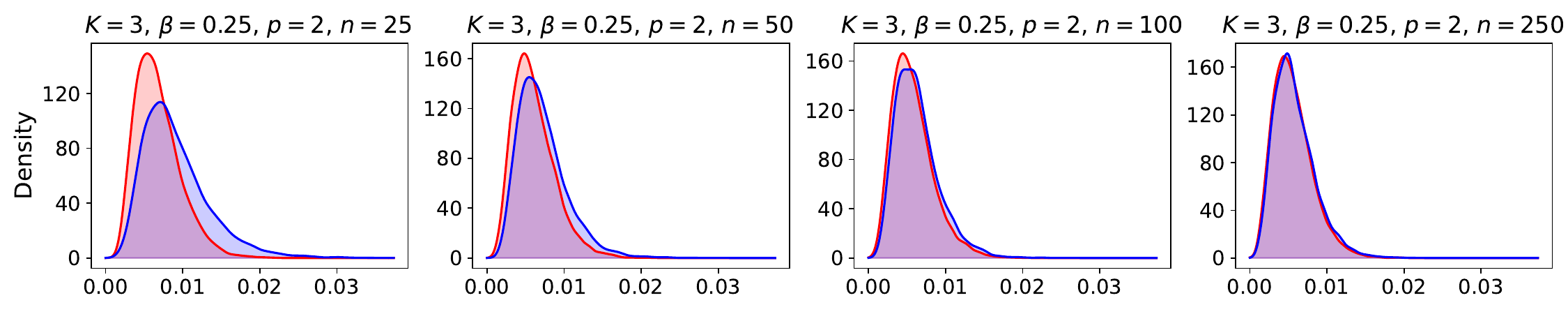}
    \end{subfigure}
    
    \caption{Kernel density estimates of $n^{p/2} T_{\beta,n}^{p, \star}$ (blue) and $n^{p/2}T_{\beta,n}^p$ (red) for $n = 25,50,100,250$ using \autoref{example: bs convergence example} based on $10,\!000$ realizations of $n^{p/2} T_{\beta,n}^p$ and one bootstrap sample from each realization, using different hyperparameters.}\label{figure: Simulation bootstrap 2}
\end{figure}
\newpage
\begin{figure}[h!]
    \centering
    \begin{subfigure}{\textwidth}
        \centering
        \includegraphics[width=1\textwidth]{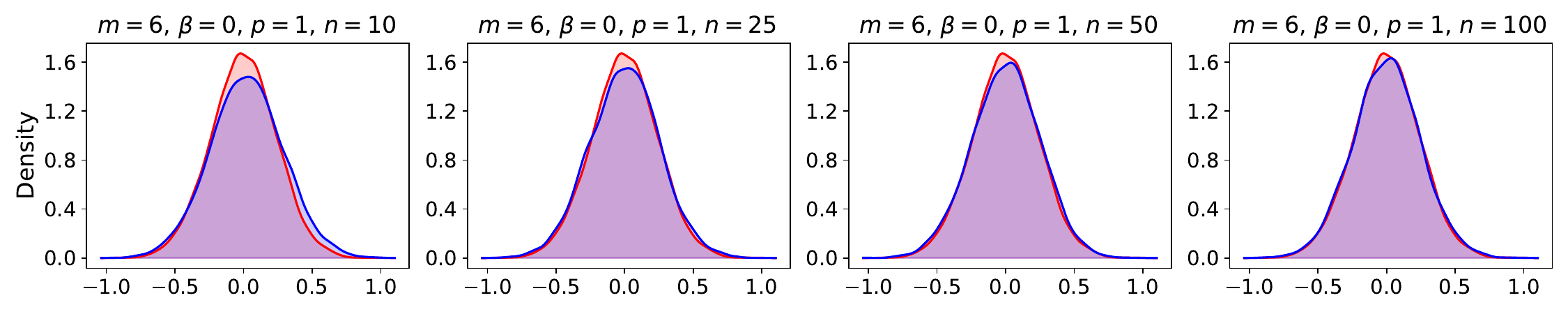}
    \end{subfigure}

    \begin{subfigure}{\textwidth}
        \centering
        \includegraphics[width=1\textwidth]{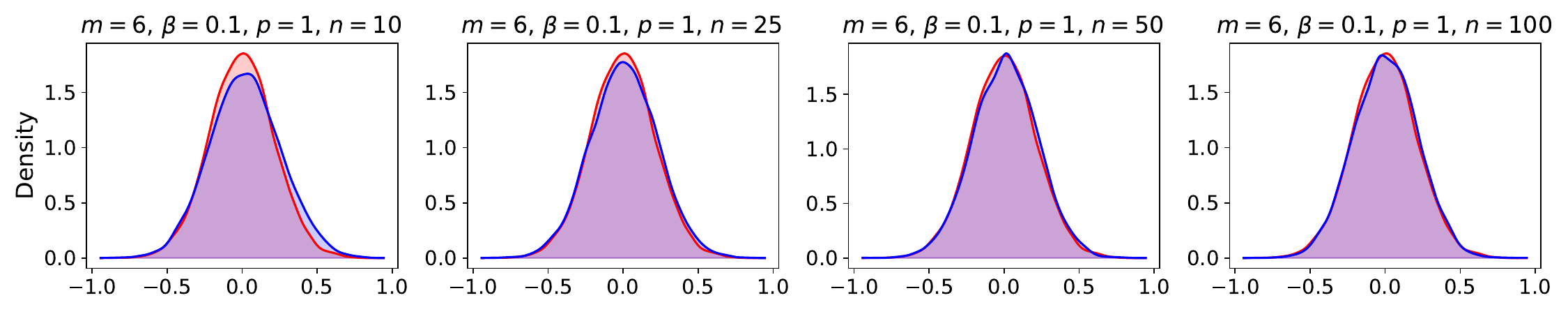}
    \end{subfigure}
    \begin{subfigure}{\textwidth}
        \centering
        \includegraphics[width=1\textwidth]{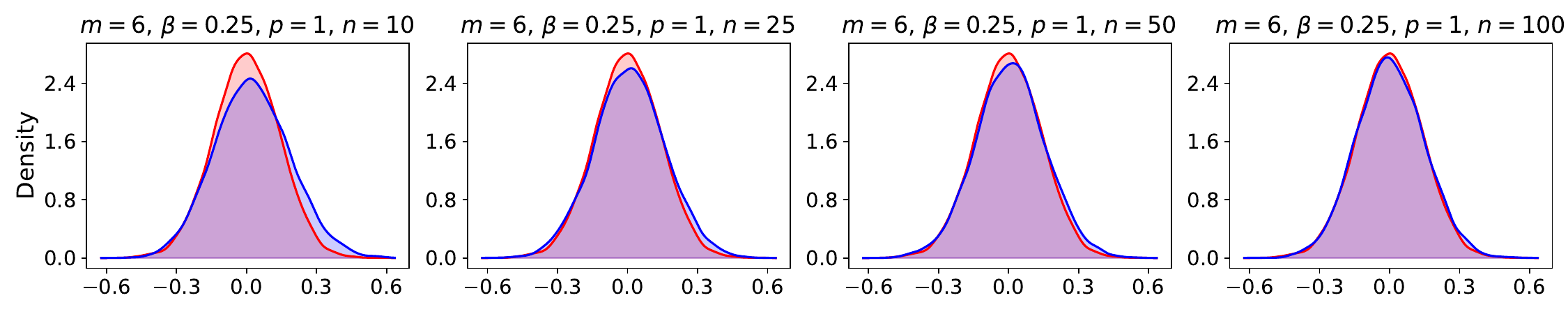}
    \end{subfigure}
    \begin{subfigure}{\textwidth}
        \centering
        \includegraphics[width=1\textwidth]{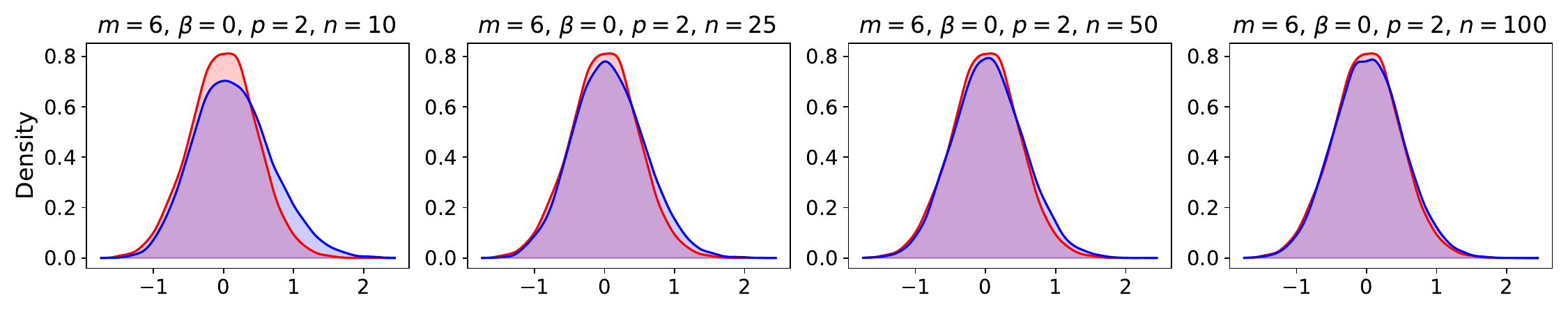}
    \end{subfigure}
    \begin{subfigure}{\textwidth}
        \centering
        \includegraphics[width=1\textwidth]{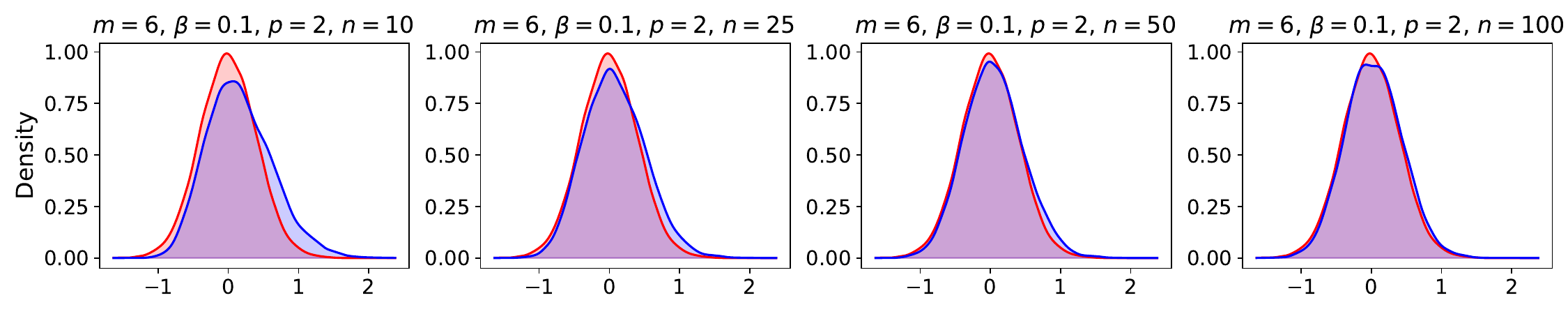}
    \end{subfigure}
    \begin{subfigure}{\textwidth}
        \centering
        \includegraphics[width=1\textwidth]{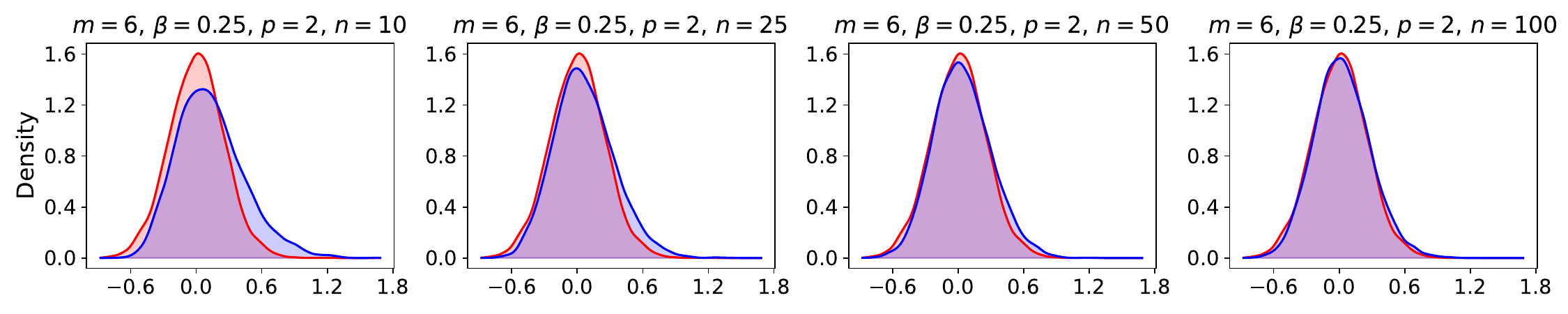}
    \end{subfigure}
    \caption{Kernel density estimates of $10,\!000$ realizations of limit distribution in \autoref{theorem: Convergence under the alt} (red) and $\sqrt{n} \left( T_{\beta,n}^p  - T_{\beta}^p\right)$ (blue) for $n = 10,25,50,100$ in \autoref{example: convergence alternative} using different hyperparameters}\label{figure: Simulation alternative 1}
\end{figure}
\newpage
\begin{figure}[h!]
    \centering
    \begin{subfigure}{\textwidth}
        \centering
        \includegraphics[width=1\textwidth]{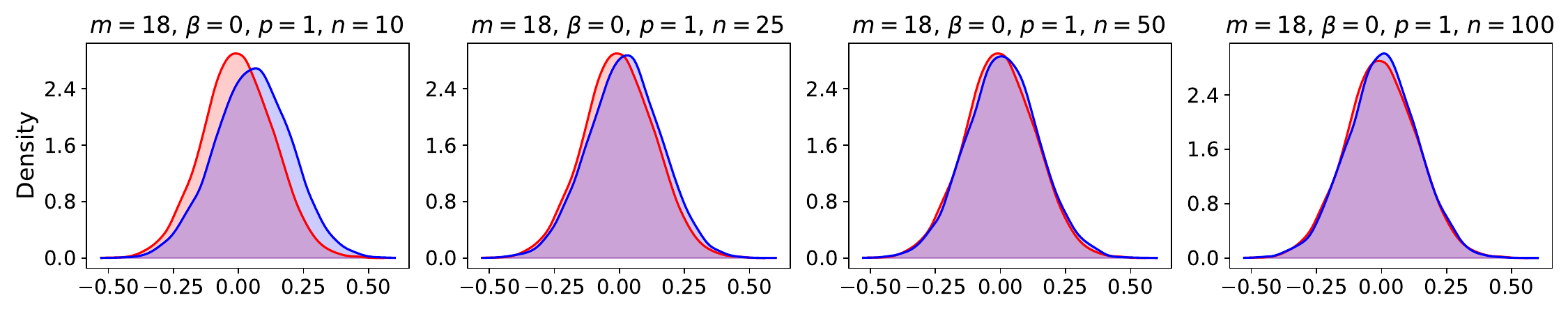}
    \end{subfigure}

    \begin{subfigure}{\textwidth}
        \centering
        \includegraphics[width=1\textwidth]{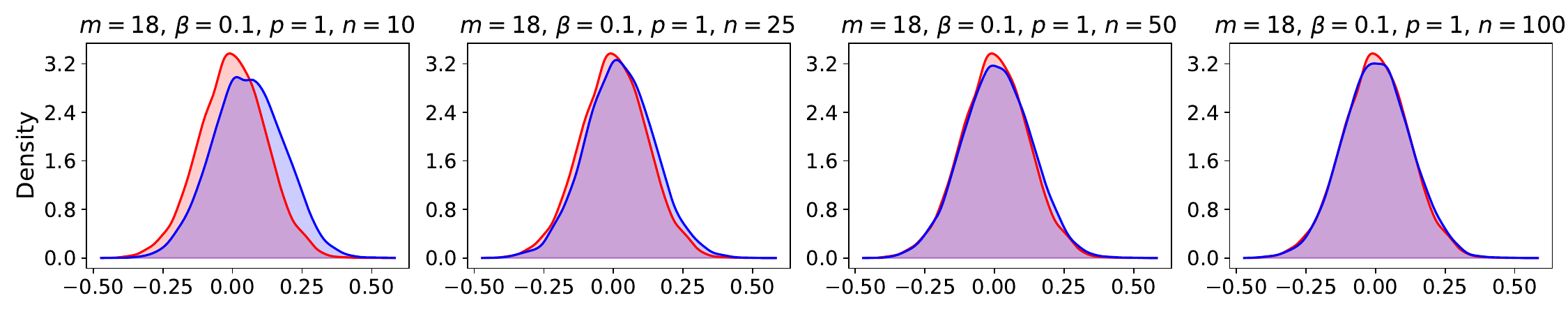}
    \end{subfigure}
    \begin{subfigure}{\textwidth}
        \centering
        \includegraphics[width=1\textwidth]{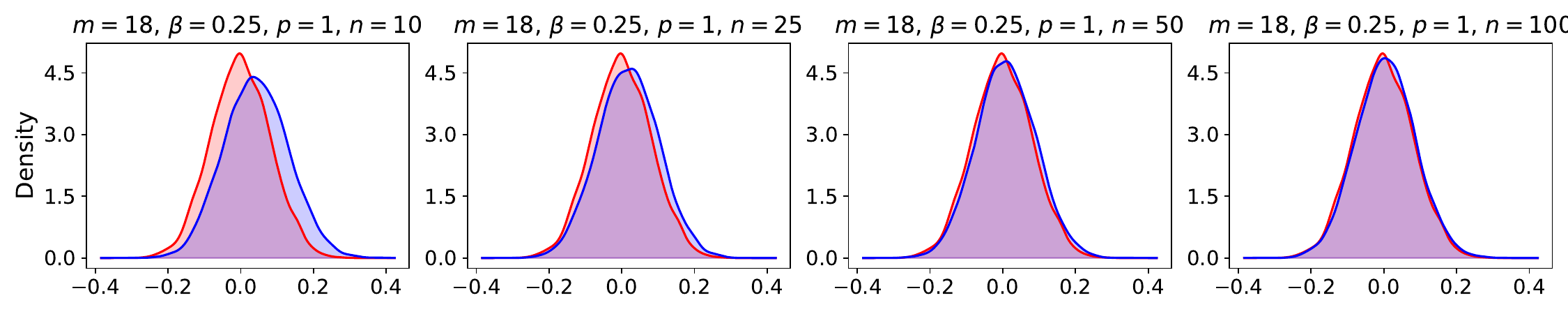}
    \end{subfigure}
    \begin{subfigure}{\textwidth}
        \centering
        \includegraphics[width=1\textwidth]{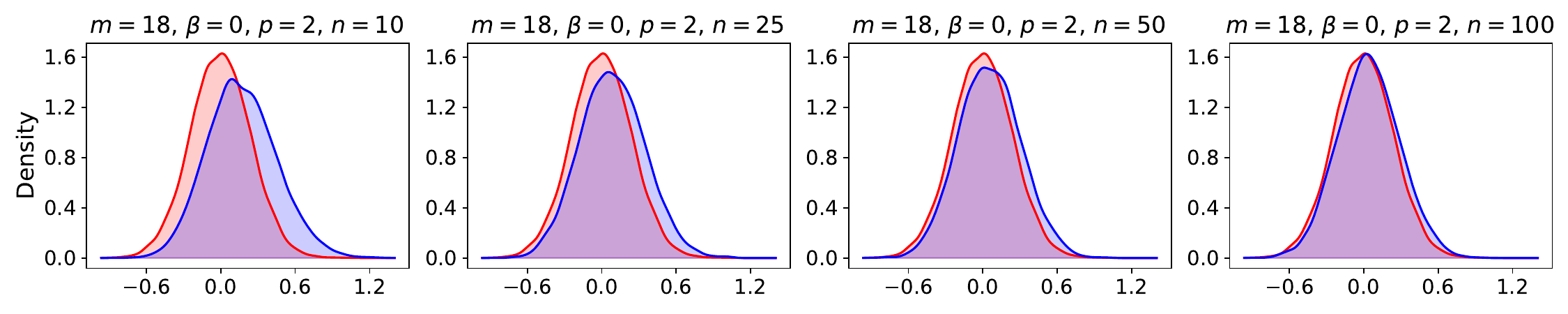}
    \end{subfigure}
    \begin{subfigure}{\textwidth}
        \centering
        \includegraphics[width=1\textwidth]{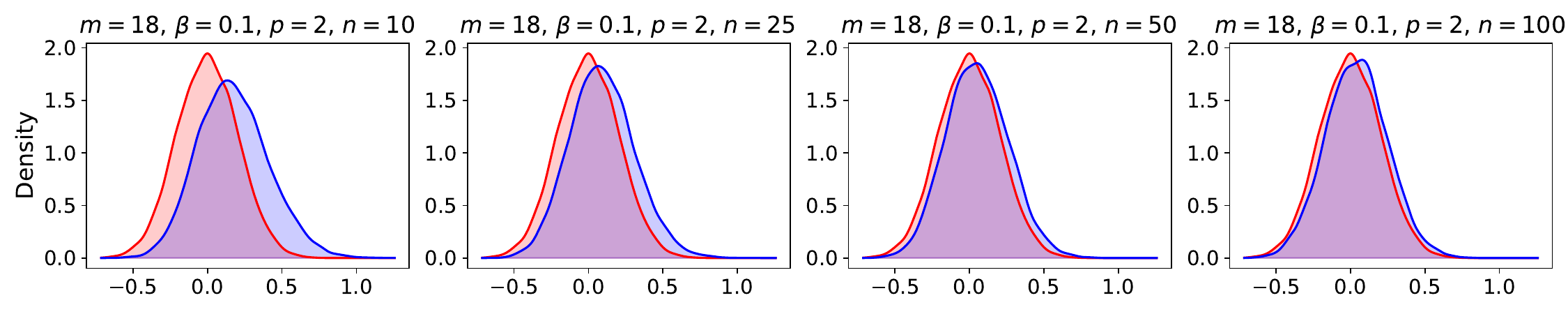}
    \end{subfigure}
    \begin{subfigure}{\textwidth}
        \centering
        \includegraphics[width=1\textwidth]{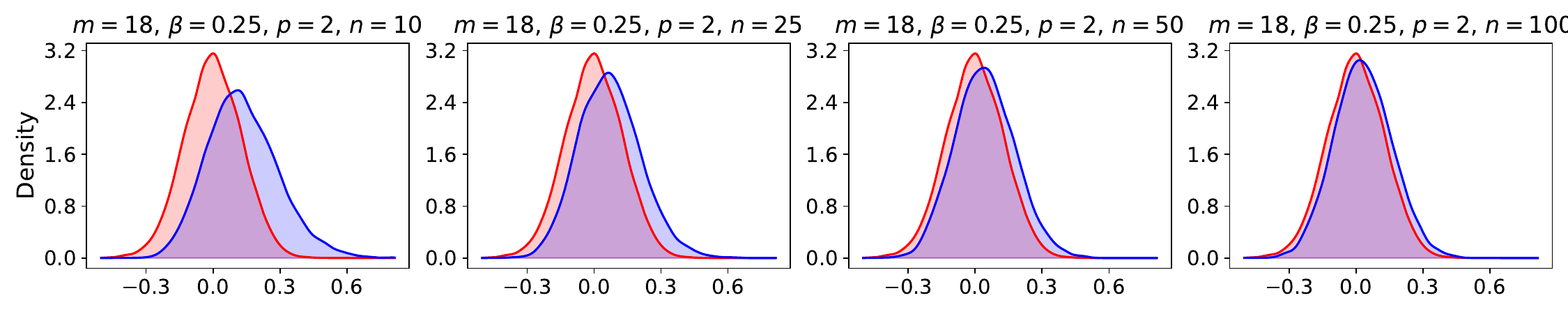}
    \end{subfigure}

    \caption{Kernel density estimates of $10,\!000$ realizations of limit distribution in \autoref{theorem: Convergence under the alt} (red) and $\sqrt{n} \left( T_{\beta,n}^p  - T_{\beta}^p\right)$ (blue) for $n = 10,25,50,100$ in \autoref{example: convergence alternative} using different hyperparameters}\label{figure: Simulation alternative 2}
\end{figure}
\clearpage
\newpage
\subsection{SLB Barycenter Test}\label{subsec:further-sim:test}
In this section, we present and discuss further results of the SLB barycenter test applied on the triangulated mesh dataset (recall Scenarios~\hyperref[item: scenario A]{A--C} from \autoref{sec: Simulations}). In said scenarios, the results using different configurations of hyperparameters are gathered in \autoref{table: power scenario A}, \autoref{table: power scenario B} and \autoref{table: power scenario C}, respectively. While results remain comparable between the different configurations, it is visible that a value of $\beta = 0.25$ seems to lead to worse results than $\beta =0.01,0.1$ (see \autoref{table: power scenario C}). This may be due to the fact that a larger trimming value leads to a higher loss of information about the underlying distance distributions. Further, in~\ref{item: scenario B}, there is a clear difference in the performance between $p=1$ and $p=2$, with $p=2$ reaching a higher empirical power. A potential reason for this is that the median (which is $\Lambda_1$) is less sensitive to outliers than the mean, which arises as the barycenter functional for $p=2$.
\begin{table}[h] 
\centering
\begin{tabular}{c c c c c c c}
        \toprule
         $p$ & $\beta$ & n = 10& n = 25 & n = 50 & n = 100 & n = 250\\ \midrule
         & 0.01 & 0.034 & 0.043 & 0.039 & 0.052 & 0.05 \\ 
        1 & 0.1 & 0.016 & 0.036 & 0.05 & 0.065 & 0.049 \\ 
         & 0.25 & 0.02 & 0.051 & 0.045 & 0.05 & 0.053 \\ \midrule
         & 0.01 & 0.009 & 0.044 & 0.054 & 0.054 & 0.057 \\ 
        2 & 0.1 & 0.005 & 0.035 & 0.049 & 0.045 & 0.044 \\ 
         & 0.25 & 0.002 & 0.036 & 0.043 & 0.042 & 0.048 \\ \bottomrule
\end{tabular}
\caption{Empirical level of $\Phi_{\beta,p}^\ast$ over $1,\!000$ repetitions of the test with $\alpha = 0.05$ in~\ref{item: scenario A}.}\label{table: power scenario A}
\end{table}
\begin{table}[h]
\centering
\begin{tabular}{c c c c c c c}
        \toprule
         $p$ & $\beta$ & n = 10& n = 25 & n = 50 & n = 100 & n = 250\\ \midrule
         & 0.01 & 0.063 & 0.13 & 0.265 & 0.501 & 0.906 \\ 
        1 & 0.1& 0.026 & 0.137 & 0.282 & 0.504 & 0.937 \\ 
         & 0.25 & 0.047 & 0.173 & 0.281 & 0.512 & 0.922 \\ \midrule
         & 0.01 & 0.033 & 0.154 & 0.354 & 0.712 & 0.997  \\ 
        2 & 0.1 & 0.026 & 0.166 & 0.365 & 0.747 & 0.998 \\ 
         & 0.25 & 0.01 & 0.158 & 0.409 & 0.773 & 0.999  \\ \bottomrule
\end{tabular}
\caption{Empirical power of $\Phi_{\beta,p}^\ast$ over $1,\!000$ repetitions of the test with $\alpha = 0.05$ in~\ref{item: scenario B}.}\label{table: power scenario B}
\end{table}
\begin{table}[h]
\centering
\begin{tabular}{c c  c  c c c c}
        \toprule
         $p$ & $\beta$ & n = 10& n = 25 & n = 50 & n = 100 & n = 250\\ \midrule
         & 0.01 & 0.029 & 0.159 & 0.559  & 0.936 & 1   \\ 
        1 & 0.1& 0.013 & 0.139 & 0.429 & 0.825 & 1   \\ 
         & 0.25 & 0.01 & 0.105 & 0.326 & 0.685 & 0.996 \\ \midrule
         & 0.01 & 0.029 & 0.270 & 0.694 & 0.984 & 1  \\ 
        2 & 0.1 & 0.019 & 0.141 & 0.490 & 0.896 & 1 \\ 
         & 0.25 & 0.002 & 0.1 & 0.33 & 0.739 & 0.995 \\ \bottomrule
\end{tabular}
\caption{Empirical power of $\Phi_{\beta,p}^\ast$ over $1,\!000$ repetitions of the test with $\alpha = 0.05$ in~\ref{item: scenario C}.}\label{table: power scenario C}
\end{table}
\end{document}